\documentclass[11pt,a4paper]{amsart}
\usepackage{graphicx}
\usepackage{amssymb}

\setlength{\textwidth}{17cm} \setlength{\textheight}{29.5cm}
\setlength{\oddsidemargin}{-0.5cm} \setlength{\evensidemargin}{-0.5cm}

\parskip 4pt

\usepackage[square, numbers, comma]{natbib}

\usepackage{amsmath}
\usepackage{amsthm}
\usepackage{dsfont}
\usepackage{a4wide}
\usepackage{pdfsync}
\usepackage{hyperref}
\usepackage{enumerate}
\usepackage{color}
\usepackage{cancel}

\DeclareMathOperator*{\esssup}{ess\,sup}
\DeclareMathOperator*{\essinf}{ess\,inf}
\DeclareMathOperator*{\esslim}{ess\,lim}


\newtheorem{theorem}{Theorem}[section]
\newtheorem{proposition}[theorem]{Proposition}

\newtheorem{lemma}[theorem]{Lemma}

\theoremstyle{remark}
\newtheorem{remark}[theorem]{Remark}

\theoremstyle{definition}
\newtheorem{definition}[theorem]{Definition}

\newcommand{\sgn}{\,{\rm sgn}}

\def\eps{\varepsilon}
\def\teps{\tilde \varepsilon}

\def\rr{\mathbb{R}}

\def\eps{\varepsilon}

\def\ml{\mathcal{L}}
\def\bs{\mathbb{S}}

\def\ou{\overline u}

\setlength{\textwidth}{17cm} \setlength{\textheight}{23.5cm}
\setlength{\oddsidemargin}{-0.5cm} \setlength{\evensidemargin}{-0.5cm}
\title[Large-time behaviour for nonlocal diffusion-convection problems]{Large-time behaviour for anisotropic stable nonlocal diffusion problems with convection}

\author[J.~Endal]{J{\o}rgen Endal}
\address[J.~Endal]{Universidad Aut\'onoma de Madrid (UAM)\\
Campus de Cantoblanco, 28049 Madrid, Spain\\
and\\
Department of Mathematical Sciences\\
Norwegian University of Science and Technology (NTNU)\\
N-7491, Trondheim, Norway}
\email[]{jorgen.endal\@@{}ntnu.no}
\urladdr{https://folk.ntnu.no/jorgeen/}

\author[L.~I.~Ignat]{Liviu I. Ignat}
\address[L.~I.~Ignat]{Institute of Mathematics ``Simion Stoilow'' of the Romanian Academy\\
21 Calea Grivitei Street, 010702 Bucharest, Romania\\
and\\
The Research Institute of the University of Bucharest - ICUB\\
University of Bucharest\\
90-92 Sos. Panduri, 5th District, Bucharest, Romania
}
\email[]{liviu.ignat\@@{}gmail.com}
\urladdr{http://www.imar.ro/~lignat}

\author[F.~Quir\'{o}s]{Fernando Quir\'{o}s}
\address[F.~Quir\'{o}s]{Departamento de Matem\'aticas\\
Universidad Aut\'onoma de Madrid (UAM)\\
Campus de Cantoblanco, 28049 Madrid, Spain\\
and\\
Instituto de Ciencias Matem\'aticas ICMAT (CSIC-UAM-UCM-UC3M)\\
28049 Madrid, Spain}
\email[]{fernando.quiros\@@{}uam.es}
\urladdr{https://matematicas.uam.es/~fernando.quiros}

\begin{document}

\begin{abstract}
We study the large-time behaviour of nonnegative solutions to the Cauchy problem for a nonlocal heat equation with a nonlinear convection term. The diffusion operator  is the infinitesimal generator of a stable L\'evy process, which may be highly anisotropic.  The initial data are assumed to be bounded and integrable. The mass of the solution is conserved along the evolution, and the large-time behaviour is given by the source-type solution with this mass of a limit equation that depends  on the relative strength of convection and diffusion. When diffusion is stronger than convection the original equation simplifies asymptotically to the purely diffusive nonlocal heat equation. When convection dominates, it does so only in the direction of convection, and the limit equation is still diffusive in the subspace orthogonal to this direction, with a diffusion operator that is a ``projection'' of the original one onto the subspace. The determination of this projection is one of the main issues of the paper. When convection and diffusion are of the same order the limit equation coincides with the original one.

Most of our results are  new even in the isotropic case in which the diffusion operator is the fractional Laplacian. We are able to cover both the cases of slow and fast convection, as long as the mass is preserved.  Fast convection, which corresponds to convection
nonlinearities that are not locally Lipschitz, but only locally H\"older, has not been considered before in the nonlocal diffusion setting.
\end{abstract}

\keywords{Nonlocal diffusion, anisotropic stable operators, diffusion-convection, asymptotic behaviour, well-posedness, compactness arguments}
\subjclass[2020]{%
35B40, 
35A01, 
35A02, 
35S05, 
60J60, 
46B50} 

\maketitle

\tableofcontents

\section{Introduction and main results}
\label{sect-Introduction} \setcounter{equation}{0}

We study the large-time behaviour of solutions to the nonlocal diffusion problem with nonlinear convection
\begin{equation}
\label{eq.main0}
	\partial_tu+\ml u+\nabla\cdot(F(u)) =0 \quad\textup{in }Q:=(0,\infty)\times \rr^N,\qquad
  	u(0)=u_0 \quad\textup{on }\rr^N,
\end{equation}
where $\ml$ is a nonnegative symmetric $\alpha$-stable operator, $\alpha\in (0,2)$,
\begin{equation}
\label{eq:F}
F(u):=\mathbf{a}|u|^{q-1}u,\quad \mathbf{a}\in\rr^N,\ |\mathbf{a}|=1, \quad 1-\frac{1}{N}<q\neq1,
\end{equation}
and $u_0\in L^1(\mathbb{R}^N)\cap L^\infty(\mathbb{R}^N)$. Our goal is to describe the \emph{intermediate asymptotics}, determining the rate at which solutions approach zero, and the limit profile, after scaling the solution to take into account the decay rate. The intermediate asymptotic behaviour depends strongly on the strength of diffusion, measured in terms of the parameter $\alpha$, when compared with convection, measured in terms of the parameter $q$.

\subsection{On the operator $\ml$}
The nonlocal operator $\mathcal{L}$ is defined, for smooth functions which do not grow too much at infinity, by
\begin{equation*}
\label{eq:general.operator}
(\mathcal{L}\phi)(x) =
\int_{\mathbb{R}^N}\Big(\phi(x)-\frac{\phi(x +y) + \phi(x-y)}2\Big)
\,{\rm d}\nu(y),
\end{equation*}
under the assumption:
\begin{equation}
\label{mlas}\tag{$\textup{A}_{\nu}$}
\left\{\
\begin{aligned}
&\nu\geq0\text{ is a Radon measure on }\rr^N\setminus\{0\}.\\
&\text{For some measure }\mu\text{ on }\bs^{N-1}:=\{x\in\mathbb{R}^N: |x|=1\},\text{ called the \emph{spectral measure}:}\\
&\hskip.8cm\textup{(Polar decomposition)}\qquad  {\rm d}\nu(r,\theta)=\frac{{\rm d}r}{r^{1+\alpha}}{\rm d}\mu(\theta) \text{ with $0<\alpha<2$;}\\
&\hskip.8cm\textup{(Nondegenerate)}\qquad\hskip1cm \inf_{\xi \in \bs^{N-1}}\int_{\bs^{N-1}} |\xi\cdot \theta|^\alpha\,{\rm d}\mu(\theta)\geq \Lambda_1>0;\\
&\hskip.8cm\textup{(Finite)}\qquad\hskip2.57cm \mu(\bs^{N-1})=\Lambda_2<\infty.
\end{aligned}
\right.
\end{equation}
Here, by a slight abuse of notation,  $\nu(r,\theta)$ denotes the measure on $(0,\infty)\times\bs^{N-1}$ induced by $\nu(y)$ and the change to polar coordinates, $r=|y|$, $\theta=y/|y|$.
We then immediately have
$$
\int_{\mathbb{R}^N}\min\{|y|^2,1\}\,{\rm d}\nu(y)=\int_{\mathbb{S}^{N-1}}\int_0^\infty\min\{r^2,1\}\,\frac{{\rm d}r}{r^{1+\alpha}}\,{\rm d}\mu(\theta)<\infty,
$$
and hence $\nu$ is a \emph{L\'evy measure}.

Using the polar decomposition of the measure $\nu$, the operator can be written as
\begin{equation}
\label{eq:operador en polares}
(\mathcal{L}\phi)(x) =\int_{\mathbb{S}^{N-1}}\int_0^\infty
\Big(\phi( x)-\frac{\phi(x + r\theta) + \phi(x-r\theta)}2 \Big)
\frac{{\rm d}r}{r^{1+\alpha}}\,{\rm d}\mu(\theta).
\end{equation}
Notice that, although  we do not impose any symmetry on the measure $\mu$, the operator will be symmetric, thanks to the way we write it  using second differences.

Operators of this form arise as infinitesimal generators of symmetric stable L\'evy processes $X=\{X_t\}_{t\ge0}$, which satisfy
$$
\lambda X_t= X_{\lambda^\alpha t},\qquad \lambda>0,\; t\ge0;
$$
see e.g.~\cite[Chapter 1]{Bertoin-1996}. 
These processes appear  in Physics, Mathematical Finance and Biology, among other applications, and have been the subject of intensive research in the last years from the point of view both of Probability and Analysis; see for instance~\cite{Janicki-Weron-1994,Samorodnitsky-Taqqu-1994,Mantegna-Stanley-1995}.

Taking Fourier transform in the definition of the operator $\ml$ we get
\begin{equation}\label{eq.SymbolL}
\widehat {{\ml}\phi}(\xi)=|\xi|^\alpha g\Big(\frac{\xi}{|\xi |}\Big)\widehat{\phi}(\xi),\qquad g(\xi)=C_{\alpha}  \int_{\bs^{N-1}}
	   |\xi\cdot \theta|^\alpha\,{\rm d}\mu(\theta),\qquad\displaystyle C_{\alpha}=\int_0^\infty
(1-\cos t)
\frac{dt}{t^{1+\alpha}};
\end{equation}
see for instance~\cite{dePablo-Quiros-Rodriguez-2020}.
Thanks to assumption~\eqref{mlas} we have $\Lambda_1\leq g(\xi/|\xi|)\leq\Lambda_2$. Hence,  the multiplier of the operator $\ml$,
\begin{equation}
\label{eq:multiplier.L}
m(\xi)= |\xi|^\alpha g\Big(\frac{\xi}{|\xi |}\Big),
\end{equation}
satisfies $m(\xi)\eqsim |\xi|^\alpha$ (see paragraph~\ref{sec:notation} for an explanation of this and other notations in the paper). This implies that $\ml$ is nondegenerate, and hence that the diffusion operator is parabolic in all directions.

In the isotropic case, ${\rm d}\mu(\theta)=c\,{\rm d}\theta$ for some constant $c>0$, the operator reduces to a multiple of the well-known  fractional Laplacian, $\mathcal{L}=(-\Delta)^{\alpha/2}$, whose symbol is $m(\xi)=|\xi|^\alpha$. This is the only possibility for an $\alpha$-stable operator if $N=1$. However, if $N>1$ the spectral measure may be \emph{anisotropic}. Nevertheless, the symbol, and hence the operator, is still homogeneous of order $\alpha$. The spectral measure is also allowed to be singular in some directions (the set of singular directions should have Lebesgue measure zero). As an example,  we have the operator
$$
\mathcal{L}=\sum_{j=1}^N (-\partial_{x_jx_j}^2)^{\alpha/2},
$$
that corresponds to the spectral measure $\mu(\theta)=c_{N,\alpha}\sum_{j=1}^N\delta_{e_j}(\theta)$, where
$\{e_j\}_{j=1}^N$ is the canonical basis in $\mathbb{R}^{N}$ and $c_{N,\alpha}>0$ is a normalization constant. The symbol is in this case a multiple of $\sum_{j=1}^N|\xi_j|^\alpha$.

\subsection{Assumptions on the initial data and the convection nonlinearity}
\label{sec:AssumptionsInitialData.Nonlinearity}

Throughout the paper we will always assume that:
\begin{equation}
\label{u_0as}
0\leq u_0\in L^1(\rr^N)\cap L^\infty(\rr^N)\text{ with mass }M:=\|u_0\|_{L^1(\rr^N)}.
\tag{$\textup{A}_{u_0}$}
\end{equation}
The boundedness assumption is not essential, since the problem has an $L^1$--$L^\infty$ smoothing effect: solutions with a possibly unbounded but integrable initial data become bounded for any positive time;  see Remark~\ref{rk:smoothing}(c).

Since we will always consider nonnegative initial data, and the concept of solution for  the equation we will deal with has a comparison principle, see Theorem~\ref{thm.UniquenessPropertiesEntropy} below,  solutions will  be nonnegative. Hence, the convection nonlinearity $|r|^{q-1}r$ can be replaced by  $r^q$.
Moreover, with a change of variables and a change of the measure, $\mu$ to $\tilde \mu(\theta):=\mu(A\theta)$, $A$ being a rotation matrix, it is enough to consider the case $\mathbf{a}=(0,\dots, 0,1)$ in~\eqref{eq:F}. Notice  that rotations preserve~\eqref{mlas}.  Thus, we will always assume
\begin{equation}
F(u):=\mathbf{a}f(u),\quad\text{where }\mathbf{a}=(0,\ldots,0,1)\in\rr^N,\  f(u)=u^q,\text{ and }1-\frac{1}{N}<q\neq1.
\tag{$\textup{A}_q$}
\label{qas}
\end{equation}
The restriction on $q$ from below is imposed to guarantee the conservation of mass. Notice that if $q\in(1-\frac{1}{N},1)$, a case denoted in the literature as \emph{fast} convection (in contrast with the case of \emph{slow} convection, $q>1$), then $f$, and hence $F$, are  not locally Lipschitz, but only  locally $q$-H\"older continuous.

Taking all the above into account, problem~\eqref{eq.main0} can be reduced to
\begin{equation}
\label{eq.main}\tag{P}
\partial_tu+\ml u+\partial_{x_N}(u^q)=0 \quad\text{in }Q,\qquad u(0)=u_0 \quad\text{on }\rr^N.
\end{equation}

\subsection{On the concept of solution}
\label{sec:SolutionConcept}

When $\alpha \in (0, 1)$ regularity is not guaranteed\footnote{The case $\alpha=1$ is special, and has to be treated with care. For instance, in~\cite{KNS08} the authors conclude that regularization actually happens for equations of the type \eqref{eq.main} with $\ml=(-\Delta)^{\frac{1}{2}}$ and $q=2$. See also \cite{CV12} for related results and discussions.}, since the convection term may lead to the formation of shocks, and  very weak solutions  (see~Remark~\ref{regularity.issue}(d) below for a precise definition)  are in general not unique~\cite{Alibaud-Andreianov-2010}. Hence the need of a more restrictive notion of solution.  The concept of entropy solution, which is valid in all the range of parameters under consideration, $\alpha\in(0, 2)$, $q>1-\frac1N$, will serve for our purposes. Even though such a concept of solution is not needed if $\alpha\in(1,2)$, shocks may show up in the description of the large-time behaviour also in that range, if convection is strong enough. Hence we prefer to deal always with entropy solutions, since they offer a unified framework for both the original problem and its possible asymptotic limits in all cases.

Unfortunately, there are no references which include fast and slow convection, as well as nonlocal diffusion. Hence, we will build the well-posedness  theory from scratch in Appendix~\ref{sect:appendix.results.entropy.solutions}, based on~\cite{Car99, MaTo03, AlibaudEntropy, CifaniJakobsenEntropySol, EndalJakobsen, AnBr20, Pan20}.

To write down the definition of entropy solution, we split the nonlocal operator as follows:
\begin{equation}\label{eq.SplittingOperatorInTwo}
\begin{split}
&(\ml\phi)(x)=(\ml^{\leq \rho}\phi)(x)+(\ml^{>\rho}\phi)(x)\quad\text{for all }\phi\in C_\textup{c}^{\infty}(\rr^N),\ \rho>0, \text{and }x\in \rr^N,\text{ where}\\
&(\ml^{\leq \rho}\phi)(x):=\int_{\bs^{N-1}}\int_0^\rho \Big(\phi(x)-\frac{\phi(x+r\theta)+\phi(x-r\theta)}2 \Big)\,\frac{{\rm d}r}{r^{1+\alpha}}{\rm d}\mu(\theta),\\
&(\ml^{> \rho}\phi)(x):=\int_{\bs^{N-1}}\int_\rho^\infty \Big(\phi(x)-\frac{\phi(x+r\theta)+\phi(x-r\theta)}2 \Big)\,\frac{{\rm d}r}{r^{1+\alpha}}
{\rm d}\mu(\theta).
\end{split}
\end{equation}
Although  we are assuming~\eqref{u_0as}, we will allow bounded measures as initial data in our definition of entropy solution, since this possibility will appear in the description of large-time behaviours.
\begin{definition}[Entropy solution]\label{def.entropySolution}
A function $u$ is an \emph{entropy solution} of~\eqref{eq.main} if:
\begin{enumerate}[{\rm (a)}]
\item \textup{(Regularity)} $u\in L^\infty((0,\infty);L^1(\rr^N))\cap L^\infty_\textup{loc}((0,\infty);L^\infty(\rr^N))$.
\item \textup{(Entropy inequality)} For all $k\in\rr$, all $\rho>0$, and all $0\leq \phi\in C_\textup{c}^\infty(Q)$,
\begin{equation}\label{eq:entropy.inequality}
\begin{split}
&\iint_Q \Big( |u-k|\partial_t\phi + \sgn(u-k)\big(F(u)-F(k)\big)\cdot\nabla\phi\Big)\\
&\qquad-\iint_Q \Big(\sgn(u-k)\phi\ml^{> \rho}u + |u-k|\ml^{\leq \rho}\phi\Big)\geq0.
\end{split}
\end{equation}
\item\textup{(Initial data in the sense of bounded measures)} For all $\psi\in C_\textup{b}(\rr^N)$,
\begin{equation}
	\label{eq:initial.data}
\esslim _{t\to0^+}\int_{\rr^N}u(t)\psi= \int_{\rr^N}\psi\,{\rm d}u_0.
\end{equation}
\end{enumerate}
\end{definition}

\begin{remark}\label{regularity.issue}
\begin{enumerate}[{\rm (a)}]
\item If $u_0$ satisfies~\eqref{u_0as}, condition~\eqref{eq:initial.data} is implied if the data is taken in an $L^1$-sense. It is then standard to show that instead of assuming the $L^1$-continuity at $t=0$,  we can   take test functions such that $0\leq \phi\in C_\textup{c}^\infty(\overline{Q})$ and add the term $\int_{\rr^N}|u_0-k|\phi(0)$ on the left-hand side of the entropy inequality~\eqref{eq:entropy.inequality}. 

\item If $u_0$ satisfies~\eqref{u_0as}, the entropy solution of~\eqref{eq.main} belongs to $C([0,\infty);L^1(\mathbb{R}^N))$ and mass is conserved, $\int_{\mathbb{R}^N}u(t)=\int_{\mathbb{R}^N}u_0$ for all $t>0$;  see  Theorem~\ref{thm.UniquenessPropertiesEntropy}(b).

\item As a consequence of (b), for general initial data which are just bounded measures we have $u\in C((0,\infty);L^1(\rr^N))$, and $\int_{\mathbb{R}^N}u(t)=\int_{\mathbb{R}^N}{\rm d}u_0$ for all $t>0$.

\item The entropy solution of~\eqref{eq.main} is a \emph{very weak solution} of that problem:
$$
\begin{gathered}
u\in L^1_{\rm loc}\big((0,\infty); L^1(\mathbb{R}^N;\rho\,{\rm d}x)\big)\cap L^q_{\rm loc}(Q),\quad  \rho(x)=(1+|x|)^{-(1+\alpha)},\\
\displaystyle \iint_Q \big(u( \partial_t\phi  - \ml\phi)+u^q\partial_{x_N}\phi\big)=0\quad\text{for all }\phi\in C_\textup{c}^\infty(Q),
\end{gathered}
$$
and~\eqref{eq:initial.data}; see Theorem~\ref{thm.UniquenessPropertiesEntropy}. The introduction of the weighted space $L^1(\mathbb{R}^N;\rho\,{\rm d}x)$ allows for the term $\iint_Q u\mathcal{L}\phi$ to be well defined, since $\mathcal{L}\phi=O(\rho)$, as can be easily checked.
\end{enumerate}
\end{remark}

\subsection{Main results}
The large-time behaviour of solutions to~\eqref{eq.main} depends on the size of $q$ as compared to the critical value
$$
q_*(\alpha):=1+\frac{\alpha-1}N.
$$
  If $q\ne q_*(\alpha)$ there is a phenomenon of \emph{asymptotic simplification}: if $q>q_*(\alpha)$ the convection term is lost in the limit, while if $q<q_*(\alpha)$ convection is kept, but the diffusion operator $\mathcal{L}$ simplifies to an operator $\mathcal{L}'$ acting only on the first $N-1$ spatial variables, defined for smooth enough functions by
\begin{equation}\label{tilde.L}
\begin{split}
  &(\ml' \phi)(x',x_N):=\int_{\bs^{N-1}}\int_0^\infty\Big(\phi(x',x_N)-\frac{\phi(x'+r\theta',x_N)+\phi(x'-r\theta',x_N)}2 \Big)\,\frac{{\rm d}r}{r^{1+\alpha}}
{\rm d}\mu(\theta ).
\end{split}
\end{equation}
Here, $x=(x',x_N)$, where $x'\in\mathbb{R}^{N-1}$ denotes the first $N-1$ coordinates and $x_N$ the last one.

\begin{theorem}[Large-time behaviour]\label{asymptotic}
Assume~\eqref{u_0as},~\eqref{qas}, and~\eqref{mlas}.
Then the entropy solution $u$ of~\eqref{eq.main} is nonnegative, has mass $M$ for all times, and satisfies, for all $p\in[1,\infty)$,
\begin{equation}\label{main.limit}
t^{\max\{\frac N\alpha, \frac1q(1+\frac{N-1}\alpha)\}(1-\frac 1p)}\|u(t)-U(t)\|_{L^p(\rr^N)}\to0\quad\text{as }t
\to\infty,
\end{equation}
where $U$ is the unique very weak (if $q> q_*(\alpha)$) or entropy (if $q\le q_*(\alpha)$) solution  in $Q$ with initial data $U(0)=M\delta_0$ of
\begin{eqnarray}
\label{lim.1}
\partial_tU+\ml U  =0&&\text{if }q>q_*(\alpha),\\
\label{lim.2}
\partial_tU+\ml U + \partial_{x_N} (U^q)=0&&\text{if } q=q_*(\alpha),\\
\label{lim.3}
\partial_tU+ \ml'{U} + \partial_{x_N}( U^q) =0&&\text{if }
q<q_*(\alpha).
\end{eqnarray}
These solutions are known as the \emph{fundamental} solutions with mass $M$ of the corresponding problem.
\end{theorem}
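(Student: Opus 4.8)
The plan is to use the scaling-and-compactness method for intermediate asymptotics, adapted to the nonlocal anisotropic operator and, in the convection-dominated regime, to the ``projected'' operator $\ml'$ of~\eqref{tilde.L}. The starting point is a list of a priori bounds that transform well under the scaling, all coming from the well-posedness theory (Theorem~\ref{thm.UniquenessPropertiesEntropy} and Appendix~\ref{sect:appendix.results.entropy.solutions}): mass conservation $\|u(t)\|_{L^1(\rr^N)}=M$; the comparison principle; the sharp decay $\|u(t)\|_{L^p(\rr^N)}\le C\,t^{-\kappa(1-1/p)}$ for $t\ge1$, with $\kappa:=\max\{\tfrac N\alpha,\tfrac1q(1+\tfrac{N-1}\alpha)\}$ (the rate $N/\alpha$ being the $L^1$--$L^\infty$ smoothing of $\ml$ and the faster one in the convection-dominated range genuinely using the interplay of $\ml$ and $\partial_{x_N}(u^q)$); a scale-covariant $L^1$-modulus of continuity in space from the regularizing effect of $\ml$ (in $x_N$, in the convection-dominated case, complemented by a compactness/velocity-averaging argument exploiting the non-degeneracy of the flux $u\mapsto u^q$); and tightness, $\sup_{t\ge\tau}\int_{\{|x|>R\}}u(t)\to0$ as $R\to\infty$.

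Next, rescale so that mass is preserved and the target equation appears in the limit. If $q\ge q_*(\alpha)$ put $u_\lambda(t,x):=\lambda^{N/\alpha}u(\lambda t,\lambda^{1/\alpha}x)$, which solves $\partial_tu_\lambda+\ml u_\lambda+\lambda^{N(q_*(\alpha)-q)/\alpha}\partial_{x_N}(u_\lambda^q)=0$ (coefficient $\le1$; tending to $0$ if $q>q_*(\alpha)$, equal to $1$ if $q=q_*(\alpha)$). If $q<q_*(\alpha)$ and $N>1$, put instead $u_\lambda(t,x',x_N):=\lambda^{\gamma}u(\lambda t,\lambda^{1/\alpha}x',\lambda^{\beta}x_N)$ with $\gamma:=\tfrac1q(1+\tfrac{N-1}\alpha)$ and $\beta:=1-\gamma(q-1)$; an elementary computation shows $\beta>1/\alpha$ precisely because $q<q_*(\alpha)$, and that $u_\lambda$ solves $\partial_tu_\lambda+\ml_\lambda u_\lambda+\partial_{x_N}(u_\lambda^q)=0$, where $\ml_\lambda$ is $\ml$ with the increment $(r\theta',r\theta_N)$ replaced by $(r\theta',\lambda^{1/\alpha-\beta}r\theta_N)$ --- isolating $\ml_\lambda$ and seeing it degenerate to $\ml'$ is the heart of the anisotropic analysis. (For $N=1$ the analogous scaling renders the diffusion subcritical and the limit is the pure conservation law, i.e.\ ``$\ml'\equiv0$''.) In every case $u_\lambda$ has mass $M$ and inherits the bounds above uniformly in $\lambda\ge1$ --- in particular the sharp decay gives $\|u_\lambda(t)\|_{L^\infty(\rr^N)}\le C t^{-\kappa}$ uniformly --- and~\eqref{main.limit} for $p=1$ is exactly $\|u_\lambda(1)-U(1)\|_{L^1(\rr^N)}\to0$ as $\lambda\to\infty$; the case $p\in(1,\infty)$ then follows by interpolation with the uniform $L^\infty$ decay.

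The bounds make $\{u_\lambda\}_{\lambda\ge1}$ relatively compact in $C([\tau,T];L^1(\rr^N))$ for every $0<\tau<T$ (uniform $L^1\cap L^\infty$ bounds, spatial equicontinuity and tightness give compactness in space; the equation, bounding $\partial_tu_\lambda$ in a space of negative regularity, gives equicontinuity in time). Let $u_\infty$ be a subsequential limit. The convection term passes to the limit by strong $L^1$-convergence combined with the uniform $L^\infty$ bound (hence convergence of $u_\lambda^q$ on compacts --- this is also how the $q$-H\"older, fast-convection case is accommodated, since $u^q$ admits no Lipschitz control), and when $q>q_*(\alpha)$ its vanishing coefficient removes it. For the diffusion term I would pass to the very weak formulation, move the symmetric operator onto a fixed $\phi\in C_\textup{c}^\infty(Q)$, and prove $\ml_\lambda\phi\to\ml'\phi$ uniformly on compacts by dominated convergence in the defining integral: the integrand converges pointwise in $(\theta,r)$ since $\lambda^{1/\alpha-\beta}r\theta_N\to0$ and $\phi$ is smooth, with the standard $\lambda$-uniform integrable majorant for $C_\textup{c}^\infty$ data (second-difference bound $\lesssim r^2\|D^2\phi\|_{L^\infty}$ near $r=0$ and $\lesssim\|\phi\|_{L^\infty}$ for $r\ge1$, integrable against $r^{-1-\alpha}\,{\rm d}r\,{\rm d}\mu(\theta)$ because $\lambda^{1/\alpha-\beta}\le1$); the uniform decay of $\ml_\lambda\phi$ at infinity and the uniform mass bound then give $\iint_Qu_\lambda\,\ml_\lambda\phi\to\iint_Qu_\infty\,\ml'\phi$. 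This identifies the limit operator as $\ml'$, and one verifies $\ml'$ is itself of the class~\eqref{mlas} on $\rr^{N-1}$: its symbol is $\xi'\mapsto m(\xi',0)$, so $\Lambda_1|\xi'|^\alpha\le m'(\xi')\le\Lambda_2|\xi'|^\alpha$ is inherited from $\ml$, and its spectral measure on $\bs^{N-2}$ is the finite, nonzero pushforward of $|\theta'|^\alpha\,{\rm d}\mu(\theta)$. Since entropy solutions are stable under $L^1$-limits (lower semicontinuity of the entropy functional, or Kruzhkov doubling), $u_\infty$ solves the relevant equation~\eqref{lim.1}--\eqref{lim.3}.

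It remains to identify $u_\infty(0)$. The rescaled data $u_\lambda(0)$ converge weakly-$*$ to $M\delta_0$ (mass $M$ concentrating at the origin), and combining this with the equicontinuity in time near $t=0$ gives $u_\infty\in C([0,\infty);\mathcal{M}(\rr^N))$ with $u_\infty(0)=M\delta_0$ --- the classical ``no initial layer'' point. Then $u_\infty$ is \emph{the} fundamental solution (uniqueness of very weak solutions with measure data for the linear stable equation~\eqref{lim.1}; uniqueness of entropy solutions with $M\delta_0$ data for~\eqref{lim.2},~\eqref{lim.3}, from the appendix and applicable because $\ml$ and $\ml'$ both satisfy~\eqref{mlas}), so subsequential convergence upgrades to convergence of the whole family, yielding~\eqref{main.limit}. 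I expect the two main obstacles to be: (i) the scaling analysis pinning down the limit --- recognizing the anisotropic rescaling ($x'$ at scale $\lambda^{1/\alpha}$, $x_N$ at the larger scale $\lambda^\beta$) and that under it $\ml_\lambda$ collapses exactly to $\ml'$ while remaining in the class~\eqref{mlas}; and (ii) the analytic inputs that are not scale-trivial, namely the sharp $L^\infty$ decay in the convection-dominated range (without it the rescaled family is not uniformly bounded), the $x_N$-equicontinuity there (where the limit carries no $x_N$-diffusion), and the uniform tightness and initial-trace control.
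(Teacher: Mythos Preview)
Your outline matches the paper's four-step scaling-and-compactness scheme (Section~\ref{sect:scalings}), including the anisotropic rescaling with $\beta>1/\alpha$ in the convection regime and the identification of $\ml'$ as the limit of $\ml_\lambda$. Two execution points differ and are worth noting. First, the scale-covariant bounds (sharp $L^\infty$ decay, $x_N$-equicontinuity, tail control) are obtained not directly on the entropy solution but on the viscosity-regularized problem~\eqref{reg} and then transferred (Lemmas~\ref{lem.StabilityOfSolutionsOfReg2}--\ref{lem.StabilityOfSolutionsOfReg3}); in particular the $x_N$-compactness in the convection regime comes from an explicit Oleinik one-sided bound $\partial_{x_N}(u^{q-1})\le 1/(qt)$ on smooth solutions of~\eqref{reg} (Lemmas~\ref{lem.OleinikForPositiveBoundedFunctions} and~\ref{estimari.hiperbolice}), not from velocity averaging, and this same bound drives the sharp hyperbolic decay~\eqref{linfty.hyp} after integrating in $x_N$ and invoking the $(N{-}1)$-dimensional smoothing of $\widetilde\ml$ (Lemma~\ref{lem.PrimitiveSolvesParabolic}). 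Second, passing to the limit in the entropy inequality when the operator itself varies with $\lambda$ is not covered by a generic ``entropy solutions are $L^1$-stable'' statement: the inequality contains the term $\sgn(u_\lambda-k)\phi\,\ml^{\lambda,>\rho}u_\lambda$, with the operator acting on $u_\lambda$ and not on the test function, and the paper treats the $>\rho$ and $\le\rho$ pieces separately via the dedicated convergence Lemmas~\ref{>rho,uniform} and~\ref{conv.L}; your ``move the operator onto $\phi$'' argument addresses only the $\le\rho$ piece.
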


Let us be precise: as in~\cite{dePablo-Quiros-Rodriguez-2020}, we define a very weak solution of~\eqref{lim.1} with initial data $U(0)=M\delta_0$ as a function $U\in L^1_{\rm loc}\big((0,\infty); L^1(\mathbb{R}^N;\rho\,{\rm d}x)\big)$ such that
\begin{equation}
\label{eq:definition.fundamental.vws}
\iint_Q  U( \partial_t\phi  - \ml\phi)+M\phi(0,0)=0\quad\text{for all }\phi\in C_\textup{c}^\infty(\overline{Q}).
\end{equation}

The case $q>q_*(\alpha)$ described by~\eqref{lim.1} is known as the diffusion, supercritical, or weakly nonlinear regime, while the case $q<q_*(\alpha)$ described by~\eqref{lim.3} is known as the convection, subcritical,   or strongly nonlinear  regime. Finally, when $q=q_*(\alpha)$ and the asymptotic behaviour is described by~\eqref{lim.2} we are in the critical  or self-similar  regime.  In the three regimes the function $U$ describing the large time behaviour satisfies
$$
t^{\max\{\frac N\alpha, \frac1q(1+\frac{N-1}\alpha)\}(1-\frac 1p)}\|U(t)\|_{L^p(\rr^N)}=C\quad\text{for some }C\in(0,\infty).
$$
Hence, the limit~\eqref{main.limit} is meaningful and yields in particular the decay rate for solutions to~\eqref{eq.main}, namely
$$
\|u(t)\|_{L^p(\rr^N)}\eqsim t^{-\max\{\frac N\alpha, \frac1q(1+\frac{N-1}\alpha)\}(1-\frac 1p)}.
$$
Notice that there is a change between a diffusion-like decay rate and a convection-like one precisely when $q=q_*(\alpha)$, since
$$
\frac N\alpha=\frac1{q_*(\alpha)}\Big(1+\frac{N-1}\alpha\Big).
$$
We refer the reader to the Figures \ref{fig:alphastable-convectionNeq1} and \ref{fig:alphastable-convectionNgt1} below.

\begin{figure}[h]
\includegraphics[width=0.6\textwidth]{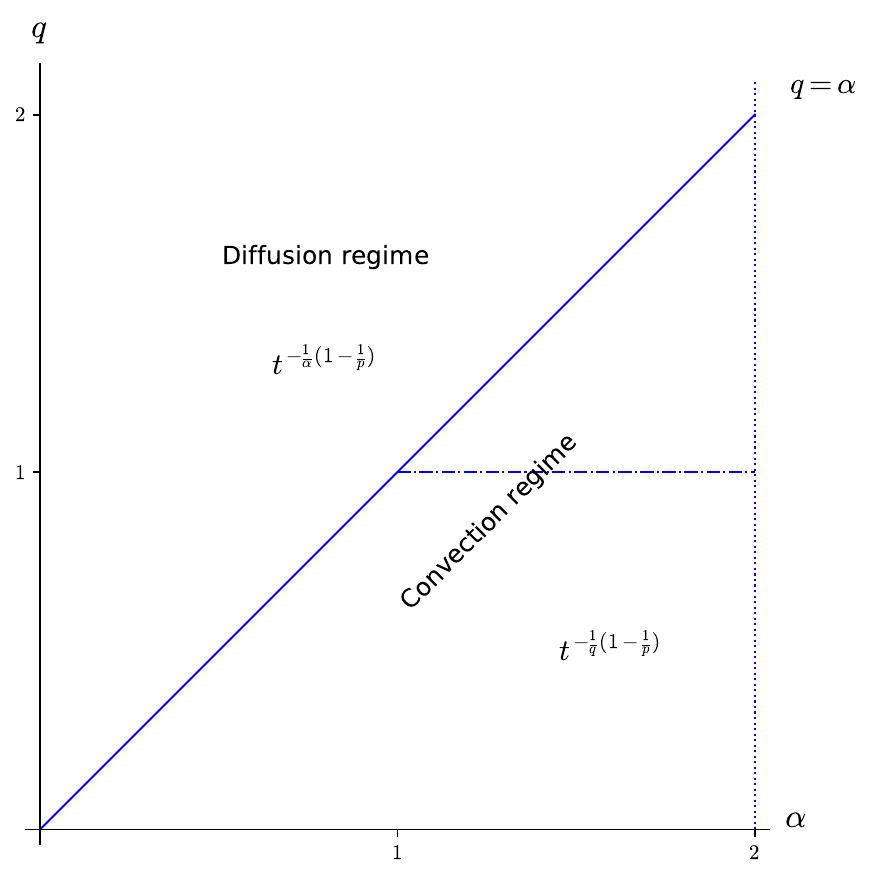}
\caption{Different regimes and $L^p$-decay rates when $N=1$.}
\label{fig:alphastable-convectionNeq1}
\end{figure}
\begin{figure}[h]
\includegraphics[width=0.8\textwidth]{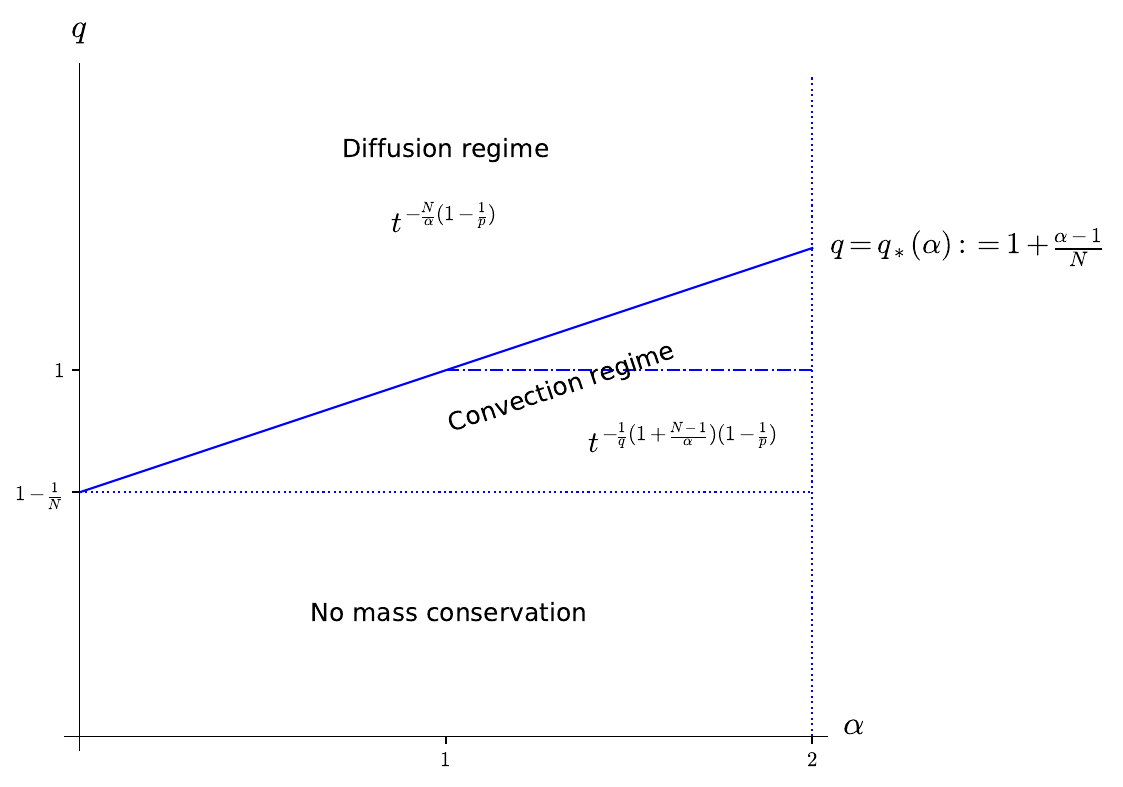}
\caption{Different regimes and $L^p$-decay rates when $N>1$.}
\label{fig:alphastable-convectionNgt1}
\end{figure}

It is worth noticing that Theorem~\ref{asymptotic} is new even in the  isotropic case $\mathcal{L}=(-\Delta)^{\alpha/2}$,
except for the critical exponent $q=q_*(\alpha)$, $\alpha\in[1,2)$, and in the subcritical range $1<q<\alpha$ in dimension $N=1$, which were considered for that operator respectively in~\cite{BilerKarchWoycz2001} and~\cite{Ignat-Stan-2018}.

Let us note that,   even though~\eqref{eq.main} is parabolic when $\alpha>1$, the equation describing the limit is hyperbolic if $q<1+\frac{\alpha-1}{N}$. We already have this phenomenon in the local case $\alpha=2$, in which $\mathcal{L}=-\Delta$. The \lq\lq dual'' situation in which equation~\eqref{eq.main} is hyperbolic and the limit equation is parabolic also occurs, when $\alpha<1$ and $q>q_*(\alpha)$. This phenomenon is  therefore purely nonlocal.  The hyperbolic character also explains why convergence in $L^\infty(\mathbb{R}^N)$ cannot be expected.

\begin{remark}
\begin{enumerate}[{\rm (a)}]
\item
In the case $q=1$, which corresponds to the $\alpha$-stable linear heat equation with a drift, the change of variables $\overline{u}(t,x):=u(t,x',x_N+t)$ yields a solution to the purely diffusive equation~\eqref{lim.1}, whose asymptotic behaviour is given by its fundamental solution $U$ with mass $M$. Hence, the asymptotic behaviour of the solution of the original problem is given by $U(t,x',x_N-t)$.

\item Our proofs and results are still valid if $q=1$ if we are in the diffusion regime or in the critical one. This does not contradict the above remark, since the $L^p$ norm of the difference of the two functions describing the limit, $U(t,x',x_N)-U(t,x',x_N-t)$, is $o(t^{-\frac N\alpha(1-1/p)})$.
\end{enumerate}
\end{remark}

To prove Theorem~\ref{asymptotic}, we will follow the ``four-step method'' developed in~\cite{KamVaz88} by Kamin and V\'azquez. It consists in using the natural scaling invariance of the diffusion or the convection term in~\eqref{eq.main} to build a one-parameter family of solutions which is relatively compact thanks to certain parabolic and hyperbolic  estimates for solutions of~\eqref{eq.main}.  These estimates will follow from their analogues for a regularized version of the problem which are obtained in Section~\ref{sect:estimates}.    To complete the proof, we pass to the limit in the parameter;  the precise details can be found in Section~\ref{sect:scalings}.  A key ingredient is then the uniqueness of the fundamental solution with mass $M$ for the limit problems,  that we state next.

\begin{theorem}[Uniqueness of fundamental solutions]\label{thm.UniquenessOfLimitEquationsIandII}
Assume  ~\eqref{mlas} and  ~\eqref{qas}.
\begin{enumerate}[{\rm (a)}]
\item There is at most one very weak fundamental solution with mass $M$ of the equation in~\eqref{lim.1}.
\item There is at most one  fundamental entropy  solution  with mass $M$  of the equation in~\eqref{lim.2}.
\item There is at most one fundamental entropy solution with mass $M$ of the equation in~\eqref{lim.3}.
\end{enumerate}
\end{theorem}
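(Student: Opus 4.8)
Since the equation in \eqref{lim.1} is linear and translation invariant, I would first kill the singularity of the datum by mollification. If $U$ is a very weak fundamental solution with mass $M$ and $\eta_\eps$ is a standard mollifier, then testing the weak formulation against $\phi*\check\eta_\eps$ (where $\check\eta_\eps(x)=\eta_\eps(-x)$) shows that $U*\check\eta_\eps$ is a very weak solution of \eqref{lim.1} with the \emph{integrable} datum $M\check\eta_\eps\in L^1(\rr^N)\cap L^\infty(\rr^N)$; the manipulation is legitimate because $U\in L^1_\textup{loc}((0,\infty);L^1(\rr^N;\rho\,{\rm d}x))$ with $\rho(x)=(1+|x|)^{-(1+\alpha)}$ and $\rho(x-y)/\rho(x)$ is bounded on compact sets. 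One then invokes uniqueness of the very weak solution of the linear problem with $L^1$ datum --- proved by duality against $(t,x)\mapsto(e^{-(T-t)\ml}\psi)(x)$, $\psi\in C_\textup{c}^\infty(\rr^N)$, after a spatial truncation whose error is controlled using $\ml\phi=O(\rho)$, exactly as in \cite{dePablo-Quiros-Rodriguez-2020} --- to get $(U*\check\eta_\eps)(t)=M(p_t*\check\eta_\eps)$, $p_t$ being the kernel of $e^{-t\ml}$. Letting $\eps\to0$ yields $U(t)=Mp_t$ for a.e.\ $t>0$. Only $m(\xi)\eqsim|\xi|^\alpha$ is used, so anisotropy is immaterial here.

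\textbf{Parts (b) and (c): reduction to the self-similar profile.} Restricted to $t\ge\tau>0$, a fundamental entropy solution with mass $M$ is an entropy solution with datum in $L^1(\rr^N)\cap L^\infty(\rr^N)$, so by the $L^1$-contraction of Theorem~\ref{thm.UniquenessPropertiesEntropy} the map $t\mapsto\|U_1(t)-U_2(t)\|_{L^1(\rr^N)}$ is non-increasing on $(0,\infty)$ for any two such solutions. Each limit equation carries a one-parameter group of dilations preserving both the equation and the mass: $U_\lambda(t,x)=\lambda^NU(\lambda^\alpha t,\lambda x)$ in the critical case \eqref{lim.2}, and an anisotropic variant $U_\lambda(t,x',x_N)=\lambda^aU(\lambda^bt,\lambda^cx',\lambda^dx_N)$ with $b=\alpha c$, $aq=(N-1+\alpha)c$ and $a=(N-1)c+d$ in the subcritical case \eqref{lim.3}; $U_\lambda$ is again a fundamental entropy solution with mass $M$. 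The a priori estimates of Section~\ref{sect:estimates} (smoothing, time equicontinuity, and a spatial tail bound, all uniform in $\lambda$) make $\{U_\lambda\}_{0<\lambda\le1}$ relatively compact, and by the group property of the dilations the limit points as $\lambda\to0^+$ form a nonempty, dilation-invariant family of fundamental entropy solutions with mass $M$. Arguing as in the Kamin--V\'azquez ``four-step method'', this family contains a self-similar element $V(t,x)=t^{-\gamma}f(t^{-\sigma'}x',t^{-\sigma_N}x_N)$, and since $V$ is dilation invariant one has $\|U(\lambda^b)-V(\lambda^b)\|_{L^1(\rr^N)}=\|U_\lambda(1)-V(1)\|_{L^1(\rr^N)}\to0$ along a sequence $\lambda\to0^+$; as $t\mapsto\|U(t)-V(t)\|_{L^1(\rr^N)}$ is non-increasing, it vanishes identically. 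Hence \emph{it suffices to prove that the self-similar fundamental solution of mass $M$ is unique}.

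\textbf{Uniqueness of the self-similar profile (the crux).} In self-similar variables the profile $f\ge0$, $\int_{\rr^N}f=M$, solves a stationary nonlinear Fokker--Planck equation with a linear confining drift, schematically $\ml f-\nabla\cdot((\sigma'y',\sigma_Ny_N)f)+\partial_{y_N}(f^q)=0$ (with $\ml$ replaced by $\ml'$ in the subcritical case). When $N=1$ and $0<q<1$ the operator $\ml'$ disappears and $f$ is merely the entropy source-type solution of the pure scalar conservation law $\partial_tU+\partial_x(U^q)=0$ with $U(0)=M\delta_0$, which is explicit and unique by classical conservation-law theory. When $q>1$ --- in both the critical and subcritical regimes --- I would prove uniqueness directly in self-similar variables: the evolution attached to the stationary equation is $L^1$-contractive, and the confining drift furnishes a strict Lyapunov functional (of relative-entropy type, adapted to the $f^q$ nonlinearity) whose dissipation vanishes only at the stationary profile, so two profiles of the same mass coincide. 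For the critical exponent with $\ml=(-\Delta)^{\alpha/2}$, $\alpha\in[1,2)$, this is \cite{BilerKarchWoycz2001}, and for the one-dimensional subcritical range $1<q<\alpha$ it is \cite{Ignat-Stan-2018}; the new input is the general anisotropic operator.

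\textbf{Main obstacle.} The last step is the delicate one: closing the entropy balance for the profile requires sufficient decay of $f$ and, crucially, $q>1$, so that the convection contribution $\partial_{y_N}(f^q)$ can be absorbed by the dissipation. When $q<1$ the flux is only H\"older, these estimates break down, and uniqueness of the limit profile --- hence of the fundamental solutions of \eqref{lim.2}--\eqref{lim.3} and ultimately Theorem~\ref{asymptotic} in that range --- remains open, which is exactly the gap recorded in Figure~\ref{fig:alphastable-convectionNgt1}.
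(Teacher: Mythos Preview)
Part (a) is fine and consistent with the paper, which simply defers to \cite{dePablo-Quiros-Rodriguez-2020}.

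For parts (b) and (c) with $q>1$, your reduction to ``uniqueness of the self-similar profile'' contains a genuine circularity. You scale $U$ to $U_\lambda$, extract limit points by compactness, observe that the limit set is dilation-invariant, and then assert that ``arguing as in the Kamin--V\'azquez four-step method, this family contains a self-similar element $V$''. But a dilation-invariant set of fundamental solutions need not contain a fixed point of the dilation group; in the Kamin--V\'azquez scheme the identification of the limit as the self-similar solution is precisely step (iv), which \emph{is} uniqueness of the limit problem. You are invoking what you are trying to prove. Even if the reduction were granted, your Lyapunov/relative-entropy argument for the anisotropic profile equation is only a sketch: the cited references \cite{BilerKarchWoycz2001,Ignat-Stan-2018} treat the isotropic $(-\Delta)^{\alpha/2}$, and you give no indication of how the dissipation identity survives for a general spectral measure $\mu$.

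The paper's route is entirely different and does not pass through self-similar profiles. Given two fundamental entropy solutions $u,\bar u$, one first integrates in $x_N$: $v(t,x')=\int_\rr u(t,x',x_N)\,{\rm d}x_N$ solves the linear equation $\partial_tv+\widetilde\ml v=0$ with datum $M\delta_0$, so by part (a) both $u$ and $\bar u$ have the same $x_N$-marginal $M\Phi(t,x')$. One then approximates $u,\bar u$ first by entropy solutions $u_n,\bar u_n$ with carefully designed $L^1\cap L^\infty$ data supported in $\{|x_N|<r\}$ and sharing the marginal $M\Phi(1/n,\cdot)$, and then by classical solutions $u_n^\eps,\bar u_n^\eps$ of the $\eps\Delta$-regularized problem. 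The primitives $v_n^\eps(t,x',x_N)=\int_{-\infty}^{x_N}u_n^\eps$ are bounded classical solutions of the Hamilton--Jacobi equation $\partial_tv+\ml v-\eps\Delta v=|\partial_{x_N}v|^q$ (with $\ml'$ in case (c)). Because the data share their $x_N$-marginal and are supported in $\{|x_N|<r\}$, the traces satisfy $v_n^\eps(0^+,\cdot,\cdot-2r)\le\bar v_n^\eps(0^+,\cdot,\cdot)\le v_n^\eps(0^+,\cdot,\cdot+2r)$, and a comparison principle for the Hamilton--Jacobi equation propagates this for all $t>0$. Letting $\eps\to0$, $n\to\infty$, $r\to0$ gives $u=\bar u$. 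The restriction $q>1$ enters not through any entropy dissipation but in the comparison step: the difference of two solutions satisfies a linear equation with drift $a(t,x)\partial_{x_N}$, and $|a|\le q(\|u_n^\eps(t)\|_\infty^{q-1}+\|\bar u_n^\eps(t)\|_\infty^{q-1})$ is bounded precisely because $p\mapsto|p|^q$ is locally Lipschitz when $q>1$.

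For $N=1$, $q\in(0,1)$, you and the paper agree: equation \eqref{lim.3} is a pure scalar conservation law and uniqueness is \cite{LaurencotFast}.
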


The proof of (a) was already given in~\cite{dePablo-Quiros-Rodriguez-2020}.  The ones of (b) and (c), which can be found in Section~\ref{sec:UniquenessOfLimitProblems}, are quite technical  and draw inspiration from \cite{EVZIndiana}.  They involve the Hamilton-Jacobi equation
$$
\partial_tv+\ml v -\eps \Delta v =|\partial_{x_N} v|^q
$$
when dealing with~\eqref{lim.2}, with $\ml'$ instead of $\ml$ when dealing with~\eqref{lim.3}. These equations appear after taking the primitive of $u$ in the direction of convection in a regularized version of the original equation.  Proving a comparison principle for these Hamilton-Jacobi equations is a main issue here  since comparison for the primitives will imply uniqueness for the original variable $u$. Usually, such a comparison result would follow from the theory of viscosity solutions, but this is not exactly our setting and we have to develop an independent proof. In the cases $q\in(1-\frac1N,1)$ for the equation in~\eqref{lim.2} in all dimensions and the equation in~\eqref{lim.3} when $N>1$, we had to approximate the convection nonlinearity by a Lipschitz nonlinearity to be able to justify some of the arguments. Hence, the proofs in the Lipschitz and the H\"older cases are similar up to this modification.  If $N=1$, the equation in~\eqref{lim.3} is a pure scalar conservation law, and its uniqueness when $q\in(0,1)$ was already proved in~\cite{LaurencotFast}.

 As a consequence of uniqueness and of the invariance of the limit equations~\eqref{lim.1}--\eqref{lim.3} and the initial data $M\delta_0$ under certain scalings (see the beginning of Section~\ref{sect:scalings}),  we obtain that the fundamental solutions giving the limit behaviour are self-similar. In particular, the fundamental solutions of~\eqref{lim.1} and~\eqref{lim.2} have the form
$$
U(t,x)=t^{-N/\alpha}f(xt^{-1/\alpha})
$$
and the fundamental solution of~\eqref{lim.3} has the form
$$
U(t,x',x_N)=t^{-\gamma}f(x't^{-1/\alpha},x_Nt^{-\beta}),\qquad \beta=\frac1q\Big(1+\frac {N-1}{\alpha}\Big)-\frac {N-1}{\alpha},\quad \gamma=\frac {N-1}{\alpha}+\beta.
$$
The different profiles $f$ depend on $M$, $\alpha$, $q$, and the spectral measure $\mu$, and are not radial in general.

\subsection{Precedents}
As an important precedent, we have the local case in which $\mathcal{L}=-\Delta$, which  corresponds to $\alpha=2$. The Laplacian is invariant under rotations. Hence, in this case problem~\eqref{eq.main0} can be reduced to
$$
\partial_tu-\Delta u+\partial_{x_N}(u^q)=0\quad\text{in }Q, \qquad u(0)=u_0\quad\text{on }\mathbb{R}^N.
$$
If  $u_0$ is integrable, this problem admits a classical solution, whose large-time behaviour is given by
$$
t^{\max\{\frac N2, \frac{N+1}{2q}\}(1-\frac 1p)}\|u(t)-U(t)\|_{L^p(\mathbb{R}^N)}\to0\quad\textup{as }t
\to\infty,
$$
 for all $p\in[1,\infty]$ if $q\geq q_*(2)$, and all $p\in [1,\infty)$ if $q\in(1,q_*(2))$  or $q\in(0,1)$  and $N=1$, 
where $U$  is the unique very weak (if $q\ge q_*(2)=1+\tfrac1N$) or entropy (if $q<q_*(2)$) fundamental solution  in $Q$ with mass $M$ of
$$
\begin{array}{rl}
\partial_tU-\Delta U  =0\quad&\textup{if }q>q_*(2),\\
\partial_tU-\Delta U + \partial_{x_N} (U^q)=0\quad&\textup{if } q=q_*(2),\\
\partial_tU-\Delta_{x'}{U} + \partial_{x_N}(U^q) =0\quad&\textup{if } q\in(1,q_*(2)),\text{ or }
q\in(0,1)\text{ and } N=1.
\end{array}
$$
Here $\Delta_{x'}$ stands for the Laplacian in the first $N-1$ coordinates.
The results for $q>1$ were proved in the remarkable series of papers~\cite{EZ,EVZArma,EVZIndiana, EsZu97} (see also~\cite{Zua20}), and the ones for $q<1$ in~\cite{LaurencotFast}. We will take profit of some of the techniques used in those papers, adapted to our nonlocal setting: scalings, maximum principle, entropy inequalities\dots\ However, as pointed out above,  the nonlocal equation~\eqref{eq.main}
may be hyperbolic, which leads to a lack of regularity that makes the proofs much more involved than in the local case.

As we have already mentioned, there are two precedents  that fall directly into our nonlocal framework,  both of them for the isotropic case in which $\mathcal{L}$ is the fractional Laplacian:~\cite{BilerKarchWoycz2001}, dealing with the critical exponent $q=q_*(\alpha)$ for $\alpha\in[1,2)$ (so that $q\ge 1$), and~\cite{Ignat-Stan-2018}, that considers the subcritical range $1<q<\alpha$ in dimension $N=1$. Another important precedent is~\cite{BilerKarchWoyczAsymp}, in which the diffusion operator is $-\Delta+\mathcal{L}$, with a nonlocal operator $\mathcal{L}$ like the one we are considering here. The Laplacian has a regularizing effect that makes things simpler. This will be exploited later in this paper as well.

\begin{remark}
In the local framework, $\alpha=2$, uniqueness, and hence convergence, for the limit problems~\eqref{lim.2} and~\eqref{lim.3}  when $q\in(1-\frac1N,1)$ is only known for the latter problem when $N=1$. The remaining cases will be treated in a future work.
\end{remark}

Also, in the nonlocal framework, much attention has been paid to the case in which $\mathcal{L}$ is an operator of convolution type,
\begin{equation*}
\label{eq:def.L.convolution}
\mathcal{L}\phi=J*\phi-\phi
\end{equation*}
for some integrable kernel $J$; see for instance~\cite{Laurencot-2005,Ignat-Rossi-2007,diFrancesco-Fellner-Liu-2008,Ignat-Pazoto-2014,CazacuIgnatPazoto}.

\subsection{Comments and extensions}

\noindent\textsc{Solutions with sign changes. } In this paper we have only considered the case of nonnegative solutions. The case of nonpositive solutions is easily reduced to this one. Sign changes in the solutions offer some technical difficulties, which may be handled if the mass $M$ is different from 0 following what was done for the local case in~\cite{EVZArma,Carpio1996}. When $M=0$,  things become more involved in the diffusion range, since the solutions of the limit problem decay faster than the fundamental solution.  As a consequence, the critical line, where the decay rates in the diffusion and convection regimes are expected to coincide, should be different from that of the case $M\neq0$.  This  situation in which solutions of the limit diffusion problem decay faster than the fundamental solution  was considered for the local case in~\cite{Karch-Schonbek-2002}. It may be interesting to see whether the techniques in that paper apply to the nonlocal case.

\noindent\textsc{Nonintegrable initial data. } Problem~\eqref{eq.main0} makes sense for initial data which are bounded but not necessarily in $L^1$. What is then the asymptotic behaviour? We still expect a diffusion regime and a convection one. However, in the diffusion regime the behaviour should not be given by a fundamental solution (solutions have infinite mass). If the initial data has a precise power-like nonintegrable decay at infinity, solutions of the purely diffusive problem converge towards a nonintegrable self-similar solution, with a decay that is different from the one of fundamental solutions; see~\cite{Herraiz-1999}. This special solution is expected to give the behaviour in the diffusion regime. However, the critical line, where the decay rates of the limit diffusive and convective problems coincide, should change with respect to the integrable case.

\noindent\textsc{Nonhomogeneous media. }  It would be interesting to think about nonlocal models involving \lq\lq non-constant diffusivities\rq\rq, and to study their large-time behaviour. In the local case, if the diffusivity approaches a constant at infinity, there is an asymptotic simplification towards the problem with diffusivity equal to this constant; see~\cite{Duro-Zuazua-1999}. In the nonlocal setting things might be very different.

\noindent\textsc{Nonlinear diffusion. } The results in the local case have been extended by several authors to the case in which the diffusion operator is a nonlinear operator of porous medium type, $\mathcal{L}=-\Delta u^m$, $m>1$; see for instance~\cite{LaurencotFast,LaurencotSimondon,Rey99, ReVa99,Escobedo-Feireisl-Laurencot-2000,Carrillo-Fellner-2005,Andreucci-Tedeev-2008}.  Two different nonlocal nonlinear diffusion operators of porous medium-type have become popular in the last years, $\mathcal{L}u=(-\Delta)^{\alpha/2} u^m$ and $\mathcal{L}u=-\operatorname{div}(u^{m-1}\nabla(-\Delta)^{-\alpha/2}u)$; see for instance~\cite{dePablo-Quiros--Rodriguez-Vazquez-2012} for the former and~\cite{Caffarelli-Vazquez-2011} for the latter. Diffusion-convection problems for operators of this kind have only been considered for the second operator in spatial dimension $N=1$ by means of entropy methods~\cite{Feo-Huang-Volzone-2020}. Such methods are not suitable for the first model, hence a different approach is needed.

\noindent\textsc{Other convection nonlinearities. }  It should be possible to extend our results to the case in which the convection nonlinearity is not exactly a power, as long as it behaves like one of them for $u\sim0$; see e.g.~\cite{EVZArma,EZ} for the local case.

\noindent\textsc{Multidirectional convection. } It would also be interesting to consider convection nonlinearities $F$ in~\eqref{eq.main0} having different behaviours for $u\sim0$ in different directions. To our knowledge, this situation has only been considered up to now in the local case, for problems involving different power-like convection nonlinearities in different directions, both with linear and nonlinear diffusion, under conditions that guarantee that asymptotically convection acts only in one direction; see~\cite{EsZu97,Escobedo-Feireisl-Laurencot-2000}.  The general case is more involved, as pointed out by the  recent paper \cite{Ser21}, which shows that the multidimensional Burgers equation with a Dirac delta as initial data is not well-posed: either the solution does not exist or it is not self-similar. So, even if such an equation enjoys $L^1$--$L^\infty$-smoothing effects \cite{SeSi19}, we cannot obtain its asymptotic behaviour with the methods developed in this paper.

\noindent\textsc{Nonlocal convection. } The papers~\cite{Ignat-Rossi-2007,MR2888353,MR3328145} study  diffusion-convection problems with a diffusion operator of convolution type in which also the convection term is nonlocal. In both cases the involved kernel is integrable with some finite moments. The asymptotic behaviour of models involving singular kernels in both the diffusion and convection terms, and the manner in which they interact, remains to be analysed.

\noindent\textsc{Numerical schemes. } It would be interesting to develop numerical schemes reproducing qualitatively the large-time behaviour of solutions of~\eqref{eq.main}. Such issues have been considered  in problems involving both local and nonlocal (of convolution type) diffusion operators  for instance in~\cite{Ignat-Pozo-Zuazua-2015,Ignat-Pozo-2017,Ignat-Pozo-2018}.

\subsection{Notation}\label{sec:notation}
Let us explain some of the notations that will be used throughout the paper.

\noindent\emph{Asymptotic symbols. } Let $f$ and $g$ be positive functions.  By $f\eqsim g$ we mean that there are constants $c, C>0$ such that $c\leq f/g\leq C$, and by $f\lesssim g$ that there is a constant $C>0$ such that $f\le C g$.

\noindent\emph{Vector valued functions. } We will often identify $u(t)$ with $u(t,\cdot)$. Strictly speaking, the notation $u(t)$ means a function $[0,T]\to X$ for some Banach space $X$, while $u(t,\cdot)$ denotes a function $[0,T]\times \rr^N\to \rr$, and hence, we mean that $u(t)$ is an a.e.~representative of $u(t,\cdot)$.

\noindent\emph{Tail-control and cut-off functions. } In order to control the tails of solutions we use the functions
$\rho_R$, defined for all $R>0$ by $\rho_R(x):=\rho(x/R)$, where $0\leq \mathcal{\rho}\in C^\infty(\rr^N)$ is such that
$$
\rho(x)=\begin{cases}0&\text{for }|x|\leq1,\\
1&\text{for }|x|\geq2.
\end{cases}
$$
As cut-off functions we will use $\mathcal{X}_R:=1-\rho_R$.  Note that $\rho_R\to0$, $\mathcal{X}_R\to 1$ pointwise as $R\to\infty$.

\noindent\emph{Standard mollifiers. } They will be denoted by $\omega_\delta$.

\noindent\emph{Energy spaces. }
Besides the standard fractional Sobolev spaces $W^{\alpha,p}(\mathbb{R}^N)$, we will also consider the Bessel potential spaces $(I-\Delta)^{-\alpha/2}L^p(\rr^N)$, $1<p<\infty$, that we will denote by $H^{\alpha,p}(\mathbb{R}^N)$.\footnote{They will be important when proving regularity estimates for problems of the type \eqref{eq.main}, see Proposition \ref{prop.propOfRegularizedEquationSmoothness2}.}

The bilinear form associated to $\mathcal{L}$,
$$
\begin{array}{rl}
	\mathcal{E}(u,v)=&\displaystyle\frac14\int_{\mathbb{R}^{N}}\int_{\mathbb{R}^{N}}
	\big(u(x+y)-u(x)\big)\big(v(x+y)-v(x)\big)\,{\rm d}\nu(y){\rm d}x \\ [4mm]
	&\displaystyle+\frac14\int_{\mathbb{R}^{N}}\int_{\mathbb{R}^{N}}
	\big(u(x)-u(x-y)\big)\big(v(x)-v(x-y)\big)\,{\rm d}\nu(y){\rm d}x,
\end{array}
$$
is well defined in the energy espace $X:=\left\{ u\colon \mathbb{R}^N \to \mathbb{R} \text{ measurable}:\mathcal{E}(u,u)<\infty \right\}$. Since
$$
\mathcal{E}(u,u)\eqsim\int_{\mathbb{R}^N}|\xi|^\alpha|\hat u(\xi)|^2\,{\rm d}\xi=\|(-\Delta)^{\alpha/4}u\|_{L^2(\mathbb{R}^N)}^2,
$$
this space, endowed with the norm $\mathcal{E}(u,u)^{\frac12}$ is equivalent to the homogeneous Sobolev space $\dot W^{\frac\alpha2,2}(\mathbb{R}^N)$.

\noindent\emph{Dual spaces. } Given an space $X$ and its dual $X'$, we denote the associated duality  pairing  by $\langle\cdot,\cdot\rangle_{X\times X'}$.

\noindent\emph{Sign function. }  By $\sgn$ we denote the sign function defined by
$$
\sgn(x)=\begin{cases}
-1,&x<0,\\
0,&x=0,\\
1,&x>0.
\end{cases}
$$

\section{On the various nonlocal operators}
\label{sect:SomeDiscussionOnTheVariousNonlocal}
\setcounter{equation}{0}
In this section,  we present the operators that appear in our analysis and some of their properties that we will use in  the paper. The first two operators $\ml$ and $\ml'$ were already introduced in the first section, respectively in~\eqref{eq:operador en polares} and~\eqref{tilde.L}.
The third operator, $\widetilde{\ml}$, closely related to $\mathcal{L}'$,  appears by integrating in the direction of convection, as we see next.

\subsection{The operator $\protect\widetilde{\ml}$}
Given a smooth  function $u:\rr^N\rightarrow\rr$ such that
\begin{equation}
\label{v.integrate}
  v( x')=\int_{\rr} u(x', x_N)\,{\rm d}x_N, \quad x'\in \rr^{N-1},
\end{equation}
is well defined,  our goal is to construct a measure $\tilde \mu$ on $\bs^{N-2}$ satisfying the non-degeneracy and finiteness conditions in~\eqref{mlas} such that if we define
\begin{equation}
\label{tildeL}
  (\widetilde{\mathcal{L}}v)(x')= \int_{\bs^{N-2}}\int_0^\infty\Big(v( x')-\frac{v(x'+r\sigma')+v(x'-r\sigma')}2\Big)\,\frac{{\rm d}r}{r^{1+\alpha}}{\rm d}\tilde\mu(\sigma'),
\end{equation}
then 
\begin{equation}
\label{id.widetilde}
  (\widetilde{\mathcal{L}}v)( x')=\int_\rr (\mathcal{L}u)( x',x_N)\,{\rm d}x_N,\quad x'\in \rr^{N-1}.
\end{equation}

We start with the measurable spaces $(\bs^{N-1}, B_{\bs^{N-1}})$ and $(\bs^{N-2}, B_{\bs^{N-2}})$, which are endowed with the $\sigma$-algebra of Borel sets. Let $\mu$ be a Borel measure on $(\bs^{N-1}, B_{\bs^{N-1}})$ satisfying the conditions for the spectral measure in~\eqref{mlas}. We define a weighted Borel measure $\overline \mu$ on $(\bs^{N-1}, B_{\bs^{N-1}})$ by
\[
\begin{gathered}
{\rm d}\overline \mu (\sigma)=(1-\sigma_N^2)^{\alpha/2}\, {\rm d}\mu(  \sigma',\sigma_N), \quad \sigma=(\sigma',\sigma_N)\in \bs^{N-1},\quad\text{i.e., }
\\
\int_{\bs^{N-1}}\psi(\sigma)\,{\rm d}\overline \mu (\sigma)=\int_{\bs^{N-1}}\psi( \sigma', \sigma_N)(1-\sigma_N^2)^{\alpha/2} \,{\rm d}\mu (  \sigma', \sigma_N) \quad \text{for all } \psi\in C_{\rm b}(\bs^{N-1}).
\end{gathered}
\]
Let $\bs^{N-1}_*=\bs^{N-1}\setminus\{|\sigma_N|=1\}$. We consider
the map $T:(\bs^{N-1}_*, B_{\bs^{N-1}_*})\rightarrow (\bs^{N-2}, B_{S^{N-2}})$ defined by
\[
T(\sigma', \sigma_N)=\frac{  \sigma'}{{(1-\sigma_N^2)^{1/2}}},
\]
which is continuous and measurable. Let $\tilde \mu$  be the push-forward measure of $\overline \mu$ under $T$,
$\tilde \mu = T_\ast \overline \mu$, i.e.
\[
\tilde \mu (B)=\overline \mu (T^{-1}(B)) \quad \text{for all } B\in B_{\bs^{N-2}}.
\]
For any $\psi\in C_{\rm b}(\bs^{N-2})$,  we have
\begin{equation}\label{star}
\int_{\bs^{N-2}} \psi (\sigma')\,{\rm d}\tilde \mu (\sigma')=\int_{\bs^{N-1}_*} \psi(T(\sigma))\,{\rm d}\overline \mu (\sigma)=
\int_{\bs^{N-1}_*} \psi\Big(\frac{  \sigma'}{(1-\sigma_N^2)^{1/2}}\Big)(1-\sigma_N^2)^{\alpha/2} \,{\rm d}\mu(\sigma',\sigma_N).
\end{equation}
In particular, taking $\psi\equiv 1$,  we get that $\tilde \mu$ is a finite measure,
\[
\tilde \mu(\bs^{N-2})=\int_{\bs^{N-1}_*}(1-\sigma_N^2)^{\alpha/2}\, {\rm d}\mu(\sigma',\sigma_N)\leq \mu (\bs^{N-1}),
\]
and, taking $\psi(\sigma')=|\xi'\cdot \sigma' |$ with $\xi',\sigma'\in \bs^{N-2}$, we deduce that  it satisfies the non-degeneracy condition  in~\eqref{mlas},
\[
\begin{aligned}
\int_{\bs^{N-2}} |\xi'\cdot \sigma'|^\alpha\, {\rm d}\tilde \mu (\sigma')&=\int_{\bs^{N-1}_*}\Big|\xi'\cdot \frac{\sigma'}{(1-\sigma_N^2)^{1/2}}  \Big|^\alpha (1-\sigma_N^2)^{\alpha/2}\,{\rm d}\mu (\sigma',\sigma_N)\\
&=
\int_{\bs^{N-1}_*}|\xi'\cdot \sigma' |^\alpha \,{\rm d}\mu (\sigma',\sigma_N)\geq \Lambda_2.
\end{aligned}
\]

We finally check that, with this choice,~\eqref{id.widetilde} is fulfilled. 
\begin{lemma}
Let $u\in C_{\rm c}^\infty(\rr^N)$  and $\tilde\mu$ given by~\eqref{star}.  If $v$ is defined by~\eqref{v.integrate} and  $\widetilde \ml v$ by~\eqref{tildeL}, then~\eqref{id.widetilde} holds.
\end{lemma}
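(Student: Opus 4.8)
The plan is to compute $\int_\rr (\ml u)(x',x_N)\,{\rm d}x_N$ directly from the polar representation~\eqref{eq:operador en polares}, integrate in $x_N$, and recognize that after the change of variables $r=s(1-\sigma_N^2)^{1/2}$ in the radial integral the weight $(1-\sigma_N^2)^{\alpha/2}$ emerges exactly as in the definition of $\overline\mu$, while the angular variable collapses onto $\sigma'/(1-\sigma_N^2)^{1/2}=T(\sigma)$. First I would write
$$
\int_\rr(\ml u)(x',x_N)\,{\rm d}x_N=\int_{\bs^{N-1}}\int_0^\infty\int_\rr\Big(u(x',x_N)-\tfrac{u(x'+r\theta',x_N+r\theta_N)+u(x'-r\theta',x_N-r\theta_N)}2\Big){\rm d}x_N\,\frac{{\rm d}r}{r^{1+\alpha}}{\rm d}\mu(\theta),
$$
the interchange of integrals being justified since $u\in C_{\rm c}^\infty(\rr^N)$ and $\nu$ is a L\'evy measure (the second-difference is $O(\min\{r^2,1\})$ uniformly, and has compact support in $x$). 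Next, for fixed $\theta=(\theta',\theta_N)$ with $|\theta_N|<1$, the translation $x_N\mapsto x_N\pm r\theta_N$ is measure-preserving on $\rr$, so $\int_\rr$ of each of the three terms equals $v$ evaluated at the corresponding $x'$-argument; that is, the inner $x_N$-integral produces
$$
v(x')-\tfrac{v(x'+r\theta')+v(x'-r\theta')}2.
$$
The directions with $|\theta_N|=1$ have $\theta'=0$, so their second difference vanishes identically and they contribute nothing; this is why restricting to $\bs^{N-1}_*$ is harmless.

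It remains to handle the radial and angular integration. For $\theta\in\bs^{N-1}_*$ write $\theta'=(1-\theta_N^2)^{1/2}\sigma'$ with $\sigma'=T(\theta)\in\bs^{N-2}$, and substitute $r=s(1-\theta_N^2)^{1/2}$, ${\rm d}r=(1-\theta_N^2)^{1/2}{\rm d}s$, so that $r\theta'=s\sigma'$ and $\frac{{\rm d}r}{r^{1+\alpha}}=(1-\theta_N^2)^{-\alpha/2}\frac{{\rm d}s}{s^{1+\alpha}}$. This turns the $\theta$-integrand into
$$
(1-\theta_N^2)^{\alpha/2}\int_0^\infty\Big(v(x')-\tfrac{v(x'+s\sigma')+v(x'-s\sigma')}2\Big)\frac{{\rm d}s}{s^{1+\alpha}},
$$
and integrating ${\rm d}\mu(\theta)$ over $\bs^{N-1}_*$ is, by the very definition of $\overline\mu$ and of the push-forward $\tilde\mu=T_*\overline\mu$ recorded in~\eqref{star} (applied with the bounded continuous test function $\psi(\sigma')=\int_0^\infty(v(x')-\frac{v(x'+s\sigma')+v(x'-s\sigma')}2)\frac{{\rm d}s}{s^{1+\alpha}}$, which is bounded and continuous on $\bs^{N-2}$ uniformly in $x'$ since $v\in C_{\rm c}^\infty(\rr^{N-1})$), exactly $\int_{\bs^{N-2}}\psi(\sigma')\,{\rm d}\tilde\mu(\sigma')$. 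This is the right-hand side of~\eqref{tildeL}, completing the proof.

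The main point requiring care is the justification of Fubini and of the change of variables uniformly near the singular set $|\theta_N|=1$: one must check that the map $\psi$ above is genuinely a bounded continuous function on $\bs^{N-2}$ (so that~\eqref{star} applies), and that no mass is lost when passing from $\bs^{N-1}$ to $\bs^{N-1}_*$. Both follow from $u\in C_{\rm c}^\infty$: the second difference of $v$ is $O(\min\{s^2,1\})$ with constants controlled by $\|D^2 v\|_{L^\infty}$ and $\|v\|_{L^\infty}$, so $\psi$ is bounded on $\bs^{N-2}$, and its continuity in $\sigma'$ is dominated convergence; the set $\{|\theta_N|=1\}$ contributes zero before the substitution because there $\theta'=0$. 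Everything else is bookkeeping with the polar decomposition.
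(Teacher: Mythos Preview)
Your approach is correct and is essentially the paper's computation run in the opposite direction: the paper starts from $(\widetilde{\ml}v)(x')$, applies~\eqref{star} to pass from $\tilde\mu$ to $\overline\mu$, changes variables in $r$, and then reintroduces the $x_N$-shift by translation invariance, whereas you start from $\int_\rr(\ml u)\,{\rm d}x_N$ and undo these steps. There is a sign slip in your substitution: to get $r\theta'=s\sigma'$ with $\sigma'=\theta'/(1-\theta_N^2)^{1/2}$ you need $r=s/(1-\theta_N^2)^{1/2}$, not $r=s(1-\theta_N^2)^{1/2}$; with the correct substitution the Jacobian is $(1-\theta_N^2)^{\alpha/2}$ as you ultimately claim (your intermediate $(1-\theta_N^2)^{-\alpha/2}$ is inconsistent with your own final line).
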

\begin{proof}
Using~\eqref{tildeL}, the first identity in~\eqref{star},~\eqref{v.integrate}, and a change in the variable $x_N$, we get
\begin{align*}
&(\widetilde{\mathcal{L}}v)( x')= \int_{\bs^{N-2}}\int_0^\infty \Big(v( x')-\frac{v(x'+r\sigma')+v(x'-r\sigma')}2\Big)\frac{{\rm d}r}{r^{1+\alpha}}{\rm d}\tilde\mu(\sigma')\\
&=  \int_{\bs^{N-1}_*}\int_0^\infty\Big(v( x')-\frac 12 \sum_{\pm}v\Big(x'\pm \frac{r\sigma'}{(1-\sigma_N^2)^{1/2}}\Big)\Big)\frac{{\rm d}r}{r^{1+\alpha}}{\rm d}\overline\mu(\sigma',\sigma_N) \\
&=  \int_{\bs^{N-1}_*}\int_0^\infty\int_{\rr}\Big(u( x',x_N)- \frac 12 \sum_{\pm}u\big(x'\pm \frac{r  \sigma'}{(1-\sigma_N^2)^{1/2}},x_N\big)\,{\rm d}x_N\Big)\frac{{\rm d}r}{r^{1+\alpha}} {\rm d}\overline\mu(\sigma',\sigma_N) \\
&=\int_{\bs^{N-1}_*}\int_0^\infty \int_{\rr}\Big(u(x',x_N)- \frac 12 \sum_{\pm}u\big(x'\pm\frac{r\sigma'}{(1-\sigma_N^2)^{1/2}},x_N\pm \frac{r\sigma_N}{(1-\sigma_N^2)^{1/2}}\big)\Big)\,{\rm d}x_N \frac{{\rm d}r}{r^{1+\alpha}} {\rm d}\overline\mu(\sigma',\sigma_N) .
\end{align*}
A further change of variables,  $r\mapsto r(1-\sigma_N^2)^{1/2}$, and the second identity in~\eqref{star} give us
\begin{align*}
(\widetilde{\mathcal{L}}v)( x')&=\int_{\bs^{N-1}_*}\int_0^\infty \int_{\rr}\big(u( x',x_N)-\frac 12 \sum_{\pm} u((x',x_N)\pm r(\sigma',\sigma_N) )\big)\,{\rm d}x_N \frac{{\rm d}r}{r^{1+\alpha}}{\rm d}\mu(\sigma',\sigma_N).
\end{align*}
We observe now that $\int_{\rr}\big(u( x',x_N)-\frac 12 \sum_{\pm} u(x',x_N\pm r)\big)\,{\rm d}x_N=0$ for every $r>0$ and a.e. $x'\in\mathbb{R}^{N-1}$. Hence, using also Fubini's theorem,
\begin{align*}
(\widetilde{\mathcal{L}}v)( x')&=\int_0^\infty
\int_{\bs^{N-1}_*}\int_{\rr}\big(u( x',x_N)-\frac 12 \sum_{\pm} u((x',x_N)\pm r( \sigma',\sigma_N) )\big)\,{\rm d}x_N {\rm d}\mu(\sigma',\sigma_N) \frac{{\rm d}r}{r^{1+\alpha}} \\
 &\quad+ \int_0^\infty\mu ((0',\pm 1))\int_{\rr}\big(u( x',x_N)-\frac 12 \sum_{\pm} u(x',x_N\pm r)\big)\,{\rm d}x_N \frac{{\rm d}r}{r^{1+\alpha}}\\
 &=\int_{\rr}\int_{\bs^{N-1}}\int_0^\infty
\big(u( x',x_N)-\frac 12 \sum_{\pm} u((x',x_N)\pm r( \sigma',\sigma_N) )\big)\,\frac{{\rm d}r}{r^{1+\alpha}} {\rm d}\mu(\sigma',\sigma_N){\rm d}x_N \\
&=\int_\rr (\mathcal{L}u)( x',x_N)\,{\rm d}x_N,
 \end{align*}
as desired.
 \end{proof}
\subsection{Fourier symbols}  We already know the Fourier symbol $m(\xi)$ of $\ml$; see~\eqref{eq.SymbolL}--\eqref{eq:multiplier.L}. We obtain now the ones of the truncated operators. We start with the \lq\lq inner'' ones.
\begin{lemma}\label{symbols}Let $\rho>0$. The symbols of the operators $\ml ^{\le\rho}$, $\ml'^{,\le\rho}$, and $\widetilde \ml ^{\le\rho}$ are, respectively:
\begin{align}
&m^{\le\rho}(\xi)=\int_{\bs^{N-1}}|\theta\cdot \xi |^{\alpha}c_{\le,\alpha}(\rho|\theta\cdot\xi|)\, {\rm d}\mu(\theta),\quad &&\xi \in \rr^N,\nonumber\\
\label{sym2}
&m'^{,\le\rho}(\xi',\xi_N)=\int_{\bs^{N-1}}|\theta'\cdot \xi' |^{\alpha}c_{\le,\alpha}(\rho|\theta'\cdot\xi'|) \,{\rm d}\mu(\theta),\quad &&(\xi',\xi_N)\in \rr^{N-1}\times\rr,\\
\label{sym3}
&\widetilde m^{\le\rho}(\xi')=\int_{\bs^{N-1}}|\theta'\cdot \xi' |^{\alpha}c_{\le,\alpha}\Big(\rho\frac{|\theta'\cdot\xi'|}{(1-\theta_N^2)^{1/2}}\Big)\, {\rm d}\mu(\theta),\quad  &&\xi'\in \rr^{N-1},
\\
&\text{where } c_{\le,\alpha}(s)=\int_0^s \frac{1-\cos t}{t^{1+\alpha}} \,{\rm d}t.\nonumber
\end{align}
\end{lemma}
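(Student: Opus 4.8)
The plan is to compute each symbol directly from the definition of the corresponding truncated operator, using the elementary Fourier identity
\[
\mathcal{F}\Big[\phi(x)-\tfrac{\phi(x+z)+\phi(x-z)}{2}\Big](\xi)=(1-\cos(z\cdot\xi))\,\widehat\phi(\xi)
\]
valid for any fixed vector $z\in\rr^N$. First I would treat $\ml^{\le\rho}$: applying this identity with $z=r\theta$ inside the double integral defining $\ml^{\le\rho}$ in~\eqref{eq.SplittingOperatorInTwo}, and then using Fubini (justified because $\phi\in C_\textup{c}^\infty$ makes all the $r$-integrals over $(0,\rho)$ absolutely convergent near $r=0$ by the $r^2$ bound coming from the second difference, and trivially convergent at $r=\rho<\infty$), I get
\[
\widehat{\ml^{\le\rho}\phi}(\xi)=\Big(\int_{\bs^{N-1}}\int_0^\rho\frac{1-\cos(r\theta\cdot\xi)}{r^{1+\alpha}}\,{\rm d}r\,{\rm d}\mu(\theta)\Big)\widehat\phi(\xi).
\]
Then the substitution $t=r|\theta\cdot\xi|$ in the inner integral turns $\int_0^\rho\frac{1-\cos(r\theta\cdot\xi)}{r^{1+\alpha}}\,{\rm d}r$ into $|\theta\cdot\xi|^\alpha\int_0^{\rho|\theta\cdot\xi|}\frac{1-\cos t}{t^{1+\alpha}}\,{\rm d}t=|\theta\cdot\xi|^\alpha c_{\le,\alpha}(\rho|\theta\cdot\xi|)$, which is exactly the claimed $m^{\le\rho}(\xi)$. (One must note $1-\cos(r\theta\cdot\xi)=1-\cos(r|\theta\cdot\xi|)$ since cosine is even, so the sign of $\theta\cdot\xi$ is irrelevant; if $\theta\cdot\xi=0$ the integrand is $0$ and matches the convention $c_{\le,\alpha}(0)=0$.)

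Next I would do $\ml'^{,\le\rho}$ in the same way: its definition is the analogue of~\eqref{tilde.L} truncated at $r\le\rho$, i.e.\ the second difference is taken only in the variable $x'$, along the direction $\theta'$ (the first $N-1$ components of $\theta$), with $\mu$ still the spectral measure on $\bs^{N-1}$. Applying the Fourier identity with $z=(r\theta',0)\in\rr^N$ gives the factor $1-\cos(r\theta'\cdot\xi')$, independent of $\xi_N$, and the same substitution $t=r|\theta'\cdot\xi'|$ yields~\eqref{sym2}. For $\widetilde\ml^{\le\rho}$ the operator acts on functions of $x'\in\rr^{N-1}$ and is built from the measure $\tilde\mu$ on $\bs^{N-2}$; the cleanest route is to start from the defining relation~\eqref{tildeL} truncated at $r\le\rho$, apply the $(N-1)$-dimensional version of the Fourier identity with $z=r\sigma'$ to get the factor $\int_0^\rho\frac{1-\cos(r\sigma'\cdot\xi')}{r^{1+\alpha}}\,{\rm d}r=|\sigma'\cdot\xi'|^\alpha c_{\le,\alpha}(\rho|\sigma'\cdot\xi'|)$, and then push this through the change of variables~\eqref{star} with $\psi(\sigma')=|\sigma'\cdot\xi'|^\alpha c_{\le,\alpha}(\rho|\sigma'\cdot\xi'|)$ — a bounded continuous function of $\sigma'\in\bs^{N-2}$, so~\eqref{star} applies verbatim. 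Writing $\sigma'=\theta'/(1-\theta_N^2)^{1/2}$ inside $\psi$, the factor $|\sigma'\cdot\xi'|^\alpha$ becomes $|\theta'\cdot\xi'|^\alpha/(1-\theta_N^2)^{\alpha/2}$, which is cancelled exactly by the weight $(1-\theta_N^2)^{\alpha/2}$ in~\eqref{star}, leaving $|\theta'\cdot\xi'|^\alpha c_{\le,\alpha}\big(\rho|\theta'\cdot\xi'|/(1-\theta_N^2)^{1/2}\big)$ integrated against $\mu$; this is~\eqref{sym3}.

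The main obstacle — really the only nontrivial point — is the rigorous justification of the Fubini interchanges and the well-definedness of the symbols, i.e.\ showing the iterated integrals are absolutely convergent so that swapping $\int{\rm d}\mu$, $\int{\rm d}r$, and the Fourier transform is legitimate, and checking that $m^{\le\rho}$, $m'^{,\le\rho}$, $\widetilde m^{\le\rho}$ are finite for every $\xi$ (and indeed $\lesssim|\xi|^\alpha\wedge\rho^{-\alpha}|\xi|^2$, say). This follows from the two bounds $0\le 1-\cos t\le\min\{t^2/2,2\}$, which give $c_{\le,\alpha}(s)\le\min\{s^{2-\alpha}/(2(2-\alpha)),\,2/\alpha\}$, hence $|\theta\cdot\xi|^\alpha c_{\le,\alpha}(\rho|\theta\cdot\xi|)\le C(1+|\xi|^\alpha)$ uniformly in $\theta$, and $\mu(\bs^{N-1})=\Lambda_2<\infty$ makes the $\mu$-integral finite; the same bounds with an extra power of $r$ near $r=0$ control the $C_\textup{c}^\infty$ computation itself. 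For the $\widetilde\ml^{\le\rho}$ case the only additional care is that the weight $(1-\theta_N^2)^{\alpha/2}$ exactly compensates the singular factor $(1-\theta_N^2)^{-\alpha/2}$ coming from $|\sigma'\cdot\xi'|^\alpha$, so no new integrability issue arises at $|\theta_N|\to1$; this is precisely the reason the measure $\overline\mu$ was defined with that weight. I would also remark that taking $\rho\to\infty$ recovers $c_{\le,\alpha}(\infty)=C_\alpha$ and hence the known symbols of $\ml$, $\ml'$, and $\widetilde\ml$ from~\eqref{eq.SymbolL}, providing a consistency check.
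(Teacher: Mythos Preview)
Your proposal is correct and follows essentially the same route as the paper's own proof: apply the Fourier identity for the second difference, swap integrals via Fubini, substitute $t=r|\theta\cdot\xi|$ (resp.\ $t=r|\theta'\cdot\xi'|$) to get $m^{\le\rho}$ and $m'^{,\le\rho}$, and for $\widetilde m^{\le\rho}$ first compute the symbol against $\tilde\mu$ on $\bs^{N-2}$ and then push it back to $\mu$ on $\bs^{N-1}$ via the change-of-variables identity~\eqref{star}, the weight $(1-\theta_N^2)^{\alpha/2}$ cancelling precisely the extra factor from $|\sigma'\cdot\xi'|^\alpha$. Your treatment is in fact slightly more careful than the paper's about the Fubini justification and the $\theta\cdot\xi=0$ edge case, but otherwise the arguments coincide.
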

\begin{remark}\label{symbols.3}
 The value at infinity of $c_{\le,\alpha}$ coincides with the constant $C_\alpha$ in~\eqref{eq.SymbolL}. Hence, when $\rho=\infty$,  both  $\ml'$ and $\widetilde \ml$ have the same symbol,
\[
m'(\xi',\xi_N)=\widetilde m(\xi')\mathds{1}_{\mathbb{R}}(\xi_N);
\]
see the representations~\eqref{sym2} and~\eqref{sym3}. In view of this, we emphasize that
\begin{equation}
\label{eq:L'.Ltilde}
(\ml' \phi)(x',x_N)=(\widetilde\ml \phi (\cdot,x_N))(x')\quad\text{for any }\phi\in \mathcal{S}(\rr^{N}),
\end{equation}
and also that
the three operators $\ml$, $\ml'$ and $\widetilde{\mathcal{L}}$ satisfy
\[
\ml(\psi(\cdot'){\mathds{1}}_{\mathbb{R}}(\cdot_N))(x',x_N)={\ml'}(\psi(\cdot') \mathds{1}_{\mathbb{R}}(\cdot_N))(x',x_N)=(\widetilde{\mathcal{L}}\psi )(x')\mathds{1}_{\mathbb{R}}(x_N)\quad\text{if }\psi\in \mathcal{S}(\rr^{N-1}).
\]
\end{remark}
\proof
Using Fubini's theorem and a change of variables,  we get
\begin{align*}
\widehat{\ml^{\le\rho}\phi}(\xi)&=\widehat \psi(\xi) \int_{\bs^{N-1}}\int_0^\rho\frac{1-\cos(r\theta\cdot \xi)}{r^{1+\alpha}} \,{\rm d}r {\rm d}\mu(\theta)=
\widehat \psi(\xi) \int_{\bs^{N-1}, \theta\cdot \xi\neq 0}\int_0^\rho\frac{1-\cos(r\theta\cdot \xi)}{r^{1+\alpha}}\,{\rm d}r {\rm d}\mu(\theta)\\
&=\widehat \psi(\xi) \int_{\bs^{N-1},  \theta\cdot \xi\neq 0} |\theta\cdot \xi|^{\alpha}\int_0^{\rho |\theta\cdot \xi|}\frac{1-\cos r}{r^{1+\alpha}}\, {\rm d}r {\rm d}\mu(\theta)=\widehat \psi(\xi) \int_{\bs^{N-1} } |\theta\cdot \xi|^{\alpha}c_\alpha(\rho |\theta\cdot \xi|) \, {\rm d}\mu(\theta).
\end{align*}
A similar argument works for $m'^{,\le\rho}$.

As for $\widetilde m^{\le\rho}$ we have
\begin{align*}
\widetilde m^{\le\rho}(\xi')& =  \int_{\bs^{N-2}} |\theta'\cdot \xi'|^{\alpha}c_\alpha(\rho|\theta'\cdot\xi'|)\, {\rm d}\tilde\mu(\theta)\\
&=\int_{\bs^{N-1}_*}
\Big|\frac{\xi'\cdot \theta'}{(1-\theta_N^2)^{1/2}}\Big|^\alpha c_\alpha\Big(\rho \Big|\frac{\xi'\cdot \theta'}{(1-\theta_N^2)^{1/2}}\Big| \Big)(1-\theta_N^2)^{\alpha/2}\, {\rm d}\mu(\theta)\\
&=\int_{\bs^{N-1}_*} |\xi'\cdot \theta'|^\alpha c_\alpha\Big(\rho\frac{|\theta'\cdot\xi'|}{(1-\theta_N^2)^{1/2}}\Big) {\rm d}\mu(\theta)=\int_{\bs^{N-1}} |\xi'\cdot \theta'|^\alpha c_\alpha\Big(\rho\frac{|\theta'\cdot\xi'|}{(1-\theta_N^2)^{1/2}}\Big)\, {\rm d}\mu(\theta).
\end{align*}
\endproof

An analogous proof gives the symbols of the \lq\lq outer'' operators, which we include here for the sake of completeness.
\begin{lemma}\label{symbols.outer}Let $\rho>0$. The symbols of the operators  $\ml ^{>\rho}$, $\ml'^{,>\rho}$, $\widetilde \ml ^{>\rho}$ are, respectively:
\begin{align*}
&m^{>\rho}(\xi)=\int_{\bs^{N-1}}|\theta\cdot \xi |^{\alpha}c_{>,\alpha}(\rho|\theta\cdot\xi|)\, {\rm d}\mu(\theta),\quad &&\xi \in \rr^N,\\
&m'^{,>\rho}(\xi',\xi_N)=\int_{\bs^{N-1}}|\theta'\cdot \xi' |^{\alpha}c_{>,\alpha}(\rho|\theta'\cdot\xi'|) \,{\rm d}\mu(\theta),\quad &&(\xi',\xi_N)\in \rr^{N-1}\times\rr,\\
&\widetilde m^{>\rho}(\xi')=\int_{\bs^{N-1}}|\theta'\cdot \xi' |^{\alpha}c_{>,\alpha}\Big(\rho\frac{|\theta'\cdot\xi'|}{(1-\theta_N^2)^{1/2}}\Big)\, {\rm d}\mu(\theta),\quad  &&\xi'\in \rr^{N-1},\\
&\text{where }c_{>,\alpha}(s)=\int_s^\infty \frac{1-\cos t}{t^{1+\alpha}} \,{\rm d}t.
\end{align*}
\end{lemma}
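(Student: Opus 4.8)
The plan is to repeat, almost verbatim, the computation in the proof of Lemma~\ref{symbols}, the only difference being that the radial integral now runs over $(\rho,\infty)$ instead of $(0,\rho)$. First I would take Fourier transform in the definition of $\ml^{>\rho}$ given in~\eqref{eq.SplittingOperatorInTwo}, using that the Fourier transform of the second difference $\phi(x)-\tfrac12\bigl(\phi(x+r\theta)+\phi(x-r\theta)\bigr)$ equals $\bigl(1-\cos(r\theta\cdot\xi)\bigr)\widehat\phi(\xi)$. For $\phi\in C_\textup{c}^\infty(\rr^N)$ (or $\phi\in\mathcal S(\rr^N)$) this yields
\[
\widehat{\ml^{>\rho}\phi}(\xi)=\widehat\phi(\xi)\int_{\bs^{N-1}}\int_\rho^\infty\frac{1-\cos(r\theta\cdot\xi)}{r^{1+\alpha}}\,{\rm d}r\,{\rm d}\mu(\theta).
\]
Here the use of Fubini's theorem is in fact easier than in the ``inner'' case: the measure $\mathds 1_{\{r>\rho\}}\,r^{-1-\alpha}\,{\rm d}r\,{\rm d}\mu(\theta)$ is finite by~\eqref{mlas}, and $\bigl|\phi(x)-\tfrac12(\phi(x+r\theta)+\phi(x-r\theta))\bigr|$ is bounded, so there is no singularity at $r=0$ to worry about and no decay needed beyond $r^{-1-\alpha}$ as $r\to\infty$.

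Next I would restrict the $\theta$-integral to $\{\theta\cdot\xi\neq0\}$ — its complement carries the measure on the set where the integrand vanishes, hence contributes nothing — and rescale the radial variable by $|\theta\cdot\xi|$, which produces the factor $|\theta\cdot\xi|^\alpha$ and turns the inner integral into $\int_{\rho|\theta\cdot\xi|}^\infty\frac{1-\cos r}{r^{1+\alpha}}\,{\rm d}r=c_{>,\alpha}(\rho|\theta\cdot\xi|)$. One should record here that $c_{>,\alpha}(s)$ is finite for every $s\ge0$ (near $r=0$ one has $1-\cos r\eqsim r^2$ together with $\alpha<2$, while near $r=\infty$ one uses $\alpha>0$), with $c_{>,\alpha}(0)=C_\alpha$ as in~\eqref{eq.SymbolL} and $c_{>,\alpha}(s)\to0$ as $s\to\infty$; this is exactly what makes the stated formula for $m^{>\rho}$ meaningful. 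This proves the first identity, and the second, for $m'^{,>\rho}$, follows in the same way after replacing $\theta\cdot\xi$ by $\theta'\cdot\xi'$ throughout, since $\ml'$ only differentiates in the $x'$ variables.

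For $\widetilde m^{>\rho}$ I would argue exactly as for $\widetilde m^{\le\rho}$ in Lemma~\ref{symbols}: starting from $\widetilde m^{>\rho}(\xi')=\int_{\bs^{N-2}}|\theta'\cdot\xi'|^\alpha c_{>,\alpha}(\rho|\theta'\cdot\xi'|)\,{\rm d}\tilde\mu(\theta')$, rewrite the integral against $\tilde\mu=T_*\overline\mu$ via the identity~\eqref{star}, and undo the rescaling by $(1-\theta_N^2)^{1/2}$ encoded in $T$, which replaces $|\theta'\cdot\xi'|$ by $|\theta'\cdot\xi'|/(1-\theta_N^2)^{1/2}$ inside $c_{>,\alpha}$ and cancels the weight $(1-\theta_N^2)^{\alpha/2}$. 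Since $c_{>,\alpha}$ is bounded and $|\theta'\cdot\xi'|^\alpha\to0$ as $|\theta_N|\to1$, the integrand vanishes on $\{|\theta_N|=1\}$, so the integral over $\bs^{N-1}_*$ can be written over all of $\bs^{N-1}$, giving the claimed formula. The only points requiring any care — the applicability of Fubini and the negligibility of the sets $\{\theta\cdot\xi=0\}$ and $\{|\theta_N|=1\}$ — are precisely those already handled in the proof of Lemma~\ref{symbols}, so I do not expect any genuinely new obstacle here.
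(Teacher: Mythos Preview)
Your proposal is correct and follows exactly the approach the paper intends: it explicitly states that ``an analogous proof gives the symbols of the `outer' operators,'' referring to the computation in Lemma~\ref{symbols}, and you have faithfully carried out that analogous computation with the radial integral over $(\rho,\infty)$. The extra remarks you include on the finiteness of $c_{>,\alpha}$ and the easier application of Fubini are accurate and harmless.
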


\subsection{Operators acting on $H^{\alpha,p}(\rr^N)$ spaces} Since the  symbol $m(\xi)$ of the operator $\ml$ is comparable with $|\xi|^\alpha$, $m(\xi)\eqsim |\xi|^\alpha$, we immediately get $\| \ml\varphi\|_{L^2(\rr^N)} \eqsim \|(-\Delta)^{\alpha/2} \varphi\|_{L^2(\rr^N)}$. An analogous, not trivial, result in $L^p(\rr^N)$ spaces, $p\in(1,\infty)$,
\begin{equation}\label{ml.lp}
\| \ml \varphi\|_{L^p(\rr^N)}\eqsim \|(-\Delta)^{\alpha/2}\varphi\|_{L^p(\rr^N)} \quad \text{for all } \varphi\in H^{\alpha,p}(\rr^{N}),
\end{equation}
was proved in \cite[Corollary 4.4]{MR3082477}.  Moreover, in view of~\eqref{sym2} the symbol of the operator $\ml'$ is independent of the variable $\xi_N$.  Thus
 $\ml'$ has the same property \eqref{ml.lp} when freezing the last variable,
\[
\| \ml' \varphi(\cdot',x_N)\|_{L^p(\rr^{N-1})}\eqsim \|((-\Delta_{x'})^{\alpha/2}\varphi ) (\cdot',x_N) \|_{L^p(\rr^{N-1})};
\]
see~\eqref{eq:L'.Ltilde}.  Then, integrating the above inequality in the last variable we get
\[
\| \ml' \varphi \|_{L^p(\rr^{N})}\eqsim \|(-\Delta_{x'})^{\alpha/2}\varphi   \|_{L^p(\rr^{N})}.
\]

We show now that the map $a(\xi)=|\xi'|^{\alpha}/|\xi|^\alpha$ is an $L^p(\rr^N)$ multiplier for $1<p<\infty$. Indeed, since   $a\in C^\infty(\rr^{N}\setminus\{0\})$ and $a$ is homogenous of order zero, $a(\lambda \xi)=a(\xi)$, we can apply Marcinkiewicz's multiplier theorem (see \cite[Th. 5.2.4]{MR3243734} and the comments on page 366 of the same reference) to obtain:
\begin{equation}\label{ml'.lp}
\| \ml' \varphi\|_{L^p(\rr^N)} \lesssim \|(-\Delta)^{\alpha/2}\varphi\|_{L^p(\rr^N)} \lesssim  \| \varphi\|_{H^{\alpha,p}(\rr^N)} \quad \text{for all }\varphi\in H^{\alpha,p}(\rr^{N}).
\end{equation}

\subsection{Examples}
As a first example,  we consider the case in which  the spectral measure $\mu$  is absolutely continuous with respect to  Lebesgue's  surface measure on $\bs^{N-1}$, $\sigma_{N-1}$, so that ${\rm d}\mu(\theta) =h(\theta)\, {\rm d}\sigma_{N-1}(\theta)$ for some density function $h\in L^1(\bs^{N-1})$.  Then the new measure $\tilde \mu$ is given by
\[
{\rm d}\tilde \mu (\sigma)=\Big( \sum_{\pm} \int_0^1 \frac{s^{N-2+\alpha}}{(1-s^2)^{1/2}}h (s\sigma,\pm (1-s^2)^{1/2})\,{\rm d}s\Big)\,{\rm d}\sigma_{N-2}(\sigma),\quad
 \sigma\in \bs^{N-2}.
 \]

As a second example,  we consider the spectral measure $\mu=\sum_{j=1}^N\delta _{a_j}$, which is  concentrated at the points $\{a_j\}_{j=1}^N\subset \bs^{N-1}$. We assume that  the hyperplane generated by these points does not contain the origin, so that the nondegeneracy condition in  hypothesis~\eqref{mlas} is satisfied.
In this case
\[
\tilde \mu=\sum_{k=1}^N |Pa_k|^\alpha\delta_{\frac{Pa_k}{|Pa_k|}},
\]
where $Pa$ denotes the projection of the point $a\in \bs^{N-1}$ onto its first $N-1$ components. Note that if $a$ is the north or south pole the corresponding term vanishes. Thus, if
$$
\mathcal{L}=\sum_{j=1}^N (-\partial_{x_jx_j}^2)^{\alpha/2},
$$
an operator that corresponds to the spectral measure $\mu=c_{\alpha}\sum_{j=1}^N\delta_{e_j}$, where
$\{e_j\}_{j=1}^N$ is the canonical basis in $\mathbb{R}^{N}$, the measure corresponding to the projected operator is $\tilde \mu=c_{\alpha}\sum_{j=1}^{N-1}\delta_{e_j}$,  and hence $\widetilde \ml = \sum_{j=1}^{N-1} (-\partial_{x_jx_j}^2)^{\alpha/2}$.

\section{Parabolic and hyperbolic estimates for the regularized problem}
\label{sect:estimates}
\setcounter{equation}{0}

The main purpose of this section is to obtain estimates for the regularized problem
\begin{equation}\label{reg}\tag{$\textup{P}_{\varepsilon}$}
\partial_tu+\ml u+\partial_{x_N} (f(u))=\varepsilon \Delta u\quad\textup{in }Q,\qquad
  	u(0)=u_0\quad\textup{in }\rr^N,
\end{equation}
which will be used later  to get uniform estimates for a one-parameter family of scaled versions of the solution to~\eqref{eq.main}. These estimates will provide the required compactness leading to the proof of our convergence result. As a preliminary step we need some well-posedness and a priori estimates  for classical and entropy solutions of this equation under various conditions on $u_0$. We state some of them here;  the proofs can be found in Appendix~\ref{sect:appendix.results.entropy.solutions}. For simplicity, we will not explicitly write $u_\varepsilon$ in what follows. 

\subsection{Approximation by classical solutions}

\begin{lemma}[Classical solutions]\label{lem.ClassicalSolutionsOfReg}
Assume $p\in[1,\infty)$, $\teps>0$, $\teps\leq u_0\in L^\infty(\rr^N)$,~\eqref{qas}, and~\eqref{mlas}. Then there exists a unique classical solution $u\in C_\textup{b}^\infty(Q)\cap C([0,\infty);L_\textup{loc}^p(\rr^N))$ of~\eqref{reg}. It moreover satisfies $\teps\leq u(t,x)\leq \esssup_{x\in\rr^N}u_0(x)$ for all $(t,x)\in Q$.
\end{lemma}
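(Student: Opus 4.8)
The plan is to treat $(\textup{P}_\eps)$ as a semilinear parabolic problem, using the Laplacian $\eps\Delta$ for regularization and viewing $\ml u$ (which is of order $\alpha<2$) and the convection term $\partial_{x_N}(u^q)$ as lower–order perturbations. First I would remove the bad behaviour of $r\mapsto r^q$ at the origin: by~\eqref{qas} we have $q>1-\frac1N\ge0$, so $r\mapsto r^q$ is $C^\infty$ on $(0,\infty)$, and I replace $f(u)=u^q$ by a function $\tilde f\in C^\infty(\rr)$ that coincides with $r^q$ on $[\teps/2,2\|u_0\|_{L^\infty(\rr^N)}]$ and is globally Lipschitz with bounded derivatives of every order. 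I solve the modified problem, and the two-sided bound $\teps\le u\le\esssup u_0$ obtained below will guarantee $\tilde f(u)=u^q$ along the solution, so that it in fact solves $(\textup{P}_\eps)$. A convenient tool is the semigroup $S_\eps(t):=e^{t(\eps\Delta-\ml)}$: since $\ml$ generates a conservative, positivity–preserving $\alpha$–stable convolution semigroup with density $p_t$ and $\eps\Delta$ the Gaussian semigroup with kernel $G_{\eps t}$, and these commute, $S_\eps(t)$ has the positive kernel $G_{\eps t}*p_t$ of unit mass; hence $S_\eps(t)$ is a positivity–preserving contraction on every $L^r(\rr^N)$ and on $C_\textup{b}(\rr^N)$, and it enjoys Gaussian-type smoothing, $\|D^k S_\eps(t)\phi\|_{L^\infty(\rr^N)}\lesssim_\eps t^{-k/2}\|\phi\|_{L^\infty(\rr^N)}$ for every multi-index of order $k$.

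\emph{Existence and regularity.} I would write Duhamel's formula
\[
u(t)=S_\eps(t)u_0-\int_0^t \partial_{x_N}S_\eps(t-s)\big(\tilde f(u(s))\big)\,{\rm d}s
\]
and run a Banach fixed-point argument in $C([0,T];L^\infty(\rr^N))$, using $\|\partial_{x_N}S_\eps(\tau)\|_{L^\infty\to L^\infty}\lesssim_\eps\tau^{-1/2}$ (integrable near $0$) together with the global Lipschitz bound on $\tilde f$. Since the resulting $T$ depends only on $\eps$ and the Lipschitz constant of $\tilde f$, the mild solution extends to all of $[0,\infty)$. A standard bootstrap, inserting the Duhamel formula into itself and applying the smoothing estimates, upgrades $u$ to a classical solution which is $C^\infty$ in $Q$, with all space-time derivatives bounded on $[\tau,\infty)\times\rr^N$ for every $\tau>0$; and $u\in C([0,\infty);L^p_\textup{loc}(\rr^N))$ because $S_\eps(t)u_0\to u_0$ in $L^p_\textup{loc}(\rr^N)$ and the Duhamel integral tends to $0$ uniformly on compacts as $t\to0^+$.

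\emph{Two-sided bound and uniqueness.} Constants solve the modified equation, since $\ml$, $\Delta$ and $\partial_{x_N}(\tilde f(\cdot))$ all annihilate them. Set $w:=u-\esssup u_0$, so $w\le0$ at $t=0$; then
\[
\partial_t w+\ml w+\partial_{x_N}(bw)=\eps\Delta w,\qquad b:=\int_0^1\tilde f'\big(\esssup u_0+sw\big)\,{\rm d}s\in L^\infty(Q).
\]
A maximum principle for this linear nonlocal parabolic equation — obtained either by the classical argument evaluating at an interior spatial maximum (where $\Delta w\le0$, $\ml w\ge0$ by the second–difference formula, and $\partial_{x_N}w=0$), with the usual $\delta$–corrections to handle maxima escaping to infinity, or by iterating the Duhamel representation and exploiting positivity of $S_\eps$ together with the integrable $\tfrac12$–derivative loss — yields $w\le0$, i.e. $u\le\esssup u_0$. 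The symmetric argument applied to $u-\teps\ge0$ at $t=0$ gives $u\ge\teps$. Hence $u(t,x)\in[\teps,\esssup u_0]$, so $\tilde f(u)=u^q$ and $u$ solves $(\textup{P}_\eps)$. For uniqueness, if $u_1,u_2$ are two such solutions then $w:=u_1-u_2$ satisfies a linear equation of the same form with bounded coefficient and $w(0)=0$; the same maximum principle (or an $L^1$–contraction estimate localized with the cut-offs $\mathcal{X}_R$) forces $w\equiv0$.

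I expect the maximum principle for the linear nonlocal–parabolic–convective equation to be the main obstacle: one must make the evaluation-at-a-maximum argument rigorous on the unbounded domain when the extremum is not attained, and correctly account for the sign of the nonlocal term. Everything else is routine semigroup theory, whose only structural input is that $\ml$ is of order $\alpha<2$ and therefore subordinate to the Gaussian smoothing of $\eps\Delta$.
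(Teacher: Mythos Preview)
Your proposal is correct and follows essentially the same route as the paper: the paper's proof simply checks that the symbol of $-\eps\Delta+\ml$ fits the framework of Biler--Karch--Woyczy\'nski and Droniou--Gallou\"et--Vovelle and then invokes Theorem~1.1 of the latter, noting (implicitly via the same truncation-of-$f$ trick you spell out) that $F$ may be taken $C^\infty$ because the solution stays in $[\teps,\|u_0\|_{L^\infty(\rr^N)}]$. Your Duhamel/fixed-point/bootstrap argument and the maximum-principle comparison with constants are precisely the contents of those cited results, written out rather than delegated.
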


\begin{remark}
 The classical solution provided by Theorem~\ref{lem.ClassicalSolutionsOfReg} is a mild solution of~\eqref{reg} in the sense of semigroups  that takes the initial data in an $L_\textup{loc}^p$-sense.
\end{remark}

Since we can also have $1-\frac{1}{N}<q<1$ in~\eqref{reg}, it is not clear if one can obtain classical solutions of~\eqref{reg} in the general case $u_0\geq 0$.  Note, however, that when $q>1$ (or when the convection nonlinearity is Lipschitz-regularized),  classical solutions do exist by Proposition~\ref{prop.propOfRegularizedEquationSmoothness2}. To present a unified theory, we therefore rely on entropy solutions.

\begin{lemma}[Entropy solutions]\label{lem.EntropySolutionsOfReg}
Assume~\eqref{u_0as},~\eqref{qas}, and~\eqref{mlas}. Then there exists a unique entropy solution $u$ of~\eqref{reg}. This solution satisfies:
\begin{enumerate}[{\rm (a)}]
\item \textup{(Regularity)} For all $p\in[1,\infty)$,
$$
u\in   L^\infty(Q)\cap C([0,\infty);L^p(\rr^N))\cap L^2((0,\infty);H^1(\rr^N)\cap W^{\alpha/2,2}(\rr^N)).
$$
\item \textup{(Conservation of mass)} For all $t>0$,
$$
\int_{\rr^N}u(t)=\int _{\rr^N}u_0.
$$
\item \textup{(Preliminary estimate on the time derivative)} For all smooth bounded domain $\Omega\subset \rr^N$,
$$
\partial_t u\in L^2((0,\infty);H^{-1}(\Omega)).
$$\item \textup{(Preliminary tail control)} Let $\rho_R$ be a family of tail-control functions. For all $R>0$ and $t>0$,
\begin{equation*}
\label{tail.control}
\begin{split}
&\int_{|x|>2R}u(t)\lesssim \int_{|x|> R}u_0+M t\|\ml\rho_R\|_{L^\infty(\rr^N)}   + \eps \frac{Mt}{R^2}+
\int _0^t \int_{\rr^N} u^q|\partial_{x_N}\rho_R|.
\end{split}
\end{equation*}
\end{enumerate}
\end{lemma}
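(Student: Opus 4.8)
The plan is to obtain the entropy solution of~\eqref{reg} as a limit of classical solutions provided by Lemma~\ref{lem.ClassicalSolutionsOfReg}. First I would truncate the initial data from below: set $u_0^{\teps}:=u_0+\teps$ (or $\max\{u_0,\teps\}$) so that $\teps\le u_0^{\teps}\in L^\infty(\rr^N)$, and let $u^{\teps}$ be the unique classical solution of~\eqref{reg} with this data. By the maximum principle in Lemma~\ref{lem.ClassicalSolutionsOfReg} one has $\teps\le u^{\teps}\le \|u_0\|_{L^\infty}+\teps$ uniformly, and by comparison the family $u^{\teps}$ is monotone decreasing as $\teps\downarrow0$, hence converges a.e.\ and (since $u_0\in L^1$ and mass is controlled) in $C([0,\infty);L^1_{\textup{loc}})$ to some limit $u$. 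To upgrade the compactness to $L^1(Q)$ one uses the tail control (item (d) below, derived at the classical level) together with the uniform $L^\infty$ bound; standard arguments then give convergence in $C([0,\infty);L^p(\rr^N))$ for every $p\in[1,\infty)$ after interpolating against the $L^\infty$ bound. That $u$ is an entropy solution follows by passing to the limit in the entropy inequality~\eqref{eq:entropy.inequality} (with the extra $\eps\Delta$ term handled exactly as in the Carrillo/Alibaud theory, splitting the diffusion into $\ml^{\le\rho}$, $\ml^{>\rho}$ and the viscous part, using that $\sgn(u-k)\phi\ml^{>\rho}u$ and $|u-k|\ml^{\le\rho}\phi$ and $|u-k|\eps\Delta\phi$ all pass to the limit by dominated convergence), and the initial trace is inherited from $C([0,\infty);L^1)$. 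Uniqueness is the Kru\v zkov-type $L^1$ contraction for entropy solutions of~\eqref{reg}, which I would cite from the appendix (built on~\cite{Car99,AlibaudEntropy,CifaniJakobsenEntropySol,EndalJakobsen}).

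Next, the quantitative properties. \emph{Conservation of mass} (b): at the classical level one integrates the equation over $\rr^N$; $\int\ml u^{\teps}=0$ because $\ml$ is written in second-difference form and $u^{\teps}(t)\to$const at infinity is not quite true, so instead one integrates against $\mathcal{X}_R$ and lets $R\to\infty$, using that $\int u^{\teps}\ml\mathcal{X}_R\to0$ (since $\ml\mathcal{X}_R=O(R^{-\alpha})$ on compacts and is $O(\rho_R)$ at infinity as in Remark~\ref{regularity.issue}(d)), that $\int\partial_{x_N}((u^{\teps})^q)\mathcal{X}_R\to0$ by the $L^\infty\cap L^1$ bound and $|\partial_{x_N}\mathcal{X}_R|\lesssim R^{-1}$, and that $\eps\int\Delta u^{\teps}\,\mathcal{X}_R=\eps\int u^{\teps}\Delta\mathcal{X}_R\to0$; passing $\teps\downarrow0$ gives mass conservation for $u$. \emph{Energy regularity} (a): testing~\eqref{reg} against $u^{\teps}$ (after subtracting the constant $\teps$ to have an $L^2$ function; use $u^{\teps}-\teps\rho_R$-type cut-offs to make this rigorous) produces, after the convection term integrates to zero and using $\mathcal{E}(u,u)\eqsim\|(-\Delta)^{\alpha/4}u\|_{L^2}^2$,
$$
\tfrac12\|u^{\teps}(T)-\teps\|_{L^2}^2+\int_0^T\Big(\mathcal{E}(u^{\teps},u^{\teps})+\eps\|\nabla u^{\teps}\|_{L^2}^2\Big)\le \tfrac12\|u_0\|_{L^2}^2,
$$
which gives the claimed $L^2((0,\infty);H^1\cap W^{\alpha/2,2})$ bound uniformly in $\teps$, hence for $u$ by weak lower semicontinuity. \emph{Time derivative} (c): from the equation, $\partial_t u=-\ml u-\partial_{x_N}(u^q)+\eps\Delta u$; the first and third terms are in $L^2((0,\infty);H^{-1}(\Omega))$ by (a) (using $m(\xi)\eqsim|\xi|^\alpha$ and $\alpha\le 2$ so $\ml u\in L^2(H^{-\alpha/2})\subset L^2(H^{-1}(\Omega))$ locally), and $\partial_{x_N}(u^q)\in L^2(H^{-1})$ because $u^q\in L^\infty(Q)\cap L^2(Q)$ (as $u\in L^\infty\cap L^1\subset L^{2q}$ when $q\ge\frac12$; when $q<1$ even better) — so the bound holds on any bounded $\Omega$.

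\emph{Tail control} (d): work with the classical solution $u^{\teps}$, multiply~\eqref{reg} by $\rho_R\ge0$ and integrate over $(0,t)\times\rr^N$:
$$
\int_{\rr^N}u^{\teps}(t)\rho_R=\int_{\rr^N}u_0^{\teps}\rho_R-\int_0^t\!\!\int_{\rr^N}u^{\teps}\,\ml\rho_R-\int_0^t\!\!\int_{\rr^N}\partial_{x_N}((u^{\teps})^q)\rho_R+\eps\int_0^t\!\!\int_{\rr^N}u^{\teps}\Delta\rho_R.
$$
Bound $\int u^{\teps}\ml\rho_R\le M\|\ml\rho_R\|_{L^\infty}$, integrate the convection term by parts to get $\le\int(u^{\teps})^q|\partial_{x_N}\rho_R|$, bound $\eps\int u^{\teps}\Delta\rho_R\le \eps M\|\Delta\rho_R\|_{L^\infty}\lesssim \eps M R^{-2}$, use $\rho_R\ge\mathds{1}_{\{|x|>2R\}}$ on the left and $\rho_R\le\mathds{1}_{\{|x|>R\}}$ for the data term, then let $\teps\downarrow0$ (everything passes by the established $L^1$ convergence and $u^{\teps}_0\to u_0$ in $L^1$). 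This yields precisely the stated inequality.

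The main obstacle I expect is the rigorous construction and uniqueness of entropy solutions in the full range including fast convection $q\in(1-\frac1N,1)$, where $f(u)=u^q$ is only $q$-H\"older: one must be careful that the approximating sequence is well-defined (this is why we truncate from below, keeping $u^{\teps}\ge\teps>0$ where $f$ is smooth), and that the $L^1$-contraction/Kru\v zkov machinery still applies to the limit despite the lack of Lipschitz continuity of the flux. This is precisely the point deferred to Appendix~\ref{sect:appendix.results.entropy.solutions}; here I would invoke those results and concentrate on deriving (a)–(d) from the classical approximation and the a priori estimates above.
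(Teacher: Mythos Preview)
Your proposal is correct and follows essentially the same route as the paper: lift the data to $u_0+\teps$, use the classical solutions of Lemma~\ref{lem.ClassicalSolutionsOfReg}, pass to the limit by monotonicity/comparison, and derive (a)--(d) from the $L^p$-energy identity, the very weak formulation with cutoffs $\mathcal{X}_R$, $\rho_R$, and the appendix machinery for uniqueness. Two small technical remarks: for the energy estimate the paper avoids the awkward ``subtract $\teps$'' trick by testing directly against $u_{\teps}^{p-1}\mathcal{X}_R$ and using the bilinear form $\mathcal{E}$ together with Stroock--Varopoulos; and for (c) the paper pairs with $\phi\in C_{\rm c}^\infty((0,T)\times\Omega)$ and treats the case $q<\tfrac12$ separately via $u^q\in L^p$ with $p\ge 1/q$, which your sketch glosses over---but the strategy is the same.
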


\begin{lemma}\label{lem.StabilityOfSolutionsOfReg}
Assume~\eqref{u_0as},~\eqref{qas},~\eqref{mlas}, and $0<\teps\leq 1$. Let $u$ be the entropy solution of~\eqref{reg} with initial data $u_0$, and $u_{\tilde \eps}$ the classical solution of the same problem with bounded initial data $u_{0,\teps}:=u_0+\teps\geq \teps$. Then,  as $\tilde{\varepsilon}\to0^+$,
$$
u_{\tilde{\varepsilon}}\to u\quad\text{in } C([0,\infty);L_{\textup{loc}}^1(\rr^N)),\quad\text{and hence }u_{\tilde{\varepsilon}}\to u\text{ a.e.~in }Q.
$$
\end{lemma}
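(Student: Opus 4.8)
The plan is to combine the comparison principle for the viscous problem~\eqref{reg} with a monotone–limit argument and the uniqueness of entropy solutions. First I would record the monotonicity and the uniform bounds: for $0<\teps_2\le\teps_1\le1$ one has $u_0\le u_0+\teps_2\le u_0+\teps_1$, and $u_0+\teps$ is bounded (though not integrable), so the comparison principle for~\eqref{reg} (the analogue of Theorem~\ref{thm.UniquenessPropertiesEntropy}, applied between the classical solutions $u_{\teps_1},u_{\teps_2}$ and between $u_{\teps}$ and the entropy solution $u$) yields $u\le u_{\teps_2}\le u_{\teps_1}$ a.e.\ in $Q$. Moreover $\teps\le u_{\teps}\le\|u_0\|_{L^\infty(\rr^N)}+\teps\le\|u_0\|_{L^\infty(\rr^N)}+1$ by Lemma~\ref{lem.ClassicalSolutionsOfReg}, so $\{u_{\teps}\}_{0<\teps\le1}$ is non-increasing in $\teps$ and uniformly bounded in $L^\infty(Q)$. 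Hence the pointwise limit $w(t,x):=\lim_{\teps\to0^+}u_{\teps}(t,x)=\inf_{0<\teps\le1}u_{\teps}(t,x)$ exists, is measurable, and $0\le u\le w\le\|u_0\|_{L^\infty(\rr^N)}$ a.e.; by dominated convergence $u_{\teps}\to w$ in $L^1_{\rm loc}(Q)$ and $u_{\teps}(t)\to w(t)$ in $L^1_{\rm loc}(\rr^N)$ for every $t\ge0$.

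The heart of the argument is then to identify $w$ with $u$. Since each $u_{\teps}$ is classical, it satisfies the entropy inequality for~\eqref{reg}, i.e.\ the inequality of Definition~\ref{def.entropySolution}(b) augmented by the viscous term $\iint_Q\eps|u_{\teps}-k|\Delta\phi$ (this follows from the identity for viscous entropy pairs, the Kato-type inequality $\sgn(u-k)\ml^{\le\rho}u\ge\ml^{\le\rho}|u-k|$, and $\Delta|u-k|\ge\sgn(u-k)\Delta u$). I would pass to the limit $\teps\to0^+$ in this inequality for fixed $0\le\phi\in C_\textup{c}^\infty(Q)$, fixed $\rho>0$, and $k\in\rr$ in the co-countable set for which $|\{(t,x)\in\operatorname{supp}\phi:w(t,x)=k\}|=0$: for such $k$, $\sgn(u_{\teps}-k)\to\sgn(w-k)$ a.e.\ on $\operatorname{supp}\phi$, so the uniform $L^\infty$ bound and dominated convergence dispose of the local terms and of the viscous term, while for $\iint_Q\sgn(u_{\teps}-k)\phi\,\ml^{>\rho}u_{\teps}$ one additionally uses $\nu(\{|y|>\rho\})<\infty$ to pass to the limit inside the $\ml^{>\rho}$-integral. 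Letting $k$ vary and using continuity of each term in $k$, $w$ satisfies the entropy inequality for all $k\in\rr$. The initial datum is recovered from the sandwich $u(t)\le w(t)\le u_{\teps}(t)$, which gives $|w(t)-u_0|\le|u(t)-u_0|+|u_{\teps}(t)-u_0|$ pointwise; integrating over $B_R$ and letting $t\to0^+$ (using the initial traces of $u$ and $u_{\teps}$) yields $\limsup_{t\to0^+}\int_{B_R}|w(t)-u_0|\le\teps|B_R|$, and then $\teps\to0^+$. Thus $w$ is an entropy solution of~\eqref{reg} with datum $u_0$, and uniqueness (Lemma~\ref{lem.EntropySolutionsOfReg}) forces $w=u$; in particular $u_{\teps}\to u$ a.e.\ in $Q$ and $u\in C([0,\infty);L^1_{\rm loc}(\rr^N))$.

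To upgrade to convergence in $C([0,\infty);L^1_{\rm loc}(\rr^N))$ I would argue by Dini's theorem. Fix $T,R>0$ and set $g_{\teps}(t):=\int_{B_R}\big(u_{\teps}(t)-u(t)\big)\ge0$, using $u\le u_{\teps}$. Each $g_{\teps}$ is continuous on $[0,T]$ since $u_{\teps},u\in C([0,\infty);L^1(B_R))$; the family is non-increasing in $\teps$ by Step~1; and $g_{\teps}(t)\to0$ for every $t\in[0,T]$ — at $t=0$ because $g_{\teps}(0)=\teps|B_R|$, and for $t>0$ by dominated convergence since $u_{\teps}(t)\downarrow w(t)=u(t)$. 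Dini's theorem then gives $\sup_{t\in[0,T]}\|u_{\teps}(t)-u(t)\|_{L^1(B_R)}=\sup_{[0,T]}g_{\teps}\to0$, and as $T,R$ are arbitrary this is the claimed convergence.

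The only genuinely delicate point I anticipate is the passage to the limit in the nonlocal contribution $\iint_Q\sgn(u_{\teps}-k)\phi\,\ml^{>\rho}u_{\teps}$ together with the handling of the discontinuity of $\sgn$ at the level $k$ (resolved by working with the co-countable set of ``good'' levels $k$ and then using continuity in $k$); everything else reduces to the comparison principle, the uniform $L^\infty$ bound, and dominated convergence. If the appendix provides an $L^1$-contraction estimate for~\eqref{reg} with a spatial weight, one could instead compare $u$ with the entropy solutions having data $(u_0+\teps)\mathcal{X}_R$ and let $\teps\to0$, $R\to\infty$ jointly, but the monotone-limit route above avoids that two-parameter bookkeeping.
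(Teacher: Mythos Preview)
Your identification step—monotone limit, passage to the limit in the entropy inequality, uniqueness—is essentially what the paper does in Proposition~\ref{prop.PointwiseStabilityOfEntropySolutions}; you are in fact more careful than the paper about the discontinuity of $\sgn$. One omission: to invoke uniqueness you need $w$ to satisfy \emph{all} of Definition~\ref{def.entropySolutionVeps}(a), in particular $w\in L^2_{\rm loc}((0,\infty);H^1_{\rm loc})$, which you do not verify; the paper obtains this from the uniform gradient bound of Lemma~\ref{lem.localGradientEstimate} and weak compactness of $\nabla u_\teps$.

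The real gap is in the Dini step. Uniqueness only gives $w=u$ a.e.\ in $Q$, hence $w(t)=u(t)$ in $L^1_{\rm loc}$ only for \emph{a.e.}\ $t>0$; at the exceptional times you have not shown $g_\teps(t)\to0$, and a decreasing family of continuous functions on $[0,T]$ converging to $0$ a.e.\ need not converge pointwise everywhere (take $f_n(t)=(1-n|t-\tfrac12|)_+$), so Dini does not apply as written. The paper avoids this altogether by proving a uniform-in-$\teps$ time-continuity modulus $\int_K|u_\teps(t)-u_\teps(s)|\le\lambda(|t-s|^{1/3})$ (via spatial mollification and the very weak formulation; the modulus depends only on $\|u_0\|_{L^\infty}$ and the spatial $L^1$-modulus of $u_0$, both unchanged by adding the constant $\teps$) and then invokes Arzel\`a--Ascoli and Kolmogorov--Riesz. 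If you want to keep the Dini route, you must supply an argument that $g_\teps(t)\to0$ for \emph{every} $t\ge0$; one clean way is to test the very weak equation satisfied by $v_\teps:=u_\teps-u\ge0$ against $\mathds{1}_{[0,t]}\mathcal{X}_R$, which yields
\[
\int_{\rr^N}v_\teps(t)\mathcal{X}_R=\teps\!\int_{\rr^N}\!\mathcal{X}_R+\int_0^{t}\!\!\int_{\rr^N}\!\Big((u_\teps^q-u^q)\partial_{x_N}\mathcal{X}_R-v_\teps\,\ml\mathcal{X}_R+\eps\,v_\teps\,\Delta\mathcal{X}_R\Big),
\]
and the right-hand side tends to $0$ for each fixed $t$ by dominated convergence (using $\ml\mathcal{X}_R\in L^1(\rr^N)$, the uniform $L^\infty$ bound, and $u_\teps\to u$ a.e.\ in $Q$), after which Dini goes through.
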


\subsection{Parabolic estimates}
By~\eqref{eq.L2Energy} in Appendix~\ref{sect:appendix.results.entropy.solutions}, an energy estimate holds for~\eqref{reg}, associated to the parabolic term $\ml u-\eps\Delta u$. It is then standard to deduce a time decay of the $L^p$-norm.

\begin{lemma}\label{parabolic.estimates}
Assume~\eqref{u_0as},~\eqref{qas}, and~\eqref{mlas}. Then the entropy solution $u$ of~\eqref{reg} satisfies:
\begin{enumerate}[{\rm (a)}]
\item \textup{(Time decay of $L^p$-norm)} For all $t>0$,
\begin{equation}
\label{decayLinfty}
\|u(t)\|_{L^p(\rr^N)}\leq C(p,\alpha) \|u_0\|_{L^1(\rr^N)}t^{-\frac{N}{\alpha}(1-\frac 1p)}\quad\text{for all }p\in[1,\infty].
\end{equation}
\item \textup{(Energy estimate)} For all $0<\tau<T<\infty$,
\begin{equation*}
\Lambda_1\int_\tau^T \int _{\rr^N} |(-\Delta)^{\alpha/4}u|^2 +\varepsilon \int_\tau^T \int _{\rr^N} |\nabla u|^2\leq\frac{1}{2}\|u(\tau)\|^2_{L^2(\rr^N)}.
\end{equation*}
\end{enumerate}
\end{lemma}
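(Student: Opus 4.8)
The plan is to derive both statements from the basic $L^2$-energy estimate for the regularized problem, which we may assume from Appendix~\ref{sect:appendix.results.entropy.solutions} (referenced as~\eqref{eq.L2Energy}). The starting point is that for the entropy solution $u$ of~\eqref{reg}, testing the equation formally against $u$ and using that $\int_{\rr^N}u\,\partial_{x_N}(u^q)=0$ (the convection term is a perfect derivative and integrates to zero), one obtains
\begin{equation*}
\frac12\frac{{\rm d}}{{\rm d}t}\|u(t)\|_{L^2(\rr^N)}^2 + \mathcal{E}(u(t),u(t)) + \varepsilon\int_{\rr^N}|\nabla u(t)|^2 = 0.
\end{equation*}
Recalling $\mathcal{E}(u,u)\eqsim\|(-\Delta)^{\alpha/4}u\|_{L^2(\rr^N)}^2$ and the nondegeneracy $\mathcal{E}(u,u)\ge\Lambda_1\int_{\rr^N}|(-\Delta)^{\alpha/4}u|^2$ coming from~\eqref{mlas}, integrating in time between $\tau$ and $T$ and dropping the (nonnegative) final value $\frac12\|u(T)\|_{L^2}^2$ gives part~(b) immediately. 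The only subtlety is justifying the computation at the level of entropy solutions rather than classical ones; this is handled by first proving everything for the classical solutions $u_{\tilde\varepsilon}$ of Lemma~\ref{lem.ClassicalSolutionsOfReg} with data $u_0+\tilde\varepsilon$, and then passing to the limit $\tilde\varepsilon\to0^+$ using the convergence in Lemma~\ref{lem.StabilityOfSolutionsOfReg} together with Fatou's lemma (for the left-hand side) and the contractivity of the $L^2$-norm (for the right-hand side). Mass conservation, Lemma~\ref{lem.EntropySolutionsOfReg}(b), takes care of the shift $u_0\mapsto u_0+\tilde\varepsilon$ not affecting the $L^1$-based quantities in the limit.

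For part~(a), the estimate~\eqref{decayLinfty} follows the classical Nash-type iteration. First, the $L^1$--$L^1$ contraction (again a consequence of the entropy framework, with the convection term conservative) gives $\|u(t)\|_{L^1(\rr^N)}\le\|u_0\|_{L^1(\rr^N)}=M$, which is the case $p=1$ with $t$-exponent zero. For $p=2$, I would combine the energy identity above with the fractional Nash inequality
\begin{equation*}
\|w\|_{L^2(\rr^N)}^{2+\frac{2\alpha}{N}} \lesssim \|(-\Delta)^{\alpha/4}w\|_{L^2(\rr^N)}^2\,\|w\|_{L^1(\rr^N)}^{\frac{2\alpha}{N}},
\end{equation*}
together with the $L^1$-bound $\|u(t)\|_{L^1}\le M$, to get a closed differential inequality $\frac{{\rm d}}{{\rm d}t}\|u(t)\|_{L^2}^2 \le -c\,M^{-2\alpha/N}\|u(t)\|_{L^2}^{2+2\alpha/N}$, which integrates to $\|u(t)\|_{L^2(\rr^N)}\le C M t^{-\frac N{2\alpha}}$. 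The general case $p\in(2,\infty)$ is then obtained by the standard Moser/Nash iteration on $L^{2^k}$-norms (multiplying the equation by $u^{2^k-1}$, using the fractional Stroock–Varopoulos / Córdoba–Córdoba inequality $\int u^{p-1}(-\Delta)^{\alpha/2}u \ge c_p\|(-\Delta)^{\alpha/4}u^{p/2}\|_{L^2}^2$ to control the nonlocal term and discarding the $\varepsilon\Delta$ term which has a favorable sign), bootstrapping from the $p=2$ bound; the case $p=\infty$ follows by letting $p\to\infty$ (the constants $C(p,\alpha)$ staying bounded) or by a parallel $L^\infty$ iteration. Interpolation between $L^1$ and $L^\infty$ also recovers all intermediate $p$, so it suffices to establish the endpoints. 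The constant $C(p,\alpha)$ is uniform in $\varepsilon$ precisely because the diffusion term $\varepsilon\Delta u$ only helps in every step and is simply thrown away.

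The main obstacle I anticipate is not the formal computation but the rigorous justification of the energy identity and of the testing against $u^{p-1}$ at the level of entropy (rather than classical) solutions, especially in the fast-convection range $q\in(1-\frac1N,1)$ where $F$ is only Hölder. The clean way around this is exactly the scheme already built into the excerpt: prove all the a priori bounds for the smooth approximations $u_{\tilde\varepsilon}$ from Lemma~\ref{lem.ClassicalSolutionsOfReg}, where all manipulations are legitimate and the convection term genuinely integrates to zero against any power of $u_{\tilde\varepsilon}$ (since $u_{\tilde\varepsilon}$ is bounded below by $\tilde\varepsilon>0$ and smooth), then pass to the limit $\tilde\varepsilon\to0^+$ via Lemma~\ref{lem.StabilityOfSolutionsOfReg}; the decay estimates~\eqref{decayLinfty} are stable under a.e.\ convergence plus uniform $L^1$-bounds by Fatou, and the energy estimate is lower-semicontinuous with respect to this convergence. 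A secondary technical point is checking that the Nash and Stroock–Varopoulos inequalities can be applied with $(-\Delta)^{\alpha/4}$ rather than the anisotropic $\mathcal{L}$; this is fine because~\eqref{mlas} gives $\mathcal{E}(u,u)\ge\Lambda_1\|(-\Delta)^{\alpha/4}u\|_{L^2}^2$ in the $L^2$-setting, and for the $L^p$-iteration one uses the analogous pointwise inequality $\mathcal{L}$ satisfies as a Lévy-type operator, so the isotropic functional inequalities suffice after absorbing $\Lambda_1$ into the constant.
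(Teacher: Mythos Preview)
Your proposal is correct and follows essentially the same route as the paper: the energy estimate (b) is the $p=2$ case of the $L^p$-energy inequality~\eqref{eq.L2Energy} derived in Appendix~\ref{sect:appendix.results.entropy.solutions} (via approximation by the classical solutions $u_{\tilde\varepsilon}$, Stroock--Varopoulos for the nonlocal term, and passage to the limit), and the decay~(a) is then obtained by the standard Nash iteration you describe. The paper's own proof is just the one-line reference ``By~\eqref{eq.L2Energy} \dots\ It is then standard to deduce a time decay of the $L^p$-norm,'' so your write-up is a faithful expansion of exactly that argument.
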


\subsection{Hyperbolic estimates}
In the subcritical case, when the limit equation is hyperbolic, we prove that the previous decay of the solutions can be improved by using an Ole\u{\i}nik-type estimate in the direction~$x_N$ (we refer to \cite{Ole63} for the classical result by Ole\u{\i}nik and to \cite{Hof83} for its sharp version established by Hoff; see Remark \ref{rk:smoothing}(b) for further information).  By Lemma~\ref{lem.StabilityOfSolutionsOfReg}, we can first treat the case of strictly positive and bounded solutions. For simplicity, we write $u_0,u$ for $u_{0,\teps}, u_{\teps}$ in what follows (we also did this in Lemma \ref{lem.ClassicalSolutionsOfReg}). 

\begin{lemma}[Hoff-type estimate {{\cite{Hof83}}}]\label{lem.OleinikForPositiveBoundedFunctions}
Assume $\teps>0$, $\teps\leq u_{0}\in L^\infty(\rr^N)$,~\eqref{qas} with $q\leq 2$, and~\eqref{mlas}. Then the classical solution $u$ of~\eqref{reg} satisfies for all $(t,x)\in Q$:
\begin{eqnarray}
\label{oleinik.reg}
\partial_{x_N}(u^{q-1})(t,x)\leq \frac 1{qt}&&\text{if }1<q\leq 2,\\
\label{oleinik.reg.2}
\partial_{x_N}(u^{1-q})(t,x)\geq -\frac {\|u_0\|^{2-2q}_{L^\infty(\rr^N)}}{qt}&&\text{if }1-\frac1N<q<1.
\end{eqnarray}
\end{lemma}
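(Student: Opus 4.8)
The plan is to prove an Oleinik-type (one-sided Lipschitz) estimate for the convection term by differentiating the equation in the direction $x_N$ and deriving a Riccati-type ODE for a suitable supremum. Since $u$ is a strictly positive, bounded classical solution by Lemma~\ref{lem.ClassicalSolutionsOfReg}, all the manipulations below are justified; the cases $1<q\le 2$ and $1-\frac1N<q<1$ are treated in parallel by introducing the auxiliary function
$$
w:=\begin{cases} u^{q-1} & \text{if }1<q\le 2,\\[1mm] u^{1-q} & \text{if }1-\frac1N<q<1,\end{cases}
\qquad p:=\partial_{x_N}w .
$$
The claim~\eqref{oleinik.reg} becomes $p(t,x)\le \frac{1}{qt}$, and~\eqref{oleinik.reg.2} becomes $p(t,x)\ge -\frac{\|u_0\|_{L^\infty}^{2-2q}}{qt}$, i.e.\ in both cases one wants $p\le \frac{C}{t}$ with $C=\frac1q$ in the first case and, after flipping signs, $C=\frac{\|u_0\|_{L^\infty}^{2-2q}}{q}$ in the second.

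First I would write the equation satisfied by $p$. Starting from $\partial_t u+\ml u+\partial_{x_N}(u^q)=\eps\Delta u$, apply $\partial_{x_N}$ to get an equation for $\partial_{x_N}u$, and then change the unknown to $w$ using $\partial_{x_N}u = u^{2-q}p/(q-1)$ in the first case (with the analogous relation in the second). A direct computation produces an equation of the schematic form
$$
\partial_t p+\ml p-\eps\Delta p + (\text{transport terms in }p) + \kappa\, u^{q-1} p^2 = (\text{good-sign terms}),
$$
where the constant $\kappa=q>0$ is exactly what makes the quadratic term dissipative; the diffusion terms $\ml p$ and $-\eps\Delta p$ produce at an interior maximum of $p(t,\cdot)$ a nonnegative contribution that can be dropped, and the first-order transport terms vanish at such a maximum because $\nabla p=0$ there. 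The only subtlety is keeping track of the coefficient of $u^{q-1}$ in front of $p^2$: in the case $q<1$ one bounds $u^{q-1}\ge \|u_0\|_{L^\infty}^{q-1}$ (using $0<u\le\|u_0\|_{L^\infty}$) which is where the factor $\|u_0\|_{L^\infty}^{2-2q}$ enters after unwinding; in the case $q>1$ one has $u^{q-1}\le\|u_0\|_{L^\infty}^{q-1}$, but because the quadratic term has the favourable sign one instead just keeps $u^{q-1}>0$ and derives the bound with the clean constant $\frac1q$, independent of $\|u_0\|_{L^\infty}$.

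Next I would run the maximum-principle/ODE argument. Set $\Phi(t):=\sup_{x\in\rr^N}p(t,x)$ (finite and attained up to harmless truncation, since $u$ is smooth and bounded and $p$ decays at infinity because $u_0+\teps$ is constant outside a... well, one argues by a standard localization/penalization with a cutoff if $p$ is not literally attained). If $\Phi(t)>0$, evaluating the $p$-equation at a maximum point $x_t$ gives, after discarding the nonnegative nonlocal and Laplacian contributions and using $\nabla p(t,x_t)=0$,
$$
\Phi'(t)\le -q\, u(t,x_t)^{q-1}\,\Phi(t)^2 \le -c\,\Phi(t)^2,
$$
with $c=q$ in the case $q>1$ (no lower bound on $u$ needed since we only use $u^{q-1}>0$, hence strictly $\Phi'\le -q\Phi^2$), and $c=q\|u_0\|_{L^\infty}^{q-1}$ in the case $q<1$. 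Integrating the differential inequality $\Phi'\le -c\Phi^2$ from $0^+$ (where $\Phi$ may be $+\infty$) yields $\Phi(t)\le \frac1{ct}$ for all $t>0$, which is precisely~\eqref{oleinik.reg} and, for $q<1$, after the substitution $p=\partial_{x_N}(u^{1-q})$ and the relation $\frac1{ct}=\frac{\|u_0\|_{L^\infty}^{1-q}}{qt}$... re-expressing in terms of the stated normalization, $\frac{\|u_0\|_{L^\infty}^{2-2q}}{qt}$ after the sign flip on $p\to -p$ for the lower bound. I expect the main obstacle to be the rigorous justification of the maximum-principle step in the presence of the nonlocal operator $\ml$: one must argue that at an interior spatial maximum of $p(t,\cdot)$ one has $(\ml p)(t,x_t)\ge 0$ (which follows from the second-difference form~\eqref{eq:operador en polares}, since each increment $p(x_t)-\frac{p(x_t+r\theta)+p(x_t-r\theta)}2\ge 0$) together with a standard argument handling the fact that the supremum may be approached only at infinity — resolved by adding a small spatial penalization $\eta\,\psi(x)$ with $\psi$ growing slowly, using the tail decay of $p$, differentiating the penalized supremum, and letting $\eta\to0$. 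This technical point, plus carefully bookkeeping the signs and the power of $u$ in the quadratic term so that the clean constant $\frac1q$ emerges in the slow-convection case, is the heart of the proof.
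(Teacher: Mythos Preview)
Your overall strategy is right --- derive a Riccati inequality for $\Phi(t)=\sup_x\partial_{x_N}(u^{q-1})$ and integrate --- but there is a genuine gap in how you treat the nonlocal term. You write the equation for $p=\partial_{x_N}w$ schematically with the term $\ml p$, and then invoke the elementary fact that $\ml p\ge 0$ at an interior maximum. This is not what the change of variables produces: with $z=u^{\gamma}$ (eventually $\gamma=q-1$ or $1-q$), the nonlocal contribution in the equation for $w=\partial_{x_N}z$ is
\[
z^{-1/\gamma}\Big(z\,\ml(z^{\beta}w)-\tfrac{\beta}{\beta+1}\,w\,\ml(z^{\beta+1})\Big),\qquad \beta=\tfrac{1-\gamma}{\gamma},
\]
\emph{not} $\ml w$. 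The nonlinear substitution does not commute with the nonlocal operator, and showing that this coupled expression has the correct sign at an (approximate) maximum of $w$ is precisely the technical core of the proof. The paper handles it by splitting the integral at a scale $R\to0$: the near part is controlled by $C^2$ bounds and vanishes, while in the far part one uses $w(x_n\pm r\theta)\le W(t)$ at approximate maxima $x_n$ together with a H\"older-type inequality on the $z$-factors to extract $W(t)$ times a nonnegative integrand. Your one-line appeal to the second-difference representation of $\ml$ applied to $p$ itself does not address this.

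There is also a bookkeeping inconsistency in your quadratic term. You write it as $q\,u^{q-1}p^2$ and then assert, for $q>1$, that ``we only use $u^{q-1}>0$, hence strictly $\Phi'\le-q\Phi^2$''; that implication fails, since $u^{q-1}$ may be as small as $\tilde\eps^{q-1}$. The whole point of choosing $\gamma=q-1$ is that the quadratic coefficient in the $w$-equation becomes the \emph{constant} $q$ (the exponent $\frac{q-1-\gamma}{\gamma}$ of $z$ vanishes), and it is this that yields the clean bound $1/(qt)$. In the case $\gamma=1-q$ the coefficient is $q z^{-2}=q\,u^{2q-2}$, and only here does the bound $u\le\|u_0\|_{L^\infty}$ enter to give $q\,u^{2q-2}\ge q\,\|u_0\|_{L^\infty}^{2q-2}$, producing the factor $\|u_0\|_{L^\infty}^{2-2q}$ in~\eqref{oleinik.reg.2}.
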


\begin{proof}
It follows the lines of~\cite[Proposition~3.4]{Ignat-Stan-2018}. Lemma~\ref{lem.ClassicalSolutionsOfReg} gives that $\teps\leq u\leq \|u_0\|_{L^\infty(\rr^N)}$ and $u\in C^\infty_\textup{b}(Q)$.

Let us denote $z=u^{\gamma}$ where $\gamma\in (0,1]$ will be chosen latter. Then
$z\in C^\infty_\textup{b}(Q)$ and
\[
\partial_tz+\gamma z^{1-\frac {1}{\gamma}}\ml (z^{\frac 1\gamma})+qz^{\frac{q-1}\gamma}\partial_{x_N}z=\eps \Big(\Delta z+\frac{1-\gamma}{\gamma}\frac{|\nabla z|^2}z\Big).
\]
Let $w=\partial_{x_N}z$ and $\beta=\frac{1-\gamma}{\gamma}\geq 0$. Then $w\in C^\infty_\textup{b}(Q)$ and
\begin{equation}\label{eq.w}
\begin{aligned}&
\partial_tw+ \frac{q(q-1)}{\gamma}z^{\frac{q-1-\gamma}{\gamma}}w^2+qz^{\frac{q-1}\gamma}\partial_{x_N}w+z^{-\frac 1\gamma}A_\beta(w,z)
\\
&\qquad=
\eps\Big( \Delta w-\frac{1-\gamma}\gamma \frac{|\nabla z|^2}{z^2}w
+\frac{2(1-\gamma)}{\gamma} \frac{ \nabla z\cdot\nabla w }{z}\Big),
\\
&\text{where }
A_\beta(w,z)=z\ml (z^\beta w)-\frac{\beta}{\beta+1}w\ml (z^{1+\beta}).
\end{aligned}
\end{equation}
Let us denote $W(t)=\sup_{x\in \rr^N} w(t,x)$. If $W(t)$ is attained at some point $\overline{x}\in \rr^N$,
one can prove that
\[
A_\beta(w(t,\overline{x}), z(t,\overline{x}))\geq w(t,\overline{x})\mathcal{I}_z(t,\overline x)
\]
for some nonnegative function $\mathcal{I}_z$. However, it may happen that for a fixed $t$ the supremum is not attained, and we follow the strategy in \cite[Theorem~2, Remark~3]{DroniouImbertFractal} and \cite[Lemma 1.17]{KarchQualitProp2009}.
We consider points $x_n$ such that
\begin{equation}
\label{eq:x_n}
W(t)\ge w(t,x_n)\ge W(t)-\frac 1n.
\end{equation}
Since $w(t)\in C^3_\textup{b}(\rr^N)$, it follows that
\[
\lim_{n\rightarrow \infty}\nabla w(t,x_n)=0, \qquad \limsup_{n\rightarrow \infty}\Delta w(t,x_n)\leq 0.
\]
Besides, $z(t,x_n)\in [ \teps^{\gamma}, \|u_0\|_{L^\infty(\rr^N)}^{\gamma}]$ for all $n\in\mathbb{N}$. Hence,   $z(t,x_n)\rightarrow p_1(t)\in  [ \teps^{\gamma}, \|u_0\|_{L^\infty(\rr^N)}^\gamma]$ up to a subsequence. Also, since $\{|\nabla z(t,x_n)|\}_{n\geq 1}$ is uniformly bounded we get  $|\nabla z(t,x_n)|\rightarrow  p_2(t)$, again up to a subsequence.
We now make a claim which is proved some paragraphs below:  once more up to a subsequence,
\begin{equation}\label{claim1}
A_\beta(w(t,x_n),z(t,x_n))\geq W(t)\mathcal{I}_n(t)-o(1)\quad\text{as }n\to\infty
\end{equation}
for some uniformly bounded nonnegative sequence $\mathcal{I}_n(t)$. We therefore also have $\mathcal{I}_n(t)\rightarrow p_3(t)\geq 0$ up to a subsequence.  Arguing as in \cite[Proposition~3.4]{Ignat-Stan-2018},  we obtain that
\[
W'(t)+\frac{q(q-1)}{\gamma}p_1^{\frac{q-1-\gamma}{\gamma}}(t)W^2(t)+p_1^{- \frac 1\gamma}(t)p_3(t) W(t)+\frac{\eps(1-\gamma)}\gamma\frac{p^2_2(t)}{p_1^2(t)}W(t)\leq 0\quad\text{for all }t>0.
\]
Looking for supersolutions of the type $\overline{W}=Ct^{-1}$ we get  for $\gamma\geq q-1>0$ that
\[
w(t,x)\leq W(t)\leq \frac{\gamma}{q-1}\frac {\|u_0\|_{L^\infty(\rr^N)}^{\gamma-(q-1)}}{qt} \quad\text{for all }(t,x)\in Q.
\]
The choice $\gamma=q-1$ then gives~\eqref{oleinik.reg}.

Let us introduce $\tilde w=-w$. Using~\eqref{eq.w},  it follows that $\tilde w$ satisfies
 \[
 \begin{gathered}
\partial_t\tilde w+ \frac{q(1-q)}{\gamma}z^{\frac{q-1-\gamma}{\gamma}}\tilde w^2+qz^{\frac{q-1}\gamma} \partial_{x_N}\tilde w+z^{-\frac 1\gamma}A_\beta(\tilde w,z)\\
=
\eps\Big( \Delta \tilde w-\frac{1-\gamma}\gamma \frac{|\nabla z|^2}{z^2}\tilde w
+\frac{2(1-\gamma)}{\gamma} \frac{ \nabla z\cdot\nabla \tilde w }{z}\Big).
\end{gathered}
\]
The same argument as above gives us, for $\gamma\in (0,1]$,  in particular $1\geq \gamma >0>q-1$, that
\[
\tilde w(t,x)\leq  \frac{\gamma}{1-q}\frac {\|u_0\|_{L^\infty(\rr^N)}^{\gamma-(q-1)}}{qt}\quad\text{for all }(t,x)\in Q.
\]
The choice $\gamma=1-q$ then gives~\eqref{oleinik.reg.2}.

It remains to prove claim~\eqref{claim1}. For simplicity, we will sometimes suppress the $t$-dependence of $w$ and $z$. The definition of $\ml$ yields
\begin{align*}
A_\beta(w(t,x),&z(t,x))=\mathcal{J}^>(t,x,R)+\mathcal{J}^<(t,x,R),\quad\text{where }\\
\mathcal{J}^>(t,x,R)&=\int_{\bs^{N-1}}\int_R^\infty  \Big[\frac{w(x)}{\beta+1}\Big(z^{\beta+1}(x)+\frac\beta2 \big(z^{\beta+1}(x+r\theta)+z^{\beta+1}(x-r\theta)\big) \Big)
\\
&\quad-\frac{z(x)}2\big(w(x+r\theta)z^\beta(x+r\theta)+w(x-r\theta)z^\beta(x-r\theta)\big)\Big]\, \frac{{\rm d}r}{r^{1+\alpha}}{\rm d}\mu(\theta),\\
\mathcal{J}^<(t,x,R)
&=z(x)\int_{\bs^{N-1}}\int_0^R \Big(z^\beta(x) w(x)-\frac{z^\beta(x+r\theta) w(x+r\theta)+z^\beta(x-r\theta) w(x-r\theta)}2 \Big)\frac{{\rm d}r}{r^{1+\alpha}}
{\rm d}\mu(\theta)\\
&\quad-\frac{\beta}{\beta+1} w(x) \int_{\bs^{N-1}}\int_0^\infty\Big(z^{\beta+1}(x)-\frac{z^{\beta+1}(x+r\theta)+z^{\beta+1}(x-r\theta)}2 \Big)\frac{{\rm d}r}{r^{1+\alpha}}
{\rm d}\mu(\theta).
\end{align*}
Note   that terms were grouped in a different way in $\mathcal{J}^>$ and $\mathcal{J}^<$. The term $\mathcal{J}^<$ can be easily estimated as
\begin{align*}
|\mathcal{J}^<&(t,x,R)|\\
&\leq \big(\|z\|_{L^\infty(\mathbb{R}^N)}\|D^2(z^\beta w)\|_{L^\infty(\mathbb{R}^N)} + \|w\|_{L^\infty(\rr^N)}\| D^2 (z^{\beta+1})\|_{L^\infty(\mathbb{R}^N)}\big)
 \int_{\bs^{N-1}}\int_0^R\frac{r^2|\theta |^2\,{\rm d}r}{r^{1+\alpha}}{\rm d}\mu(\theta)\\
&\leq C(\teps,\|z\|_{C^3_{\textup{b}}(\rr^N)})R^{2-\alpha}.
\end{align*}
On the other hand, using~\eqref{eq:x_n},  we get

\begin{align*}
\mathcal{J}^>(t,x_n,R)&\ge  W(t) \mathcal{I}(t,x_n,R)+\mathcal{K}(t,x_n,R),\quad\text{where }\\
\mathcal{I}(t,x_n,R)&=
\int_{\bs^{N-1}}\int_R^\infty \Big[\frac{1}{\beta+1} \Big(z^{\beta+1}(t,x_n)+\frac\beta2 (z^{\beta+1}(t,x_n+r\theta)+z^{\beta+1}(t,x_n-r\theta)\Big)\\
&\quad -z(t,x_n)\frac{z^{\beta}(t,x+r\theta)+z^{\beta}(t,x-r\theta) }2 \Big]\, \frac{{\rm d}r}{r^{1+\alpha}}{\rm d}\mu(\theta),\\
\mathcal{K}(t,x_n,R)&=- \frac 1{n(\beta+1)}  \int_{\bs^{N-1}}\int_R^\infty  \Big(z^{\beta+1}(t,x_n)\\
&\quad+\frac\beta2 (z^{\beta+1}(t,x_n+r\theta)+z^{\beta+1}(t,x_n-r\theta) \Big)\, \frac{{\rm d}r}{r^{1+\alpha}}{\rm d}\mu(\theta).
\end{align*}
By H\"older's inequality,  $\mathcal{I}$ is  nonnegative.   Moreover, \cite[Lemma~3.5]{Ignat-Stan-2018} shows that it is uniformly bounded: $|\mathcal{I}(t,x_n,R)|\leq C(\alpha,\beta,\tilde\eps, \|z(t)\|_{L^\infty(\rr^N)})\|z(t)\|_{C^1_{\rm b}(\rr^N)}^2$. Finally, an easy computation shows that $|\mathcal{K}(t,x_n,R)|\le \frac{C}{nR^\alpha}\|z(t)\|_{L^\infty(\mathbb{R}^N)}^{\beta+1}$. Combining everything,
$$
A_\beta(w(t,x_n),z(t,x_n))\ge W(t) \mathcal{I}(t,x_n,R)-\frac{C}{nR^\alpha}-CR^{2-\alpha}.
$$

Choosing $R=R_n\rightarrow 0$ such that  $nR_n^\alpha\rightarrow \infty$ we obtain~\eqref{claim1} with $\mathcal{I}_n(t)=\mathcal{I}(t,x_n,R_n)$.
\end{proof}

To continue, we need a lemma whose proof can be found in Appendix~\ref{sec:PrimitivesOfEntropySolutions}.
\begin{lemma}\label{lem.PrimitiveSolvesParabolic}
Assume~\eqref{u_0as},~\eqref{qas}, and~\eqref{mlas}. If $u$ is the entropy solution of~\eqref{reg}, then the function $v(t,x'):=\int_{\rr}u(t,x',x_N)\,{\rm d}x_N$ satisfies $v\in C([0,\infty);L^1(\rr^{N-1}))$ and solves
\begin{equation}\label{restricted.epsilon}
\partial_tv+\widetilde \ml v-\eps \Delta_{x'}v=0 \quad\textup{in }(0,\infty)\times \rr^{N-1},\qquad v(0,\cdot)=\int_{\rr}u_0(\cdot,x_N)\,{\rm d}x_N\quad\textup{in }\rr^{N-1},
\end{equation}
in the very weak sense, where $\widetilde \ml$ is the operator introduced in~\eqref{tildeL}.
\end{lemma}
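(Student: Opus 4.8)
The plan is to test the equation~\eqref{reg} against functions of the form $\phi(t,x',x_N)=\psi(t,x')\chi_R(x_N)$, where $\psi\in C_\textup{c}^\infty((0,\infty)\times\rr^{N-1})$ and $\chi_R$ is a smooth cut-off equal to $1$ on $[-R,R]$ and supported in $[-2R,2R]$, and then let $R\to\infty$. Recall that by Remark~\ref{regularity.issue}(d) the entropy solution $u$ of~\eqref{reg} is a very weak solution, so
\[
\iint_Q\big(u(\partial_t\phi-\ml\phi+\eps\Delta\phi)+u^q\partial_{x_N}\phi\big)=0
\]
for all admissible test functions (the $\eps\Delta$ term is added here since we are working with~\eqref{reg} rather than~\eqref{eq.main}). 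First I would insert the product test function, so that the left-hand side splits into four groups. For the time-derivative term, $\partial_t\phi=\partial_t\psi\,\chi_R$, and since $u\in L^\infty((0,\infty);L^1(\rr^N))$ dominated convergence gives $\iint u\,\partial_t\psi\,\chi_R\to\iint_{(0,\infty)\times\rr^{N-1}}v\,\partial_t\psi$ as $R\to\infty$, with $v$ as in~\eqref{v.integrate}. The convection term contributes $\iint u^q\psi\,\chi_R'(x_N)$; here one uses $u\in L^q(Q)$ from Lemma~\ref{lem.EntropySolutionsOfReg}(a) together with $|\chi_R'|\lesssim R^{-1}$ supported on $R\le|x_N|\le 2R$, so this term tends to $0$. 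For the local diffusion term, $\Delta\phi=(\Delta_{x'}\psi)\chi_R+\psi\,\chi_R''$; the first piece converges to $\iint v\,\Delta_{x'}\psi$ and the second vanishes by the same $R^{-2}$ cut-off estimate against $u\in L^1$.

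The remaining, and main, term is the nonlocal one, $-\iint u\,\ml\phi$. The key point is the identity~\eqref{id.widetilde}, which tells us that integrating $\ml$ of a function in the $x_N$ direction produces $\widetilde\ml$ acting on the $x'$-integral. Concretely, I would show that $\ml(\psi(\cdot')\chi_R(\cdot_N))(x',x_N)\to(\widetilde\ml\psi)(x')$ as $R\to\infty$, pointwise and with a bound of the form $|\ml(\psi\chi_R)(x',x_N)|\lesssim C(\psi)\rho(x)$ uniform in $R$, where $\rho(x)=(1+|x|)^{-(1+\alpha)}$ is the weight from Remark~\ref{regularity.issue}(d); this is essentially the statement already recorded in Remark~\ref{symbols.3}, namely $\ml(\psi(\cdot')\mathds{1}_\rr(\cdot_N))=\widetilde\ml\psi\cdot\mathds{1}_\rr$ for Schwartz $\psi$, together with a quantitative version of the convergence $\chi_R\to\mathds{1}_\rr$. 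Since $u\in L^1_\textup{loc}((0,\infty);L^1(\rr^N;\rho\,{\rm d}x))$, dominated convergence then yields $-\iint u\,\ml(\psi\chi_R)\to-\iint_{(0,\infty)\times\rr^{N-1}}v\,\widetilde\ml\psi$. Collecting the four limits gives
\[
\iint_{(0,\infty)\times\rr^{N-1}}\big(v(\partial_t\psi-\widetilde\ml\psi+\eps\Delta_{x'}\psi)\big)=0\quad\text{for all }\psi\in C_\textup{c}^\infty((0,\infty)\times\rr^{N-1}),
\]
which is exactly the very weak formulation of~\eqref{restricted.epsilon}.

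Finally, the regularity $v\in C([0,\infty);L^1(\rr^{N-1}))$ and the attainment of the initial datum $v(0,\cdot)=\int_\rr u_0(\cdot,x_N)\,{\rm d}x_N$ follow from $u\in C([0,\infty);L^1(\rr^N))$ (Lemma~\ref{lem.EntropySolutionsOfReg}(a)): the map $x'\mapsto\int_\rr u(t,x',x_N)\,{\rm d}x_N$ is a contraction from $L^1(\rr^N)$ into $L^1(\rr^{N-1})$, so continuity in time and the initial trace are inherited directly. I expect the only genuine obstacle to be the uniform-in-$R$ decay estimate on $\ml(\psi\chi_R)$: one must check that the second-difference integrand defining $\ml$, evaluated at $\psi(x')\chi_R(x_N)$, is controlled by $\rho(x)$ independently of $R$ — near the support of $\psi$ this is the usual $O(\rho)$ bound for $\ml$ of a smooth compactly supported function, while the interaction of the long jumps in the $x_N$ direction with $\chi_R$ must be shown not to destroy this bound, using that $\int_0^\infty\min\{r^2,1\}\,r^{-1-\alpha}\,{\rm d}r<\infty$ and that $\mu$ is finite. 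Once this is in place, every convergence is a routine application of dominated convergence.
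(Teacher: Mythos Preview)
Your approach is essentially identical to the paper's: test the very weak formulation of~\eqref{reg} against $\psi(t,x')\mathcal{X}_R(x_N)$, let $R\to\infty$ via dominated convergence, and invoke Remark~\ref{symbols.3} for the limit of the nonlocal term. The only simplification you are missing is that the weighted $\rho$-decay bound you flag as the ``genuine obstacle'' is unnecessary here: since $u\in L^\infty((0,\infty);L^1(\rr^N))\cap L^\infty(Q)$ under~\eqref{u_0as}, a uniform-in-$R$ bound $\|\ml(\psi\mathcal{X}_R)\|_{L^\infty(\rr^N)}\le C(\psi)$ (immediate from $\|D^2(\psi\mathcal{X}_R)\|_\infty\le C(\psi)$ for small jumps and $\|\psi\mathcal{X}_R\|_\infty\le\|\psi\|_\infty$ for large ones) already gives an $L^1$ majorant $C(\psi)|u|$ for dominated convergence.
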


We are then ready to present and prove the hyperbolic estimates.
\begin{lemma}\label{estimari.hiperbolice}
Assume~\eqref{u_0as},~\eqref{qas} with $q\leq 2$, and~\eqref{mlas}.  There exists a positive constant $C(q,\alpha,N)$ such that the entropy solution $u$ of~\eqref{reg} satisfies:
\begin{enumerate}[{\rm (a)}]
\item \textup{(Time decay of $L^\infty$-norm)}
\begin{equation}
\label{linfty.hyp}
\|u(t)\|_{L^\infty(\rr^N)}\leq C(q,\alpha,N)\|u_0\|_{L^1(\rr^N)}^{\frac{1}{q}} t^{-\frac{1}q(1+\frac{N-1}\alpha)}\quad\text{for all }t>0.
\end{equation}
\item \textup{(Bounds on the $x_N$-derivative)} The following inequalities hold in $\mathcal{D}'(\rr^N)$ for all $t>0$:
\begin{eqnarray}
\label{upper.ux.q>1}
\partial_{x_N}u(t)\leq C(q,\alpha,N)\|u_0\|_{L^1(\rr^N)}^{\frac {2-q}q} t^{-1-(1+\frac{N-1}\alpha)(\frac {2-q}q)}  &&\text{if }1<q\leq 2,\\
\label{upper.ux.q<1}
\partial_{x_N}u(t)\geq -C(q,\alpha,N) \|u_0\|_{L^1(\rr^N)}^{\frac {2-q}q} t^{-1-(1+\frac{N-1}\alpha)(\frac {2-q}q)}&&\text{if }1-\frac1N<q<1.
\end{eqnarray}

\item \textup{(Local estimate on the $x_N$-derivative)} For all $R, R'>0$, all $|h_N|<R$, and all $t>0$,
\begin{equation}
\label{W11loc}
\begin{aligned}
&\int_{|x'|<R', |x_{N}|< R}|u(t,x+(0,h_N))-u(t,x)|\,{\rm d}x\\
&\quad\leq C(q,\alpha,N)|h_N|{R'}^{N-1}\Big(R t^{-1-(1+\frac{N-1}\alpha)(\frac {2-q}q)} \|u_0\|_{L^1(\rr^N)}^{\frac {2-q}q}+  t^{-\frac{1}q(1+\frac{N-1}\alpha)} \|u_0\|_{L^1(\rr^N)}^{\frac{1}{q}}\Big).
\end{aligned}
\end{equation}
\end{enumerate}
\end{lemma}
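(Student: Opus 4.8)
The plan is to prove the three estimates first for the classical solutions $u_{\teps}$ of~\eqref{reg} with strictly positive bounded data $u_{0,\teps}:=u_0+\teps$ provided by Lemma~\ref{lem.ClassicalSolutionsOfReg}, for which the Oleinik-type inequalities~\eqref{oleinik.reg}--\eqref{oleinik.reg.2} of Lemma~\ref{lem.OleinikForPositiveBoundedFunctions} are available, and then to let $\teps\to0^+$ using the a.e.\ and $L^1_\textup{loc}$ convergence $u_{\teps}\to u$ of Lemma~\ref{lem.StabilityOfSolutionsOfReg}. The engine of the argument is a \emph{primitive trick} in the convection direction: setting $v_{\teps}(t,x'):=\int_\rr\big(u_{\teps}(t,x',x_N)-\teps\big)\,{\rm d}x_N\ge0$ for the $x_N$-primitive of the integrable excess $u_{\teps}-\teps$, one integrates over $x_N$ the equation solved by $u_{\teps}-\teps$; the convection term is a total $x_N$-derivative and vanishes, and by the defining property~\eqref{id.widetilde} of $\widetilde\ml$ the function $v_{\teps}$ solves (classically, hence very weakly) the purely diffusive problem~\eqref{restricted.epsilon} with the \emph{$\teps$-independent} datum $\int_\rr u_0(\cdot,x_N)\,{\rm d}x_N\in L^1(\rr^{N-1})$ of mass $M$, so by uniqueness for~\eqref{restricted.epsilon} it equals the function $v$ of Lemma~\ref{lem.PrimitiveSolvesParabolic}. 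Since the symbol of $\widetilde\ml-\eps\Delta_{x'}$ is bounded below by a constant times $|\xi'|^\alpha$ uniformly in $\eps>0$ (nondegeneracy in~\eqref{mlas}), problem~\eqref{restricted.epsilon} has the usual $L^1$--$L^\infty$ smoothing, whence $\|v(t)\|_{L^\infty(\rr^{N-1})}\le C\,M\,t^{-\frac{N-1}\alpha}$ for all $t>0$, with $C$ independent of $\eps$.

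For part~(a) in the slow range $1<q\le2$, I would fix $(t,x',x_N)$, put $a:=u_{\teps}^{q-1}(t,x',x_N)$, and use~\eqref{oleinik.reg} in the form $u_{\teps}^{q-1}(t,x',x_N-s)\ge a-\tfrac{s}{qt}$ for $s\ge0$: this makes $u_{\teps}(t,x',x_N-s)\ge\big(a-\tfrac{s}{qt}\big)^{1/(q-1)}$ while the right-hand side exceeds $\teps$, and integrating this pointwise lower bound against ${\rm d}x_N$ and bounding $v_{\teps}(t,x')$ below by that piece, the change of variable $\sigma=a-\tfrac{s}{qt}$ gives $v(t,x')=v_{\teps}(t,x')\ge(q-1)\,t\,u_{\teps}^q(t,x',x_N)-O(\teps)$. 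Taking the supremum in $(x',x_N)$ and sending $\teps\to0^+$ (the error is lower order since $\|u_{\teps}(t)\|_{L^\infty}\le\|u_0\|_{L^\infty}+\teps$) yields $\|u(t)\|_{L^\infty}^q\le\|v(t)\|_{L^\infty}/((q-1)t)$, which together with the smoothing bound for $v$ is exactly~\eqref{linfty.hyp}. In the fast range $1-\tfrac1N<q<1$ the same geometry applies to $u_{\teps}^{1-q}$ via~\eqref{oleinik.reg.2}, but its constant carries the spurious factor $\|u_{0,\teps}\|_{L^\infty}^{2-2q}$; I would remove it by a \emph{bootstrap}, restarting~\eqref{oleinik.reg.2} at time $t/2$ so that the factor becomes $2\|u_{\teps}(t/2)\|_{L^\infty}^{2-2q}/q$, and feeding in the bound being proved through $\Psi_{\teps}(T):=\sup_{0<s\le T}s^{\kappa}\|u_{\teps}(s)\|_{L^\infty}$ with $\kappa:=\tfrac1q\big(1+\tfrac{N-1}\alpha\big)$. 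The integration step then produces $\big(t^{\kappa}\|u_{\teps}(t)\|_{L^\infty}\big)^{2-q}\le C\,M\,\Psi_{\teps}(T)^{2-2q}+o(1)$ for $0<t\le T$, the powers of $t$ collapsing \emph{precisely} because $q\kappa=1+\tfrac{N-1}\alpha$; since $\Psi_{\teps}(T)<\infty$ a priori by the maximum principle, taking the supremum in $t$ and then $\teps\to0^+$ gives $\|u(t)\|_{L^\infty}\le C\,M^{1/q}\,t^{-\kappa}$, again~\eqref{linfty.hyp}.

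Given part~(a), part~(b) is immediate for the smooth $u_{\teps}$: from~\eqref{oleinik.reg}, $\partial_{x_N}u_{\teps}=\tfrac1{q-1}u_{\teps}^{2-q}\,\partial_{x_N}(u_{\teps}^{q-1})\le\|u_{\teps}(t)\|_{L^\infty}^{2-q}/(q(q-1)t)$ when $1<q\le2$, while from~\eqref{oleinik.reg.2} restarted at $t/2$, $\partial_{x_N}u_{\teps}\ge-\|u_{\teps}(t)\|_{L^\infty}^{q}\|u_{\teps}(t/2)\|_{L^\infty}^{2-2q}/(q(1-q)(t/2))$ when $1-\tfrac1N<q<1$; inserting the decay from part~(a) (twice in the second case) and passing to the limit $\teps\to0^+$ in $\md'(\rr^N)$ yields~\eqref{upper.ux.q>1}--\eqref{upper.ux.q<1}, the exponents matching once more by $q\kappa=1+\tfrac{N-1}\alpha$. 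For part~(c), I would split $\partial_{x_N}u_{\teps}=(\partial_{x_N}u_{\teps})^+-(\partial_{x_N}u_{\teps})^-$: by part~(b) one of $(\partial_{x_N}u_{\teps})^{\pm}$ is $\le K(t):=C(q,\alpha,N)\,M^{(2-q)/q}\,t^{-1-(1+\frac{N-1}\alpha)\frac{2-q}q}$ pointwise (up to $o(1)$ in $\teps$), and the other differs from it by $\pm\partial_{x_N}u_{\teps}$, whose integral over $\{|x'|<R',\,|x_N|<2R\}$ is the boundary term $\int_{|x'|<R'}\big(u_{\teps}(t,x',2R)-u_{\teps}(t,x',-2R)\big)\,{\rm d}x'$, of size $\le C\,{R'}^{N-1}\|u_{\teps}(t)\|_{L^\infty}$; hence $\int_{|x'|<R',\,|x_N|<2R}|\partial_{x_N}u_{\teps}(t)|\le C\,{R'}^{N-1}\big(R\,K(t)+\|u_{\teps}(t)\|_{L^\infty}\big)$. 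Writing $u_{\teps}(t,x+(0,h_N))-u_{\teps}(t,x)=\int_0^{h_N}\partial_{x_N}u_{\teps}(t,x',x_N+s)\,{\rm d}s$ and using Fubini (valid because $|h_N|<R$) gives~\eqref{W11loc} for $u_{\teps}$, and $\teps\to0^+$ together with parts~(a)--(b) concludes.

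The main obstacle will be the fast range $1-\tfrac1N<q<1$: the Oleinik bound~\eqref{oleinik.reg.2} is not scale-clean --- it carries $\|u_0\|_{L^\infty}$ rather than only the mass --- so obtaining the sharp $\|u_0\|_{L^1}^{1/q}$-dependence in~\eqref{linfty.hyp} forces the self-improving bootstrap above, which closes only by virtue of the exact relation $q\kappa=1+\tfrac{N-1}\alpha$. A secondary, more bookkeeping-type difficulty is that, solutions being possibly non-smooth when $\alpha\le1$, each step has to be performed for the classical approximations $u_{\teps}$ and then transferred to the entropy solution --- in particular the identification $v_{\teps}\equiv v$ and the passage of the one-sided derivative bounds to $\md'(\rr^N)$.
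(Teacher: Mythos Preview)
Your proposal is correct and close to the paper's proof; the slow range $1<q\le2$ and parts (b)--(c) match almost exactly (the paper packages the ``integrate the Oleinik lower bound in $x_N$'' step as an abstract one-variable lemma, Lemma~\ref{ineq.dis.appendix}, applied directly to the entropy solution $u(t,x',\cdot)\in L^1\cap L^\infty_{\rm loc}$, whereas you do the same computation by hand on $u_{\teps}$ and track the $O(\teps)$ remainder).

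The one genuine methodological difference is in the fast range $1-\tfrac1N<q<1$. The paper does not run a single fixed-point argument on $\Psi_{\teps}(T)=\sup_{0<s\le T}s^\kappa\|u_{\teps}(s)\|_{L^\infty}$; instead it works with the entropy solution $u$ itself, restarts the Oleinik estimate at time $\tau$ via a fresh approximation $w_{0,k}=u(\tau)+1/k$, and iterates: from $\|u(t)\|_{L^\infty}\le\gamma_n t^{-\alpha_n}$ and the primitive bound $\|v(t)\|_{L^\infty}\lesssim Mt^{-(N-1)/\alpha}$ it derives $\|u(t)\|_{L^\infty}\le\gamma_{n+1}t^{-\alpha_{n+1}}$ with $\alpha_{n+1}=\alpha_n\frac{2-2q}{2-q}+\frac{1}{2-q}\big(1+\frac{N-1}{\alpha}\big)$, a contraction with fixed point $\kappa$. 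The advantage of the paper's route is that, working with $u$ rather than $u_{\teps}$, there are no $\teps$-error terms to carry through. Your direct bootstrap is also valid, but note that your ``$o(1)$'' is really $O(\teps\,T^{\kappa(2-q)})$ after taking the supremum over $t\in(0,T]$, so closing the inequality $\Psi_{\teps}(T)^{2-q}\le CM\,\Psi_{\teps}(T)^{2-2q}+O(\teps\,T^{\kappa(2-q)})$ needs a short dichotomy (either $\Psi_{\teps}(T)^q\le 2CM$, or else the first term on the right is absorbed and $\Psi_{\teps}(T)\le C'\teps^{1/(2-q)}T^\kappa$), after which one lets $\teps\to0$ for fixed $T$ and then $T\to\infty$. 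This is the only place where your sketch under-specifies the argument.
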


\begin{remark}\label{rk:smoothing}
\begin{enumerate}[{\rm (a)}]
\item  The hyperbolic estimate~\eqref{linfty.hyp} is an improvement of the parabolic estimate~\eqref{decayLinfty} if and only if $q<q_*(\alpha)$. In this respect,  it is important to notice that $q_*(\alpha)<2$ for all $\alpha\in(0,2)$, so that the hyperbolic estimate is valid in the convection regime.
\item Note that $f''(r)=q(q-1)r^{-(2-q)}$. On the  one hand, when $1<q\leq 2$, $f''(r)>0$ for all $r>0$, and on the other hand, when $1-\frac{1}{N}<q<1$, $f''(r)<0$ for all $r>0$. Hence, $f$ is so-called \emph{genuinely nonlinear} in these ranges of parameters. In the case $f''(r)>0$, the Ole\u{\i}nik-type estimate and the $L^1$--$L^\infty$-smoothing effects in Lemmas~\ref{lem.OleinikForPositiveBoundedFunctions} and~\ref{estimari.hiperbolice} fall into the well-known theory of scalar conservation laws, see e.g. \cite{Hof83} and \cite[Theorems 11.2.1 and 11.5.2]{Daf00}. The maybe less known case $f''(r)<0$ can be found in \cite[Lemma 2.6]{LaurencotFast}. See also the recent paper  \cite{SeSi19} for a novel multidimensional analogue.
\item By passing to the limit $\varepsilon\to0^+$ we can check that the $L^1$--$L^\infty$ smoothing estimate~\eqref{linfty.hyp} is still valid for our original problem~\eqref{eq.main}.
\end{enumerate}
\end{remark}

\begin{proof}[Proof of Lemma~\ref{estimari.hiperbolice}] (i) \emph{The case $1<q\leq 2$.}  We approximate $u_0$ with smooth functions $u_{0,\teps}$ such that $\teps\le u_{0,\teps}\le 2\|u_0\|_{L^\infty(\rr^N)}$ and use the estimates in Lemma~\ref{lem.OleinikForPositiveBoundedFunctions} for the corresponding classical solution~$u_{\teps}$ of~\eqref{reg}. By Lemma~\ref{lem.StabilityOfSolutionsOfReg}, we may pass to the limit $\teps\to 0$ to obtain that
\begin{equation}
	\label{oleinik.10}
\partial_{x_N}(u^{q-1})(t)\leq \frac 1{qt} \quad\textup{in }\mathcal{D'}(\rr^N)\textup{ for all }t>0.
\end{equation}
Now, consider $v(t,x')=\int_{\rr}u(t,x',x_N)\,{\rm d}x_N$ from Lemma~\ref{lem.PrimitiveSolvesParabolic}.
 Using the nondegeneracy condition on the new measure $\tilde\mu$ (corresponding to $\widetilde \ml$),  the results of Lemma~\ref{parabolic.estimates} still hold for~\eqref{restricted.epsilon}.
We then obtain that $v$ satisfies the $L^1$--$L^\infty$-smoothing
  \begin{equation}
  	\label{est.v.new}
  \|v(t)\|_{L^\infty(\rr^{N-1})}\leq c(\alpha)  t^{-\frac{N-1}\alpha}\|v_0\|_{L^1(\rr^{N-1})}=c(\alpha) t^{-\frac{N-1}\alpha}\|u_0\|_{L^1(\rr^N)}.
  \end{equation}
This estimate, the decay of the $x_N$ derivative of $u$ in~\eqref{oleinik.10},    and Lemma \ref{ineq.dis.appendix} in variable $x_N$
show that for a.e. $x'\in \rr^{N-1}$ (see also   \cite[Lemma 2.2]{EVZIndiana})
  \[
  t(q-1)\|u(t,x',\cdot)\|_{L^\infty(\rr)}^q \leq \int_{\rr}u(t,x',x_N)\,{\rm d}x_N\leq\esssup _{x'\in \rr^{N-1}} v(t,x')\leq c(\alpha) t^{-\frac{N-1}\alpha}\|u_0\|_{L^1(\rr^N)}
  \]
which gives us~\eqref{linfty.hyp}.

Let us now prove~\eqref{upper.ux.q>1}. Even though the argument may be classical, we prefer to add a few lines here. Lemma~\ref{lem.ClassicalSolutionsOfReg} guarantees that $\teps \leq  u_{\teps}  \leq 2 \|u_0\|_{L^\infty(\rr^N)} $, and then
\[
    \partial_{x_N}(u_{\teps}(t,x))=\frac 1{q-1}u_{\teps}^{2-q}(t,x)\partial_{x_N}\big(u_{\teps}^{q-1}(t,x)\big)
    \leq \frac{u_{\teps}^{2-q}(t,x)}{q(q-1)t}.
\]
  Letting $\teps\rightarrow 0$, using that $u_{\teps}(t)\rightarrow u(t)$ a.e.~$x\in \rr^N$ (cf.~Lemma~\ref{lem.StabilityOfSolutionsOfReg}) and the $L^1$--$L^\infty$-smoothing obtained before, we get that
\begin{equation}\label{eq.EstimateOnThex_NDerivativeL1}
  \partial_{x_N}u(t)\leq \frac{{u}^{2-q}(t)}{q(q-1)t}\leq C(q,\alpha,N)\|u_0\|_{L^1(\rr^N)}^{\frac {2-q}q} t^{-1-(1+\frac{N-1}\alpha)(\frac {2-q}q)}  \quad\textup{in }\mathcal{D}'(\rr^N)\textup{ for all }t>0.
\end{equation}

It remains to prove~\eqref{W11loc}. By Lemma~\ref{lem.EntropySolutionsOfReg}, $u\in L^2((0,\infty);H^1(\rr^N))$,
and then $u(t)\in W^{1,1}_{\textup{loc}}(\rr^N)$ for a.e.~$t>0$.  Hence,~\eqref{eq.EstimateOnThex_NDerivativeL1} holds a.e.~in $\rr^N$ for a.e $t>0$. This gives us that
  \begin{align*}
  \int_{|x'|<R',|x_N|<R}&|\partial_{x_N}u(t)|\\
  &= \int_{|x'|<R'} \Big(2\int_{(-R,R),\partial_{x_N}u>0}\partial_{x_N}u(t,x',x_N)\,{\rm d}x_N -\int_{-R}^R\partial_{x_N}u(t,x',x_N)\,{\rm d}x_N\Big){\rm d}x'\\
  &=  \int_{|x'|<R'} \Big( 2\int_{(-R,R),\partial_{x_N}u>0}\partial_{x_N}u(t,x',x_N)\,{\rm d}x_N+
  u(t,x',-R)-u(t,x',R)\Big)\,{\rm d}x'\\
  &\lesssim {R'}^{N-1}\Big(R \|u_0\|_{L^1(\rr^N)}^{\frac {2-q}q} t^{-1-(1+\frac{N-1}\alpha)(\frac {2-q}q)} +  \|u(t)\|_{L^\infty(\rr^N)}\Big)\\
  &\lesssim {R'}^{N-1}\Big(R \|u_0\|_{L^1(\rr^N)}^{\frac {2-q}q} t^{-1-(1+\frac{N-1}\alpha)(\frac {2-q}q)} +   t^{-\frac{1}q(1+\frac{N-1}\alpha)} \|u_0\|_{L^1(\rr^N)}^{\frac{1}{q}}\Big).
  \end{align*}
  Since for $|h_N|<R$  we have
  \[
  \int_{|x_N|<R}|u(t,x',x_N+h_N)-u(t,x',x_N)|\,{\rm d}x_N\leq |h_N|\int_{|x_N|<2R}|\partial_{x_N}u(t,x',x_N)|\,{\rm d}x_N,
  \]
 we obtain the desired estimate  for a.e.~$t>0$. Since $u\in C([0,\infty);L^1(\rr^N))$,  estimate~\eqref{W11loc} holds for all $t>0$.

\smallskip
\noindent (ii) \emph{The case $1-\frac1N<q< 1$.} We proceed as in \cite[Lemma 2.6]{LaurencotFast}. We may assume that $u_0\not\equiv0$; otherwise the result is trivial. We fix $\tau>0$. Since $u_0$  is a bounded nonnegative function, $u(\tau)$ is also a bounded nonnegative function.   We consider a sequence of initial data   $w_{0,k}=u(\tau)+1/k$, with $k$ large enough such that  $1/k\leq w_{0,k}\leq 2\|u(\tau)\|_{L^\infty(\rr^N)}$ (notice that $\|u(\tau)\|_{L^\infty(\rr^N)}>0$, since $u_0$ is nontrivial). Then the corresponding  solutions $w_k$ with these initial data satisfy  $1/k\leq w_k(t,x)\leq 2\|u(\tau)\|_{L^\infty(\rr^N)}$ (cf.~Lemma~\ref{lem.ClassicalSolutionsOfReg})
and by Lemma~\ref{lem.OleinikForPositiveBoundedFunctions},
\begin{equation}
	\label{est.wk}
\partial_{x_N}(w_k^{1-q})(t,x)\geq -\frac{(2\|u(\tau)\|_{L^\infty(\rr^N)})^{2-2q}}{qt}\quad\text{for all }(t,x)\in Q.
\end{equation}
We now use Lemma~\ref{lem.StabilityOfSolutionsOfReg} to let $k\rightarrow \infty$. It follows that $w_k(t)\rightarrow u(\tau+t)$ a.e.~in $\rr^N$, so that
\[
\partial_{x_N}(u^{1-q})(t)\geq -\frac{(2\|u(\tau)\|_{L^\infty(\rr^N)})^{2-2q}}{q(t-\tau)}\quad\text{in } \mathcal{D}'(\rr^N)\text{ for all } t>\tau.
\]

In the next step, we show that there are convergent sequences $\{\alpha_n\}_{n\in\mathbb{N}}$, $\{\gamma_n\}_{n\in\mathbb{N}}$ such that
\[
\|u(t)\|_{L^\infty(\rr^N)}\leq \gamma_n t^{-\alpha_n} \quad \text{for all }t>0.
\]
We proceed by induction.  Using Lemma~\ref{parabolic.estimates},  we can choose  $\gamma_0=c(\alpha)\|u_0\|_{L^1(\rr^N)}$ and $\alpha_0=-\frac{N}{\alpha}$.

Denoting again $v(t,x')=\int_{\rr}u(t,x',x_N)\,{\rm d}x_N$, we obtain that $v$ satisfies~\eqref{est.v.new}.  Using Lemma~\ref{ineq.dis.appendix}  in the variable $x_N$, the parabolic estimate~\eqref{est.v.new}, and the induction hypotheses we get, for a.e.~$x'\in \rr^{N-1}$,
\begin{align*}
\|u(t,x',\cdot)\|_{L^\infty(\rr)}&\leq \Big ( \frac{2-q}{1-q}\|u(t,x',\cdot)\|_{L^1(\rr)} \frac{(2\|u(\tau)\|_{L^\infty(\rr^N)})^{2-2q}}{q(t-\tau)} \Big)^{\frac 1{2-q}}\\
&\leq \Big ( \frac{2^{2-2q}(2-q)}{(1-q)q}\|v(t)\|_{L^\infty(\rr^{N-1})} \frac{\|u(\tau)\|_{L^\infty(\rr^N)}^{2-2q}}{ (t-\tau)} \Big)^{\frac 1{2-q}}\\
&\leq \Big ( \frac{2^{2-2q}(2-q)}{(1-q)q}c( \alpha) t^{-\frac{N-1}\alpha}\|u_0\|_{L^1(\rr^N)} \frac{( \gamma_n\tau^{-\alpha_n})^{2-2q}}{(t-\tau)} \Big)^{\frac 1{2-q}}.
\end{align*}
Choosing $\tau=t/2$,  we can define
\[
\begin{aligned}
\alpha_{n+1}&=\alpha_n\frac{2-2q}{2-q}+\frac{1}{2-q}\Big(\frac{N-1}\alpha+1\Big),\\
\gamma_{n+1}&=k \gamma_n^{\frac{2-2q}{2-q}}2^{\alpha_{n+1}},\quad k=c(q,\alpha,N )\|u_0\|_{L^1(\rr^N)}^{\frac 1{2-q}}.
\end{aligned}
\]
It follows that $\alpha_n\rightarrow \frac 1q(1+\frac{N-1}\alpha)$ and $\gamma_n\rightarrow \|u_0\|_{L^1(\rr^N)}^{1/q}C(q,\alpha,N )$, which gives us~\eqref{linfty.hyp}.

Let us now obtain the estimate~\eqref{upper.ux.q<1}. By~\eqref{est.wk},
\[
\partial_{x_N}w_k(t,x)=\frac{w_k^q \partial_{x_N} (w_k^{1-q})}{(1-q)}(t,x)\geq -\frac{w_k^q(t,x)}{1-q}\frac{(2\|u(\tau)\|_{L^\infty(\rr^N)} )^{2-2q}}{qt}\geq -\frac{(2\|u(\tau)\|_{L^\infty(\rr^N)} )^{2-q}}{(1-q)qt}.
\]
Letting $k\rightarrow\infty$, choosing $\tau=t/2$,  and using the decay estimate~\eqref{linfty.hyp},  we arrive to
the desired estimate.

Finally, estimate~\eqref{W11loc} follows exactly as in the case $q>1$, since by Lemma~\ref{lem.EntropySolutionsOfReg} the solution of~\eqref{reg} satisfies  $u(t)\in W^{1,1}_{\rm loc}(\rr^N)$.
\end{proof}

\section{Rescaled solutions and limit equations}
\label{sect:scalings}
\setcounter{equation}{0}

We introduce the one-parameter family of functions
\[
u_\lambda(t,x',x_N):=\lambda^\gamma u(\lambda t, \lambda^{1/\alpha}x',\lambda^\beta x_N),
\]
where the exponents $\gamma,\beta>0$ are such that:
\begin{enumerate}[{\rm (i)}]
\item The mass of the solutions remains constant.
\item The decay in the $L^\infty$-norm remains independent of $\lambda$.
\end{enumerate}
The conservation of mass imposes
\[
\gamma=\frac{N-1}\alpha + \beta.
\]
According to~Lemmas~\ref{parabolic.estimates} and~\ref{estimari.hiperbolice}, in order to keep the best decay estimate of the $L^\infty$-norm we set
\[
\beta=
\begin{cases}
\frac 1\alpha,& q\geq q_*(\alpha),\\
\frac 1q \big(1+\frac {N-1}{\alpha}\big)-\frac {N-1}{\alpha},&1-\frac{1}{N}<q <q_*(\alpha).
\end{cases}
\]
The large time behaviour for $u$ will follow from the behaviour as $\lambda\to\infty$ of $u_\lambda$ for some fixed time $t$. Hence, we divide the proof of Theorem~\ref{asymptotic}, into two cases, depending on whether or not $q\geq q_*(\alpha)$.

\subsection{The case $q\geq q_*(\alpha)$}
We need uniform estimates for the rescaled solutions $\{u_\lambda\}_{\lambda>0}$. Note that they are entropy solutions of
\begin{equation}\label{eq.lambda.1}
 \partial_t u_\lambda +\mathcal{L} u_\lambda  =-\lambda^{-c(q,\alpha,N)}\partial_{x_N}(u_\lambda^q)
\quad \text{in } Q,\qquad u_\lambda(0)=u_{0,\lambda}\quad \textup{in } \rr^N,
\end{equation}
where $c(q,\alpha,N)=\frac N\alpha(q-q_*(\alpha))\geq 0$ and $u_{0,\lambda}(x):=\lambda^{N/\alpha} u_0(\lambda ^{1/\alpha }x)$.

\begin{proposition}\label{uniform.supercritical}
Assume~\eqref{u_0as},~\eqref{qas} with $q\geq q_*(\alpha)$, and~\eqref{mlas}. Then the entropy solution $u_\lambda$ of~\eqref{eq.lambda.1} satisfies:
\begin{enumerate}[{\rm (a)}]
\item \textup{(Time decay of $L^p$-norm)} For all $p\in[1,\infty]$,
$$
\|u_\lambda(t)\|_{L^p(\rr^N)}\lesssim   \|u_0\|_{L^1(\rr^N)} t^{-\frac N\alpha (1-\frac 1p)} \quad\text{for a.e }t>0.
$$
\item \textup{(Energy estimate)} For a.e. $0<\tau<T<\infty$,
\begin{equation*}
\int_\tau^T \int _{\rr^N} |(-\Delta)^{\alpha/4}u_\lambda(t)|^2 \lesssim
\tau^{-\frac{N}{\alpha}}\|u_0\|^2_{L^1(\rr^N)}.
\end{equation*}
\item \textup{(Conservation of mass)} For all $t>0$,
\[\int_{\rr^N}u_\lambda(t)=\int_{\rr^N}u_0.
\]
\item \textup{(Estimate on the time derivative)} For all smooth bounded domains $\Omega\subset\rr^N$ and a.e $0<\tau<T<\infty$,
\[
\|\partial_tu_\lambda\|_{L^2((\tau,T),H^{-1}(\Omega))}\lesssim C(\tau, \|u_0\|_{L^1(\rr^N)})\quad\text{for all }\lambda\ge1.
\]
\item \textup{(Tail control)} For all $R>0$, all $t>0$,  and all $\lambda\ge1$,
\begin{equation}
\label{tail.control.1}
\begin{aligned}
\int_{|x|>2R}u_\lambda(t)&\lesssim \int_{|x|> \lambda^{1/\alpha} R}u_0+C(\|u_0\|_{L^1(\rr^N)},\|u_0\|_{L^\infty(\rr^N)} ) \big(
tR^{-\alpha} +r_{q,\alpha,N}(t,\lambda)R^{-b(q,N)}\big),\\
r_{q,\alpha,N}(t,\lambda)&:=
\begin{cases}
	\lambda^{-1/\alpha}, & q>1+\frac{\alpha}N,\\
		\lambda^{-1/\alpha}  \log(1+t\lambda), &q=1+\frac{\alpha}N,\\
		\lambda^{-1/\alpha} + t^{1-\frac N\alpha(q-1)}\lambda ^{-c(q,\alpha,N)},&
		1< q<1+\frac{\alpha}N,\\
	\lambda^{-c(q,\alpha,N)}t, &q<1,
\end{cases}
\\
b(q, N)&:=
\begin{cases}
	1, & q>1,\\
	1-N(1-q), &q<1.
\end{cases}
\end{aligned}
\end{equation}
\end{enumerate}
\end{proposition}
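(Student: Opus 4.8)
\emph{Proof strategy.} The plan is to deduce every statement from its counterpart for $u$, the entropy solution of~\eqref{eq.main}, and then read off the claim through the scaling. First I would pass to the limit $\varepsilon\to0^+$ in the uniform-in-$\varepsilon$ bounds of Lemmas~\ref{lem.EntropySolutionsOfReg} and~\ref{parabolic.estimates}, using that the entropy solution of~\eqref{eq.main} is the $C([0,\infty);L^1(\rr^N))$-limit of the entropy solutions of~\eqref{reg} (cf.\ Lemma~\ref{lem.StabilityOfSolutionsOfReg} and Appendix~\ref{sect:appendix.results.entropy.solutions}): by Fatou one keeps the $L^1$--$L^p$ smoothing $\|u(t)\|_{L^p(\rr^N)}\lesssim M t^{-\frac N\alpha(1-\frac1p)}$, $p\in[1,\infty]$; by weak lower semicontinuity of the Dirichlet form one keeps $\Lambda_1\int_\tau^\infty\!\int_{\rr^N}|(-\Delta)^{\alpha/4}u|^2\le\tfrac12\|u(\tau)\|_{L^2(\rr^N)}^2$; by $L^1$-convergence one keeps mass conservation; and by dominated convergence (the $\varepsilon$-terms vanishing) one keeps the tail bound
\begin{equation*}
\int_{|x|>2R}u(t)\lesssim\int_{|x|>R}u_0+Mt\|\ml\rho_R\|_{L^\infty(\rr^N)}+\int_0^t\!\int_{\rr^N}u(s)^q\,|\partial_{x_N}\rho_R|.
\end{equation*}
Since $q\ge q_*(\alpha)$ forces $\beta=\tfrac1\alpha$ and hence $\gamma=\tfrac N\alpha$, the rescaling is the isotropic parabolic one, $u_\lambda(t,x)=\lambda^{N/\alpha}u(\lambda t,\lambda^{1/\alpha}x)$, and each of (a)--(e) follows from its analogue for $u$ by a change of variables.

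For (a), a change of variables gives $\|u_\lambda(t)\|_{L^p(\rr^N)}=\lambda^{\frac N\alpha(1-\frac1p)}\|u(\lambda t)\|_{L^p(\rr^N)}$, into which the $L^1$--$L^p$ smoothing for $u$ inserts exactly the claimed $M t^{-\frac N\alpha(1-\frac1p)}$, the $\lambda$-powers cancelling. Property (c) is immediate, as the scaling was chosen to preserve mass: $\int_{\rr^N}u_\lambda(t)=\int_{\rr^N}u(\lambda t)=M$. For (b), using $(-\Delta)^{\alpha/4}[w(\lambda^{1/\alpha}\,\cdot\,)](x)=\lambda^{1/2}[(-\Delta)^{\alpha/4}w](\lambda^{1/\alpha}x)$ and then changing variables in $x$ and in $t$, one gets $\int_\tau^T\!\int_{\rr^N}|(-\Delta)^{\alpha/4}u_\lambda|^2=\lambda^{N/\alpha}\int_{\lambda\tau}^{\lambda T}\!\int_{\rr^N}|(-\Delta)^{\alpha/4}u(s)|^2\,{\rm d}s$, which by the energy bound together with the $L^1$--$L^2$ smoothing at time $\lambda\tau$ is $\lesssim\lambda^{N/\alpha}M^2(\lambda\tau)^{-N/\alpha}=M^2\tau^{-N/\alpha}$. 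For (d), I would use~\eqref{eq.lambda.1}, $\partial_tu_\lambda=-\ml u_\lambda-\lambda^{-c(q,\alpha,N)}\partial_{x_N}(u_\lambda^q)$ with $\lambda^{-c(q,\alpha,N)}\le1$ since $\lambda\ge1$ and $c(q,\alpha,N)\ge0$: one bounds $\|\ml u_\lambda(t)\|_{H^{-1}(\Omega)}\lesssim\mathcal{E}(u_\lambda(t),u_\lambda(t))^{1/2}$ (for $\phi\in H^1_0(\Omega)$ one has $\|\phi\|_{\dot W^{\alpha/2,2}}\le\|\phi\|_{H^1}$, since $|\xi|^\alpha\le1+|\xi|^2$), which lies in $L^2(\tau,T)$ by (b), while $\|\partial_{x_N}(u_\lambda^q)(t)\|_{H^{-1}(\Omega)}\le\|u_\lambda^q(t)\|_{L^2(\Omega)}$ is controlled, uniformly in $\lambda\ge1$, by the $L^1$--$L^\infty$ smoothing from (a) (plus a H\"older estimate on $\Omega$ if $q<\tfrac12$); thus the bound depends only on $\tau$, $\|u_0\|_{L^1(\rr^N)}$, and $\Omega,q,\alpha,N$.

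The hard part will be (e). Since $\ml$ is homogeneous of order $\alpha$, $\|\ml\rho_R\|_{L^\infty(\rr^N)}=R^{-\alpha}\|\ml\rho\|_{L^\infty(\rr^N)}$, and $|\partial_{x_N}\rho_R|\lesssim R^{-1}\mathds{1}_{\{R<|x|<2R\}}$. Setting $\tilde R:=\lambda^{1/\alpha}R$ and applying the tail bound for $u$ at time $\lambda t$ and radius $\tilde R$, the change of variables $y=\lambda^{1/\alpha}x$ yields $\int_{|x|>2R}u_\lambda(t)=\int_{|y|>2\tilde R}u(\lambda t)$ and
\begin{equation*}
\int_{|y|>2\tilde R}u(\lambda t)\lesssim\int_{|y|>\tilde R}u_0+M(\lambda t)\tilde R^{-\alpha}+\tilde R^{-1}\int_0^{\lambda t}\!\int_{\tilde R<|y|<2\tilde R}u(s)^q\,{\rm d}y\,{\rm d}s,
\end{equation*}
where $M(\lambda t)\tilde R^{-\alpha}=MtR^{-\alpha}$ reproduces the $tR^{-\alpha}$-term and $\int_{|y|>\tilde R}u_0=\int_{|y|>\lambda^{1/\alpha}R}u_0$. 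For the convection term, when $q\ge1$ I would use $\int_{\tilde R<|y|<2\tilde R}u(s)^q\le\|u(s)\|_{L^\infty(\rr^N)}^{q-1}M$ together with $\|u(s)\|_{L^\infty(\rr^N)}\lesssim\min\{\|u_0\|_{L^\infty(\rr^N)},Ms^{-N/\alpha}\}$: then $\int_0^{\lambda t}\|u(s)\|_{L^\infty(\rr^N)}^{q-1}\,{\rm d}s$ is bounded if $q>1+\tfrac\alpha N$, is $\lesssim\log(1+\lambda t)$ if $q=1+\tfrac\alpha N$, and is $\lesssim1+(\lambda t)^{1-\frac N\alpha(q-1)}$ if $1<q<1+\tfrac\alpha N$; multiplying by $\tilde R^{-1}=\lambda^{-1/\alpha}R^{-1}$ and using $1-\tfrac1\alpha-\tfrac N\alpha(q-1)=-c(q,\alpha,N)$ yields the top three cases of $r_{q,\alpha,N}(t,\lambda)$ with $b(q,N)=1$ (the $\|u_0\|_{L^\infty(\rr^N)}$-dependence and the ``free'' $\lambda^{-1/\alpha}$-term coming from the size of $\|u(s)\|_{L^\infty}$ near $s=0$). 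When $q<1$ I would instead estimate $\int_{\tilde R<|y|<2\tilde R}u(s)^q\le M^q|\{\tilde R<|y|<2\tilde R\}|^{1-q}\lesssim M^q\tilde R^{N(1-q)}$ by H\"older, which is licit since $q>1-\tfrac1N$; then $\tilde R^{-1}\int_0^{\lambda t}(\,\cdot\,)\,{\rm d}s\lesssim M^q\tilde R^{N(1-q)-1}\lambda t=M^qR^{-(1-N(1-q))}t\,\lambda^{1+\frac{N(1-q)-1}\alpha}$, and $1+\tfrac{N(1-q)-1}\alpha=-c(q,\alpha,N)$ gives the last case of $r_{q,\alpha,N}$ with $b(q,N)=1-N(1-q)$.

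The main difficulty is thus entirely the bookkeeping in (e): disentangling the contribution of $s\sim0$ (which produces the $\|u_0\|_{L^\infty(\rr^N)}$-dependence and the $\lambda^{-1/\alpha}$-term) from that of $s\sim\lambda t$, and verifying that in every range of $q$ the $\lambda$-exponents collapse to $c(q,\alpha,N)$. A routine alternative to the $\varepsilon\to0^+$ passage described above is to note that $u_\lambda$ is itself an entropy solution of a problem of the form~\eqref{eq.main} with convection coefficient $\lambda^{-c(q,\alpha,N)}\le1$, so the estimates of Section~\ref{sect:estimates} may be applied to it directly, with $u_{0,\lambda}$ in place of $u_0$; this changes only the presentation, not the substance.
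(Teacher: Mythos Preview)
Your proposal is correct and uses the same ingredients as the paper; the only difference is organizational. The paper proceeds via what you call the ``routine alternative'': it works with the regularized rescaled solution $u_\lambda^\eps$ of~\eqref{eq.lambda.1.eps}, applies Lemmas~\ref{lem.EntropySolutionsOfReg} and~\ref{parabolic.estimates} directly to $u_\lambda^\eps$ (these lemmas are stated for~\eqref{reg}, and $u_\lambda^\eps$ satisfies their hypotheses with convection coefficient $\lambda^{-c(q,\alpha,N)}\le1$), and then lets $\eps\to0^+$ using Lemma~\ref{lem.StabilityOfSolutionsOfReg2}. Your primary route instead lets $\eps\to0^+$ once at the level of $u$ and then reads off (a)--(e) through the isotropic scaling $u_\lambda(t,x)=\lambda^{N/\alpha}u(\lambda t,\lambda^{1/\alpha}x)$. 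Both orderings yield exactly the same computations in~(e): your integral $\tilde R^{-1}\int_0^{\lambda t}\|u(s)\|_{L^\infty}^{q-1}\,{\rm d}s$ is the unscaled version of the paper's $R^{-1}\int_0^t\|u_\lambda(s)\|_{L^q}^q\,{\rm d}s\lesssim R^{-1}\int_0^t\big(\tfrac{\lambda}{1+\lambda s}\big)^{\frac N\alpha(q-1)}\,{\rm d}s$, and the case split according to the sign of $\tfrac N\alpha(q-1)-1$ is identical. For~(d) the paper keeps the term $\eps\|\nabla u_\lambda^\eps\|_{L^2}$ in the $H^{-1}$ estimate and controls it via the energy, whereas you drop it from the start by working with $u_\lambda$; either way the bound reduces to~(a) and~(b).
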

\begin{remark}
Under our assumptions the exponent $b$ is always positive.
\end{remark}

The proof relies on considering the regularized problem
\begin{equation}\label{eq.lambda.1.eps}
 \partial_t u_\lambda^\eps +\mathcal{L} u_\lambda^\eps-\eps \Delta u_\lambda^\eps  =-\lambda^{-c(q,\alpha,N)}\partial_{x_N}((u_\lambda^\eps)^q)
\quad \text{in } Q,\qquad u_\lambda(0)=u_{0,\lambda}\quad \textup{in } \rr^N,
\end{equation}
and then transferring properties from $u_\lambda^\eps$ to $u_\lambda$.  We then need the following lemma, proved in Appendix~\ref{sect:appendix.results.entropy.solutions}, Section~\ref{sec.Appendix.ConvInCL1}:

\begin{lemma}\label{lem.StabilityOfSolutionsOfReg2}
Assume~\eqref{u_0as},~\eqref{qas} with $q\geq q_*(\alpha)$, and~\eqref{mlas}. Let $u_\lambda^\eps$ be the entropy solution of~\eqref{eq.lambda.1.eps} and $u_{\lambda}$ the entropy solution of~\eqref{eq.lambda.1}. Then,
$$
u_{\lambda}^\eps\to u_\lambda\quad\text{in } C([0,\infty);L^1(\rr^N))\text{ as }
\varepsilon\to0^+.
$$
\end{lemma}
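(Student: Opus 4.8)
The plan is to read Lemma~\ref{lem.StabilityOfSolutionsOfReg2} as a vanishing--viscosity result and to identify the limit through the uniqueness of entropy solutions. For a fixed $\lambda\ge1$, the equation in~\eqref{eq.lambda.1} is of the form~\eqref{eq.main} with convective flux $\lambda^{-c(q,\alpha,N)}r^q$ and datum $u_{0,\lambda}$, which still satisfies~\eqref{u_0as} with mass $M$, while~\eqref{eq.lambda.1.eps} is precisely its regularisation~\eqref{reg}. So I would first establish bounds on $\{u_\lambda^\eps\}_{\eps\in(0,1]}$ that do not degenerate as $\eps\to0^+$, then extract a convergent subsequence by compactness, pass to the limit in the entropy formulation, and finally invoke uniqueness (Theorem~\ref{thm.UniquenessPropertiesEntropy}) to upgrade subsequential convergence to convergence of the whole family.

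For the uniform estimates I would use Lemma~\ref{lem.EntropySolutionsOfReg} and Lemma~\ref{parabolic.estimates}, whose constants are $\eps$--independent, together with the $L^1$--contraction and comparison principle of the entropy theory. This gives, uniformly in $\eps\in(0,1]$: mass conservation $\|u_\lambda^\eps(t)\|_{L^1(\rr^N)}=M$ and $0\le u_\lambda^\eps(t)\le\|u_{0,\lambda}\|_{L^\infty(\rr^N)}$; a spatial $L^1$--modulus of continuity $\|u_\lambda^\eps(t,\cdot+h)-u_\lambda^\eps(t,\cdot)\|_{L^1(\rr^N)}\le\|u_{0,\lambda}(\cdot+h)-u_{0,\lambda}\|_{L^1(\rr^N)}$, from the contraction and the translation invariance of~\eqref{reg}; a time $L^1$--modulus of continuity on bounded time intervals, by testing the weak form against fixed $\phi\in C_{\rm c}^\infty(\rr^N)$ and using $\ml\phi=O(\rho)$, $\|\ml^{\le\rho}\phi\|_{L^\infty}\lesssim\rho^{2-\alpha}\|D^2\phi\|_{L^\infty}$, and $\int_K(u_\lambda^\eps)^q\lesssim M^q|K|^{1-q}$ on compacts $K$; and, from the tail estimate of Lemma~\ref{lem.EntropySolutionsOfReg}(d), in which the viscous term is only $\eps Mt/R^2\le Mt/R^2$, an $\eps$--uniform tail control $\int_{|x|>2R}u_\lambda^\eps(t)\lesssim\int_{|x|>R}u_{0,\lambda}+o_R(1)$ for $t$ in bounded intervals, where the convective contribution $\int_0^t\!\int(u_\lambda^\eps)^q|\partial_{x_N}\rho_R|$ is absorbed using the $L^\infty$--decay of Lemma~\ref{parabolic.estimates}(a).

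With these bounds, the Fr\'echet--Kolmogorov theorem in space (uniformly for $t$ in a compact interval) and the Arzel\`a--Ascoli theorem in time yield, along a subsequence $\eps_k\to0$, a limit $\bar u$ with $u_\lambda^{\eps_k}\to\bar u$ in $C([0,T];L^1(\rr^N))$ for every $T>0$, hence a.e.\ in $Q$ and with $\|\bar u(t)\|_{L^1(\rr^N)}=M$. I would then pass to the limit in the entropy inequality for $u_\lambda^{\eps_k}$, which differs from~\eqref{eq:entropy.inequality} only by the extra term $\varepsilon_k\iint_Q|u_\lambda^{\eps_k}-k|\Delta\phi$: the convective term passes by dominated convergence, the uniform $L^\infty$ bound being exactly what neutralises the merely $q$--H\"older flux when $q<1$; the term $\iint_Q\sgn(u_\lambda^{\eps_k}-k)\phi\,\ml^{>\rho}u_\lambda^{\eps_k}$ passes because $\ml^{>\rho}$ is bounded on $L^1(\rr^N)$; the term $\iint_Q|u_\lambda^{\eps_k}-k|\ml^{\le\rho}\phi$ passes since $\ml^{\le\rho}\phi$ is a fixed bounded function; and the viscous term vanishes because $\varepsilon_k\to0$ while $\|u_\lambda^{\eps_k}\|_{L^1}$ and $\|\Delta\phi\|_{L^\infty}$ stay bounded. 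The $L^1$--time--continuity also lets $\bar u$ attain $u_{0,\lambda}$ in the sense of~\eqref{eq:initial.data}. Hence $\bar u$ is an entropy solution of~\eqref{eq.lambda.1}; by uniqueness $\bar u=u_\lambda$, and as the limit is subsequence--independent, $u_\lambda^\eps\to u_\lambda$ in $C([0,T];L^1(\rr^N))$ for every $T>0$, which is the asserted convergence.

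The main obstacle is to check that \emph{each} of these estimates is genuinely uniform in $\eps$ down to $\eps=0$ --- most delicately the time $L^1$--equicontinuity and the tail control, where one must retain the viscous corrections as $O(\eps)$ rather than merely finite --- and to obtain the spatial $L^1$--compactness without a $BV$ bound, which I would not expect to hold in the fast--convection range $q\in(1-\tfrac1N,1)$; for this I rely throughout on the $L^1$--contraction of the entropy framework rather than on total-variation control. The other point requiring care is the limit passage in the convective term when $f(u)=\lambda^{-c}u^q$ is not Lipschitz, which is handled by pairing the $\eps$--uniform $L^\infty$ bound with the almost-everywhere convergence coming from the $L^1$--compactness.
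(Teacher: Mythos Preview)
Your proposal is correct and follows essentially the same route as the paper: uniform-in-$\eps$ estimates (via the $L^1$-contraction for space translations, the very weak formulation for time translations, and Lemma~\ref{lem.EntropySolutionsOfReg}(d) for tails), Arzel\`a--Ascoli/Fr\'echet--Kolmogorov compactness in $C([0,T];L^1(\rr^N))$, and identification of the limit via the uniqueness in Theorem~\ref{thm.UniquenessPropertiesEntropy}. The paper's own proof (in Sections~\ref{sec.CompactnessCLloc1}--\ref{sec.Appendix.ConvInCL1}) is in fact terser than yours; the only place where you are slightly less explicit is the time $L^1$-modulus, where testing against a fixed $\phi$ yields only weak time-continuity, and the paper makes the needed upgrade to $L^1$ via the standard mollification splitting $u(t)-u(s)=[u(t)-u_\delta(t)]+[u_\delta(t)-u_\delta(s)]+[u_\delta(s)-u(s)]$ (the outer terms controlled by the spatial $L^1$-modulus from the contraction, the middle one by testing against $\omega_\delta$), giving a modulus $\lambda(|t-s|^{1/3})$.
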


\begin{proof}[Proof of Proposition~\ref{uniform.supercritical}]
Note that $u_\lambda^\eps$ satisfies the hypotheses  of Lemmas~\ref{lem.EntropySolutionsOfReg} and~\ref{parabolic.estimates}. The convergence given in Lemma~\ref{lem.StabilityOfSolutionsOfReg2} then ensures that all the estimates for  $u_\lambda^\eps$ transfer to $u_\lambda$. We thus focus on obtaining them for $u_\lambda^\eps$.

\smallskip
\noindent(a) This is exactly Lemma~\ref{parabolic.estimates}(a).

\smallskip
\noindent(b) The energy estimate in Lemma~\ref{parabolic.estimates}(b) and the $L^1$--$L^2$-smoothing deduced in (a) gives the result.

\smallskip
\noindent(c) A consequence of Lemma~\ref{lem.EntropySolutionsOfReg}(b).

\smallskip
\noindent(d) Let us consider a function $\phi\in C_\textup{c}^\infty(\rr^N)$ supported in $\Omega$. Let us choose $p\geq 2\geq p'\geq 1$ such that $pq\geq 1$. By Lemma~\ref{lem.EntropySolutionsOfReg}(c), we know that, for a.e.~$t>0$, $\partial_t u_\lambda^\eps(t) \in H^{-1}(\Omega)$ and
\begin{equation*}
\begin{split}
|\langle\partial_t u_\lambda^\eps(t) ,&\phi \rangle_{H^{-1}\times H^1}|= \bigg |\int _{\rr^N} \Big( \lambda^{-c(q,\alpha,N)}(u_\lambda^\eps(t))^q\partial_{x_N}\phi   -\varepsilon \nabla u_\lambda^\eps(t)\cdot \nabla \phi  \Big) + \mathcal{E}(u_\lambda^\eps(t), \phi) \bigg |\\
&\leq \lambda^{-c(q,\alpha,N)}\| (u_\lambda^\eps(t))^q\|_{L^p} \|\partial_{x_N}\phi\|_{L^{p'}}+\eps \|\nabla u_\lambda^\eps(t)\|_{L^2}\| \nabla\phi\|_{L^2}+\mathcal{E}(u_\lambda^\eps(t),u_\lambda^\eps(t))^{1/2}\mathcal{E}(\phi,\phi)^{1/2}\\
&\leq \lambda^{-c(q,\alpha,N)}\| u_\lambda^\eps(t)\|_{L^{pq}}^q \|\nabla \phi\|_{L^{p'}}+\eps \|\nabla u_\lambda^\eps(t)\|_{L^2}\|\phi\|_{H^1}+\mathcal{E}(u_\lambda^\eps(t),u_\lambda^\eps(t))^{1/2} \|\phi\|_{H^{\alpha/2}}\\
&\lesssim \Big(\lambda^{-c(q,\alpha,N)}\| u_\lambda^\eps(t)\|_{L^{pq}(\rr^N)}^q+\eps \|\nabla u_\lambda^\eps(t)\|_{L^2(\rr^N)}+\|(-\Delta)^{\alpha/4}u_\lambda^\eps(t)\|_{L^2(\rr^N)}\Big) \|\phi\|_{H^{1}(\Omega)}.
\end{split}
\end{equation*}
That is,
\begin{equation*}
\begin{split}
\|\partial_tu_\lambda^\eps(t)\|_{H^{-1}(\Omega)}&=\sup_{\|\phi\|_{H^{1}(\Omega)}\leq 1}|\langle\partial_t u_\lambda^\eps(t) ,\phi \rangle_{H^{-1}\times H^1}|\\
&\lesssim   \lambda^{-c(q,\alpha,N)}\| u_\lambda^\eps(t)\|_{L^{pq}(\rr^N)}^{q}+\eps \|\nabla u_\lambda^\eps(t)\|_{L^2(\rr^N)}+\|(-\Delta)^{\alpha/4}u_\lambda^\eps(t)\|_{L^2(\rr^N)}.
\end{split}
\end{equation*}

Now, integrating in the time variable and using the  properties for $u_\lambda^\eps$  in Lemma~\ref{parabolic.estimates}(b) give us that the following holds uniformly in $\varepsilon\in (0,1)$, for all $\lambda>1$:
\begin{align*}
\int_\tau^T \|\partial_tu_\lambda^\eps(t)\|^2_{H^{-1}(\Omega)}\,{\rm d}t&\lesssim \int_\tau^T \Big(\| (-\Delta)^{\alpha/4}u_\lambda^\eps(t)\|^2_{L^2(\rr^N)}+\eps^2 \|\nabla u_\lambda^\eps(t)\|^2_{L^2(\rr^N)}\Big)\,{\rm d}t \\
&\quad+\int_{\tau}^T\| u_\lambda^\eps(t)\|_{L^{pq}(\rr^N)}^{2q}\,{\rm d}t\lesssim C(\tau,\|u_0\|_{L^1(\rr^N)}).
\end{align*}

\smallskip
\noindent(e)  We continue the estimation of Lemma~\ref{lem.EntropySolutionsOfReg}(d):
\begin{align*}
&\int_{|x|>2R} u_\lambda^\eps(t)\lesssim \int_{|x|>\lambda^{1/\alpha} R} u_{0}+M t\|\ml\rho_R\|_{L^\infty(\rr^N)}  +\eps \frac{tM}{R^2} +\lambda^{-c(q,\alpha,N)} \int_0^t \int_{\rr^N}(u_\lambda^\eps)^q|\partial_{x_N}\rho_R|.
\end{align*}
The $\alpha$-homogeneity of $\ml$  gives $\|\ml \rho_R\|_{L^\infty(\rr^N)}\lesssim R^{-\alpha}$.  It remains to estimate the nonlinear term.

If $q>1$, the conservation of mass and the maximum principle yield
$$
\begin{aligned}
\|u_\lambda^\eps(t)\|_{L^q(\rr^N)}^q&\le \|u_\lambda^\eps(t)\|_{L^1(\rr^N)}\|u_\lambda^\eps(t)\|_{L^\infty(\rr^N)}^{q-1}
\le \|u_\lambda^\eps(0)\|_{L^1(\rr^N)}\|u_\lambda^\eps(0)\|_{L^\infty(\rr^N)}^{q-1}
\\
&= M\lambda^{\frac{N}{\alpha}(q-1)}\|u_0\|_{L^\infty(\rr^N)}^{q-1},
\end{aligned}
$$
which,  combined with  the $L^1$--$L^q$-decay estimate from part (a),  gives
\begin{equation}
\label{eq:power.q}
\begin{aligned}
\|u_\lambda^\eps(t)\|_{L^q(\rr^N)}^q&\lesssim\min\{ M\lambda^{\frac{N}{\alpha}(q-1)}\|u_0\|_{L^\infty(\rr^N)}^{q-1},M^q t^{-\frac{N}\alpha(q-1)}\}\\
&\lesssim C(M,\|u_0\|_{L^\infty(\rr^N)})\bigg(\frac{ \lambda}{1+\lambda t}\bigg)^{\frac{N}\alpha(q-1)},
\end{aligned}
\end{equation}
so that
$$
\begin{aligned}
\int_0^t\int_{\rr^N}(u_\lambda^\eps)^q|\partial_{x_N}\rho_R|&\lesssim\frac1R\int_0^t\|u_\lambda^\eps(s)\|_{L^q(\rr^N)}^q\,{\rm d}s\lesssim \frac{C(M,\|u_0\|_{L^\infty(\rr^N)})\lambda^{\frac N\alpha(q-1)-1} }{R} \int_0^{\lambda t} \frac{{\rm d}s}{(1+ s)^{\frac{N}\alpha(q-1)}}\\
&\lesssim\frac{C(M,\|u_0\|_{L^\infty(\rr^N)})\lambda^{\frac N\alpha(q-1)-1} }{R}\begin{cases}
		1, & q>1+\frac{\alpha}N,\\
		 \log(1+t\lambda), &q=1+\frac{\alpha}N,\\
		(1 +\lambda t)^{1- \frac{N}\alpha(q-1)},&
		q<1+\frac{\alpha}N,
	\end{cases}
\\&=\lambda^{c(q,\alpha,N)}C(M,\|u_0\|_{L^\infty(\rr^N)})r_{q,\alpha,N}(t,\lambda)R^{-b(q,N)}.
\end{aligned}
$$

If $q<1$,   H\"older's inequality with exponents $1/q$ and $(1/q)'=1/(1-q)$ yields
\[
\int_{\rr^N}(u_\lambda^\eps(s))^q|\partial_{x_N}\rho_R|\leq \|u_\lambda^\eps(s)\|_{L^1(\mathbb{R}^N)}^q \|\partial_{x_N}\rho_R\|_{L^{1/(1-q)}(\mathbb{R}^N)}\lesssim M^q R^{-b(q,N)}.
\]

Summarizing: both if $q>1$ or $q<1$,  we have
$$
\begin{aligned}
&\int_{|x|>2R} u_\lambda^\eps(t)\lesssim \int_{|x|>\lambda^{1/\alpha} R} u_{0}+ MtR^{-\alpha} +\eps tMR^{-2} +C(M,\|u_0\|_{L^\infty(\rr^N)})r_{q,\alpha,N}(t,\lambda)R^{-b(q,N)},
\end{aligned}
$$
and the result follows  letting $\eps\to0^+$.
\end{proof}

\begin{proof}[Proof of Theorem~\ref{asymptotic} when $q\ge q_*(\alpha)$] We organize it in four steps.

\noindent (i) \emph{Compactness.} Consider Theorem~\ref{Aubin-Lions-Simon} with $X=H^{\alpha}(\rr^N)$, $B=L^2(B_R)$, and $Y=H^{-1}(B_R)$ for all $R>0$. By Proposition~\ref{uniform.supercritical}(a) and (b), e.g.~Theorem 2.1 in~\cite{DTG-CVa20} gives that $X\hookrightarrow B$ is compact, and by Proposition~\ref{uniform.supercritical}(d) time translations are controlled in $Y$. We then deduce that, up to a subsequence,  $u_\lambda\rightarrow U$ in $L^2_{\rm loc}(Q)$, so in  $L^1_{\textup{loc}}(Q)$. The tail control in Proposition~\ref{uniform.supercritical}(e)  gives us that the convergence also holds in $L^1_\textup{loc}((0,\infty);L^1(\rr^N))$. Then for a.e.~$t>0$, $u_\lambda(t)\rightarrow U(t)$ in $L^1(\rr^N)$ and $u_\lambda(t,x)\rightarrow U(t,x)$ for a.e.~$(t,x)\in Q$. Moreover, the limit point $U$ inherits the properties in Proposition~\ref{uniform.supercritical}. In particular,
$$
U\in L^\infty((0,\infty);L^1(\rr^N))\cap L^\infty((\tau,\infty);L^\infty(\rr^N))\text{ for all }\tau>0,\quad \int_{\rr^N} U(t)=M\text{ for a.e. }t>0.
$$

We will later prove that the limit profile $U$ is an entropy solution of equation~\eqref{lim.1} or~\eqref{lim.2}. In fact, for~\eqref{lim.1} we only need that $U$ is a very weak solution.
The uniqueness of the very weak/entropy solution (cf. Theorem~\ref{thm.UniquenessOfLimitEquationsIandII}) shows that in fact the whole sequence $\{u_\lambda\}_{\lambda>0}$ converges to $U$, not only a subsequence. Also,  the regularity of such solutions observed in Remark~\ref{regularity.issue},  $U\in C((0,\infty); L^1(\rr^N))$, will show that  mass conservation holds for all $t>0$.
The convergence for a given $t_1>0$ of $u_\lambda(t_1)$ toward $U(t_1)$ in $L^1(\rr^N)$ shows the desired convergence in~\eqref{main.limit} for $p=1$. The general case follows by interpolating between the convergence in the $L^1$-norm and the decay in the $L^{2p}$-norm of $u$ and $U$.

\smallskip
\noindent (ii) \emph{Identification of the limit equation.} Let us now concentrate on the equation satisfied by the limit point $U$.
 In view of Remark \ref{regularity.issue}(a),  since $u_\lambda$ is an entropy solution of~\eqref{eq.lambda.1}, then: for all $k\in \rr$, all $\rho>0$, and all $0\leq \phi\in C_\textup{c}^\infty(\overline{Q})$, 
\[
\begin{split}
&\iint_Q \Big( |u_\lambda-k|\partial_t\phi + \lambda^{-c(q,\alpha,N)}\sgn(u_\lambda-k)\big(F(u_\lambda)-F(k)\big)\cdot\nabla\phi \Big)\\
&\qquad-\iint_Q \Big(\sgn(u_\lambda-k)\phi(\ml^{> \rho}u_\lambda) + |u_\lambda-k|(\ml^{\leq \rho}\phi)\Big)+\int_{\rr^N}|u_{0,\lambda}-k|\phi(0)\geq 0.
\end{split}
\]
Let us consider $\phi\in C_\textup{c}^\infty(Q)$ and $\tau$ and $T$ such that $\phi$ is supported in $[\tau,T]$.
On $(\tau,T)$,  we use the fact that $u_\lambda\rightarrow U$ in $L^1((\tau,T)\times \rr^N)$, so
\[
\int_\tau^T \int_{\rr^N} \big||u_\lambda -k| -|U-k|\big| |\partial_t\phi| \leq \int_\tau^T \int_{\rr^N}
|u_\lambda  -U| |\partial_t\phi|\rightarrow 0 \quad\text{as }\lambda\rightarrow \infty.
\]
Similar arguments work for the term $|u_\lambda-k|(\ml^{\leq \rho}\phi)$. In the case of $\ml^{>\rho}$ we use that $\| \ml^{>\rho} v \|_{L^1(\rr^N)}\lesssim \rho^{-\alpha}\|v\|_{L^1(\rr^N)}$ to obtain
\begin{align*}
&\bigg| \iint_Q  \sgn(u_\lambda-k)\phi(\ml^{> \rho}u_\lambda) -\sgn(U -k)\phi(\ml^{> \rho}U )\bigg| \\
&\quad\leq
\bigg| \int_\tau^T \int_{\rr^N}   \sgn(u_\lambda-k)\phi (\ml^{>\rho}u_\lambda -\ml^{>\rho} U)     \bigg|+ \Big| \int_\tau^T \int_{\rr^N}  ( \sgn(u_\lambda -k)-\sgn(U -k))\phi(\ml^{> \rho}U )  \Big|\\
&\quad\lesssim  \rho^{-\alpha} \int_\tau^T \|u_\lambda(t)-U(t)\|_{L^1(\rr^N)}+\int_\tau^T \int_{\rr^N}  |\sgn(u_\lambda -U) \phi(\ml^{> \rho}U )|.
\end{align*}
Using the strong convergence for the first term  and the dominated convergence theorem for the last one we obtain that the right-hand side goes to zero as $\lambda\rightarrow \infty$.

As for the nonlinear term, if  $q>q_*(\alpha)$, so that $c(q,\alpha,N)>0$, it goes to 0. Indeed, in view of~\eqref{eq:power.q},
\begin{align*}
\lambda^{-c(q,\alpha,N)}\int_0^T \int_{\rr^N}|F(u_\lambda)|&=\lambda^{-c(q,\alpha,N)}\int_0^T \|u_\lambda(t)\|_{L^q(\rr^N)}^q\,{\rm d}t\\
&\lesssim \lambda^{-c(q,\alpha,N)}
\int_0^T \Big(\frac{\lambda}{1+\lambda t}\Big)^{\frac N\alpha(q-1)}\,{\rm d}t\rightarrow 0 \quad\text{as }\lambda\to \infty
\end{align*}
when $q> 1$,   while for $q\in(q_*(\alpha),1)$, using H\"older's inequality with exponents $1/q$ and $(1/q)'=1/(1-q)$ and the conservation of mass,
\[
\lambda^{-c(q,\alpha,N)}\int_0^T \int_{\rr^N}|F(u_\lambda)\nabla \phi|\lesssim M\lambda^{-c(q,\alpha,N)}\int_0^T \int_{\rr^N} \|\nabla \phi\|_{L^{1/(1-q)}(\rr^N)}\rightarrow 0\quad\text{as }\lambda\to \infty.
\]
We consider now the case $q=q_*(\alpha)$, for which $c(q,\alpha,N)=0$.
Observe that, on $(\tau,T)$,  both $u_\lambda$ and $U$ are uniformly bounded by $\tau^{-N/\alpha}$. So
$F(u_\lambda)\rightarrow F(U)$ in $L^1((\tau, T); L^1(\rr^N))$  and
we can use the dominated convergence theorem to obtain, as $\lambda\to\infty$,
\[
\int_\tau^T \int_{\rr^N}\sgn(u_\lambda-k)\big(F(u_\lambda)-F(k)\big)\cdot\nabla\phi
\rightarrow \int_\tau^T \int_{\rr^N}\sgn(U-k)\big(F(U)-F(k)\big)\cdot\nabla\phi.
\]
We conclude that  the limit point $U$ satisfies: for all $k\in \rr$, all $\rho>0$, and all $0\leq \phi\in C_\textup{c}^\infty(Q)$,
\begin{equation}\label{eq.EntropyInequalityU}
\begin{split}
&\iint_Q \Big( |U-k|\partial_t\phi + (1-\sgn(c(q,\alpha,N)))^+\sgn(U-k)\big(F(U)-F(k)\big)\cdot\nabla\phi \Big)\\
&\qquad-\iint_Q \Big(\sgn(U-k)\phi(\ml^{> \rho}U) + |U-k|(\ml^{\leq \rho}\phi)\Big) \geq 0,
\end{split}
\end{equation}
i.e., $U$ satisfies Definition~\ref{def.entropySolution}(b).

\smallskip
\noindent(iii) \emph{Identification of the initial data.} We recall that $u_\lambda$ is not only an entropy solution but also a very weak solution, i.e.,
\[
\iint_Q  \Big( u_\lambda  \partial_t\phi +\lambda^{-c(q,\alpha,N)}u_\lambda^q\partial_{x_N}\phi
 -u_\lambda (\mathcal{L}  \phi) \Big)+\int _{\rr^N}u_{0,\lambda}\phi(0)=0\quad\text{for all }\phi \in C_{\textup{c}}^\infty(\overline{Q}).
 \]
Using test functions $\phi(t,x)=\theta(t)\psi(x)$ as in \cite[Proof of Theorem 1, page 56]{EVZArma} (i.e., $\theta$ approximates  $\mathds{1}_{[0,t]}$),    we get that, for all $\psi\in C_\textup{c}^\infty(\rr^N)$,
 \begin{align*}
\bigg|  \int_{\rr^N}u_\lambda(t)\psi&-\int_{\rr^N}u_{0,\lambda}\psi\bigg|\leq
\int_0^t\int_{\rr^N}\Big(  \lambda^{-c(q,\alpha,N)}| u_\lambda|^q |\partial_{x_N}\psi |+|u_\lambda| |\mathcal{L}  \psi|\Big).
\end{align*}
The same arguments as in the proof of the tail control in Proposition~\ref{uniform.supercritical}(e) show that
\[
\bigg|  \int_{\rr^N}u_\lambda(t)\psi -\int_{\rr^N}u_{0,\lambda}\psi\bigg|\leq
  C(\psi)(t+r_{q,\alpha,N}(t,\lambda)),
\]
where $r_{q,\alpha,N}(t,\lambda)$ is the one  in~\eqref{tail.control.1}. Letting $\lambda\rightarrow\infty$ we get
\[
    \bigg|  \int_{\rr^N}U (t)\psi-M\psi(0)\bigg|\leq C(\psi)(t+t^{1/\alpha}) \quad \text{for a.e. }t>0,
\]
 which proves Definition~\ref{def.entropySolution}(c) for all $\psi\in C_{\textup{c}}^\infty(\rr^N)$. To extend this result to all $\psi\in C_{\textup{b}}(\rr^N)$, we first obtain the tail control for  the limit $U$. Letting $\lambda\rightarrow\infty$ in
~\eqref{tail.control.1},
 \[
 \int_{|x|>2R}U(t)\lesssim \frac{t}{R^{\alpha}}+\frac{t^{1/\alpha}}{R} \quad \text{for a.e. }t >0\text{ if }R>0.
 \]
This estimate and classical arguments as in e.g.~\cite{EVZArma} and \cite[Pages 277--278]{Ignat-Stan-2018}  show that $U$ takes as initial data $M\delta_0$ in the sense of bounded measures:
\[
\esslim_{t\to 0^+}\int_{\rr^N} U(t)\psi=M\psi(0) \quad \text{for all }\psi\in C_\textup{b}(\rr^N).
\]
It follows that $U$ is an entropy solution of problem~\eqref{lim.2}.

\smallskip
\noindent(iv) \emph{Identification of $U$ as a very weak fundamental solution of~\eqref{lim.1} if $q>q_*(\alpha)$.}  Since $U\in L^\infty((\tau,\infty)\times \rr^N))$ for all $\tau>0$, we may choose $k=\pm \|U\|_ {L^\infty((\tau,\infty)\times \rr^N))}$ in~\eqref{eq.EntropyInequalityU} to obtain that $U$ satisfies~\eqref{lim.1} in the sense of distributions (we recall that $\sgn(c(q,\alpha,N))=1$ in this case), and hence also in a very weak sense. Very weak solutions $U\in L^\infty ((0,\infty);L^1(\rr^N))$ of the linear problem~\eqref{lim.1} which take $M\delta_0$ as initial data in the sense of bounded measures are in fact very weak fundamental solutions with mass $M$ of~\eqref{lim.1} in the sense of~\cite[Section 3]{dePablo-Quiros-Rodriguez-2020} (cf.~\eqref{eq:definition.fundamental.vws}),  which completes the identification of $U$.
\end{proof}

\subsection{The case $q<q_*(\alpha)$}
It follows that $u_\lambda$ is an entropy solution of
\begin{equation}
\label{rescalada.subcritico}
\partial_t u_\lambda+\ml^\lambda u_\lambda+\partial_{x_N}(u_\lambda^q)=0\quad\textup{in }Q,\qquad   	u_\lambda(0)=u_{0,\lambda}  \quad \textup{in }\rr^N,
\end{equation}
where, for all $\phi\in C_{\textup{c}}^\infty(\rr^N)$,
\begin{equation}\label{ml.lambda}
\begin{split}
(\ml^\lambda \phi)(x',x_N)
&=\int_{\bs^{N-1}}\int_0^\infty \Big( \phi(x',x_N)\\
&\qquad-\frac{\phi(x'+r\theta',x_N+r\lambda^{\frac 1\alpha-\beta}\theta_N)+\phi(x'-r\theta',x_N-r\lambda^{\frac 1{\alpha}-\beta}\theta_N)}2 \Big)\frac{{\rm d}r}{r^{1+\alpha}}
{\rm d}\mu(\theta),
\end{split}
\end{equation}
and
$u_{0,\lambda}(x',x_N):=\lambda^\gamma u_0(\lambda ^{1/\alpha }x',\lambda^\beta x_N)$.

Observe that, for any function $\phi\in C_{\textup{c}}^\infty(\rr^N)$,  we have $(\ml^\lambda \phi)(x)\rightarrow (\ml'\phi)(x)$ for any $x\in \rr^N$ when $\lambda\rightarrow\infty$. Hence, one expects the solution of system~\eqref{rescalada.subcritico} to converge  to a solution of $\partial_t u +\ml' u+\partial_{x_N}(u^q)=0$ when $\lambda\rightarrow\infty$. This will be proved carefully in what follows.

Let us take a closer look at the new operator $\ml^\lambda$.
By recalling~\eqref{eq.SymbolL}, it is easy to observe that
\[
\widehat{\ml^\lambda \phi}(\xi',\xi_N)=\Big(|\xi'|^{2}+|\lambda ^{\frac 1\alpha-\beta}\xi_N|^2\Big)^{\alpha/2}g\Big(\frac{(\xi',\lambda^{\frac 1\alpha -\beta}\xi_N)}{|(\xi',\lambda^{\frac 1\alpha -\beta}\xi_N)|}\Big)\widehat \phi(\xi',\xi_N) \quad\text{for all }\phi\in C_{\textup{c}}^\infty(\rr^N).
\]
Hence, by denoting $\mathcal{E}_\lambda(\cdot,\cdot)$ as the bilinear form associated to the energy,
\[
\mathcal{E}_\lambda(\phi,\phi):=  \int _{\rr^N}\Big( \big|(-\Delta_{x'})^{\alpha/4}\phi \big|^2+\lambda^{1-\alpha\beta} \big|(-\partial^2_{x_Nx_N})^{\alpha/4}\phi \big|^2
 \Big),
\]
it follows that $|\langle\ml^\lambda \phi, \phi\rangle_{L^2(\rr^N)}|\eqsim \mathcal{E}_\lambda(\phi,\phi) $ and
$|\langle\ml^\lambda \phi, \psi \rangle_{L^2(\rr^N)}|\lesssim \mathcal{E}^{1/2}_\lambda(\phi,\phi)\mathcal{E}^{1/2}_\lambda(\psi,\psi) $.
We emphasize that, in the considered case,
\[
\alpha\beta =-N+1+\frac {\alpha+N-1}q> 1,
\]
and thus, for $\lambda>1$, 
\[
\mathcal{E}_\lambda(\phi,\phi)\lesssim \|(-\Delta)^{\alpha/4}\phi\|_{L^2(\rr^N)}\lesssim \| \phi\|_{H^{\alpha/2}(\rr^N)}.
\]

\begin{proposition}\label{uniform.subcritical}
Assume~\eqref{u_0as},~\eqref{qas} with $q< q_*(\alpha)$, and~\eqref{mlas}. Then the entropy solution $u_\lambda$ of~\eqref{rescalada.subcritico} satisfies:
\begin{enumerate}[{\rm(a)}]
\item	\textup{(Time decay of $L^p$-norm)} For all $p\in[1,\infty]$,
$$
\|u_\lambda(t)\|_{L^p(\rr^N)}\lesssim    t^{-\frac 1q(1+\frac{N-1}\alpha)(1-\frac 1p)} \|u_0\|_{L^1(\rr^N)}^{\frac{1}{q}(1-\frac{1}{p})}\quad\text{for a.e. }t>0.
$$
\item \textup{(Energy estimate)} For all a.e. $0<\tau<T<\infty$,
\begin{equation*}
\int_\tau^T \int _{\rr^N}\Big( \big|(-\Delta_{x'})^{\alpha/4}u_\lambda(t) \big|^2+\lambda^{1-\alpha\beta} \big|(-\partial^2_{x_Nx_N})^{\alpha/4}u_\lambda (t)\big|^2
 \Big) \lesssim
\tau^{-\frac 1q(1+\frac{N-1}{\alpha})}\|u_0\|^{\frac{1}{q}}_{L^1(\rr^N)}.
\end{equation*}
\item \textup{(Conservation of mass)} For all $t>0$,
\[
\int_{\rr^N}u_\lambda(t)=\int_{\rr^N}u_0.
\]
\item \textup{(Local estimate on the $x_N$ derivative)} For all $R, R'>0$, all $|h_N|<R$, and all $t>0$,
\begin{equation*}
\label{W11loc.lambda}
\begin{split}
&\int_{|x'|<R', |x_{N}|< R}|u_\lambda(t,x+(0,h_N))-u_\lambda(t,x)|\,{\rm d}x\\
&\qquad\leq C(q,\alpha,N)|h_N|{R'}^{N-1}\Big(R t^{-1-(1+\frac{N-1}\alpha)(\frac {2-q}q)} \|u_0\|_{L^1(\rr^N)}^{\frac {2-q}q}  +  t^{-\frac{1}q(1+\frac{N-1}\alpha)} \|u_0\|_{L^1(\rr^N)}^{\frac{1}{q}}\Big).
\end{split}
\end{equation*}
\item \textup{(Estimate on the time derivative)} For all bounded domains $\Omega\subset\rr^N$ and all $\lambda\ge1$, and a.e. $0<\tau<T<\infty$,
\[
\|\partial_t u_\lambda\|_{L^2((\tau,T),H^{-1}(\Omega))}\lesssim C(\tau, \|u_0\|_{L^1(\rr^N)}).
\]
\item \textup{(Tail control)} For all $R>0$, all $t>0$,  and all $\lambda\ge1$,
\begin{equation}
\label{tail.control.2}
\int_{|x|>2R}u_\lambda(t)\lesssim \int_{|x|>\lambda^{1/\alpha} R}u_0+C(\|u_0\|_{L^1(\rr^N)},\|u_0\|_{L^\infty(\rr^N)} ) \Big(
\frac{ t}{R^\alpha} +\frac{ t^{a(\alpha,q,N)}}{R^{b(q,N)}}\Big),
\end{equation}
with $b(q,N)$ is as in~\eqref{tail.control.1} and
\[
a(\alpha,q,N):=
\begin{cases}
	1-(1+\frac{N-1}\alpha)(1-\frac 1q), & q>1,\\
	1 ,&q<1.
\end{cases}
\]
\end{enumerate}
\end{proposition}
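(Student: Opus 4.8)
The plan is to follow the scheme of Proposition~\ref{uniform.supercritical}. We introduce the viscous regularization
\[
\partial_t u_\lambda^\eps+\ml^\lambda u_\lambda^\eps+\partial_{x_N}((u_\lambda^\eps)^q)=\eps\Delta u_\lambda^\eps\quad\text{in }Q,\qquad u_\lambda^\eps(0)=u_{0,\lambda}\quad\text{in }\rr^N,
\]
prove \textup{(a)}--\textup{(f)} for $u_\lambda^\eps$ with constants uniform in $\eps\in(0,1)$ and $\lambda\ge1$, and then let $\eps\to0^+$ using a vanishing-viscosity stability statement analogous to Lemma~\ref{lem.StabilityOfSolutionsOfReg2}. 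One first notes that $\ml^\lambda$ is again an operator of the type in~\eqref{mlas}: writing the increment $r(\theta',\lambda^{1/\alpha-\beta}\theta_N)$ in polar coordinates, its spectral measure is the push-forward of $|(\theta',\lambda^{1/\alpha-\beta}\theta_N)|^\alpha\,{\rm d}\mu(\theta)$ under $\theta\mapsto(\theta',\lambda^{1/\alpha-\beta}\theta_N)/|(\theta',\lambda^{1/\alpha-\beta}\theta_N)|$. Since $\alpha\beta>1$ in the subcritical regime, $\lambda^{1/\alpha-\beta}\le1$ for $\lambda\ge1$; hence the total mass of this measure is $\le\mu(\bs^{N-1})$ and all increment directions have length $\le1$, so $\|\ml^\lambda\rho_R\|_{L^\infty(\rr^N)}\lesssim R^{-\alpha}$ uniformly in $\lambda\ge1$. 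The one genuine difficulty is that $\ml^\lambda$ is \emph{not} uniformly nondegenerate: it degenerates in the $x_N$-direction as $\lambda\to\infty$.

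The observation that resolves this is that integrating in $x_N$ destroys the $\lambda$-dependence. Since $\int_\rr\phi(x_N+c)\,{\rm d}x_N=\int_\rr\phi(x_N)\,{\rm d}x_N$, integrating the second difference of $\ml^\lambda$ over $x_N$ annihilates the $x_N$-component of the increment, and the computation establishing~\eqref{id.widetilde} shows that $\int_\rr(\ml^\lambda u)(x',x_N)\,{\rm d}x_N=(\widetilde\ml v)(x')$ with $v(x')=\int_\rr u(x',x_N)\,{\rm d}x_N$, for every $\lambda$. Therefore, exactly as in Lemma~\ref{lem.PrimitiveSolvesParabolic}, the primitive $v_\lambda(t,x'):=\int_\rr u_\lambda^\eps(t,x',x_N)\,{\rm d}x_N$ solves the $\lambda$-independent equation $\partial_tv_\lambda+\widetilde\ml v_\lambda-\eps\Delta_{x'}v_\lambda=0$ with $v_\lambda(0,\cdot)=\int_\rr u_{0,\lambda}(\cdot,x_N)\,{\rm d}x_N$; since $\widetilde\ml$ is nondegenerate (Section~\ref{sect:SomeDiscussionOnTheVariousNonlocal}), Lemma~\ref{parabolic.estimates} applies to it and gives $\|v_\lambda(t)\|_{L^\infty(\rr^{N-1})}\lesssim t^{-(N-1)/\alpha}\|u_0\|_{L^1(\rr^N)}$ with a constant independent of $\lambda$.

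Granted these two facts, the arguments of Section~\ref{sect:estimates} go through with $\ml$ replaced by $\ml^\lambda$. For \textup{(a)}, the Oleinik-type estimate of Lemma~\ref{lem.OleinikForPositiveBoundedFunctions} has an operator-independent constant, so it holds for $u_\lambda^\eps$; combining it with the bound on $v_\lambda$ and Lemma~\ref{ineq.dis.appendix} in the variable $x_N$ — directly if $q>1$, and through the bootstrap of Lemma~\ref{estimari.hiperbolice}(ii) if $q<1$ (its recursion for $\alpha_n$ and $\log\gamma_n$ is a contraction, hence converges to a limit independent of the starting point, which may thus be taken to be the $\lambda$-uniform bounds available here) — gives the $L^\infty$ decay of~\eqref{linfty.hyp} with $M=\|u_0\|_{L^1(\rr^N)}$, and interpolation with the conservation of mass yields the general $L^p$ bound. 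Item \textup{(c)} is Lemma~\ref{lem.EntropySolutionsOfReg}(b). Item \textup{(b)} follows from the $L^2$-energy identity for the $\ml^\lambda$-equation, which produces precisely $\mathcal E_\lambda(u_\lambda^\eps,u_\lambda^\eps)+\eps\|\nabla u_\lambda^\eps\|_{L^2}^2$, integrated in time and combined with the $L^1$--$L^2$ decay from \textup{(a)}. Item \textup{(d)} repeats the proof of~\eqref{W11loc}, using the pointwise bound on $\partial_{x_N}u_\lambda^\eps$ from Oleinik and the decay \textup{(a)}. Item \textup{(e)} is the duality estimate of Proposition~\ref{uniform.supercritical}(d): one bounds $|\langle\partial_tu_\lambda^\eps(t),\phi\rangle|$ by $\|u_\lambda^\eps(t)\|_{L^{pq}}^q\|\nabla\phi\|_{L^{p'}}+\eps\|\nabla u_\lambda^\eps(t)\|_{L^2}\|\phi\|_{H^1}+\mathcal E_\lambda(u_\lambda^\eps(t),u_\lambda^\eps(t))^{1/2}\mathcal E_\lambda(\phi,\phi)^{1/2}$, uses $\mathcal E_\lambda(\phi,\phi)\lesssim\|\phi\|_{H^{\alpha/2}(\rr^N)}^2$ for $\lambda\ge1$, and integrates in time via \textup{(a)} and \textup{(b)}. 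Finally, for \textup{(f)} one starts from the inequality in Lemma~\ref{lem.EntropySolutionsOfReg}(d) for $u_\lambda^\eps$, bounds the initial tail by $\int_{|x|>\lambda^{1/\alpha}R}u_0$ after undoing the scaling (using $\lambda^{-2\beta}\le\lambda^{-2/\alpha}$), uses $\|\ml^\lambda\rho_R\|_{L^\infty}\lesssim R^{-\alpha}$, and estimates $\int_0^t\int(u_\lambda^\eps)^q|\partial_{x_N}\rho_R|\lesssim R^{-1}\int_0^t\|u_\lambda^\eps(s)\|_{L^q}^q\,{\rm d}s$ by $\|u_\lambda^\eps(s)\|_{L^q}^q\le M\|u_\lambda^\eps(s)\|_{L^\infty}^{q-1}$ and \textup{(a)} if $q>1$ — the $s$-integral converges because $q<q_*(\alpha)$ forces $\frac{q-1}{q}(1+\frac{N-1}\alpha)<1$ and produces the exponent $a(\alpha,q,N)$ — and by H\"older with exponents $1/q$ and $1/(1-q)$ together with conservation of mass if $q<1$; letting $\eps\to0^+$ via the stability result transfers all of this to $u_\lambda$ and removes the spurious term $\eps Mt/R^2$.

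The hard part — the one thing that needs genuine care — is precisely this loss of uniform nondegeneracy of $\ml^\lambda$: one cannot invoke the parabolic $t^{-N/\alpha}$ smoothing of Lemma~\ref{parabolic.estimates}(a), whose constant blows up as $\lambda\to\infty$. What makes the scaling scheme consistent is that in the subcritical regime the $L^\infty$ decay is of purely hyperbolic origin — it comes from the Oleinik estimate and the smoothing of the primitive — and the latter depends only on the nondegeneracy of the horizontal operator, which is $\widetilde\ml$ irrespective of $\lambda$.
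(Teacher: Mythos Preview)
Your proposal is correct, but it works harder than the paper. The decisive observation you overlook is that $u_\lambda$ is \emph{by construction} an exact rescaling of $u=u_1$: the definition $u_\lambda(t,x)=\lambda^\gamma u(\lambda t,\lambda^{1/\alpha}x',\lambda^\beta x_N)$, together with the specific choice of $\gamma,\beta$, is precisely what makes each of the bounds (a)--(d) for $u_\lambda$, after substituting this identity and changing variables, collapse into the identical bound for $u$ at time $\lambda t$, with the same constant. Those bounds for $u$ are nothing but Lemmas~\ref{parabolic.estimates} and~\ref{estimari.hiperbolice} (established for $u^\eps$ uniformly in $\eps$, hence inherited by $u$). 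So (a)--(d) follow from a one-line change of variables, never touching $\ml^\lambda$. Items (e) and (f), which do not rescale so cleanly, are then handled directly for $u_\lambda^\eps$, essentially as you describe: via $\mathcal E_\lambda(\phi,\phi)\lesssim\|\phi\|_{H^{\alpha/2}(\rr^N)}^2$ for (e), and via the $\lambda$-uniform bound $\|\ml^\lambda\rho_R\|_{L^\infty}\lesssim R^{-\alpha}$ for (f).

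Your route --- re-proving the hyperbolic $L^\infty$ decay for $u_\lambda^\eps$ under $\ml^\lambda$ by arguing that the Oleinik constant is operator-independent and that integrating in $x_N$ produces the same operator $\widetilde\ml$ regardless of $\lambda$ --- is entirely valid and in fact more explanatory: it reveals \emph{why} the subcritical scaling is the correct one, and would survive in settings without an exact scaling identity. The price is the additional (correct) remark that for $q<1$ the bootstrap recursion is a contraction and hence forgets its $\lambda$-dependent parabolic starting point; the paper avoids this by never invoking the degenerating smoothing of $\ml^\lambda$ at all.
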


Again, the proof relies on considering the regularized problem
\begin{equation}
\label{rescalada.subcritico.eps}
\partial_t u_\lambda^\eps+\ml^\lambda u_\lambda^\eps+\partial_{x_N}((u_\lambda^\eps)^q)=\eps \Delta u_\lambda^\eps\quad\textup{in }Q,\qquad   	u_\lambda(0)=u_{0,\lambda}  \quad \textup{in }\rr^N,
\end{equation}
and then transferring properties to $u_\lambda$. We then need the following lemma which is proved in Appendix~\ref{sect:appendix.results.entropy.solutions}, Section~\ref{sec.Appendix.ConvInCL1}:

\begin{lemma}\label{lem.StabilityOfSolutionsOfReg3}
Assume~\eqref{u_0as},~\eqref{qas} with $q< q_*(\alpha)$, and~\eqref{mlas}. Let $u_\lambda^\eps$ be the entropy solution of~\eqref{rescalada.subcritico.eps}  and $u_{\lambda}$ the entropy solution of~\eqref{rescalada.subcritico}. Then,
$$
u_{\lambda}^\eps\to u_\lambda\quad\text{in } C([0,\infty);L^1(\rr^N))\text{ as }
\varepsilon\to0^+.
$$
\end{lemma}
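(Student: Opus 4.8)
\emph{Reduction to an admissible operator.} The point is that, for each fixed $\lambda\ge1$, the operator $\ml^\lambda$ in~\eqref{ml.lambda} is again a nonnegative symmetric $\alpha$-stable operator whose spectral measure $\mu^\lambda$ satisfies~\eqref{mlas}: $\mu^\lambda$ is the push-forward of $\mu$ under $\theta\mapsto(\theta',\lambda^{1/\alpha-\beta}\theta_N)/|(\theta',\lambda^{1/\alpha-\beta}\theta_N)|$ weighted by $|(\theta',\lambda^{1/\alpha-\beta}\theta_N)|^\alpha$, and since $\lambda\ge1$ and $\mu$ is finite and nondegenerate, so is $\mu^\lambda$ (with constants depending on $\lambda$, which is irrelevant as $\lambda$ is frozen here). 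Consequently the entire entropy theory of Appendix~\ref{sect:appendix.results.entropy.solutions}, and in particular the vanishing-viscosity statements Lemmas~\ref{lem.StabilityOfSolutionsOfReg} and~\ref{lem.StabilityOfSolutionsOfReg2}, apply verbatim with $\ml$ replaced by $\ml^\lambda$; the present lemma is exactly this application, and I therefore only sketch why the abstract argument goes through.

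\emph{Uniform-in-$\eps$ estimates.} For $\eps\in(0,1]$ the solution $u_\lambda^\eps$ obeys, with constants independent of $\eps$: the maximum principle $0\le u_\lambda^\eps\le\|u_{0,\lambda}\|_{L^\infty(\rr^N)}$ and mass conservation $\|u_\lambda^\eps(t)\|_{L^1(\rr^N)}=M$ (as in Lemma~\ref{lem.EntropySolutionsOfReg}); the energy estimate of Proposition~\ref{uniform.subcritical}(b), which for fixed $\lambda\ge1$, using $\lambda^{1-\alpha\beta}>0$ and $|\xi'|^\alpha+|\xi_N|^\alpha\eqsim|\xi|^\alpha$, controls $\int_\tau^T\|(-\Delta)^{\alpha/4}u_\lambda^\eps\|_{L^2(\rr^N)}^2$ by $C(\lambda,\tau,\|u_0\|_{L^1(\rr^N)})$, together with $\eps\int_\tau^T\|\nabla u_\lambda^\eps\|_{L^2(\rr^N)}^2\le C(\tau,\|u_0\|_{L^1(\rr^N)})$; the bound $\|\partial_t u_\lambda^\eps\|_{L^2((\tau,T);H^{-1}(\Omega))}\le C(\tau,\|u_0\|_{L^1(\rr^N)})$ of Proposition~\ref{uniform.subcritical}(e); and the tail control of Proposition~\ref{uniform.subcritical}(f), the extra viscous contribution $\eps Mt/R^2$ being harmless. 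Each of these is proved exactly as its analogue for~\eqref{reg}.

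\emph{Compactness, limit equation, and time regularity.} By Aubin--Lions--Simon (Theorem~\ref{Aubin-Lions-Simon}) with $X=H^{\alpha/2}(\rr^N)$, $B=L^2(B_R)$, $Y=H^{-1}(B_R)$, the previous estimates give relative compactness of $\{u_\lambda^\eps\}$ in $L^2_{\rm loc}(Q)$, and the tail control upgrades this to $L^1_{\rm loc}((0,\infty);L^1(\rr^N))$; pass to a subsequence $u_\lambda^{\eps_j}\to w$ a.e.\ in $Q$ and in $L^1_{\rm loc}((0,\infty);L^1(\rr^N))$. Since $\eps_j\|\nabla u_\lambda^{\eps_j}\|_{L^2}^2$ is bounded, $\eps_j\nabla u_\lambda^{\eps_j}\to0$ in $L^2$, so the viscous term drops out, and passing to the limit in the entropy inequality and the very weak formulation of~\eqref{rescalada.subcritico.eps} is carried out word for word as in steps (ii)--(iii) of the proof of Theorem~\ref{asymptotic} (splitting $\ml^\lambda$ into its truncations $(\ml^\lambda)^{\le\rho}$ and $(\ml^\lambda)^{>\rho}$ as in~\eqref{eq.SplittingOperatorInTwo}; using $0\le u_\lambda^{\eps_j}\le\|u_{0,\lambda}\|_{L^\infty(\rr^N)}$ and dominated convergence for the convection term; using the uniform control of $t\mapsto\|u_\lambda^\eps(t)\|_{L^1(\rr^N)}$ near $t=0$ for the initial datum). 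Thus $w$ is the entropy solution of~\eqref{rescalada.subcritico} with datum $u_{0,\lambda}$; by uniqueness (Theorem~\ref{thm.UniquenessPropertiesEntropy}) $w=u_\lambda$ and the whole family converges, not merely a subsequence. To promote $L^1_{\rm loc}$-in-time convergence to convergence in $C([0,\infty);L^1(\rr^N))$, I would use the uniform-in-$\eps$ time modulus of continuity $\|u_\lambda^\eps(t+h)-u_\lambda^\eps(t)\|_{L^1(\rr^N)}\le\|u_\lambda^\eps(h)-u_{0,\lambda}\|_{L^1(\rr^N)}\le\omega(h)$, obtained---as in the proof that $u_\lambda^\eps\in C([0,\infty);L^1(\rr^N))$---by testing against a mollified sign function and bounding the viscous and nonlocal contributions uniformly in $\eps$; equicontinuity together with pointwise-in-$t$ convergence (including at $t=0$, where both sides equal $u_{0,\lambda}$) then yields the claim by the Arzel{\`a}--Ascoli theorem.

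\emph{Main obstacle.} As everywhere in this circle of results, the delicate point is the nonlocal operator: one must verify that the energy estimate, the tail control and the time modulus of continuity are genuinely uniform in $\eps$ for $\ml^\lambda$---which holds because $\ml^\lambda$ satisfies~\eqref{mlas}---and that in the passage to the limit the singular part $(\ml^\lambda)^{\le\rho}$, being a second-difference integral tested against a smooth $\phi$, is controlled by $\|D^2\phi\|_{L^\infty(\rr^N)}\int_{\bs^{N-1}}\int_0^\rho r^{1-\alpha}\,{\rm d}r\,{\rm d}\mu^\lambda(\theta)$, uniformly in $\eps$. The $\lambda$-dependence of the constants plays no role, since $\lambda\ge1$ is fixed; one only needs the nondegeneracy constant of $\mu^\lambda$ to stay positive, which is immediate from that of $\mu$.
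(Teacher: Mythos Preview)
Your first paragraph captures the essential point exactly as the paper intends it: for fixed $\lambda\ge1$ the operator $\ml^\lambda$ again satisfies~\eqref{mlas}, so the vanishing-viscosity stability developed in Appendix~\ref{sect:appendix.results.entropy.solutions} applies verbatim. Had you stopped there, your answer would match the paper's one-line proof.

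Your detailed sketch, however, takes a genuinely different route from what the paper actually does in Sections~\ref{sec.CompactnessCLloc1}--\ref{sec.Appendix.ConvInCL1}. You rely on the $L^2$-energy estimate and Aubin--Lions--Simon to get compactness in $L^2_{\rm loc}(Q)$, then separately bootstrap to $C([0,\infty);L^1(\rr^N))$ via an equicontinuity argument that you only gesture at. The paper instead works entirely at the $L^1$ level: space translations are controlled directly by the $L^1$-contraction, and time translations by the explicit mollification estimate $\|u_\lambda^\eps(t)-u_\lambda^\eps(s)\|_{L^1(K)}\le 2\sup_{|\xi|\le\delta}\|u_{0,\lambda}(\cdot+\xi)-u_{0,\lambda}\|_{L^1}+C|K|\,|t-s|\,\delta^{-2}$, with the constant uniform for $\eps\in(0,1]$; choosing $\delta=|t-s|^{1/3}$ gives a uniform modulus. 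Arzel\`a--Ascoli and Kolmogorov--Riesz then yield compactness in $C([0,\infty);L^1_{\rm loc}(\rr^N))$ in one step, and the uniform tail control~\eqref{eq:TailEstimateForAllq} upgrades this to $C([0,\infty);L^1(\rr^N))$. This is more elementary---no energy inequality, no $H^{-1}$ time-derivative bound, no Aubin--Lions---and it delivers the $C$-in-time convergence directly rather than as an afterthought.

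Your approach is not wrong, but the step where you claim $\|u_\lambda^\eps(h)-u_{0,\lambda}\|_{L^1(\rr^N)}\le\omega(h)$ uniformly in $\eps$ ``by testing against a mollified sign function'' is precisely the place where the paper supplies an explicit argument (the mollification trick above combined with the tail estimate), and you leave it vague. Since that step is the whole content of promoting $L^1_{\rm loc}$-in-time to $C$-in-time convergence, it would be better either to spell it out or to adopt the paper's direct $L^1$-based compactness from the start.
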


\begin{proof}[Proof of Proposition~\ref{uniform.subcritical}]
Note that $u_\lambda^\eps$ satisfies the hypotheses of  Lemmas~\ref{lem.EntropySolutionsOfReg},~\ref{parabolic.estimates}, and~\ref{estimari.hiperbolice}. The convergence given in Lemma~\ref{lem.StabilityOfSolutionsOfReg3} then ensures that all the estimates for  $u_\lambda^\eps$ transfer to $u_\lambda$. We thus focus on obtaining them for $u_\lambda^\eps$. Note also that the first five estimates are reduced to the case $\lambda=1$ by rescaling the inequalities according to the definition of $u_\lambda$.

\smallskip
\noindent(a) The time decay of the $L^\infty$-norm follows by Lemma~\ref{estimari.hiperbolice}(a). We then interpolate this inequality with the $L^1$-norm bound to obtain the estimate for all $p\in[1,\infty]$.

\smallskip
\noindent(b) Since $|\xi'|^{2\alpha}+|\xi_N|^{2\alpha}\lesssim (|\xi'|^2+|\xi_N|^2)^{\alpha}$, Lemma~\ref{parabolic.estimates}(b) and the $L^1$--$L^2$ decay estimate from~(a) yield

\begin{align*}
\int_\tau^T \int _{\rr^N}\Big( \big|(-\Delta_{x'})^{\alpha/2}u_\lambda^\eps  \big|^2&+ \lambda^{1-\alpha\beta}\big|(-\partial^2_{x_N x_N})^{\alpha/4}u_\lambda^\eps \big|^2
 \Big)\\
 & \lesssim
 \int_\tau^T \int _{\rr^N} |(-\Delta)^{\alpha/4}u_\lambda^\eps|^2\lesssim
 \|u_\lambda^\eps(\tau)\|^2_{L^2(\rr^N)}\lesssim \tau^{-\frac {1}{q}(1+\frac{N-1}\alpha)} \|u_0\|_{L^1(\rr^N)}^{\frac{1}{q}}.	
\end{align*}

\smallskip
\noindent(c) A consequence of Lemma~\ref{lem.EntropySolutionsOfReg}(b).

\smallskip
\noindent(d) The shift in the  $x_N$-variable is reduced to the case $\lambda=1$ by observing that
\begin{align*}
\label{}
  \int_{|x'|<R', |x_{N}|\leq R}&|u_\lambda^\eps(t,x+(0,h_N))-u_\lambda^\eps(t,x)|\,{\rm d}x\\
  & =  \int_{|x'|<R'\lambda^{1/\alpha}, |x_{N}|\leq R\lambda^\beta} |u_1^\eps (\lambda t,x+(0,h_N\lambda^\beta))-u_1^\eps (\lambda t,x)|\,{\rm d}x
\end{align*}
and applying  Lemma~\ref{estimari.hiperbolice}(c)   with $R'\lambda^{1/\alpha}$, $R\lambda^\beta$ and $h_N\lambda^\beta$ instead of $R',R$ and $h_N$.

\smallskip
\noindent(e) By following the proof of Proposition~\ref{uniform.supercritical}(d), for $\eps\in (0,1)$, 
\begin{equation*}
\begin{split}
\|\partial_tu_\lambda^\eps(t)\|^2_{H^{-1}(\Omega)}
&\lesssim   \| u_\lambda^\eps(t)\|_{L^{pq}(\rr^N)}^{2q}+\eps^2 \|\nabla u_\lambda^\eps(t)\|^2_{L^2(\rr^N)}+\mathcal{E}_\lambda(u_\lambda^\eps(t),u_\lambda^\eps(t))\\
&\lesssim\| u_\lambda^\eps(t)\|_{L^{pq}(\rr^N)}^{2q}+\eps  \|\nabla u_\lambda^\eps(t)\|^2_{L^2(\rr^N)}+\|(-\Delta)^{\alpha/4}u_\lambda^\eps\|_{L^2(\rr^N)}^2,
\end{split}
\end{equation*}
and hence, the conclusion follows as before by using Lemma~\ref{parabolic.estimates}(b).

\smallskip
\noindent(f)  We continue the estimation of Lemma~\ref{lem.EntropySolutionsOfReg}(d):  since $\beta>1/\alpha$, for all $\lambda\ge 1$,  we have
\begin{align*}
&\int_{|x|>2R}u_\lambda^\eps(t)\lesssim \int_{|x|>\lambda^{1/\alpha} R}u_0+Mt \|\ml ^\lambda\rho_R\|_{L^\infty(\rr^N)}  +\eps \frac{tM}{R^2} + \int_0^t \int_{\rr^N}(u_\lambda^\eps)^q|\partial_{x_N}\rho_R|.
\end{align*}
To estimate the term involving $\ml^\lambda$ we use its definition, see~\eqref{ml.lambda}, and also $\beta>1/\alpha$, to get, for $\lambda\ge 1$,
\begin{align}\label{est.L.lambda}
|(\ml^\lambda \rho_R)(x',x_N)|
  &\lesssim \|D^2 \rho_R\|_{L^\infty(\rr^N)}
 \int_{\bs^{N-1}} \int_0^{\bar{r}} \frac{r^2 (|\theta'|^2 +\lambda^{2(\frac{1}{\alpha}-\beta)}\theta_N^2)}{r^{1+\alpha}}\,{\rm d}r{\rm d}\mu(\theta)\\
 \nonumber&\quad+\|  \rho_R\|_{L^\infty(\rr^N)} \int_{\bs^{N-1}} \int_{\bar{r}}^\infty \frac{1}{r^{1+\alpha}}\,{\rm d}r{\rm d}\mu(\theta) \lesssim \frac 1{R^2} \bar{r}^{2-\alpha}+\bar{r}^{-\alpha}\eqsim R^{-\alpha},
\end{align}
where we chose $\bar{r}=R$. When $1<q<q_*(\alpha)$, we use part (a) to show that the last term satisfies
\begin{align*}
   \int_0^t \int_{\rr^N}(u_\lambda^\eps)^q|\partial_{x_N}\rho_R|\lesssim \frac{1}{R} \int_0^t \|u_\lambda^\eps(s)\|_{L^q(\rr^N)}^q\,{\rm d}s\eqsim \frac{t^{a(\alpha,q,N)}}R.
\end{align*}
When $1-\frac 1N<q<1$, we proceed as in the proof of Proposition~\ref{uniform.supercritical}(e):
\begin{align*}
\int_{\rr^N}(u_\lambda^\eps(s))^q|\partial_{x_N}\rho_R|\leq   \|u_\lambda^\eps(s)\|_{L^1(\mathbb{R}^N)}^q \|\partial_{x_N}\rho_R\|_{L^{1/(1-q)}(\mathbb{R}^N)}= M^q R^{-b(q,N)} \|\partial_{x_N}\rho \|_{L^{1/(1-q)}(\mathbb{R}^N)},
\end{align*}
which yields the desired estimate.
\end{proof}

\begin{proof}[Proof of Theorem~\ref{asymptotic} when $q<q_*(\alpha)$] We organize it in three steps.

\smallskip
\noindent (i) \emph{Compactness.} Let $0<\tau<T<\infty$. Consider Theorem~\ref{Aubin-Lions-Simon} with $X_{R,R'}$ (introduced in Appendix~\ref{sec:CompactEmbeddingOfMixedSpaces}, Remark~\ref{X.compact}), $B=L^2(B_{R'}\times(-R,R))$, and $Y=H^{-1}(B_{R'}\times(-R,R))$ and time interval $(\tau,T)$. From~(b),
$$
\displaystyle
\begin{aligned}
\int_\tau^T|u_\lambda(t)|_{L^2(\rr; H^{\alpha/2}(\rr^{N-1}))}^2&=\int_\tau^T \int _{\rr^N} \big|(-\Delta_{x'})^{\alpha/4}u_\lambda(t) \big|^2\\
&\lesssim\int_\tau^T \int _{\rr^N}\big(|(-\Delta_{x'})^{\alpha/4}u_\lambda(t)|^2+\lambda^{1-\alpha\beta} |(-\partial^2_{x_Nx_N})^{\alpha/4}u_\lambda(t)|^2 \big)  \\
&\lesssim \tau^{-\frac 1q(1+\frac{N-1}{\alpha})}  \|u_0\|^{\frac{1}{q}}_{L^1(\rr^N)}.
\end{aligned}
$$
Hence, using also Proposition~\ref{uniform.subcritical}(a)  and (d), we obtain that the family $\{u_\lambda\}_{\lambda>1}$ is uniformly bounded in $L^2((\tau,T);X_{R,R'})$. By Proposition~\ref{uniform.subcritical}(e), time translations are controlled in $L^2((\tau,T);Y)$. We then deduce that up to a subsequence $u_\lambda\rightarrow U$ in $L^2((\tau,T);B_{R'}\times (-R,R))$. A diagonal argument gives us that $u_\lambda\rightarrow U$ in $L^2_{\textup{loc}}(Q)$, so in  $L^1_{\textup{loc}}(Q)$.
The tail control in Proposition~\ref{uniform.subcritical}(f)  yields that the convergence also holds in $L^1_\textup{loc}((0,\infty);L^1(\rr^N))$. Then for a.e.~$t>0$, $u_\lambda(t)\rightarrow U(t)$ in $L^1(\rr^N)$ and $u_\lambda(t,x)\rightarrow U(t,x)$ for a.e.~$(t,x)\in Q$. Moreover, the function $U$ inherits the properties in Proposition~\ref{uniform.subcritical}. In particular,
$$
U\in L^\infty((0,\infty);L^1(\rr^N))\cap L^\infty((\tau,\infty);L^\infty(\rr^N))\text{ for all }\tau>0,\quad\int_{\rr^N} U(t)=M\text{ for a.e. }t>0.
$$

We will later prove that the limit profile $U$ is an entropy solution of equation~\eqref{lim.3}.
The uniqueness of the entropy solution (cf. Theorem~\ref{thm.UniquenessOfLimitEquationsIandII}) shows that in fact the whole sequence $\{u_\lambda\}_{\lambda>0}$ converges to $U$, not only a subsequence.
As described in the supercritical/critical case, the convergence for a fixed positive time $t_1$ of $u_\lambda(t_1)$ toward $U(t_1)$ in $L^1(\rr^N)$ shows the desired convergence in~\eqref{main.limit}.

\smallskip
\noindent (ii) \emph{Identification of the limit equation.} Let us now concentrate on the equation satisfied by $U$.  In view of Remark \ref{regularity.issue} a), since $u_\lambda$ is an entropy solution of~\eqref{rescalada.subcritico},  it satisfies: for all $k\in \rr$, all $\rho>0$, and all $0\leq \phi\in C_\textup{c}^\infty(\overline{Q})$, 
\[
\begin{split}
&\iint_Q \Big( |u_\lambda-k|\partial_t\phi +  \sgn(u_\lambda-k)\big(F(u_\lambda)-F(k)\big)\cdot\nabla\phi \Big)\\
&\qquad-\iint_Q \Big(\sgn(u_\lambda-k)\phi(\ml^{\lambda,> \rho}u_\lambda) + |u_\lambda-k|(\ml^{\lambda,\leq  \rho}\phi)\Big)+\int_{\rr^N}|u_{0,\lambda}-k|\phi(0)\geq 0.
\end{split}
\]
Let us choose $0\leq \phi\in C_\textup{c}^\infty(Q)$, and a positive $\tau$ and $T$ such that $\phi$ is supported in $[\tau,T]$. Using the same arguments as in the supercritical/critical case we have
\[
\int_\tau^T\int_{\rr^N}   |u_\lambda-k|\partial_t\phi\rightarrow \int_\tau^T\int_{\rr^N}  |U-k|\partial_t\phi\quad\text{as }\lambda\to\infty.
\]
When $q<1$,   the strong $L^1$-convergence of $u_\lambda$ towards $U$   implies $u_\lambda^q\rightarrow U^q$ in $L^{1/q}(\mathbb{R}^N)$ so, as $\lambda\to\infty$,
\[
 \int_\tau^T\int_{\rr^N} \sgn(u_\lambda-k)\big(F(u_\lambda)-F(k)\big)\cdot\nabla\phi\rightarrow\int_\tau^T\int_{\rr^N}  \sgn(U -k)\big(F(U)-F(k)\big)\cdot\nabla\phi.
\]
When $q>1$ we obtain the same result with the arguments in the supercritical/critical case.

We now consider the terms involving the truncated operators. In view of Lemma~\ref{conv.L}, we have
\[
\begin{split}
&\iint_Q |u_\lambda -k||\ml^{\lambda,\leq\rho}\phi -\ml'^{,\leq\rho}\phi |
\leq  \iint_Q |u_\lambda  ||\ml^{\lambda,\leq\rho}\phi -\ml'^{,\leq\rho}\phi |+|k|\iint_Q  |\ml^{\lambda,\leq\rho}\phi -\ml'^{,\leq\rho}\phi |\\
&\qquad\leq MT \|\ml^{\lambda,\leq\rho}\phi  -\ml'^{,\leq\rho}\phi \|_{L^\infty( \rr^N)}+|k| \|\ml^{\lambda,\leq\rho}\phi  -\ml'^{,\leq\rho}\phi \|_{L^1( \rr^N)}\lesssim \lambda^{\frac 1\alpha-\beta}.
	 	\end{split}
\]
Then, by
\[
\begin{split}
\int_\tau^T\int_{\rr^N}  \big| |u_\lambda -k| & -|U-k| \big| |\ml'^{,\leq\rho}\phi |\leq \int_\tau^T\int_{\rr^N}  |u_\lambda  -U|  |\ml'^{,\leq\rho}\phi |,
\end{split}
\]
we obtain that
\[
\int_\tau^T\int_{\rr^N}  |u_\lambda-k|(\ml^{\lambda,\leq\rho}\phi)\rightarrow \int_\tau^T\int_{\rr^N}  |U-k|(\ml'^{,\leq\rho}\phi)\quad\text{as }\lambda\to\infty.
\]
Let us now consider the term
\[
I_\lambda=\int_\tau^T\int_{\rr^N}  \sgn(u_\lambda-k)\phi(\ml^{\lambda,> \rho}u_\lambda).
\]
We will prove that $I_\lambda\rightarrow I$ as $\lambda\to\infty$, where
\[
I=\int_\tau^T\int_{\rr^N}  \sgn(U-k)\phi(\ml'^{,>\rho}U).
\]
The main ingredients are the following two uniform estimates in $\lambda$:
\begin{equation*}
  \|\ml ^{\lambda,>\rho} \phi \|_{L^1(\rr^N)}\lesssim \rho^{-\alpha} \|\phi\|_{L^1(\rr^N)}, \  \|\ml ^{\lambda,>\rho} \phi \|_{L^\infty(\rr^N)}\lesssim \rho^{-\alpha} \|\phi\|_{L^\infty(\rr^N)},
\end{equation*}
which are easily obtained by Fubini's theorem. We write the difference as $I_{1,\lambda}+I_{2,\lambda}+I_{3,\lambda}$ where
\[
\begin{aligned}
I_{1,\lambda} &=\int_\tau^T\int_{\rr^N} \sgn (u_\lambda-k)\phi \big(\ml^{\lambda,>\rho}u_\lambda-\ml ^{\lambda,>\rho}U\big),
\\
I_{2,\lambda} &=\int_\tau^T\int_{\rr^N} \big(\sgn (u_\lambda-k) -\sgn (U-k) \big)\phi (\ml ^{\lambda,>\rho}U),
\\
I_{3,\lambda} &=\int_\tau^T \int_{\rr^N} \sgn (U-k) \phi \big (\ml ^{\lambda,>\rho}U - \ml'^{,>\rho}U\big).
\end{aligned}
\]
The first one goes to zero by using the strong $L^1(\rr^N)$-convergence on $(\tau,T)$:
\[
|I_{1,\lambda}|\lesssim \|\phi\|_{L^\infty(Q)}\rho^{-\alpha}\int_\tau^T\|u_\lambda(t)-U(t)\|_{L^1(\rr^N)}\,{\rm d}t\rightarrow 0\quad\text{as } \lambda\to\infty.
\]
As for $I_{2,\lambda}$, we use $\| \ml ^{\lambda,>\rho}U \|_{L^\infty((\tau,T)\times \rr^N)}\lesssim \rho^{-\alpha}\| U\|_{L^\infty((\tau,T)\times \rr^N)}\lesssim C(\tau)$ and the dominated convergence theorem, since $u_\lambda\rightarrow U$ for a.e.~$(t,x)$ in
$Q$. For $I_{3,\lambda}$, since $U(t)\in L^1(\rr^N)$, we use Lemma~\ref{>rho,uniform} to get $\ml ^{\lambda,>\rho}U(t)\rightarrow  \ml'^{,>\rho}U(t)$ in $L^1(\rr^N)$, and the same holds for the whole time interval $(\tau,T)$.

We end this step by putting together all the above convergences to get that the limit profile $U$ satisfies for all $k\in \rr$, all $\rho>0$, and all $0\leq \phi\in C_\textup{c}^\infty(Q)$,
\[
\begin{split}
&\iint_Q \Big( |U-k|\partial_t\phi +  \sgn(U-k)\big(F(U)-F(k)\big)\cdot\nabla\phi \Big)\\
&\qquad-\iint_Q \Big(\sgn(U-k)\phi(\ml'^{,>\rho}U) + |U-k|(\ml'^{,\leq\rho}\phi)\Big)\geq 0,
\end{split}
\]
i.e., $U$ solves~\eqref{lim.3} according to Definition~\ref{def.entropySolution}(b).

\smallskip
\noindent(iii) \emph{Identification of the initial data.} Let us recall that $u_\lambda$ is not only an entropy solution but also a very weak solution, i.e., for all $\phi \in C_\textup{c}^\infty(\overline{Q})$,
\[
\iint_Q  \Big( u_\lambda  \partial_t\phi + u_\lambda^q\partial_{x_N}\phi
 -u_\lambda (\mathcal{L}^\lambda  \phi) \Big)+\int _{\rr^N}u_{0,\lambda}\phi(0)=0.
 \]
As in the supercritical/critical case, we have that for all test functions $\psi\in C_\textup{c}^\infty(\rr^N)$
 \begin{align*}
\label{}
\bigg|  \int_{\rr^N}u_\lambda(t)\psi-\int_{\rr^N}u_{0,\lambda}\psi\bigg|\leq
\int_0^t\int_{\rr^N}\Big(  | u_\lambda|^q |\partial_{x_N}\psi |+|u_\lambda| |\mathcal{L}^\lambda \psi|\Big).
\end{align*}
The estimate in~\eqref{est.L.lambda} gives $\|\ml^\lambda \psi\|_{L^\infty(\rr^N)}\lesssim \|\psi\|_{W^{2,\infty}(\rr^N)}$. Thus
the same arguments as in the proof of the tail control in Proposition~\ref{uniform.subcritical}(f) show that
\[
\bigg|  \int_{\rr^N}u_\lambda(t)\psi -\int_{\rr^N}u_{0,\lambda}\psi\bigg|\leq
  C(\varphi)(t+t^{a(\alpha,q,N)}).
\]
By letting $\lambda\rightarrow\infty$, we get
\[
\bigg|  \int_{\rr^N}U (t)\psi-M\psi(0)\bigg|\leq
  C(\varphi)(t+t^{a(\alpha,q,N)})\quad\text{for a.e. }t>0,
\]
which is then Definition~\ref{def.entropySolution}(c) for all function $\psi\in C_\textup{c}^\infty(\rr^N)$. To extend this result to all $\psi\in C_\textup{b}(\rr^N)$,   we first obtain the tail control for  the limit $U$. Letting $\lambda\rightarrow\infty$ in
~\eqref{tail.control.2} gives us that $U$ satisfies
 \[
 \int_{|x|>2R}|U(t)|\lesssim \frac{t}{R^{\alpha}}+\frac{t^{a(\alpha,q,N)}}{R^{b(q,N)}}\quad\text{for a.e. } t>0\text{ if }R>0.
 \]
As in the supercritical/critical case,
\[
\esslim_{t\to 0^+}\int_{\rr^N} U(t)\psi=M\psi(0) \quad\text{for all }\psi\in C_\textup{b}(\rr^N). \qedhere
\]
\end{proof}

\section{Uniqueness of the limit problems}
\label{sec:UniquenessOfLimitProblems}
\setcounter{equation}{0}

This section is devoted to prove that the kind of problems that may appear as limits of rescaled solutions have, under certain assumptions on $q$ and $N$, a unique fundamental solution with mass $M$. This is the content of Theorem~\ref{thm.UniquenessOfLimitEquationsIandII}.

\begin{proof}[Proof of Theorem~\ref{thm.UniquenessOfLimitEquationsIandII}]
(a)   This was already proved in \cite[Section 3]{dePablo-Quiros-Rodriguez-2020}.

\noindent(b) The proof is very technical. Hence, we divide it into several steps. Assume that~\eqref{lim.2} has two entropy solutions $u, \ou \in L^\infty((0,\infty);L^1(\rr^N))\cap L^\infty_{\textup{loc}}((0,\infty);L^\infty(\rr^N))$ with the same initial data $M\delta_0$.

\smallskip
\noindent(i) \emph{Integrating in the direction $x_N$.} Using the ideas of the proof of Lemma~\ref{lem.PrimitiveSolvesParabolic}, it is easy to check that
\begin{equation}\label{integrated.u}
v(t,x'):=\int_{\rr}u(t,x',x_N)\,{\rm d}x_N \quad\textup{for a.e. }(t,x')\in (0,\infty)\times \rr^{N-1}
\end{equation}
belongs to $L^\infty((0,\infty);L^1(\rr^{N-1}))$ and is the unique very weak solution of $\partial_tv+\widetilde\ml v=0$ in $(0,\infty)\times \rr^{N-1}$ with initial data $M\delta_0$ in the sense of bounded measures, where $\widetilde\ml$ is the $(N-1)$-dimensional $\alpha$-stable operator introduced in~\eqref{tildeL}. Thus, $v=M\Phi$, where $\Phi$ is the unique fundamental solution of the equation with mass 1, which has a self-similar form,
\[
\Phi(t,x')=t^{-\frac{N-1}\alpha}\mathcal{G}(t^{-\frac 1\alpha}x')\quad\textup{for all }(t,x')\in (0,\infty)\times \rr^{N-1}
\]
for some smooth positive profile $\mathcal{G}$ such that $\int_{\mathbb{R}^N}\mathcal{G}=1$; see~\cite{dePablo-Quiros-Rodriguez-2020}.
In terms of $u$ and $\ou$, we then have
\begin{equation*}
\label{both.Nmass}
  \int_{\rr} u(t,x',x_N)\,{\rm d}x_N=  \int_{\rr} \ou(t,x',x_N)\,{\rm d}x_N=M\Phi(t,x') \quad\textup{for all }(t,x')\in (0,\infty)\times \rr^{N-1}.
\end{equation*}

\smallskip
\noindent(ii) \emph{Approximation by entropy solutions with initial data in $L^1(\rr^N)\cap L^\infty(\rr^N)$.} We start by constructing the initial data for our approximations.
Inspired by~\cite{EVZIndiana}, for each $r>0$,  we define
\[
\begin{gathered}
\varphi_r(t,x')=\int_{-r}^r u(t,x',x_N)\,{\rm d}x_N-M\Phi(t,x')=-\int_{|x_N|>r} u(t,x',x_N)\,{\rm d}x_N,
\\
u^r_{0,n}(x',x_N):=\big(u(1/n,x',x_N)-\frac 1{2r}\varphi_r(1/n,x')\big)\mathds{1}_{(-r,r)}(x_N).
\end{gathered}
\]
Notice that $ -M\Phi(t,x')\leq \varphi_r(t,x')\leq 0$. Hence, the functions $u^r_{0,n}$ are nonnegative and  bounded,
\[
\|u^r_{0,n}\|_{L^\infty(\rr^{N})}\leq \| u(1/n)\|_{L^\infty(\rr^N)}+\frac{M}{2r}\|\Phi (1/n)\|_{L^\infty(\rr^{N-1})}.
\]
On the other hand, they have integral $M$ since
\begin{equation}
	\label{mass.u_0n}
\int_{\rr} u^r_{0,n}(x',x_N)\,{\rm d}x_N=M\Phi(1/n,x')\quad\text{for a.e. }x'\in \rr^{N-1}.
\end{equation}
Let $u_n$ (we omit the dependence on the parameter $r$ for simplicity) be the entropy solution of~\eqref{lim.2}  with initial data $u _{0,n}^r$. Let us prove that,   for any $t>0$, the sequence  $u_n(t)$ converges to $u(t)$ in $L^1(\rr^N)$ as $n\to\infty$. To this aim we observe that both $u_n$ and $u(\cdot +1/n)$ are entropy solutions of~\eqref{lim.2},  whose respective initial data, $u_{0,n}^r$ and $u(1/n)$, belong to $L^1(\rr^N)\cap L^\infty(\rr^N)$. Then the $L^1$-contraction property, Theorem~\ref{thm.UniquenessPropertiesEntropy}(a)(ii), shows that
\begin{align*}
\big\|u_n(t)-u\big(t+1/n\big)\big\|_{L^1(\rr^N)}&\leq \| u^r_{0,n}-u(1/n)\|_{L^1(\rr^N)}\\
&\leq
\int_{\rr^{N-1}}\bigg(\int_{|x_N|>r} u(1/n,x',x_N)\,{\rm d}x_N+ |\varphi_r(1/n,x')| \bigg)\,{\rm d}x'\\
&\leq 2\int_{\rr^{N-1}}\int_{|x_N|>r} u(1/n,x',x_N)\,{\rm d}x_N{\rm d}x' \rightarrow 0 \quad\text{as  $n\rightarrow\infty$.}
\end{align*}
Since $u\in C((0,\infty);L^1(\rr^N))$ (see Remark~\ref{regularity.issue}), we have that $u(t+1/n)$ converges to $u(t)$ in $L^1(\rr^N)$ as $n\rightarrow \infty$. Therefore, $u_n(t)\rightarrow u(t)$ in $L^1(\rr^N)$ as $n\rightarrow \infty$. The contraction property implies  that $u_n\rightarrow u$ in $C([\tau, \infty);L^1(\rr^N))$ as $n\rightarrow \infty$ for all  $\tau>0$.

Defining $\ou _{0,n}^r$  and then  $\ou_n$ in a similar way, we get by the same reasoning that $\ou_n(t)\rightarrow \ou(t)$ in $L^1(\rr^N)$ as $n\rightarrow \infty$.

\smallskip
\noindent (iii) \emph{Approximation by classical solutions.} If $q>1$, let $u_n^\eps$ be the solution of the regularized problem~\eqref{reg}  with initial data $u_{0,n}^r$ and nonlinearity $f_0(u)=u^q$. If $q<1$, we consider the same problem but with nonlinearity
\begin{equation}
\label{f.eta}
f_\eta(u)=(u^2+\eta)^{q/2}-\eta^{q/2},\quad \eta>0.
\end{equation}The corresponding solution will depend on $\eps$ and $\eta$ but for simplicity
we still denote it by  $u_n^\eps$ since   $\eta=\eta_\eps\rightarrow 0$ as $\eps\rightarrow 0$. Note that $f_\eta$ is $C^\infty(\rr)$ and satisfies the inequality
\begin{equation}
\label{ineq.f.eta}
 0\leq  f_\eta(s)=(s^2+\eta)^{q/2}-\eta^{q/2}\leq s^q,\quad \text{for all $s\geq 0$.}
\end{equation}

  The regularity results in Proposition~\ref{prop.propOfRegularizedEquationSmoothness2} show that $u_n^\eps$ is a classical solution of~\eqref{reg}, $u_n^\eps\in C^{1+\delta, 2+2\delta} (Q)$. Besides, Lemma~\ref{lem.StabilityOfSolutionsOfReg2} and Lemma~\ref{lem.StabilityOfSolutionsOfReg3} with $\lambda=1$,  proved in~Sections~\ref{sec.CompactnessCLloc1}--\ref{sec.Appendix.ConvInCL1}, and Lemma \ref{lem.StabilityOfperturbationofnonlinearity}  show that $u_n^\eps\rightarrow u_n$ in $C((0,\infty);L^1(\rr^N))$ as $\eps\to0^+$.  Hence,  in view of the previous step, $u_n^\eps\rightarrow u$ in $C([\tau,\infty);L^1(\rr^N))$ for all $\tau>0$.  A similar result holds for $\ou_n^\eps$, which is defined analogously.

\smallskip
\noindent (iv) \emph{The primitives are bounded solutions of the Hamilton-Jacobi equation.} It is easily checked that
\[
v_n^\eps (t,x',x_N):=\int_{-\infty}^{x_N} u^\eps_n(t,x',y_N)\,{\rm d}y_N\quad\text{for all }(t,x',x_N)\in (0,\infty)\times\rr^{N-1}\times\rr
\]
is a nonnegative classical solution of the Hamilton-Jacobi equation
\begin{equation}
\label{HJ-nonlocal}
  \partial_tv+\ml v -\eps \Delta v =f_\eta(\partial_{x_N}v) \quad\text{in }Q.
\end{equation}
Let us prove that it is bounded. Indeed, since $u^\eps_n\in C([0,\infty);L^1(\rr^N))$ is nonnegative,
\begin{equation}
	\label{vcomparedtow}
v_n^\eps(t,x',x_N)\leq \int_{\rr} u^\eps_n(t,x',y_N)\,{\rm d}y_N=:w_\eps^n(t,x'),
\end{equation}
where, by Lemma~\ref{lem.PrimitiveSolvesParabolic} (which also holds in the case of $f_\eta$, see Remark \ref{extended:lem.PrimitiveSolvesParabolic}) and \eqref{mass.u_0n},  the function $w_\eps^n$ belongs to $C([0,\infty);L^1(\rr^{N-1}))$ and solves 
\begin{equation}
\label{eq:wneps}
\partial_tw+\widetilde\ml w-\eps\Delta_{x'} w=0\quad\text{in $(0,\infty)\times\rr^{N-1}$,}\qquad w(0)=M\Phi(1/n)\quad\text{on $\rr^{N-1}$.}
\end{equation}
The solution of this problem is $w_\eps^n(t)=M\Phi(t)\ast G_{\eps t}\ast \Phi(\frac 1n)$, where $G_t$ is the classical Gaussian heat kernel. Hence,
$\|w_\eps^n(t)\|_{L^\infty(\rr^{N-1})}\leq M\| \Phi(1/n)\|_{L^\infty(\rr^{N-1})}\leq Mc_n$ for some positive constant $c_n$, and $v^\eps_n$ is  bounded. The same results, with the same bound, hold for $\overline v^{\eps}_n$, which is defined analogously.

\smallskip
\noindent (v) \emph{Comparison of the traces.} We claim that, for a.e.~$(x',x_N)\in \rr^{N-1}\times\rr$,
\begin{equation}
\label{eq:comparison.integrals}
\int_{-\infty}^{x_N-2r} u_{0,n}^r(x',y_N)\,{\rm d}y_N\leq \int_{-\infty}^{x_N} \ou_{0,n}^r(x',y_N)\,{\rm d}y_N \leq \int_{-\infty}^{x_N+2r} u_{0,n}^r(x',y_N)\,{\rm d}y_N.
\end{equation}
We prove only the first inequality since the second one can be obtained similarly.
By construction, $u_{0,n}^r(x',\cdot)$ is supported in $(-r,r)$. Thus, if $x_N<r$,   the left-hand-side term vanishes, and,   hence the inequality is true since $\ou_{0,n}^r$ is nonnegative. If $x_N>r$, using  the support of
$\ou_{0,n}^r(x',\cdot) $ and~\eqref{mass.u_0n},
\begin{align*}
\label{}
  \int_{-\infty}^{x_N-2r} u_{0,n}^r(x',y_N)\,{\rm d}y_N&\leq  \int_{-\infty}^{\infty} u_{0,n}^r(x',y_N)\,{\rm d}y_N=
  M\Phi(1/n,x')=\int_{-\infty}^{\infty} \ou_{0,n}^r(x',y_N)\,{\rm d}y_N\\
  &=\int_{-\infty}^{r} \ou_{0,n}^r(x',y_N)\,{\rm d}y_N =\int_{-\infty}^{x_N} \ou_{0,n}^r(x',y_N)\,{\rm d}y_N,
\end{align*}
and the inequality is also true.

On the other hand, since $u^\eps_n\in C([0,\infty),L^1(\rr^N))$, then $u^\eps_n(t)\rightarrow u_{0,n}^r$  in $L^1(\rr^N)$ as $t\rightarrow 0^+$. Thus,
$$
v_n^\eps (t,x',x_N)\rightarrow \int_{-\infty}^{x_N} u_{0,n}^r(x',y_N)\,{\rm d}y_N\quad\text{in }L^1_{x'} (\rr^{N-1};L_{x_N}^\infty(\rr))\text{ as }t\rightarrow 0^+,
$$
and,   hence, a.e.~in $\rr^{N-1}\times\rr$ as well. Similar arguments hold for $ \overline v^\eps_n$. Therefore,~\eqref{eq:comparison.integrals} translates into
\begin{equation}
\label{claim.v}
  v^\eps_n(0+,x',x_N-2r)\leq \overline v^\eps_n(0+,x',x_N)\leq v^\eps_n(0+,x',x_N+2r) \quad\text{for a.e} \ (x',x_N)\in \rr^{N-1}\times\rr.
\end{equation}

\smallskip
\noindent(vi) \emph{Comparison of the primitives.} Let us now show that the inequalities~\eqref{claim.v} for the traces imply
\begin{equation}
\label{claim.v.2}
   v^\eps_n(t,x',x_N-2r)\leq \overline v^\eps_n(t,x',x_N)\leq v^\eps_n(t,x',x_N+2r)\quad\text{for a.e. }(t,x',x_N)\in (0,\infty)\times \rr^{N-1}\times\rr.
\end{equation}
We only prove the first inequality, since the proof of the second is analogous. To this aim,   we define
\[
g(t,x',x_N):=v^\eps_n(t,x',x_N-2r)- \overline v^\eps_n(t,x',x_N)\quad\text{for a.e. }(t,x',x_N)\in (0,\infty)\times\rr^{N-1}\times\rr.
\]
By step (iv), $g$ is uniformly bounded by $2Mc_n$. Moreover, $g\in C^{1+\delta, 2+2\delta} (Q)$ and satisfies
\begin{eqnarray}\label{eq.g}
\partial_t g+ \ml g-\eps\Delta g= a(t,x) \partial_{x_N}g\quad\text{in }Q,\qquad \lim _{t\to0^+}g(t)\leq 0\quad\text{a.e. on }\rr^{N},
\end{eqnarray}
where
$$
a(t,x):=f_\eta'(\zeta(t))\quad\text{with $\zeta(t)$ between $\partial_{x_N} v^\eps_n(t)$ and $\partial_{x_N}\overline v^\eps_n(t)$}.
$$
Note that $|a(t,x)|\leq c_{\eta,q}$ if $q<1$, and $|a(t,x)|\leq c_{n,q}$ if $q>1$.

Following~\cite[Theorem 3]{MR2422079}, we consider $\psi:\rr^N\rightarrow \rr_+$ smooth such that  $\psi(x)= 0$ for $|x|\leq 1$, $\psi(x)>\|g\|_{L^\infty(Q)}$ for $|x|\geq2$, with $\psi, \nabla\psi$ and $D^2\psi$ bounded in $\rr^N$. With this choice we introduce, for all $\beta>0$, $\psi_\beta(x)=\psi(\beta x)$. It follows that   $\psi_\beta(x)>\|g\|_{L^\infty(Q)}$ for $|x|\geq2/\beta$, and $\nabla \psi_\beta$, $D^2\psi_\beta$, and $\ml\psi_\beta$ all go to zero uniformly on $\rr^N$ when $\beta \rightarrow 0^+$. Let  $\tilde\delta>0$ and $h(t,x):=g(t,x)-\tilde\delta t-\psi_\beta(x)$. Then $h_+(t):=\max\{h(t),0\}$ is smooth and satisfies $\nabla h_+(t)=\nabla h(t)$ in the interior of its support,  which is contained in the ball $|x|<2/\beta$ for all $t>0$.  Hence, by~\eqref{eq.g},
\begin{align*}
	\frac 12\frac{\rm d}{{\rm d}t}&\int_{\rr^N }h_+^2(t)=\int_{\rr^N}h_+(t)\partial_th(t)=	\int_{\rr^N}h_+(t)(\partial_tg(t)-\tilde\delta)\\
&=\int_{\rr^N } h_+(t)  (-  \ml g(t)+ \eps \Delta g(t)+a \partial_{x_N}g(t)-\tilde\delta)\\
	&=\int_{\rr^N } h_+(t)\big(-  \ml  h(t)+ \eps \Delta h(t)+a \partial_{x_N}h(t)\big)+
	\int_{\rr^N } h_+(t)\big(-  \ml  \psi_\beta+ \eps \Delta  \psi_\beta+a \partial_{x_N} \psi_\beta-\tilde\delta\big).
\end{align*}
Observe that for $\beta$ small enough, $\beta <\beta(\tilde\delta)$, we have
\[
\big|-  \ml  \psi_\beta+ \eps \Delta  \psi_\beta+a \partial_{x_N} \psi_\beta\big|<\tilde\delta \quad\text{in }\rr^N.
\]
Moreover, for bounded and  smooth $h$ and compactly supported $h_+$,   we have
\[
\begin{aligned}
\int_{\rr^N } h_+  ( \ml h)\,{\rm d}x=\mathcal{E}(h_+,\ml h)&=\displaystyle\frac14\int_{\mathbb{R}^{N}}\int_{\mathbb{R}^{N}}
	\big(h_+(x+y)-h_+(x)\big)\big(h(x+y)-h(x) \big)\,{\rm d}\nu(y){\rm d}x\\
	&\quad+\frac14\int_{\mathbb{R}^{N}}\int_{\mathbb{R}^{N}}
	\big(h_+(x)-h_+(x-y) \big)\big(h(x)-h(x-y) \big)\,{\rm d}\nu(y){\rm d}x\geq 0,
\end{aligned}
\]
since the map $s\mapsto s _+$ is increasing. Hence, for all $t>0$, Young's inequality gives
\begin{align*}
	\frac 12\frac{\rm d}{{\rm d}t}\int_{\rr^N }h_+^2(t)&\leq 	-\eps\int_{\rr^N }\nabla  h_+(t)\cdot \nabla h(t)+\int_{\rr^N }a h_+(t)\partial_{x_N}h(t)\\
&=-\eps \int _{\rr^N }|\nabla h_+(t)|^2 +\int_{\rr^N} ah_+(t) \partial_{x_N}h_+(t)\\
	&\leq-\eps \int _{\rr^N }|\nabla h_+(t)|^2 + \frac{c^2_{n,q}}{2\vartheta}\int_{\rr^N}  h _+^2(t) +\frac {\vartheta }2
	\int_{\rr^N}|\partial_{x_N}h_+(t)|^2\leq C(\eps,n,q)  \int_{\rr^N} h_+^2(t),
\end{align*}
 where we chose $\vartheta=2\eps$.  Then for all $0<s<t$ we have
\[
\int_{\rr^N }h_+^2(t)\leq \int_{\rr^N }h_+^2(s) + C(\eps,n,q) \int_s^t  \int_{\rr^N }h_+^2.
\]
Observing that
\[
\lim_{s\rightarrow 0^+} \int_{\rr^N }h_+^2(s)= \lim_{s\rightarrow 0^+} \int_{|x|<2/\beta}\big(g(s)-\tilde\delta s-\psi_\beta\big)_+^2 =0,
\]
we conclude that $h_+\equiv 0$, by Gr\"onwall's inequality, so that
\[
g(t,x)\leq \tilde\delta t+\psi_\beta(x) \quad \text{for all }t>0,\text{ all }x\in \rr^N,\text{ and all }\beta<\beta(\tilde\delta).
\]
This implies that $g(t,x)\leq \tilde\delta t$ on $\{|x|<1/\beta\}$ and,   hence, on $\rr^N$ after letting $\beta\rightarrow 0^+$. Finally, the result follows by taking   the limit $\tilde\delta\rightarrow 0^+$.

\smallskip
\noindent(vii) \emph{Conclusion.} Inequalities~\eqref{claim.v.2} can be written as
\[
\int_{-\infty}^{x_N-2r} u_n^\eps(t,x',y_N)\,{\rm d}y_N\leq \int_{-\infty}^{x_N} \ou_n^\eps(t,x',y_N)\,{\rm d}y_N \leq \int_{-\infty}^{x_N+2r} u_n^\eps(t,x',y_N)\,{\rm d}y_N.
\]
Since  $u_n^\eps\rightarrow u_n$ in $C([\tau,\infty);L^1(\rr^N))$ for all $\tau>0$, we deduce that $u_n^\eps(t,x',\cdot)\rightarrow u_n (t,x',\cdot)$ in $L^1(\rr)$ for a.e.~$x'\in \rr^{N-1}$ and all $t\geq \tau$. Then, letting $\eps\rightarrow 0$ we get, up to a subsequence,
\[
\int_{-\infty}^{x_N-2r} u_n (t,x',y_N)\,{\rm d}y_N\leq \int_{-\infty}^{x_N} \ou_n (t,x',y_N)\,{\rm d}y_N \leq \int_{-\infty}^{x_N+2r} u_n (t,x',y_N)\,{\rm d}y_N\quad\text{for a.e. }x'.
\]
On the other hand, since   $u_n(t)\rightarrow u(t)$ in $ L^1(\rr^N)$,  up to a subsequence, $u_n (t,x',\cdot)\rightarrow u (t,x',\cdot)$ in $L^1(\rr)$ for a.e.~$x'\in \rr^{N-1}$. We conclude that,  for all $t\geq\tau$,
\[
\int_{-\infty}^{x_N-2r} u  (t,x',y_N)\,{\rm d}y_N\leq \int_{-\infty}^{x_N} \ou  (t,x',y_N)\,{\rm d}y_N \leq \int_{-\infty}^{x_N+2r} u  (t,x',y_N)\,{\rm d}y_N\quad\text{for a.e. }x'.
\]
Letting $r\to0^+$, we deduce that $u(t,x',x_N)=\ou (t,x',x_N)$ for a.e.~$(x',x_N)\in \rr^{N-1}\times\rr$ and all $t\geq \tau$, whence the uniqueness result.

\noindent(c) The proof  is similar to that of (b). Let us comment on the points where some care has to be taken.
In step (i) we still have, using the identity ${\ml'}(\psi(\cdot') {\mathds{1}}_{\mathbb{R}}(\cdot_N))(x',x_N)=(\tilde{\mathcal{L}}\psi )(x'){\mathds{1}}_{\mathbb{R}}(x_N)$ from Remark~\ref{symbols.3}, that $v$ defined by~\eqref{integrated.u} is the unique fundamental solution with mass $M$ of  $\partial_t v+\widetilde\ml u=0$. In step (iii) we need to consider the analogue of \eqref{reg}:
\begin{equation}\label{reg'}\tag{$\textup{P}_{\varepsilon}'$}
\partial_tu+\ml' u+\partial_{x_N} (f_\eta(u)) =\varepsilon \Delta u\quad\textup{in }Q,\qquad
  	u(0,\cdot)=u_0\quad\textup{in }\rr^N,
\end{equation}
Hence, the primitives in step (iv) satisfy the Hamilton-Jacobi equation
\[
\partial_t v+\ml' v-\eps \Delta v=f_\eta(\partial_{x_N}v) \quad \text{in }Q,
\]
instead of~\eqref{HJ-nonlocal}. We can bound these primitives in terms of a function $w_\eps^n$ defined as in~\eqref{vcomparedtow}, which solves the same problem~\eqref{eq:wneps} as the function $w_\eps^n$ that was defined for case (b), and that hence has the same bound. The proof of step (vi) also works nicely, once we observe that $\int_{\rr^N} h_+(\ml' h)\geq 0$.
\end{proof}


\subsection*{Acknowledgements}
J. Endal received funding from the European Union’s Horizon 2020 research and innovation programme under the Marie Sk\l odowska-Curie grant agreement no. 839749 ``Novel techniques for quantitative behaviour of convection-diffusion equations (techFRONT)'' and from the Research Council of Norway under the MSCA-TOPP-UT grant agreement no.~312021.

L. I. Ignat received funding from Romanian Ministry of Research, Innovation and Digitization, CNCS-UEFISCDI, project number PN-III-P1-1.1-TE-2021-1539, within PNCDI III.

 F. Quir\'os was supported by grants CEX2019-000904-S, PID2020-116949GB-I00, and RED2018-102650-T, all of them funded by MCIN/AEI/10.13039/501100011033. 

We would like to thank the anonymous referees for helpful suggestions. 

\appendix

\section{Truncated operators}
\label{sect:appendix.truncated.operators}
\setcounter{equation}{0}

 The aim of this appendix is to prove that the rescaled truncated operators $\ml^{\lambda,>\rho}$ and $\ml^{\lambda,\le\rho}$ ``approach'' the truncated operators $\ml'^{,>\rho}$ and $\ml'^{,\le\rho}$, respectively, as $\lambda\to\infty$.  This fact plays an essential role in the proof of Theorem~\ref{asymptotic} in the convection regime $q<q_*(\alpha)$.

\begin{lemma}\label{>rho,uniform}
Assume $\alpha\in(0,2)$, $\beta>1/\alpha$, and~\eqref{mlas}, and consider the operators $\ml^{\lambda,>\rho}$ and $\ml'^{,>\rho}$ defined respectively through~\eqref{ml.lambda},~\eqref{tilde.L}, and also~\eqref{eq.SplittingOperatorInTwo}. Then, for all $\phi\in L^1(\rr^N)$,
\begin{equation*}
\label{>rho,uniform.est}
  \|\ml^{\lambda,>\rho}\phi-\ml'^{,>\rho}\phi\|_{L^1(\rr^N)}\rightarrow 0 \quad\text{as $\lambda\rightarrow \infty$.}
\end{equation*}
\end{lemma}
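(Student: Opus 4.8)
Proof proposal for Lemma~A.1 ($\ml^{\lambda,>\rho}\phi\to\ml'^{,>\rho}\phi$ in $L^1$):

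The plan is to exploit the fact that $\ml^{\lambda,>\rho}$ is an \emph{averaging operator with a bounded kernel of mass controlled by $\rho^{-\alpha}$}, so that it extends to a bounded operator on $L^1(\rr^N)$ with norm independent of $\lambda$, and then to argue by density. First I would write, for $\phi\in C_\textup{c}^\infty(\rr^N)$,
$$
(\ml^{\lambda,>\rho}\phi)(x)=\int_{\bs^{N-1}}\int_\rho^\infty\Big(\phi(x)-\tfrac12\phi(x+r\theta_\lambda)-\tfrac12\phi(x-r\theta_\lambda)\Big)\frac{{\rm d}r}{r^{1+\alpha}}\,{\rm d}\mu(\theta),\qquad \theta_\lambda:=(\theta',\lambda^{\frac1\alpha-\beta}\theta_N),
$$
and note that since $\beta>1/\alpha$ we have $\lambda^{\frac1\alpha-\beta}\to0$, so $\theta_\lambda\to\theta':=(\theta',0)$ pointwise and with $|\theta_\lambda|\le 1$. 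For fixed $\phi$ and fixed $x$, dominated convergence in the $(r,\theta)$-integral (the integrand is bounded by $3\|\phi\|_{L^\infty}\mathbf 1_{r\ge\rho}r^{-1-\alpha}$, integrable against $\frac{{\rm d}r}{r^{1+\alpha}}{\rm d}\mu$ because $\mu(\bs^{N-1})=\Lambda_2<\infty$) gives $(\ml^{\lambda,>\rho}\phi)(x)\to(\ml'^{,>\rho}\phi)(x)$ for every $x$. To upgrade pointwise to $L^1$-convergence, one observes that for $\phi$ compactly supported the functions $\ml^{\lambda,>\rho}\phi$ are supported in a fixed dilate of $\operatorname{supp}\phi$ enlarged by the (bounded) reach of the kernel, and are uniformly bounded; hence dominated convergence over $x$ finishes the case $\phi\in C_\textup{c}^\infty$.

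Second, I would establish the uniform bound
$$
\|\ml^{\lambda,>\rho}\phi\|_{L^1(\rr^N)}\le C\rho^{-\alpha}\|\phi\|_{L^1(\rr^N)}\qquad\text{for all }\lambda\ge1,
$$
which is exactly the estimate used repeatedly in the body of the paper (e.g.\ in the proof of Theorem~\ref{asymptotic} for $q<q_*(\alpha)$). This follows by Fubini: $\|\ml^{\lambda,>\rho}\phi\|_{L^1}\le\int_{\bs^{N-1}}\int_\rho^\infty\big(\|\phi\|_{L^1}+\tfrac12\|\phi(\cdot+r\theta_\lambda)\|_{L^1}+\tfrac12\|\phi(\cdot-r\theta_\lambda)\|_{L^1}\big)\frac{{\rm d}r}{r^{1+\alpha}}{\rm d}\mu(\theta)=2\|\phi\|_{L^1}\,\mu(\bs^{N-1})\int_\rho^\infty r^{-1-\alpha}\,{\rm d}r=\tfrac{2\Lambda_2}{\alpha}\rho^{-\alpha}\|\phi\|_{L^1}$, since translations are $L^1$-isometries. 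The same bound holds for $\ml'^{,>\rho}$ (it is the $\lambda$-independent limiting case). Thus $\ml^{\lambda,>\rho}-\ml'^{,>\rho}$ is bounded on $L^1$ uniformly in $\lambda$ with norm $\le \frac{4\Lambda_2}{\alpha}\rho^{-\alpha}$.

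Third, I would conclude by an $\varepsilon/3$ density argument: given $\phi\in L^1(\rr^N)$ and $\eta>0$, pick $\psi\in C_\textup{c}^\infty(\rr^N)$ with $\|\phi-\psi\|_{L^1}<\frac{\alpha\eta}{12\Lambda_2}\rho^{\alpha}$; then
$$
\|\ml^{\lambda,>\rho}\phi-\ml'^{,>\rho}\phi\|_{L^1}\le\|(\ml^{\lambda,>\rho}-\ml'^{,>\rho})(\phi-\psi)\|_{L^1}+\|\ml^{\lambda,>\rho}\psi-\ml'^{,>\rho}\psi\|_{L^1}\le\tfrac{2\eta}{3}+\|\ml^{\lambda,>\rho}\psi-\ml'^{,>\rho}\psi\|_{L^1},
$$
and the last term is $<\eta/3$ for $\lambda$ large by the first step. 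This gives $\|\ml^{\lambda,>\rho}\phi-\ml'^{,>\rho}\phi\|_{L^1}\to0$. The only mildly delicate point is making the $L^1$ (not just pointwise) convergence in the first step rigorous; this is where the compact support of the test function $\psi$ is essential, ensuring the convergence happens on a fixed bounded set with a uniform dominating function, after which ordinary dominated convergence applies. Everything else is bookkeeping with Fubini and the finiteness of the spectral measure $\mu$.
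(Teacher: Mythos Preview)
Your density strategy (steps~2 and~3) is sound, and the uniform $L^1$-bound in step~2 is correct. The gap is in step~1: your claim that $\ml^{\lambda,>\rho}\phi$ is supported in a fixed compact set when $\phi\in C_\textup{c}^\infty$ is false. The kernel does \emph{not} have bounded reach---the $r$-integration runs over $(\rho,\infty)$, so for any $x\in\rr^N$, however far from $\operatorname{supp}\phi$, there exist $r>\rho$ and $\theta\in\bs^{N-1}$ with $x+r\theta_\lambda\in\operatorname{supp}\phi$. Thus $\ml^{\lambda,>\rho}\phi$ is not compactly supported, and you have not produced a $\lambda$-independent integrable dominating function in~$x$. (One can be found, but it requires a genuine tail estimate, not the support argument you propose; the pointwise decay of $\ml^{\lambda,>\rho}\phi$ is anisotropic and depends on~$\mu$.)

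The paper's proof avoids this difficulty by swapping the order of integration. Writing out the difference and applying Fubini with a change of variables in~$x'$ (translating away the $r\theta'$ shift) gives
\[
\|\ml^{\lambda,>\rho}\phi-\ml'^{,>\rho}\phi\|_{L^1(\rr^N)}
\le\sum_{\pm}\int_\rho^\infty\int_{\bs^{N-1}}\int_{\rr^N}\big|\phi\big(x\pm r(0,\lambda^{\frac1\alpha-\beta}\theta_N)\big)-\phi(x)\big|\,{\rm d}x\,{\rm d}\mu(\theta)\,\frac{{\rm d}r}{r^{1+\alpha}}.
\]
For each fixed $(r,\theta)$ the inner integral tends to zero as $\lambda\to\infty$ by continuity of translations in $L^1$, and it is uniformly bounded by $2\|\phi\|_{L^1}$, which is integrable against ${\rm d}\mu(\theta)\,\frac{{\rm d}r}{r^{1+\alpha}}$ on $\bs^{N-1}\times(\rho,\infty)$. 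Dominated convergence in $(r,\theta)$ then gives the result directly for every $\phi\in L^1(\rr^N)$---no smooth approximation, no density argument needed. Your step~2 bound is essentially the domination used here, but applied at the right stage: after Fubini rather than before.
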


\begin{proof}
Using the definition of the two operators and a change of variables in $x'\in \rr^{N-1}$ we obtain
	\[
	\begin{split}
	\int_{\rr^N}| \ml^{\lambda,>\rho}\phi-\ml'^{,>\rho}\phi |
	\leq
	\sum _{\pm} \int_{\rho}^\infty\int_{\bs^{N-1}} \int_{\rr^{N  }} |\phi(x\pm r(0,\lambda^{\frac 1\alpha-\beta}\theta_N))-\phi(x)|  \,{\rm d}x{\rm d}\mu(\theta) \frac{{\rm d}r}{r^{1+\alpha}}.
	\end{split}
	\]
	Since, for all $(r,\theta)\in(\rho,\infty)\times\bs^{N-1}$,
	\[
	\int_{\rr^{N  }} |\phi(x\pm r(0,\lambda^{-\frac{1}{\alpha}(\alpha\beta-1)}\theta_N))-\phi(x)|\,{\rm d}x\rightarrow 0\quad\text{as }\lambda\rightarrow \infty,
	\]
	and this integral is uniformly bounded by the $L^1$-norm of $\phi$, we can apply the dominated convergence theorem to obtain the desired result.
\end{proof}

\begin{lemma}	\label{conv.L}
Assume $\alpha\in(0,2)$, $\beta>1/\alpha$, and~\eqref{mlas}, and consider the operators $\ml^{\lambda,\leq\rho}$ and $\ml'^{,\leq\rho}$ defined respectively through~\eqref{ml.lambda},~\eqref{tilde.L}, and also~\eqref{eq.SplittingOperatorInTwo}. Then, for all $\phi\in C_\textup{c}^{3}(\rr^N)$,
\[
\begin{aligned}
  &\| \ml^{\lambda,\leq\rho} \phi  -\ml'^{,\leq\rho}\phi \|_{L^\infty(\rr^N)}\lesssim c(\rho) \lambda^{\frac 1\alpha-\beta} \Big(\|D^2\phi \|_{L^\infty(\rr^N)}+\|\partial_{x_N}D^2\phi \|_{L^\infty(\rr^N)}\Big),\\
  &\| \ml^{\lambda,\leq\rho} \phi  -\ml'^{,\leq\rho}\phi \|_{L^1(\rr^N)}\lesssim c(\rho) \lambda^{\frac 1\alpha-\beta}  \Big(\|D^2\phi \|_{L^1(\rr^N)}+\|\partial_{x_N}D^2\phi \|_{L^1(\rr^N)}\Big),
\end{aligned}
\]
where $c(\rho)\eqsim \rho^{2-\alpha }+\rho^{3-\alpha}$, uniformly in $\lambda>1$.
\end{lemma}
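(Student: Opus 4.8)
The plan is to estimate the difference of the two truncated operators pointwise, exploiting the second-difference structure of both operators and the fact that the only discrepancy between them is the presence of the factor $\lambda^{1/\alpha-\beta}\theta_N$ in the $x_N$-component of the increment. Recall from~\eqref{ml.lambda} and~\eqref{tilde.L} that, for $\phi\in C_\textup{c}^3(\rr^N)$,
\[
(\ml^{\lambda,\leq\rho}\phi - \ml'^{,\leq\rho}\phi)(x',x_N)
=\int_{\bs^{N-1}}\int_0^\rho \frac 12\sum_{\pm}\Big(\phi(x'\pm r\theta',x_N)-\phi\big(x'\pm r\theta',x_N\pm r\lambda^{\frac 1\alpha-\beta}\theta_N\big)\Big)\frac{{\rm d}r}{r^{1+\alpha}}{\rm d}\mu(\theta),
\]
so that the difference involves \emph{only} a shift of size $r\lambda^{1/\alpha-\beta}|\theta_N|\le r\lambda^{1/\alpha-\beta}$ in the last variable. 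First I would write, for each fixed $(r,\theta,x')$, the inner expression as a second-order Taylor remainder in the single variable $x_N$: since $\phi$ is $C^3$, each term $\phi(x'\pm r\theta',x_N)-\phi(x'\pm r\theta',x_N\pm r\lambda^{\frac1\alpha-\beta}\theta_N)$ equals $\mp r\lambda^{\frac1\alpha-\beta}\theta_N\,\partial_{x_N}\phi(x'\pm r\theta',x_N)$ plus a remainder of size $\lesssim r^2\lambda^{2(\frac1\alpha-\beta)}\theta_N^2\|\partial^2_{x_Nx_N}\phi\|_{L^\infty}$. The linear terms do not cancel between the two $\pm$ branches as written; instead I would symmetrize once more, using that $\theta\mapsto\theta$ and $\theta\mapsto-\theta$ contribute and pairing $+r\theta'$ with $-r\theta'$ is not available, so rather than relying on symmetry of $\mu$ I would keep the first-order term and absorb it: $|r\lambda^{\frac1\alpha-\beta}\theta_N\,\partial_{x_N}\phi(x'\pm r\theta',x_N)|\le r\lambda^{\frac1\alpha-\beta}\|\partial_{x_N}\phi\|_{L^\infty}$, which after integration against ${\rm d}r/r^{1+\alpha}$ on $(0,\rho)$ diverges near $r=0$. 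So the first-order term must be handled by a genuine cancellation. The correct device is to Taylor-expand the \emph{original} second differences to second order in $r$ \emph{before} subtracting: both $\ml^{\lambda,\leq\rho}\phi$ and $\ml'^{,\leq\rho}\phi$ have, by the standard bound on $\ml^{\leq\rho}$, the form $\int_{\bs^{N-1}}\int_0^\rho O(r^2)\,\frac{{\rm d}r}{r^{1+\alpha}}{\rm d}\mu$, the $O(r^2)$ coming from the Hessian; subtracting, the leading $O(r^2)$ Hessian terms differ by $O(r^2\lambda^{1/\alpha-\beta})$ (one more derivative, i.e.\ $\|\partial_{x_N}D^2\phi\|_{L^\infty}$, produces the extra $\lambda^{1/\alpha-\beta}$), which is integrable and yields the $\rho^{2-\alpha}$ contribution.

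Concretely, the key step is the pointwise inequality
\[
\Big|\tfrac12\sum_\pm\Big(\phi(x'\pm r\theta',x_N)-\phi(x'\pm r\theta',x_N\pm r\lambda^{\frac1\alpha-\beta}\theta_N)\Big)\Big|
\lesssim \lambda^{\frac1\alpha-\beta}\Big(r^2\|D^2\phi\|_{L^\infty}+r^3\|\partial_{x_N}D^2\phi\|_{L^\infty}\Big),
\]
valid uniformly for $\lambda>1$, $r\in(0,\rho)$, $\theta\in\bs^{N-1}$ and all $x$. This follows by writing each of the two second differences that make up $\ml^{\lambda,\le\rho}\phi$ and $\ml'^{,\le\rho}\phi$ via the integral form of the Taylor remainder along the segment joining the two increment points, and then comparing: the segments differ only in the $x_N$-direction by an amount $O(r\lambda^{1/\alpha-\beta})$, and one further differentiation of $D^2\phi$ in $x_N$ accounts for the difference, giving the stated $r^2$ and $r^3$ terms. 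Integrating this bound against $\int_{\bs^{N-1}}\int_0^\rho r^{-1-\alpha}\,{\rm d}r\,{\rm d}\mu(\theta)$ and using $\mu(\bs^{N-1})=\Lambda_2<\infty$ from~\eqref{mlas} produces $\int_0^\rho r^{2-1-\alpha}\,{\rm d}r\eqsim\rho^{2-\alpha}$ and $\int_0^\rho r^{3-1-\alpha}\,{\rm d}r\eqsim\rho^{3-\alpha}$, hence the constant $c(\rho)\eqsim\rho^{2-\alpha}+\rho^{3-\alpha}$ and the $L^\infty$-bound. For the $L^1$-bound I would integrate the same pointwise inequality in $x$ first: since translations are isometries of $L^1(\rr^N)$ and $\phi\in C_\textup{c}^3$, one has $\|D^2\phi(\cdot)\|$ and $\|\partial_{x_N}D^2\phi(\cdot)\|$ controlled in $L^1$, so Fubini (exchanging the $x$-integral with the $\int_{\bs^{N-1}}\int_0^\rho$ integral, justified by the integrability just established) gives the $L^1$-version with the same $c(\rho)$ and the $L^1$-norms of $D^2\phi$ and $\partial_{x_N}D^2\phi$ in place of the sup-norms.

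The main obstacle I anticipate is organizing the Taylor expansion so that the apparently non-integrable first-order term (which, taken naively, behaves like $r\lambda^{1/\alpha-\beta}\,r^{-1-\alpha}$, divergent at $r=0$) is genuinely absent. The resolution is to \emph{not} expand the difference of the two operators termwise, but to observe that each operator individually already equals an $O(r^2)$ quantity after using the second-difference (symmetric) structure — this is exactly the computation behind the standard estimate $\|\ml^{\le\rho}\phi\|_\infty\lesssim\rho^{2-\alpha}\|D^2\phi\|_\infty$ — and then to subtract at the level of these $O(r^2)$ Hessian expressions, where the dependence on the increment direction is smooth and the difference is controllably $O(r^2\lambda^{1/\alpha-\beta})$ with an extra derivative. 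Everything else (Fubini, the radial integrations, invoking $\Lambda_2<\infty$) is routine, and uniformity in $\lambda>1$ is automatic because $\lambda^{1/\alpha-\beta}\le 1$ there, so no term is amplified.
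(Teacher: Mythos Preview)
Your proposal is correct and follows essentially the same route as the paper: after the initial false start, you land on exactly the paper's device of writing each truncated operator via the second-order Taylor (Hessian) integral remainder and then comparing, which the paper organizes as a three-term split $I_{1,\lambda}+I_{2,\lambda}+I_{3,\lambda}$ (difference in the evaluation point of $D^2\phi$, giving the $\rho^{3-\alpha}\|\partial_{x_N}D^2\phi\|$ piece, and differences in the quadratic-form vectors $(\theta',\lambda^{1/\alpha-\beta}\theta_N)$ vs.\ $(\theta',0)$, giving the $\rho^{2-\alpha}\|D^2\phi\|$ pieces). The $L^1$ version via Fubini and translation invariance is handled identically.
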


\begin{proof}
We use Taylor's theorem with integral reminder:
\[
    f(x+z)-f(x)=\int_0^1 z\cdot \nabla f(x+sz)\,{\rm d}s=z\cdot \nabla f(x)+ \int_0^1 (1-s)z  D^2f(x+sz)z^t\,{\rm d}s.
\]
Hence, by defining $\sigma:=1/\alpha-\beta<0$, we have
\[
    \begin{aligned}
    (\ml^{\lambda,\leq\rho}\phi)(x)&=-\int_{0}^\rho\int_{\bs^{N-1}} \int_0^1(1-s)(\theta',\lambda^{\sigma }\theta_N)D^2\phi(x\pm sr(\theta',\lambda^{\sigma}\theta_N))(\theta',\lambda^{\sigma}\theta_N)^t\,{\rm d}s\frac{{\rm d}r}{r^{\alpha-1}}
    \,{\rm d}\mu(\theta),\\
    (\ml^{\lambda,\leq\rho}\phi)(x)&=-\int_{0}^\rho\int_{\bs^{N-1}} \int_0^1(1-s)(\theta',0)D^2\phi(x\pm sr(\theta',0))(\theta',0)^t\,{\rm d}s\frac{{\rm d} r}{r^{\alpha-1}}
    {\rm d}\mu(\theta).
    \end{aligned}
\]	
We only consider the part with $+$ sign, since the other is similar. We write
$$
    |(\ml^{\lambda,\leq\rho}\phi)(x)-(\ml'^{,\leq\rho}\phi)(x)|\le I_{1,\lambda}(x)+I_{2,\lambda}(x)+I_{3,\lambda}(x),
$$
where
\begin{align*}
I_{1,\lambda}(x)&:=\int_{0}^\rho\int_{\bs^{N-1}} \int_0^1(1-s)\Big|(\theta',\lambda^{\sigma }\theta_N)\big(D^2\phi(x+ sr(\theta',\lambda^{\sigma}\theta_N))\\
&\hskip6cm-D^2\phi(x+ sr(\theta',0)) \big) (\theta',\lambda^{\sigma}\theta_N)^t\Big|\,{\rm d}s\frac{{\rm d}r{\rm d}\mu(\theta)}{r^{\alpha-1}},\\
I_{2,\lambda}(x)&:=\int_{0}^\rho\int_{\bs^{N-1}} \int_0^1(1-s)\Big|\big((\theta',\lambda^{\sigma }\theta_N)  -(\theta',0)\big)D^2\phi(x+ sr(\theta',0))   (\theta',\lambda^{\sigma}\theta_N)^t\Big|\,{\rm d}s\frac{{\rm d}r{\rm d}\mu(\theta)}{r^{\alpha-1}},\\
I_{3,\lambda}(x)&:=\int_{0}^\rho\int_{\bs^{N-1}} \int_0^1(1-s)\Big|(\theta',0)D^2\phi(x+ sr(\theta',0))   \big((\theta',\lambda^{\sigma }\theta_N)  -(\theta',0)\big)^t\Big|\,{\rm d}s\frac{{\rm d}r{\rm d}\mu(\theta)}{r^{\alpha-1}}.
\end{align*}
By using the identity
		\begin{align*}
			D^2\phi(x+ sr(\theta',\lambda^{\sigma}\theta_N))&-D^2\phi(x+ sr(\theta',0))\\
			&=sr(0,\lambda^\sigma\theta_N)\cdot \int_0^1 \nabla (D^2\phi)\Big(x+sr((\theta',0)+t(0,\lambda^
			\sigma\theta_N))  \Big)\,{\rm d}t\\
			&=sr\lambda^{\sigma}\theta_N \int_0^1 \partial_{x_N}D^2\phi \Big(x+sr(\theta', t \lambda^
			\sigma\theta_N)  \Big)\,{\rm d}t,
 		\end{align*}
and the facts that $\sigma<0$ and $(\theta',\theta_N)\in \bs^{N-1}$, we obtain that
\[
\|I_{1,\lambda}\|_{L^1(\rr^N)}\lesssim \lambda^{\sigma}  \| \partial_{x_N}D^2\phi \|_{L^1(\rr^N)} \int_{0}^\rho\int_{\bs^{N-1}} \frac{{\rm d}r{\rm d}\mu(\theta)}{r^{\alpha-2}}\eqsim
\lambda^{\sigma} \rho^{3-\alpha} \| \partial_{x_N}D^2\phi \|_{L^1(\rr^N)}.
\]
The last two terms can be estimated by brute force in a similar manner:
\[
\|I_{2,\lambda}\|_{L^1(\rr^N)}+\|I_{3,\lambda}\|_{L^1(\rr^N)}\lesssim \lambda^{\sigma}  \|  D^2\phi \|_{L^1(\rr^N)} \int_{0}^\rho\int_{\bs^{N-1}} \frac{{\rm d}r{\rm d}\mu(\theta)}{r^{\alpha-1}}\eqsim \lambda^{\sigma} \rho^{2-\alpha} \|   D^2\phi \|_{L^1(\rr^N)}.
\]
A similar estimate holds  for the $L^\infty$-norm of the three terms.
\end{proof}

\section{Basic results for entropy solutions}
\label{sect:appendix.results.entropy.solutions}
\setcounter{equation}{0}

In this section, we will build the basic theory of~\eqref{eq.main},~\eqref{reg}, and~\eqref{reg'} from scratch based on~\cite{Car99, MaTo03, AlibaudEntropy, CifaniJakobsenEntropySol, EndalJakobsen, AnBr20, Pan20}.  The operator $\ml'$ defined by~\eqref{tilde.L} only acts on the first $N-1$ spatial variables. However, most of the results we are going to deduce only depend on upper bounds of the operator, therefore we will always write them for $\ml$ and only give suitable comments on the case $\ml'$, if needed.

\subsection{Concept of entropy solutions}\label{sect:conceptofentropy}
The definition of entropy solutions of~\eqref{eq.main} was already stated as Definition~\ref{def.entropySolution}, let us also give it for~\eqref{reg}:

\begin{definition}[Entropy solution of~\eqref{reg}]\label{def.entropySolutionVeps}
A function $u$ is an \emph{entropy solution} of~\eqref{reg} if:
\begin{enumerate}[{\rm (a)}]
\item \textup{(Regularity)} $u\in
L^\infty((0,\infty);L_\textup{loc}^1(\rr^N))\cap L^\infty_\textup{loc}((0,\infty);L^\infty(\rr^N))\cap L_\textup{loc}^2((0,\infty);H_\textup{loc}^1(\rr^N))$.
\item \textup{(Entropy inequality)} For all $k\in\rr$, all $\rho>0$, and all $0\leq \phi\in C_\textup{c}^\infty(\overline{Q})$,
\begin{equation}\label{eq.entropyInequalityVeps.appendix}
\begin{split}
&\iint_Q \Big( |u-k|\phi_t + \sgn(u-k)\big(F(u)-F(k)\big)\cdot\nabla\phi+\varepsilon|u-k|\Delta \phi \Big)\\
&\qquad-\iint_Q \Big(\sgn(u-k)\phi(\ml^{> \rho}u) + |u-k|(\ml^{\leq \rho}\phi)\Big)+\int_{\rr^N}|u_0-k|\phi(0)\geq 0.
\end{split}
\end{equation}
\end{enumerate}
\end{definition}

\begin{remark}
In the above definition, $u\in L^\infty_\textup{loc}((0,\infty);L^\infty(\rr^N))$ implies that $u\in L_{\textup{loc}}^2(Q)$. Hence, the last regularity assumption is actually $\nabla u\in L_{\textup{loc}}^2(Q)$. Its use is better seen if we rewrite the term involving the Laplacian:
$$
\iint_Q|u-k|\Delta \phi=-\iint_Q\sgn(u-k)\nabla u\cdot\nabla\phi.
$$
\end{remark}

\subsection{Known uniqueness and a priori estimates}\label{sect:knownunique}
We follow the uniqueness argument presented in \cite[Section 4]{AnBr20}. It starts with a Kato inequality; see e.g. \cite[Theorem 3.9]{MaTo03} and \cite[Proposition 4.2]{EndalJakobsen}.

\begin{proposition}[Kato inequality]
Assume~\eqref{qas}, and~\eqref{mlas}. Let $u,\bar{u}$ be entropy solutions of~\eqref{eq.main} with initial data $u_0,\bar{u}_0\in L^\infty(\rr^N)$, respectively. Then, for all $0\leq \phi\in C_\textup{c}^\infty(\overline{Q})$,
\begin{equation}
\label{eq.KatoInequality}
\begin{split}
&\iint_Q \Big( |u-\bar{u}|\phi_t + \sgn(u-\bar{u})\big(F(u)-F(\bar{u})\big)\cdot\nabla\phi -|u-\bar{u}|(\ml\phi)\Big)+\int_{\rr^N}|u_0-\bar{u}_0|\phi(0)\geq 0.
\end{split}
\end{equation}
\end{proposition}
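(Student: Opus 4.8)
The plan is to establish \eqref{eq.KatoInequality} by the doubling-of-variables technique of Kruzhkov, adapted to the nonlocal diffusion as in \cite{AlibaudEntropy, CifaniJakobsenEntropySol, EndalJakobsen, AnBr20}. Fix $0\le\phi\in C_\textup{c}^\infty(\overline{Q})$ and a standard mollifier, and for small $\delta>0$ set
$$
\Phi_\delta(t,x,s,y):=\phi\Big(\tfrac{t+s}2,\tfrac{x+y}2\Big)\,\omega_\delta\Big(\tfrac{t-s}2\Big)\,\omega_\delta\Big(\tfrac{x-y}2\Big),
$$
which for $\delta$ small is a nonnegative, smooth, compactly supported test function in $\overline{Q}$ with respect to each of the two pairs of variables. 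First I would apply the entropy inequality for $u=u(t,x)$ with Kruzhkov level $k=\bar u(s,y)$ and test function $\Phi_\delta(\cdot,\cdot,s,y)$, then integrate the result over $(s,y)\in Q$; symmetrically, I would apply the entropy inequality for $\bar u=\bar u(s,y)$ with level $k=u(t,x)$ and test function $\Phi_\delta(t,x,\cdot,\cdot)$, and integrate over $(t,x)\in Q$. Adding the two inequalities produces a single inequality in which, crucially, the singular half $\ml^{\le\rho}$ of the operator always acts on $\Phi_\delta$ while the finite half $\ml^{>\rho}$ always acts on the solutions.

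The next step is to let $\delta\to0^+$. For the accretive part $|u-\bar u|\partial_t\Phi_\delta$ and the flux part $\sgn(u-\bar u)\big(F(u)-F(\bar u)\big)\cdot\nabla\Phi_\delta$, convergence towards $\iint_Q|u-\bar u|\partial_t\phi$ and $\iint_Q\sgn(u-\bar u)\big(F(u)-F(\bar u)\big)\cdot\nabla\phi$ is the classical Kruzhkov argument, using $u,\bar u\in L^\infty$, the local Lipschitz (or, in the fast-convection range, local H\"older) continuity of $F$, Lebesgue points, and the cancellation of the cross time/space derivatives of $\Phi_\delta$. The initial terms $\int|u_0-\bar u(s,y)|\Phi_\delta(0,x,s,y)+\int|\bar u_0-u(t,x)|\Phi_\delta(t,x,0,y)$ combine in the limit into $\int_{\rr^N}|u_0-\bar u_0|\phi(0)$. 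For the nonlocal terms I would use that, for fixed $\rho>0$, $\ml^{>\rho}$ is a bounded linear operator on $L^\infty(\rr^N)$ (its kernel has finite mass on $\{r>\rho\}$), while $\ml^{\le\rho}\Phi_\delta$ is uniformly bounded with the $O(\rho^{2-\alpha})$ estimate; hence $\delta\to0^+$ may be taken, yielding in the limit
$$
-\iint_Q\sgn(u-\bar u)\,\phi\,\big(\ml^{>\rho}u-\ml^{>\rho}\bar u\big)-\iint_Q|u-\bar u|\,\ml^{\le\rho}\phi.
$$

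To conclude I would invoke the pointwise Kato-type inequality for the nonlocal operator: since $r\mapsto|r|$ is convex and the kernel of $\ml^{>\rho}$ is nonnegative, for bounded $u,\bar u$ one has $\sgn(u-\bar u)\,\ml^{>\rho}(u-\bar u)\ge\ml^{>\rho}(|u-\bar u|)$ pointwise (compare \cite[Theorem 3.9]{MaTo03}, \cite[Proposition 4.2]{EndalJakobsen}). Combined with $\phi\ge0$, the self-adjointness of both $\ml^{>\rho}$ and $\ml^{\le\rho}$ (built into the second-difference form of the operator), the displayed quantity is bounded above by $-\iint_Q|u-\bar u|\big(\ml^{>\rho}\phi+\ml^{\le\rho}\phi\big)=-\iint_Q|u-\bar u|\,\ml\phi$, which is independent of $\rho$; this is precisely why the splitting parameter disappears. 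Collecting everything gives \eqref{eq.KatoInequality}. The main obstacle is the passage $\delta\to0^+$ in the doubled nonlocal terms: one must check that the $O(\delta^{-2})$ blow-up of the derivatives of $\Phi_\delta$ is harmless, because $\ml^{\le\rho}$ only ever hits the smooth factor $\phi\big(\tfrac{t+s}2,\tfrac{x+y}2\big)$ and not the mollifiers, and that the discontinuity of $\sgn$ is absorbed, as usual, by working with the continuous Kruzhkov pairs $|u-k|$ and $\sgn(u-k)\big(F(u)-F(k)\big)$ together with dominated convergence once $\bar u(s,y)\to\bar u(t,x)$ at Lebesgue points.
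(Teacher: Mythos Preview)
The paper does not prove this proposition; it simply cites \cite[Theorem~3.9]{MaTo03} and \cite[Proposition~4.2]{EndalJakobsen} and follows the uniqueness strategy of \cite[Section~4]{AnBr20}. Your overall plan---Kruzhkov doubling of variables, adding the two entropy inequalities, Kato's pointwise inequality for $\ml^{>\rho}$, then collapsing the doubled variables---is indeed the standard route taken in those references.

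There is, however, a genuine gap in your reasoning for the $\ml^{\le\rho}$ term. Your claim that ``$\ml^{\le\rho}$ only ever hits the smooth factor $\phi\big(\tfrac{t+s}2,\tfrac{x+y}2\big)$ and not the mollifiers'' is false. The cancellation you have in mind works for $(\partial_t+\partial_s)\Phi_\delta$ and $(\nabla_x+\nabla_y)\Phi_\delta$ because these are first-order local operators and the chain rule converts them into derivatives in the ``slow'' variables $(t+s)/2$, $(x+y)/2$ only. For a nonlocal second-difference operator this fails: a direct computation of $(\ml^{\le\rho}_x+\ml^{\le\rho}_y)\big[\phi(\tfrac{x+y}2)\,\omega_\delta(\tfrac{x-y}2)\big]$ produces, besides the desired $\omega_\delta(\tfrac{x-y}2)\,\ml^{\le\rho}\phi$ piece, an additional term of the type $\phi(\tfrac{x+y}2)\,\ml^{\le\rho}\omega_\delta$ (plus a carr\'e-du-champ cross term), and $\|\ml^{\le\rho}\omega_\delta\|_{L^1}\sim\rho^{2-\alpha}\delta^{-2}$ does blow up as $\delta\to0$ for fixed $\rho$. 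The same phenomenon already occurs for the Laplacian: $(\Delta_x+\Delta_y)[\psi(x+y)\chi(x-y)]=2\chi\Delta\psi+2\psi\Delta\chi$, so the mollifier \emph{is} hit.

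The passage $\delta\to0^+$ in the nonlocal terms therefore requires an honest argument, not a structural one. In \cite{AlibaudEntropy,CifaniJakobsenEntropySol,EndalJakobsen} this is handled by a dedicated lemma: after integrating against $|u(t,x)-\bar u(s,y)|$ one exploits $\int\ml^{\le\rho}\omega_\delta=0$ to subtract the diagonal value, and controls the remainder through the $L^1_{\rm loc}$ continuity of translations of $\bar u$, possibly coupling $\rho$ and $\delta$ (or, alternatively, first applying Kato and self-adjointness to $\ml^{>\rho}$ so that the full operator $\ml_x+\ml_y$ acts on $\Phi_\delta$, and then using the $\alpha$-homogeneous scaling $\|\ml\omega_\delta\|_{L^1}\sim\delta^{-\alpha}$). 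This is the actual technical core of the cited proofs and should replace the incorrect heuristic in your plan.
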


\begin{remark}
For entropy solutions $u,\bar{u}$ of~\eqref{reg}, we add the term $\displaystyle\varepsilon\iint_Q |u-\bar{u}|\Delta \phi$
in~\eqref{eq.KatoInequality}.
\end{remark}
To proceed we need the auxiliary function
\begin{equation}
\label{eq:definition.Phi}
\Phi(x)=\begin{cases}
1&\text{if }|x|<1,\\|x|^{-N-\alpha}&\text{if }|x|>2,
\end{cases}
\end{equation}
which is nonnegative and belongs to $W^{2,1}\cap W^{2,\infty}(\rr^N)$.  Let us obtain some estimates on it.
\begin{lemma}[{{\cite[Lemma 4.9]{AnBr20}}}]\label{lem.GoodTestPhi}
Assume~\eqref{mlas}. There is a constant $\gamma$ (depending on $\Lambda_1$) such that
$$
|\nabla\Phi|\leq \gamma \Phi,\qquad |\Delta \Phi|\leq \gamma \|\Phi\|_{W^{2,\infty}(\rr^N)},\quad\text{and }|\ml \Phi|\leq \gamma \|\Phi\|_{W^{2,\infty}(\rr^N)}.
$$
\end{lemma}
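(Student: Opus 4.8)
The plan is to check the three inequalities one at a time; none is deep, so the whole argument is a matter of elementary estimates once $\Phi$ is fixed appropriately. I would take $\Phi$ in~\eqref{eq:definition.Phi} to be radially nonincreasing and smooth, so that on the transition annulus $\{1\le|x|\le2\}$ one has the two-sided bound $2^{-N-\alpha}\le\Phi\le1$ together with uniform bounds on $\nabla\Phi$ and $D^2\Phi$; recall $\Phi\in W^{2,1}(\rr^N)\cap W^{2,\infty}(\rr^N)$ by construction.

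For the gradient estimate $|\nabla\Phi|\le\gamma\Phi$ I would split into the three regions of the definition. On $\{|x|<1\}$ it is trivial since $\nabla\Phi\equiv0$. On $\{|x|>2\}$ a direct computation gives $|\nabla\Phi(x)|=(N+\alpha)|x|^{-N-\alpha-1}=(N+\alpha)|x|^{-1}\Phi(x)\le(N+\alpha)\Phi(x)$. On the compact annulus $\{1\le|x|\le2\}$, $|\nabla\Phi|$ is bounded and $\Phi$ is bounded below by a positive constant, so the quotient $|\nabla\Phi|/\Phi$ is bounded there; taking $\gamma$ at least the largest of these constants settles this part. The Laplacian estimate is then immediate, since $|\Delta\Phi(x)|\le\sum_{i}|\partial_{x_ix_i}\Phi(x)|\le N\|D^2\Phi\|_{L^\infty(\rr^N)}\le N\|\Phi\|_{W^{2,\infty}(\rr^N)}$.

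The only mildly delicate point is the nonlocal estimate $|\ml\Phi|\le\gamma\|\Phi\|_{W^{2,\infty}(\rr^N)}$, and here I would use the decomposition $\ml=\ml^{\le1}+\ml^{>1}$ from~\eqref{eq.SplittingOperatorInTwo} together with the polar representation~\eqref{eq:operador en polares}. For $\ml^{\le1}\Phi$ I would bound the symmetric second difference $\Phi(x)-\tfrac12(\Phi(x+r\theta)+\Phi(x-r\theta))$ by $r^2\|D^2\Phi\|_{L^\infty(\rr^N)}$ using Taylor's formula with integral remainder, so that the factor $r^{1-\alpha}$ is integrable on $(0,1)$ and, after integrating over $\bs^{N-1}$ against $\mu$ and using $\mu(\bs^{N-1})=\Lambda_2<\infty$, one gets a bound of order $\frac{\Lambda_2}{2-\alpha}\|\Phi\|_{W^{2,\infty}(\rr^N)}$. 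For $\ml^{>1}\Phi$ I would simply bound the second difference by $2\|\Phi\|_{L^\infty(\rr^N)}$ and integrate $r^{-1-\alpha}$ over $(1,\infty)$, which gives $\frac{2\Lambda_2}{\alpha}\|\Phi\|_{L^\infty(\rr^N)}$; both estimates are uniform in $x$. Adding the two contributions and then enlarging $\gamma$ to dominate all the constants appearing in the three steps finishes the proof, with $\gamma$ depending only on $N$, $\alpha$, the constants in~\eqref{mlas}, and the fixed cut-off profile. The main, and very mild, obstacle is purely organisational: one must choose $\Phi$ so that it is bounded below on the transition region — which is exactly what makes the gradient estimate work there — and keep careful track of the constants; no genuine difficulty arises.
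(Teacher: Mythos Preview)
The paper does not actually supply a proof of this lemma: it is stated with a direct citation to \cite[Lemma~4.9]{AnBr20} and then only followed by a remark on how it is used. Your argument is correct and is exactly the kind of elementary verification one would expect here; the splitting $\ml=\ml^{\le1}+\ml^{>1}$ with a Taylor bound on the inner part and an $L^\infty$ bound on the outer part is the standard way to get $|\ml\Phi|\lesssim\|\Phi\|_{W^{2,\infty}}$, and the case analysis for $|\nabla\Phi|\le\gamma\Phi$ is precisely why the explicit tail $|x|^{-N-\alpha}$ was chosen. One cosmetic point: the constant you obtain depends on $\Lambda_2$ (the finiteness bound in~\eqref{mlas}), $N$, $\alpha$, and the chosen transition profile, rather than on $\Lambda_1$ as the statement says; this looks like a typo in the paper rather than an issue with your proof.
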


\begin{remark}
 In~\cite{AnBr20} the authors use for the isotropic case $\ml=(-\Delta)^{\alpha/2}$ the above function $\Phi$ that satisfies~$|(-\Delta)^{\frac{\alpha}{2}}\Phi|\leq \gamma \Phi$ for all $\alpha\in(0,2)$. The existence of a function $\Phi$ satisfying this property is not clear for a general anisotropic $\ml$.  However,  the uniqueness proof for $L^1\cap L^\infty$-solutions in~\cite{AnBr20} still works substituting the estimate (28) in that paper by
\begin{equation*}
\begin{split}
\int_{>\frac{1}{\varepsilon}}|u-v|(\ml\Phi)(\varepsilon\cdot)\geq -\gamma\|\Phi\|_{W^{2,\infty}(\rr^N)}\int_{>\frac{1}{\varepsilon}}|u-v|
\end{split}
\end{equation*}
This allows us to obtain estimate (32) in~\cite{AnBr20} with $s=1$ and $\sigma=q$. The proof is then completed without more effort since the integrability of solutions is already assumed in the next theorem.
\end{remark}

\begin{theorem}[{{\cite{AnBr20}}}]\label{thm.UniquenessPropertiesEntropy}
{\rm (a)} Assume $p\in[1,\infty)$, $u_0\in L^\infty(\rr^N)$,~\eqref{qas}, and~\eqref{mlas}. Then there is \emph{at least} one entropy solution $u\in L^\infty(Q)\cap C([0,\infty);L_\textup{loc}^p(\rr^N))$ of~\eqref{eq.main} with initial data $u_0$. Moreover, let $u,\bar{u}$ be entropy solutions of~\eqref{eq.main} with initial data $u_0,\bar{u}_0$, respectively. Then:
\begin{enumerate}[{\rm (i)}]
\item\textup{(Comparison principle)} If $u_0\leq \bar{u}_0$ a.e.~on $\rr^N$, then $u\leq \bar{u}$ a.e.~in $Q$.
\item\textup{($L^1$-contraction)}  If $u_0-\bar{u}_0\in L^1(\rr^N)$, then, for a.e.~$t>0$,
$$
\int_{\rr^N}|u(t)-\bar{u}(t)|\leq \int_{\rr^N}|u_0-\bar{u}_0|.
$$
\item\textup{($L^\infty$-bound)} For a.e.~$(t,x)\in Q$,  $\essinf_{x\in\rr^N}u_0(x)\leq u(t,x)\leq\esssup_{x\in\rr^N}u_0(x)$.
\item\textup{($L^1$-bound)} For a.e.~$t>0$, $\|u(t)\|_{L^1(\rr^N)}\leq \|u_0\|_{L^1(\rr^N)}$.
\item\textup{(Very weak solutions)} For all $\phi\in C_{\rm c}^\infty(\overline{Q})$,
\begin{equation*}
\begin{split}
\iint_Q\Big(u\partial_t\phi+F(u)\cdot\nabla\phi-u(\ml\phi)\Big)+\int_{\rr^N}u_0\phi(0)=0.
\end{split}
\end{equation*}
\item\textup{(Mass conservation)} For a.e.~$t>0$,
$$
\int_{\rr^N}u(t)=\int _{\rr^N}u_0.
$$
\end{enumerate}
\noindent{\rm (b)} Assume, in addition, $u_0\in L^1(\rr^N)\cap L^\infty(\rr^N)$. Then there is \emph{at most} one entropy solution $u$ of~\eqref{eq.main} with initial data $u_0$, and moreover, the solution satisfies $u\in C([0,\infty);L^1(\rr^N))$.
\end{theorem}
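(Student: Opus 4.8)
The plan is to follow the blueprint of~\cite{AnBr20}, adapted to the anisotropic operator $\ml$ via the two preparatory facts already at hand: the Kato inequality~\eqref{eq.KatoInequality} and the estimates of Lemma~\ref{lem.GoodTestPhi} on the weight $\Phi$. \emph{Existence (part (a)).} For $u_0\in L^1(\rr^N)\cap L^\infty(\rr^N)$ I would argue by vanishing viscosity: set $u_{0,\eps,\teps}:=u_0\ast\omega_\teps+\teps$ and let $u_{\eps,\teps}$ be the classical solution of~\eqref{reg} from Lemma~\ref{lem.ClassicalSolutionsOfReg}. Being an entropy solution of~\eqref{reg}, it enjoys, by Lemma~\ref{lem.EntropySolutionsOfReg} and the $L^1$-contraction for~\eqref{reg} applied to $u_{\eps,\teps}$ and its space translates, a uniform $L^\infty(Q)$ bound, a uniform $L^1$-modulus of continuity in space and time, and the uniform tail control of Lemma~\ref{lem.EntropySolutionsOfReg}(d); a Kolmogorov--Riesz argument together with Arzel\`a--Ascoli in time then yields, along a subsequence, convergence in $C([0,T];L^1(\rr^N))$ for every $T>0$, and letting $\teps\to0^+$ and then $\eps\to0^+$ in~\eqref{eq.entropyInequalityVeps.appendix} (with $\eps\iint_Q|u-k|\Delta\phi\to0$) produces an entropy solution $u$ of~\eqref{eq.main} with $u\in C([0,\infty);L^1(\rr^N))$, which is the extra regularity of part (b). For merely $u_0\in L^\infty(\rr^N)$ one argues by a further approximation (either a compensated-compactness passage in the spirit of~\cite{Pan20}, or approximation by $L^1\cap L^\infty$ data combined with the comparison principle), landing in $C([0,\infty);L^p_\textup{loc}(\rr^N))$.

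\emph{Kato inequality, contraction, comparison, bounds.} The Kato inequality is the stated Proposition; its proof is Kruzhkov's doubling of variables as in~\cite{MaTo03,EndalJakobsen,AnBr20} and transfers verbatim, since only an upper bound on $\ml$ enters. To get~(ii) --- hence uniqueness in (b), applying it to two representatives with $u_0=\bar u_0$ --- I would insert in~\eqref{eq.KatoInequality} the test function $\phi(t,x)=\Theta(t)\Phi(\eps x)$, with $\Theta$ a Lipschitz approximation of $\mathds{1}_{[0,t_0]}$, $\Theta(0)=1$, and $\Phi$ as in Lemma~\ref{lem.GoodTestPhi}. By $\alpha$-homogeneity $\ml[\Phi(\eps\,\cdot)]=\eps^\alpha(\ml\Phi)(\eps\,\cdot)$, so $\|\ml[\Phi(\eps\,\cdot)]\|_{L^\infty(\rr^N)}\lesssim\eps^\alpha$ --- this plays the role of estimate~(28) of~\cite{AnBr20} flagged in the remark after Lemma~\ref{lem.GoodTestPhi} --- while $\nabla[\Phi(\eps\,\cdot)]$ is supported in the annulus $\{1\le|\eps x|\le2\}$ with sup-norm $\lesssim\eps$. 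Bounding the flux difference by $|F(u)-F(\bar u)|\le|u-\bar u|^q$ when $q<1$, or $|F(u)-F(\bar u)|\lesssim\|u-\bar u\|_{L^\infty(Q)}^{q-1}|u-\bar u|$ when $q>1$, and using H\"older on that annulus, the convection contribution is $O(\eps^{1-N(1-q)}+\eps)$ --- here is where the restriction $q>1-\tfrac1N$ of~\eqref{qas} is used, to make $\eps^{1-N(1-q)}\to0$ --- and the $\ml$-contribution is $O(\eps^\alpha)$, all multiplied by $\|u-\bar u\|_{L^\infty((0,t_0);L^1(\rr^N))}$. Passing to the limit ($\eps\to0^+$, then $\Theta\to\mathds{1}_{[0,t_0]}$ at a Lebesgue point $t_0$) leaves $\int_{\rr^N}|u(t_0)-\bar u(t_0)|\le\int_{\rr^N}|u_0-\bar u_0|$, i.e.\ (ii). The comparison principle~(i) follows from the one-sided analogue of the Kato inequality (doubling with $\sgn^+$) by the same weight argument, with $\bar u_0\ge u_0$ so that the initial term drops (taking due care of the convection term when $u_0-\bar u_0\notin L^1$, as in~\cite{AnBr20}), or alternatively from the classical maximum principle for~\eqref{reg} in the vanishing-viscosity limit. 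Then~(iv) is~(ii) with $\bar u\equiv0$, and~(iii) is inherited in the limit from the bound $\teps\le u_{\eps,\teps}\le\esssup_{\rr^N}u_{0,\eps,\teps}$ of Lemma~\ref{lem.ClassicalSolutionsOfReg}.

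\emph{Very weak formulation and conservation of mass.} Choosing $k=\pm\|u_0\|_{L^\infty(\rr^N)}$ in the entropy inequality and using~(iii), $\sgn(u-k)\equiv\mp1$ a.e.; after the integration by parts $\iint_Q(k-u)\ml^{\le\rho}\phi=-\iint_Q(\ml^{\le\rho}u)\phi$ (using $\int_{\rr^N}\ml^{\le\rho}\phi\,{\rm d}x=0$) and $\iint_Q F(k)\cdot\nabla\phi=0$, the constants drop and~\eqref{eq.entropyInequalityVeps.appendix} (resp.\ its analogue for~\eqref{eq.main}) collapses to the two opposite inequalities $\pm\big(\iint_Q(u\partial_t\phi+F(u)\cdot\nabla\phi-u\,\ml\phi)+\int_{\rr^N}u_0\phi(0)\big)\ge0$ for $0\le\phi\in C_\textup{c}^\infty(\overline Q)$, i.e.\ the equality~(v) for such $\phi$; writing a general $\phi\in C_\textup{c}^\infty(\overline Q)$ via $M\chi\pm\phi\ge0$, with $\chi\in C_\textup{c}^\infty$ equal to $1$ on $\operatorname{supp}\phi$ and $M\ge\|\phi\|_{L^\infty}$, and subtracting, extends~(v) to all $\phi$. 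Finally, inserting in~(v) the test functions $\Theta(t)\mathcal X_R(x)$, letting $\Theta\to\mathds{1}_{[0,t_0]}$ and then $R\to\infty$, with $\iint_Q u\,\ml\mathcal X_R$ controlled by $\|\ml\mathcal X_R\|_{L^\infty(\rr^N)}=\|\ml\rho_R\|_{L^\infty(\rr^N)}\lesssim R^{-\alpha}$ and the convection term by the $L^1$--$L^q$ bounds, gives $\int_{\rr^N}u(t_0)=\int_{\rr^N}u_0$, i.e.\ (vi).

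\emph{Main obstacle.} The genuinely delicate step is the compactness in the existence part: since $\ml$ yields no gain of classical regularity when $\alpha\le1$, the only uniform handle on oscillations is the $L^1$-modulus of continuity inherited from the datum via the $L^1$-contraction for~\eqref{reg}, which must be shown uniform in $(\eps,\teps)$ and combined with the tail bound of Lemma~\ref{lem.EntropySolutionsOfReg}(d) to obtain tightness before Kolmogorov--Riesz applies. The second subtle point is the H\"older range $q\in(1-\tfrac1N,1)$, where $F$ is not Lipschitz: one cannot absorb the convection term through a Lipschitz estimate and must instead exploit the subadditivity $|u^q-\bar u^q|\le|u-\bar u|^q$ for $u,\bar u\ge0$ together with the hypothesis $q>1-\tfrac1N$, which is precisely what renders the relevant power of $\eps$ positive, so that no Lipschitz bound on $F$ is ever needed.
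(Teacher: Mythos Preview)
Your proposal is correct and follows the same route as the paper, which itself largely defers to~\cite{AnBr20} and only spells out items~(v) and~(vi). One minor fix in your~(v): the step $\iint_Q(k-u)\,\ml^{\le\rho}\phi=-\iint_Q(\ml^{\le\rho}u)\,\phi$ puts the singular part of the operator on $u$, which need not make sense for a merely bounded entropy solution---you only need $\iint_Q(k-u)\,\ml^{\le\rho}\phi=-\iint_Q u\,\ml^{\le\rho}\phi$ via $\int_{\rr^N}\ml^{\le\rho}\phi=0$; the paper instead first applies the convex inequality $\iint_Q\sgn(u-k)\phi\,\ml^{>\rho}u\ge\iint_Q|u-k|\,\ml^{>\rho}\phi$ to unify both nonlocal terms as $|u-k|\,\ml\phi$ before choosing $k=\pm\|u\|_{L^\infty(Q)}$.
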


\begin{remark}\label{remarkb7-eps}
\begin{enumerate}[{\rm (a)}]
\item All of the above properties also hold for entropy solutions of~\eqref{reg}: In item (v), we have the additional term $\varepsilon\iint_Qu\Delta \phi$ on the left-hand side.
\item The definitions and the  results in Section \ref{sect:conceptofentropy} and Section \ref{sect:knownunique}  hold for any Lipschitz convection nonlinearity, in particular, $f_\eta$ in \eqref{f.eta}. 
\end{enumerate}
\end{remark}

\begin{proof}[Proof of Theorem~\ref{thm.UniquenessPropertiesEntropy}]
Let us comment on the two last items:

\smallskip
\noindent(v) By the convex inequality (or Kato inequality)
$$
\iint_Q\sgn(u-k)\phi(\ml^{> \rho}(u-k)) \geq \iint_Q|u-k|(\ml^{> \rho}\phi)
$$
and the choices $ k\geq \pm \|u\|_{L^\infty(Q)}$ in~\eqref{eq.entropyInequalityVeps.appendix}, the desired result follows.

\smallskip
\noindent(vi) Consider a standard cut-off function $\mathcal{X}_R$ defined in Section~\ref{sec:notation}.
The idea is then to choose $\phi$ as $\mathds{1}_{[0,\tau]}(t)\mathcal{X}_R(x)$ in the very weak formulation. Since $\|\ml\mathcal{X}_R\|_{L^\infty}\leq R^{-\alpha}\|\ml\mathcal{X}\|_{L^\infty}$ by $\alpha$-homogeneity, the gradient and nonlocal terms go to zero as $R\to\infty$.
\end{proof}

\subsection{Concept of classical solutions}\label{sec.Appendix.ConceptOfClassicalSolutions}
We define, for all $\phi\in C_\textup{c}^\infty(\rr^N)$,
\begin{equation*}
\begin{gathered}
(L\phi)(x):=-\varepsilon(\Delta\phi)(x)+(\ml\phi)(x), \quad\text{i.e.,}
\\
\widehat{L\phi}(\xi)=a(\xi)\widehat{\phi}(\xi) \quad\text{with }
a(\xi):=\varepsilon|\xi|^2+\int_{\bs^{N-1}}|\xi\cdot\theta|^\alpha \,{\rm d}\mu(\theta).
\end{gathered}
\end{equation*}
Now, solutions of~\eqref{reg} with initial data $0<\tilde{\varepsilon}\leq u_0\in L^\infty(\rr^N)$ are mild and also classical on $Q$:

\begin{proof}[Proof of Lemma~\ref{lem.ClassicalSolutionsOfReg}]
By the assumption~\eqref{mlas}, we get
$
\varepsilon|\xi|^2\leq a(\xi)\leq  \varepsilon|\xi|^2+\Lambda_2|\xi|^\alpha$.
This means that assumptions (2.8), (2.9), and (2.10) in~\cite{BilerKarchWoyczAsymp} are fulfilled (the latter with $a_0=\varepsilon$ and $\tilde{\alpha}=\alpha$). By \cite[Remark 1.2]{DroniouVanishing2003}, we are in the setting of  \cite[Theorem 1.1]{Droniou}. However, that theorem requires $F\in C^\infty(\rr)$, but, with our assumption on the initial data, $F(u)\in C^\infty((\tilde{\varepsilon}, \|u_0\|_{L^\infty(\rr^N)}))$. 
\end{proof}

We continue by providing a uniform bound for $\nabla u$ (cf. \cite[Lemma 2.1 and Corollary 2.2]{Pan20}).

\begin{lemma}\label{lem.localGradientEstimate}
Assume $p\in[2,\infty)$, $0<\tilde{\varepsilon}\leq u_0\in L^\infty(\rr^N)$,~\eqref{qas}, and~\eqref{mlas}.  Let  $\Phi$ be as in \eqref{eq:definition.Phi} and $0\leq \Theta\in C_\textup{c}^\infty((0,\infty))$. Then the unique mild  solution $u$ of~\eqref{reg} satisfies
$$
\begin{aligned}
&\varepsilon\frac{4(p-1)}{p^2}\iint_Q|\nabla u^{\frac{p}{2}}(t,x)|^2\Theta(t)\Phi(x)\,{\rm d}x{\rm d}t\leq C,\quad\text{where} \\
&C:=\max_{\tilde{\varepsilon}\leq u(x,t)\leq \|u_0\|_{L^\infty}}\bigg\{\frac{1}{p}u^p+\gamma u^{p-1}|F(u)|+\gamma\frac{1}{p}u^p\\
&\qquad+\frac{1}{2}\Big(\varepsilon^{-\frac{1}{2}}(p-1)^{\frac{1}{2}}u^{\frac{p}{2}-1}|F(u)|+\varepsilon^{\frac{1}{2}}\gamma (p-1)^{-\frac{1}{2}}u^{\frac{p}{2}}\Big)^2\bigg\}\bigg(\iint_Q\max\{|\Theta'|,\Theta\}\big(\Phi +|\ml\Phi|\big)\bigg).
\end{aligned}
$$
\end{lemma}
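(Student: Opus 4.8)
The statement is a weighted energy estimate for the (positive, bounded, smooth) mild solution of the regularized equation~\eqref{reg}. The natural approach is to test the equation against $u^{p-1}\Theta(t)\Phi(x)$, i.e.\ to compute $\frac{{\rm d}}{{\rm d}t}\int_{\rr^N}\frac1p u^p\Theta\Phi$ and integrate in time, exploiting that $\Theta$ is compactly supported in $(0,\infty)$ so there are no boundary terms at $t=0$ or $t=\infty$. Since $u$ is a classical solution with $\tilde\eps\le u\le\|u_0\|_{L^\infty}$ and $u\in C_{\rm b}^\infty(Q)$ (Lemma~\ref{lem.ClassicalSolutionsOfReg}), all manipulations below are legitimate, and the weight $\Phi\in W^{2,1}\cap W^{2,\infty}$ together with the bounds $|\nabla\Phi|\le\gamma\Phi$, $|\Delta\Phi|\le\gamma\|\Phi\|_{W^{2,\infty}}$, $|\ml\Phi|\le\gamma\|\Phi\|_{W^{2,\infty}}$ from Lemma~\ref{lem.GoodTestPhi} will absorb every derivative that falls on $\Phi$.

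\textbf{Key steps.} First I would multiply~\eqref{reg} by $u^{p-1}\Theta\Phi$ and integrate over $Q$, identifying four groups of terms: the time term, which after integrating by parts in $t$ becomes $-\iint_Q\frac1p u^p\Theta'\Phi$; the viscosity term $\eps\iint_Q(\Delta u)u^{p-1}\Theta\Phi$; the nonlocal term $-\iint_Q(\ml u)u^{p-1}\Theta\Phi$; and the convection term $-\iint_Q\partial_{x_N}(u^q)u^{p-1}\Theta\Phi$. Second, for the viscosity term I would integrate by parts in $x$ to get $-\eps\iint_Q\nabla u\cdot\nabla(u^{p-1}\Theta\Phi)=-\eps(p-1)\iint_Q u^{p-2}|\nabla u|^2\Theta\Phi-\eps\iint_Q u^{p-1}\nabla u\cdot\nabla\Phi\,\Theta$; the first piece is (a multiple of) the good term $\eps\frac{4(p-1)}{p^2}\iint_Q|\nabla u^{p/2}|^2\Theta\Phi$ that we want to bound, moved to the left, and the cross term is controlled using $|\nabla\Phi|\le\gamma\Phi$ and a Young inequality with a free parameter—this is exactly where the slightly awkward square $\frac12(\eps^{-1/2}(p-1)^{1/2}u^{p/2-1}|F(u)|+\eps^{1/2}\gamma(p-1)^{-1/2}u^{p/2})^2$ in the statement of $C$ comes from, after one also handles the convection term by parts (writing $-\iint_Q\partial_{x_N}(u^q)u^{p-1}\Theta\Phi=\iint_Q u^q\,\partial_{x_N}(u^{p-1})\Theta\Phi+\iint_Q u^q u^{p-1}\partial_{x_N}\Phi\,\Theta$, the first of which, being $\int\partial_{x_N}(\text{function of }u)$, integrates to a pure boundary term in $x_N$ against $\Phi$—i.e.\ a $\partial_{x_N}\Phi$ term—after a further integration by parts, and the second is an $|F(u)|u^{p-1}|\nabla\Phi|$ term). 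Third, the nonlocal term $-\iint_Q(\ml u)u^{p-1}\Theta\Phi$ is handled by the convexity inequality for $\ml$: writing $\beta(u)=\frac1p u^p$ so that $\beta'(u)=u^{p-1}$, one has the Stroock--Varopoulos / chain-rule-type inequality $u^{p-1}(\ml u)(x)\ge (\ml\beta(u))(x)$ pointwise (this is the same convex-inequality mechanism used in Theorem~\ref{thm.UniquenessPropertiesEntropy}(v)), hence $-\iint_Q(\ml u)u^{p-1}\Theta\Phi\le-\iint_Q(\ml\beta(u))\Theta\Phi=-\iint_Q\beta(u)(\ml\Phi)\Theta$ by self-adjointness of $\ml$, and then $|\ml\Phi|\le\gamma\|\Phi\|_{W^{2,\infty}}$ gives a bound by $\gamma\iint_Q\frac1p u^p\Theta(\Phi+|\ml\Phi|)$-type term. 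Collecting everything: the good gradient term stays on the left, and all remaining terms are integrals over $Q$ of expressions of the form (coefficient depending only on $u$, hence bounded by its max over $[\tilde\eps,\|u_0\|_{L^\infty}]$) times $\max\{|\Theta'|,\Theta\}(\Phi+|\ml\Phi|)$; pulling the max out of the integral produces precisely the constant $C$ in the statement.

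\textbf{Main obstacle.} The routine part is the bookkeeping of the integrations by parts and the Young inequalities to match the exact shape of $C$; the one genuinely delicate point is justifying the convexity inequality $u^{p-1}(\ml u)\ge\ml(\tfrac1p u^p)$ together with the self-adjoint move $\int(\ml g)\Phi=\int g(\ml\Phi)$ in the presence of the weight $\Phi$, since $u^p$ is only bounded (not decaying) while $\ml$ is a genuinely nonlocal operator with a heavy tail—this is why one needs $\Phi$ to have the precise decay $|x|^{-N-\alpha}$ matching the Lévy measure, so that $\ml\Phi$ is integrable and all nonlocal pairings converge absolutely. I would therefore first prove the inequality and the integration-by-parts identity for smooth compactly supported truncations (or by truncating $\Phi$ and passing to the limit using the bounds of Lemma~\ref{lem.GoodTestPhi} and dominated convergence), and only then run the energy computation; everything else is then a direct calculation.
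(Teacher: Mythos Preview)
Your plan is essentially the paper's proof: multiply \eqref{reg} by $u^{p-1}\Theta\Phi$, integrate, use the pointwise convexity (Kato) inequality $u^{p-1}\ml u\ge\ml(\tfrac1p u^p)$ together with self-adjointness to turn the nonlocal term into $\tfrac1p u^p\,\ml\Phi$, integrate the Laplacian by parts to isolate the good term $\eps\tfrac{4(p-1)}{p^2}|\nabla u^{p/2}|^2\Theta\Phi$, then control the cross terms via $|\nabla\Phi|\le\gamma\Phi$ and Young, and finally replace every function of $u$ by its supremum over $[\tilde\eps,\|u_0\|_{L^\infty}]$.

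There is one small inconsistency in your treatment of the convection term. After one integration by parts you have $\iint u^q\,\partial_{x_N}(u^{p-1})\,\Theta\Phi+\iint u^q u^{p-1}\,\partial_{x_N}\Phi\,\Theta$. You propose integrating the first piece by parts \emph{again} (it is indeed $\partial_{x_N}$ of a function of $u$), which would leave only $\partial_{x_N}\Phi$ terms and \emph{no} gradient of $u$ coming from convection. That is a valid route, but it does not produce the stated constant $C$: the factor $\eps^{-1/2}(p-1)^{1/2}u^{p/2-1}|F(u)|$ inside the square in $C$ arises precisely because the paper \emph{keeps} the term $\Phi\,F(u)\cdot\nabla u^{p-1}=\Theta\Phi\,(p-1)^{1/2}u^{p/2-1}F(u)\cdot\tfrac{2(p-1)^{1/2}}{p}\nabla u^{p/2}$ and absorbs it, together with the viscosity cross term $\eps\,u^{p-1}\nabla u\cdot\nabla\Phi$, into half of the good gradient term via a single Young inequality $ab\le\tfrac12 a^2+\tfrac12 b^2$ with $a=(\eps\Theta\Phi)^{1/2}\tfrac{2(p-1)^{1/2}}{p}|\nabla u^{p/2}|$. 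If you carry out the second integration by parts you will obtain a correct but different (and actually simpler) bound. To match the statement exactly, stop after the first integration by parts on the convection term and run Young as above.
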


\begin{remark}
Note that indeed when $p=2$ we have, for all compact $K\subset \textup{supp}\,\Phi\times\rr^N$, that
\begin{equation*}
\begin{split}
C&\geq \varepsilon\iint_Q|\nabla u(t,x)|^2\Theta(t)\Phi(x)\,{\rm d}x{\rm d}t\geq \varepsilon\iint_{K}|\nabla u(t,x)|^2\Theta(t)\Phi(x)\,{\rm d}x {\rm d}t\geq
\varepsilon\min_{K}\{\Theta\Phi\}\iint_{K}|\nabla u|^2.
\end{split}
\end{equation*}
\end{remark}

\begin{proof}[Proof of Lemma~\ref{lem.localGradientEstimate}]
Since $u\in C_\textup{b}^\infty(Q)$ is a classical solution of the PDE in~\eqref{reg},
\begin{equation}
\label{eq:pointwise.eq}
\partial_tu+\nabla\cdot F(u)+\ml u-\varepsilon\Delta u=0 \quad\textup{pointwise in }Q.
\end{equation}
Let $0\leq \phi\in C_\textup{c}^\infty(Q)$.  Using Kato's inequality and the identity $\nabla u\cdot\nabla(u^{p-1})=\frac{4(p-1)}{p^2}|\nabla u^{\frac{p}{2}}|^2$,
\begin{equation*}
\begin{split}
\iint_Q(\ml u)u^{p-1}\phi &\geq  \frac{1}{p}\iint_Q(\ml u^p)\phi= \iint_Q\frac{1}{p}u^p(\ml\phi),\\
-\varepsilon\iint_Q(\Delta u)u^{p-1}\phi&=\varepsilon\frac{4(p-1)}{p^2}\iint_Q|\nabla u^{\frac{p}{2}}|^2\phi+\varepsilon\iint_Qu^{p-1}\nabla u\cdot\nabla\phi.
\end{split}
\end{equation*}
Therefore, if we multiply~\eqref{eq:pointwise.eq} by $u^{p-1}\phi$ and integrate over $Q$,
\begin{equation*}
\begin{split}
&\iint_Q\Big(\frac{1}{p}u^p\partial_t\phi+u^{p-1}F(u)\cdot \nabla\phi+\phi F(u)\cdot\nabla u^{p-1}\Big)\\
&\qquad\geq \iint_Q\Big(\varepsilon\frac{4(p-1)}{p^2}|\nabla u^{\frac{p}{2}}|^2\phi+\varepsilon u^{p-1}\nabla u\cdot\nabla\phi+\frac{1}{p}u^p(\ml\phi)\big).
\end{split}
\end{equation*}
Define $I:=\varepsilon\frac{4(p-1)}{p^2}\iint_Q|\nabla u^{\frac{p}{2}}|^2\Theta\Phi$. By the choice $\phi(x,t):=\Theta(t)\Phi(x)$, the identities
$$
\nabla u^{p-1}=(p-1)^{\frac{1}{2}}u^{\frac{p}{2}-1}\frac{2(p-1)^{\frac{1}{2}}}{p}\nabla u^{\frac{p}{2}} \quad\textup{and }\quad u^{p-1}\nabla u=(p-1)^{-\frac{1}{2}}u^{\frac{p}{2}}\frac{2(p-1)^{\frac{1}{2}}}{p}\nabla u^{\frac{p}{2}},
$$
and the regularity $\nabla u\in L^\infty(Q)$, we get
\begin{equation*}
\begin{split}
I&\leq \iint_Q\Big(\Phi\frac{1}{p}u^p\partial_t\Theta+\Theta u^{p-1}F(u)\cdot \nabla\Phi+\Theta\Phi(p-1)^{\frac{1}{2}}u^{\frac{p}{2}-1} F(u)\cdot\Big(\frac{2(p-1)^{\frac{1}{2}}}{p}\nabla u^{\frac{p}{2}}\Big)\\
&\qquad-\Theta\frac{1}{p}u^p\big(\ml\Phi\big)- \varepsilon\Theta (p-1)^{-\frac{1}{2}}u^{\frac{p}{2}}\Big(\frac{2(p-1)^{\frac{1}{2}}}{p}\nabla u^{\frac{p}{2}}\Big)\cdot\nabla\Phi\Big)\\
&\leq \iint_Q\Big(\Phi\frac{1}{p}u^p|\Theta'|+\Theta u^{p-1}|F(u)||\nabla\Phi|+\Theta\Phi(p-1)^{\frac{1}{2}}u^{\frac{p}{2}-1}|F(u)|\Big|\frac{2(p-1)^{\frac{1}{2}}}{p}\nabla u^{\frac{p}{2}}\Big|\\
&\qquad+\Theta\frac{1}{p}u^p\big|\ml\Phi\big|+\varepsilon\Theta (p-1)^{-\frac{1}{2}}u^{\frac{p}{2}}\Big|\frac{2(p-1)^{\frac{1}{2}}}{p}\nabla u^{\frac{p}{2}}\Big||\nabla\Phi|\Big).
\end{split}
\end{equation*}
Using now Lemma~\ref{lem.GoodTestPhi},
\begin{equation*}
\begin{split}
I&\leq \iint_Q\Big(\Phi\frac{1}{p}u^p|\Theta'|+\Theta u^{p-1}|F(u)|\gamma\Phi+\Theta\Phi(p-1)^{\frac{1}{2}}u^{\frac{p}{2}-1}|F(u)|\frac{2(p-1)^{\frac{1}{2}}}{p}|\nabla u^{\frac{p}{2}}|\\
&\qquad+\Theta\frac{1}{p}u^p|\ml\Phi|+\varepsilon\Theta (p-1)^{-\frac{1}{2}}u^{\frac{p}{2}}\frac{2(p-1)^{\frac{1}{2}}}{p}|\nabla u^{\frac{p}{2}}|\gamma\Phi\Big)\\
&=\iint_Q\bigg(\Phi\max\{|\Theta'|,\Theta\}\Big(\frac{1}{p}u^p+\gamma u^{p-1}|F(u)|+\gamma\frac{1}{p}u^p\Big)\\
&\qquad+(\varepsilon\Theta\Phi)^{\frac{1}{2}}\frac{2(p-1)^{\frac{1}{2}}}{p}|\nabla u^{\frac{p}{2}}|\Big(\varepsilon^{-\frac{1}{2}}(\Theta\Phi)^{\frac{1}{2}}(p-1)^{\frac{1}{2}}u^{\frac{p}{2}-1}|F(u)|
+(\varepsilon\Theta\Phi)^{\frac{1}{2}}\gamma (p-1)^{-\frac{1}{2}}u^{\frac{p}{2}}\Big)\bigg)\\
&\qquad+\iint_Q\Theta\frac{1}{p}u^p|\ml\Phi|.
\end{split}
\end{equation*}
Now, apply Young's inequality $ab\leq \frac{1}{2}a^2+\frac{1}{2}b^2$ to obtain
\begin{equation*}
\begin{split}
\frac{1}{2}I&\leq \iint_Q\bigg(\Phi\max\{|\Theta'|,\Theta\}\Big(\frac{1}{p}u^p+\gamma u^{p-1}|F(u)|+\gamma\frac{1}{p}u^p\Big)\\
&\qquad+\frac{1}{2}\Big(\varepsilon^{-\frac{1}{2}}(\Theta\Phi)^{\frac{1}{2}}(p-1)^{\frac{1}{2}}u^{\frac{p}{2}-1}|F(u)|+(\varepsilon\Theta\Phi)^{\frac{1}{2}}\gamma (p-1)^{-\frac{1}{2}}u^{\frac{p}{2}}\Big)^2\bigg)+\iint_Q\Theta\frac{1}{p}u^p|\ml\Phi|\\
&\leq \iint_Q\Phi\max\{|\Theta'|,\Theta\}\Big(\frac{1}{p}u^p+\gamma u^{p-1}|F(u)|+\gamma\frac{1}{p}u^p\\
&\qquad+\frac{1}{2}\Big(\varepsilon^{-\frac{1}{2}}(p-1)^{\frac{1}{2}}u^{\frac{p}{2}-1}|F(u)|+\varepsilon^{\frac{1}{2}}\gamma (p-1)^{-\frac{1}{2}}u^{\frac{p}{2}}\Big)^2\Big)+\iint_Q\Theta\frac{1}{p}u^p|\ml\Phi|.
\end{split}
\end{equation*}
Taking the maximum over the range of $u$ and multiplying by $2$ gives the final estimate.
\end{proof}

To write down our much needed stability result, we define, for all $\tilde{\varepsilon}>0$,
\begin{equation}\label{eq.LiftedInitialData}
L^\infty(\rr^N)\ni u_{0,\tilde{\varepsilon}}:=u_0+\tilde{\varepsilon} \qquad\textup{with $u_0$ satisfying~\eqref{u_0as}.}
\end{equation}
Now, since mild solutions are classical solutions and then entropy solutions of~\eqref{reg}, they enjoy all the properties of Theorem~\ref{thm.UniquenessPropertiesEntropy}. We then get the following convergence result.

\begin{proposition}\label{prop.PointwiseStabilityOfEntropySolutions}
Assume~\eqref{qas} and~\eqref{mlas}. Let $u_{\tilde{\varepsilon}}$ be the unique mild solution of~\eqref{reg} with initial data $u_{0,\tilde{\varepsilon}}$ defined in~\eqref{eq.LiftedInitialData}. Then there exists a function $u\in L^\infty(Q)$ such that
$$
\begin{aligned}
&u_{\tilde{\varepsilon}}\to u\quad\text{a.e.~in }Q\text{ as }\tilde{\varepsilon}\to0^+,\\
&\nabla u_{\tilde{\varepsilon}}^{\frac{p}{2}}\rightharpoonup \nabla u^{\frac{p}{2}}\quad\text{in }L_\textup{loc}^2(Q)\text{ as } \tilde{\varepsilon}\to0^+ \quad\text{for all }p\in[2,\infty).
\end{aligned}
$$
Moreover, $u$ is an entropy solution of~\eqref{reg} with initial data $u_{0}$ satisfying~\eqref{u_0as}.
\end{proposition}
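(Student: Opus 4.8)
The plan is to realize $u$ as the monotone limit of the family $\{u_\teps\}_{\teps>0}$ and then to pass to the limit in the entropy inequality~\eqref{eq.entropyInequalityVeps.appendix}. \emph{Step 1: the monotone limit.} Each $u_\teps$ is the unique mild solution of~\eqref{reg} with datum $u_{0,\teps}$ given by Lemma~\ref{lem.ClassicalSolutionsOfReg}; it is classical, hence an entropy solution of~\eqref{reg}, and $\teps\le u_\teps\le\|u_0\|_{L^\infty(\rr^N)}+\teps$. For $0<\teps'<\teps\le1$ we have $u_{0,\teps'}\le u_{0,\teps}$, so the comparison principle of Theorem~\ref{thm.UniquenessPropertiesEntropy}(a)(i) (which also holds for~\eqref{reg}) yields $u_{\teps'}\le u_\teps$ a.e.\ in $Q$. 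Thus $\teps\mapsto u_\teps(t,x)$ is nondecreasing and bounded below by $0$, and I set $u(t,x):=\lim_{\teps\to0^+}u_\teps(t,x)$, which exists for a.e.\ $(t,x)\in Q$ and satisfies $0\le u\le\|u_0\|_{L^\infty(\rr^N)}$; in particular $u\in L^\infty(Q)$. By dominated convergence, $u_\teps\to u$ in $L^r_\textup{loc}(Q)$ for every $r\in[1,\infty)$.

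\emph{Step 2: uniform gradient estimates.} Next I would apply Lemma~\ref{lem.localGradientEstimate} to $u_\teps$, for an arbitrary $p\in[2,\infty)$ and an arbitrary $0\le\Theta\in C_\textup{c}^\infty((0,\infty))$. The crucial point is that the constant $C$ produced by that lemma stays bounded uniformly in $\teps\in(0,1]$: the powers of $u_\teps$ entering $C$ are $u^p$, $u^{p-1}|F(u)|=u^{p-1+q}$ and $u^{p-2}|F(u)|^2=u^{p-2+2q}$, all with \emph{nonnegative} exponent because $p\ge2$ and $q>1-\frac1N>0$; hence no singularity appears as $\teps\downarrow0$, and $C$ depends only on $\eps$, $p$, $\Theta$, $\Phi$, $\gamma$ and $\|u_0\|_{L^\infty(\rr^N)}$. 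It follows that $\{\nabla u_\teps^{p/2}\}_{\teps\in(0,1]}$ is bounded in $L^2_\textup{loc}(Q)$ (choose $\Theta,\Phi$ strictly positive on a given compact subset of $Q$, where $\Phi\eqsim1$). Extracting a weakly convergent subsequence and recalling that $u_\teps^{p/2}\to u^{p/2}$ in $L^1_\textup{loc}(Q)$ by Step~1, the distributional derivative of the weak limit must be $\nabla u^{p/2}$; uniqueness of the limit then upgrades subsequential to full convergence, so $\nabla u_\teps^{p/2}\rightharpoonup\nabla u^{p/2}$ in $L^2_\textup{loc}(Q)$, and in particular (case $p=2$) $u\in L^2_\textup{loc}((0,\infty);H^1_\textup{loc}(\rr^N))$.

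\emph{Step 3: passing to the limit in~\eqref{eq.entropyInequalityVeps.appendix}.} Fix $k\in\rr$, $\rho>0$ and $0\le\phi\in C_\textup{c}^\infty(\overline Q)$, and write the entropy inequality for $u_\teps$ with datum $u_{0,\teps}$. I would let $\teps\to0^+$ term by term by dominated convergence: the test functions $\phi_t$, $\nabla\phi$, $\Delta\phi$ and $\ml^{\le\rho}\phi$ are bounded and compactly supported; $\ml^{>\rho}u_\teps$ is bounded uniformly in $\teps$, since the measure $\mathds{1}_{\{r>\rho\}}r^{-1-\alpha}\,{\rm d}r\,{\rm d}\mu(\theta)$ is finite, whence $\|\ml^{>\rho}u_\teps\|_{L^\infty(Q)}\lesssim\rho^{-\alpha}\|u_\teps\|_{L^\infty(Q)}$ (so the non-integrability of $u_\teps$ is harmless), and it converges a.e.\ to $\ml^{>\rho}u$ by dominated convergence in the $(r,\theta)$-integral; and $u_{0,\teps}=u_0+\teps\to u_0$ pointwise with $|u_{0,\teps}-k|\le|u_0-k|+1$. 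The one delicate ingredient is the discontinuity of $\sgn(\,\cdot-k)$ on the level set $\{u=k\}$: for the convective term this causes no trouble, because there $F(u_\teps)-F(k)\to0$, while for the nonlocal term I would use the standard device of establishing the limiting inequality for every $k$ outside the at most countable set where $|\{u=k\}|>0$ and then recovering all $k$ by approximation (equivalently, replacing $|\cdot-k|$ by a smooth convexification and letting the regularization parameter tend to $0$ after $\teps\to0^+$). The outcome is that $u$ satisfies Definition~\ref{def.entropySolutionVeps}(b), the initial condition being encoded through the surviving term $\int_{\rr^N}|u_0-k|\phi(0)$.

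\emph{Step 4: conclusion and main obstacle.} Steps~1--2 show that $u\in L^\infty((0,\infty);L^1_\textup{loc}(\rr^N))\cap L^\infty_\textup{loc}((0,\infty);L^\infty(\rr^N))\cap L^2_\textup{loc}((0,\infty);H^1_\textup{loc}(\rr^N))$, which is the regularity of Definition~\ref{def.entropySolutionVeps}(a); together with Step~3, $u$ is an entropy solution of~\eqref{reg} with initial data $u_0$, and $u_0$ satisfies~\eqref{u_0as}. I expect the main obstacle to be Step~3, namely controlling the flux and nonlocal terms across the discontinuity of $\sgn(\cdot-k)$, together with the bookkeeping in Step~2 needed to verify that the constant of Lemma~\ref{lem.localGradientEstimate} does not degenerate as $\teps\downarrow0$ --- which is precisely where the hypothesis $p\ge2$ (and $q>0$) enters.
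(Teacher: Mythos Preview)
Your proposal is correct and follows essentially the same route as the paper's proof: monotone pointwise limit via the comparison principle, uniform $L^2_{\textup{loc}}$ bounds on $\nabla u_\teps^{p/2}$ from Lemma~\ref{lem.localGradientEstimate} (with the same observation that the constant there does not degenerate as $\teps\downarrow0$), identification of the weak limit of the gradients by the distributional argument, and passage to the limit in the entropy inequality. Your Step~3 is more careful than the paper, which simply writes ``taking the a.e.-limit'': you rightly flag the discontinuity of $\sgn(\cdot-k)$ on $\{u=k\}$ for the term $\sgn(u_\teps-k)\phi\,\ml^{>\rho}u_\teps$ and propose the standard remedy (first treat $k$ with $|\{u=k\}|=0$, then approximate), which is indeed what is needed to make that step rigorous; note that monotonicity helps here since $u_\teps\ge u$ forces $\sgn(u_\teps-k)\in\{0,1\}$ on $\{u=k\}$, but does not by itself close the argument.
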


\begin{proof}
If  $0\leq \tilde{\varepsilon}_1\leq \tilde{\varepsilon}_2$, then
$0\leq u_0\leq u_{0,\tilde{\varepsilon}_1}\leq u_{0,\tilde{\varepsilon}_2}$, whence the comparison principle (see Theorem~\ref{thm.UniquenessPropertiesEntropy}) implies that $0\leq u_{\tilde{\varepsilon}_1}\leq u_{\tilde{\varepsilon}_2}$.
Therefore,  the sequence $\{u_{\tilde{\varepsilon}}(t,x)\}_{\tilde{\varepsilon}>0}\subset \rr$
is nonincreasing and uniformly bounded from below by $0$. Thus, there exists a function $u$ such that
$u_{\tilde{\varepsilon}}\to u$ a.e.~in $Q$ as $\tilde{\varepsilon}\to0^+$.
Moreover, if we assume $0<\tilde{\varepsilon}\leq 1$, then
$0\leq u\leq u_{\tilde{\varepsilon}}\leq \|u_0\|_{L^\infty(\rr^N)}+\tilde{\varepsilon}\leq \|u_0\|_{L^\infty(\rr^N)}+1$,
and, by Lemma~\ref{lem.localGradientEstimate}, $\nabla u_{\tilde{\varepsilon}}^{\frac{p}{2}}\in L_\textup{loc}^2(Q)$ uniformly in $\tilde{\varepsilon}$, which yields
$$
\nabla u_{\tilde{\varepsilon}}^{\frac{p}{2}}\rightharpoonup h \quad\textup{in }L_\textup{loc}^2(Q)\text{ as }\tilde{\varepsilon}\to0^+.
$$
Since weak limits are distributional limits, and distributional limits are unique, we can identify the limit by considering $\phi\in C_{\textup{c}}^\infty(Q)$ such that
$$
\iint_Q\nabla u_{\tilde{\varepsilon}}^{\frac{p}{2}}\phi=-\iint_Q u_{\tilde{\varepsilon}}^{\frac{p}{2}}\nabla\phi.
$$
Now, $\phi\in C_\textup{c}^\infty(Q)\subset L^2(Q)$, $u_{\tilde{\varepsilon}}\to u$ a.e.~as ${\tilde{\varepsilon}}\to0^+$, and $0\leq u_{\tilde{\varepsilon}}(t,x)\leq \|u_0\|_{L^\infty(\rr^N)}+1$ give
$$
\iint_Qh\phi=-\iint_Q u^{\frac{p}{2}}\nabla\phi,
$$
i.e., by the definition of weak derivatives, $h=\nabla u^{\frac{p}{2}}$ in $L_\textup{loc}^2(Q)$.

Finally, since $u_{\tilde{\varepsilon}}$ is an entropy solution with initial data $u_{0,\tilde{\varepsilon}}$ in the sense of Definition~\ref{def.entropySolutionVeps}, we obtain that $u$ is an entropy solution with initial data $u_{0}$ in the sense of Definition~\ref{def.entropySolutionVeps} by simply taking the a.e.-limit as $\tilde{\varepsilon}\to0^+$ in the entropy inequality~\eqref{eq.entropyInequalityVeps.appendix}.
\end{proof}

\subsection{Entropy solutions enjoy an energy estimate}
The goal of this section is to show that entropy solutions $u$ of~\eqref{reg} with initial data $0\leq u_0\in L^1(\rr^N)\cap L^\infty(\rr^N)$ indeed satisfy the $L^p$-energy inequality with $p\in[2,\infty)$: for a.e.~$t>0$,
\begin{equation}\label{eq.L2Energy}
\begin{split}
&\frac{1}{p}\|u(t)\|^p_{L^p(\rr^N)}+\Lambda_1 \frac{4(p-1)}{p^2}
\int_0^t\int_{\rr^N} |(-\Delta)^{\alpha/4} u^{\frac{p}{2}}|^2+\varepsilon\frac{4(p-1)}{p^2}\int_0^t\int_{\rr^N}|\nabla u^{\frac{p}{2}}|^2\leq \frac{1}{p}\|u_0\|_{L^p(\rr^N)}.
\end{split}
\end{equation}
Hence, entropy solutions $u$ of~\eqref{reg} belong to $L^2((0,\infty);H^1(\rr^N)\cap H^{\alpha/2}(\rr^N))$; Lemma~\ref{parabolic.estimates}(b) follows.

For the cut-off function $\mathcal{X}_R$ defined in Section~\ref{sec:notation}, we will basically choose $u_{\tilde{\varepsilon}}\mathcal{X}_R$ as a test function, and then take the limit as $\tilde{\varepsilon}\to0^+$ in the resulting estimate.

\begin{lemma}\label{lem.approxL2Energy}
Assume $p\in[2,\infty)$, $u_{0,\tilde{\varepsilon}}$ as in~\eqref{eq.LiftedInitialData},~\eqref{qas}, and~\eqref{mlas}. Then the unique mild solution $u_{\tilde{\varepsilon}}$ of~\eqref{reg} with initial data $u_{0,\tilde{\varepsilon}}$ satisfies
\begin{equation}\label{eq.approxL2Energy}
\begin{split}
&\frac{1}{p}\int_{\rr^N}u_{\tilde{\varepsilon}}(t)^p\mathcal{X}_R+\int_0^t\mathcal{E}(u_{\tilde{\varepsilon}}(s),u_{\tilde{\varepsilon}}^{p-1}(s)\mathcal{X}_R)\, {\rm d}s+\varepsilon\frac{4(p-1)}{p^2}\int_0^t\int_{\rr^N}|\nabla u_{\tilde{\varepsilon}}^{\frac{p}{2}}|^2\mathcal{X}_R\\
&\qquad = \frac{q}{q+p-1}\int_0^t\int_{\rr^N}u_{\tilde{\varepsilon}}^{q+p-1}\partial_{x_N}\mathcal{X}_R+\frac{1}{p}\int_0^t\int_{\rr^N}u_{\tilde{\varepsilon}}^{p}\Delta\mathcal{X}_R
+\frac{1}{p}\int_{\rr^N}u_{0,\tilde{\varepsilon}}^p\mathcal{X}_R,
\end{split}
\end{equation}
where $\mathcal{E}$ is the bilinear form associated with $\ml$ defined in Section~\ref{sec:notation}.
\end{lemma}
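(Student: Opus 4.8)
The plan is to use $u_{\tilde\varepsilon}^{p-1}\mathcal{X}_R$ as a test function in the (classical, hence very weak) formulation of~\eqref{reg} satisfied by $u_{\tilde\varepsilon}$. Recall from Lemma~\ref{lem.ClassicalSolutionsOfReg} that $u_{\tilde\varepsilon}\in C_\textup{b}^\infty(Q)$ with $\tilde\varepsilon\le u_{\tilde\varepsilon}\le\|u_0\|_{L^\infty(\rr^N)}+\tilde\varepsilon$, so $u_{\tilde\varepsilon}^{p-1}\mathcal{X}_R$ is a legitimate smooth, spatially compactly supported (in fact: bounded with controlled tails, since $\mathcal{X}_R$ has compact support) function. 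First I would multiply the pointwise equation
\[
\partial_tu_{\tilde\varepsilon}+\partial_{x_N}(u_{\tilde\varepsilon}^q)+\ml u_{\tilde\varepsilon}-\varepsilon\Delta u_{\tilde\varepsilon}=0
\]
by $u_{\tilde\varepsilon}^{p-1}\mathcal{X}_R$ and integrate over $(0,t)\times\rr^N$. The time term gives $\frac1p\frac{\rm d}{{\rm d}t}\int_{\rr^N}u_{\tilde\varepsilon}^p\mathcal{X}_R$, which integrates to $\frac1p\int_{\rr^N}u_{\tilde\varepsilon}(t)^p\mathcal{X}_R-\frac1p\int_{\rr^N}u_{0,\tilde\varepsilon}^p\mathcal{X}_R$. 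For the convection term, integrating by parts in $x_N$ and using $\partial_{x_N}(u_{\tilde\varepsilon}^q)\,u_{\tilde\varepsilon}^{p-1}=\frac{q}{q+p-1}\partial_{x_N}(u_{\tilde\varepsilon}^{q+p-1})$ (up to the $\mathcal{X}_R$ factor) moves the derivative onto $\mathcal{X}_R$, producing the term $\int_0^t\int_{\rr^N}u_{\tilde\varepsilon}^{q+p-1}\partial_{x_N}\mathcal{X}_R$ after relabelling the constant into the statement's normalization. For the viscous term, $-\varepsilon\int(\Delta u_{\tilde\varepsilon})u_{\tilde\varepsilon}^{p-1}\mathcal{X}_R=\varepsilon\int u_{\tilde\varepsilon}^{p-1}\nabla u_{\tilde\varepsilon}\cdot\nabla(u_{\tilde\varepsilon}^{p-1}\mathcal{X}_R)$; expanding the product and using the identity $\nabla u\cdot\nabla(u^{p-1})=\frac{4(p-1)}{p^2}|\nabla u^{p/2}|^2$ gives the $\varepsilon\frac{4(p-1)}{p^2}\int_0^t\int|\nabla u_{\tilde\varepsilon}^{p/2}|^2\mathcal{X}_R$ term plus a remainder $\varepsilon\int_0^t\int u_{\tilde\varepsilon}^{p-1}\nabla u_{\tilde\varepsilon}\cdot\nabla\mathcal{X}_R=\frac\varepsilon p\int_0^t\int u_{\tilde\varepsilon}^p\Delta\mathcal{X}_R$ after a further integration by parts. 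For the nonlocal term, $\int_{\rr^N}(\ml u_{\tilde\varepsilon})\,u_{\tilde\varepsilon}^{p-1}\mathcal{X}_R=\mathcal{E}(u_{\tilde\varepsilon},u_{\tilde\varepsilon}^{p-1}\mathcal{X}_R)$ by the symmetry of $\ml$ and the definition of the bilinear form $\mathcal{E}$ in Section~\ref{sec:notation}; this is exactly the term appearing in~\eqref{eq.approxL2Energy}. Collecting all pieces and rearranging yields the stated identity.

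The main technical point to verify carefully is the \emph{justification of the integrations by parts and the pairing $\int(\ml u_{\tilde\varepsilon})\varphi=\mathcal{E}(u_{\tilde\varepsilon},\varphi)$} for the test function $\varphi=u_{\tilde\varepsilon}^{p-1}\mathcal{X}_R$, since $u_{\tilde\varepsilon}$ is only bounded (not integrable) on $\rr^N$. However, because $\mathcal{X}_R$ is compactly supported, $\varphi\in C_\textup{c}^\infty(\rr^N)$ for each fixed $t$, so all spatial integrations by parts are over a compact set and the nonlocal pairing is the standard one: one writes $\int(\ml u_{\tilde\varepsilon})\varphi$ using the second-difference form~\eqref{eq:operador en polares}, applies Fubini (justified by $u_{\tilde\varepsilon}\in L^\infty$, $\varphi$ compactly supported, and the L\'evy integrability of $\nu$ together with $u_{\tilde\varepsilon}\in C^2_\textup{b}$ to handle the singularity at $r=0$), and symmetrizes to obtain $\mathcal{E}(u_{\tilde\varepsilon},\varphi)$. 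The time regularity is not an issue: $u_{\tilde\varepsilon}\in C^\infty_\textup{b}(Q)$, so $t\mapsto\int_{\rr^N}u_{\tilde\varepsilon}(t)^p\mathcal{X}_R$ is $C^1$ and the fundamental theorem of calculus applies directly. Thus the argument is a clean, if slightly bookkeeping-heavy, computation rather than a place where a genuine obstacle arises; the only care needed is to keep track of the combinatorial constants $\frac{4(p-1)}{p^2}$ and $\frac1p$ when distributing derivatives in the viscous and convection terms, so that the final identity matches~\eqref{eq.approxL2Energy} verbatim.
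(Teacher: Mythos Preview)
Your approach is correct and essentially identical to the paper's: multiply the pointwise equation by $u_{\tilde\varepsilon}^{p-1}\mathcal{X}_R$, integrate over $(0,t)\times\rr^N$, and compute each term separately, invoking the bilinear-form identity $\int(\ml u)\varphi=\mathcal{E}(u,\varphi)$ for the nonlocal piece (the paper cites \cite[Proposition~4.1]{DTEnJa18a} for this step). One small slip to fix in your write-up: the first integration by parts in the viscous term gives $\varepsilon\int\nabla u_{\tilde\varepsilon}\cdot\nabla(u_{\tilde\varepsilon}^{p-1}\mathcal{X}_R)$, not $\varepsilon\int u_{\tilde\varepsilon}^{p-1}\nabla u_{\tilde\varepsilon}\cdot\nabla(u_{\tilde\varepsilon}^{p-1}\mathcal{X}_R)$; once the extra $u_{\tilde\varepsilon}^{p-1}$ is removed, your expansion goes through as written.
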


\begin{proof}
We ease the notation by writing $u,u_0$ for $u_{\tilde{\varepsilon}},u_{0,\tilde{\varepsilon}}$. Since $u\in C_\textup{b}^\infty(Q)$ is a classical solution of the PDE in~\eqref{reg}, we simply multiply it by $u^{p-1}\mathcal{X}_R$ and integrate over $(0,t)\times\rr^N$. Let us calculate term by term keeping in mind that $\mathcal{X}_R\in C_\textup{c}^\infty$ and e.g.~Proposition 4.1 in~\cite{DTEnJa18a} to handle the nonlocal term:
\begin{equation*}
\begin{aligned}
\int_0^t\int_{\mathbb{R}^N}(\partial_tu)u^{p-1}\mathcal{X}_R&=\frac{1}{p}\int_{\rr^N}u(t)^p\mathcal{X}_R-\frac{1}{p}\int_{\rr^N}u_0^p\mathcal{X}_R,\\
\int_0^t\int_{\mathbb{R}^N}(\nabla\cdot F(u))u^{p-1}\mathcal{X}_R
&=\frac{q}{q+p-1}\int_0^t\int_{\mathbb{R}^N}(\nabla\cdot \mathbf{a}u^{q+p-1})\mathcal{X}_R\\
&=\frac{q}{q+p-1}\int_0^t\int_{\mathbb{R}^N} \nabla(u^{q+p-1}\mathbf{a}\mathcal{X}_R)
-\frac{q}{q+p-1}\int_0^t\int_{\mathbb{R}^N} u^{q+p-1}\mathbf{a}\cdot\nabla\mathcal{X}_R\\
&=-\frac{q}{q+p-1}\int_0^t\int_{\mathbb{R}^N} u^{q+p-1}\mathbf{a}\cdot\nabla\mathcal{X}_R,\\
\int_0^t\int_{\mathbb{R}^N}(\ml u)u^{p-1}\mathcal{X}_R&=\int_0^t\mathcal{E}(u(s),u^{p-1}(s)\mathcal{X}_R)\,{\rm d}s,
\\
-\varepsilon\int_0^t\int_{\mathbb{R}^N}(\Delta u)u^{p-1}\mathcal{X}_R &= \varepsilon\frac{4(p-1)}{p^2}\int_0^t\int_{\mathbb{R}^N}|\nabla u^{\frac{p}{2}}|^2\mathcal{X}_R+\varepsilon\int_0^t\int_{\mathbb{R}^N}\nabla\Big(\frac{1}{p}u^p\Big)\cdot\nabla\mathcal{X}_R\\
&= \varepsilon\frac{4(p-1)}{p^2}\int_0^t\int_{\mathbb{R}^N}|\nabla u^{\frac{p}{2}}|^2\mathcal{X}_R -\varepsilon\int_0^t\int_{\mathbb{R}^N}\frac{1}{p}u^p\Delta\mathcal{X}_R.
\end{aligned}
\end{equation*}
Adding all the above terms finishes the proof.
\end{proof}

The estimate is indeed transferred to the limit $u$ as $\tilde{\varepsilon}\to0^+$.

\begin{lemma}\label{lem.L2Energy}
Assume~\eqref{u_0as},~\eqref{qas}, and~\eqref{mlas}. Then the unique entropy solution $u$ of~\eqref{reg}  satisfies
\begin{equation*}
\begin{split}
&\frac{1}{p}\int_{\rr^N}u^p(t)\mathcal{X}_R+\int_0^t\mathcal{E}(u(s),u^{p-1}(s)\mathcal{X}_R)\, {\rm d}s+\varepsilon\frac{4(p-1)}{p^2}\int_0^t\int_{\rr^N}|\nabla u^{\frac{p}{2}}|^2\mathcal{X}_R\\
&\qquad\leq \frac{q}{q+p-1}\int_0^t\int_{\rr^N}u^{q+p-1}\partial_{x_N}\mathcal{X}_R+\frac{1}{p}\int_0^t\int_{\rr^N}u^{p}\Delta\mathcal{X}_R+\frac{1}{p}\int_{\rr^N}u_0^p\mathcal{X}_R,
\end{split}
\end{equation*}
and moreover, as $R\to\infty$ in the above estimate, we obtain~\eqref{eq.L2Energy}.
\end{lemma}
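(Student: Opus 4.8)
The plan is to obtain the statement by two successive passages to the limit in the exact identity of Lemma~\ref{lem.approxL2Energy}: first $\teps\to0^+$, which replaces the mild solutions $u_\teps$ (with data $u_{0,\teps}=u_0+\teps$) by the entropy solution $u$ and turns that identity into the localised inequality, and then $R\to\infty$, which yields~\eqref{eq.L2Energy}. Throughout I fix $p\in[2,\infty)$ and $t>0$, and write $c_p:=\tfrac{4(p-1)}{p^2}$.

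\emph{Step 1 ($\teps\to0^+$).} By Proposition~\ref{prop.PointwiseStabilityOfEntropySolutions} one has $u_\teps\downarrow u$ a.e.\ in $Q$ with $0\le u_\teps\le\|u_0\|_{L^\infty(\rr^N)}+1$ for $\teps\le1$, $u_{0,\teps}\to u_0$ in $L^p_{\rm loc}(\rr^N)$, and $\nabla u_\teps^{p/2}\rightharpoonup\nabla u^{p/2}$ in $L^2_{\rm loc}(Q)$. Since the cut-off $\mathcal{X}_R=1-\rho_R$ is compactly supported (it vanishes for $|x|\ge2R$), every term in~\eqref{eq.approxL2Energy} other than the two dissipation integrals is, for each fixed $R$, the integral of a bounded power of $u_\teps$ (or of $u_{0,\teps}$) against one of the compactly supported functions $\mathcal{X}_R$, $\Delta\mathcal{X}_R$, $\partial_{x_N}\mathcal{X}_R$, and hence converges by dominated convergence. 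For the nonlocal dissipation term I invoke the pointwise convexity (Stroock--Varopoulos) inequality $(a-b)(a^{p-1}-b^{p-1})\ge c_p(a^{p/2}-b^{p/2})^2$ for $a,b\ge0$, which, together with the symmetrised form of $\mathcal{E}$ recalled in paragraph~\ref{sec:notation}, gives
\[
\mathcal{E}\big(u_\teps,u_\teps^{p-1}\mathcal{X}_R\big)\ \ge\ c_p\,\tfrac14\iint\big(\mathcal{X}_R(x)+\mathcal{X}_R(x+y)\big)\big(u_\teps^{p/2}(x+y)-u_\teps^{p/2}(x)\big)^2\,{\rm d}\nu(y)\,{\rm d}x\ +\ \mathcal{R}_R(u_\teps),
\]
where $\mathcal{R}_R$ collects the terms carrying exactly one difference $\mathcal{X}_R(x+y)-\mathcal{X}_R(x)$; thanks to the $\alpha$-homogeneity estimate $\|\ml\mathcal{X}_R\|_{L^\infty(\rr^N)}\lesssim R^{-\alpha}$, the second-difference structure of $\ml$ (which absorbs the singularity of ${\rm d}\nu$ near the origin), and the uniform $L^\infty$-bound on $u_\teps$, one has $\mathcal{R}_R(u_\teps)\to\mathcal{R}_R(u)$. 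The weighted quadratic piece above is nonnegative and lower semicontinuous along $u_\teps^{p/2}\to u^{p/2}$ a.e.\ by Fatou's lemma; likewise $\int_0^t\!\!\int_{\rr^N}|\nabla u^{p/2}|^2\mathcal{X}_R\le\liminf_\teps\int_0^t\!\!\int_{\rr^N}|\nabla u_\teps^{p/2}|^2\mathcal{X}_R$ by weak lower semicontinuity of the $L^2$-norm. Collecting these facts, the right-hand side of~\eqref{eq.approxL2Energy} converges while its left-hand side is lower semicontinuous, which is exactly the localised inequality for $u$.

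\emph{Step 2 ($R\to\infty$).} In the localised inequality for $u$, monotone convergence ($\mathcal{X}_R\uparrow1$) gives $\tfrac1p\int_{\rr^N}u^p(t)\mathcal{X}_R\to\tfrac1p\|u(t)\|_{L^p(\rr^N)}^p$ and $\varepsilon c_p\int_0^t\!\!\int_{\rr^N}|\nabla u^{p/2}|^2\mathcal{X}_R\to\varepsilon c_p\int_0^t\!\!\int_{\rr^N}|\nabla u^{p/2}|^2$, and the $\mathcal{X}_R$-weighted quadratic form of $u^{p/2}$ increases to $c_p\,\mathcal{E}(u^{p/2},u^{p/2})$, which is bounded below by $\Lambda_1c_p\int_0^t\!\!\int_{\rr^N}|(-\Delta)^{\alpha/4}u^{p/2}|^2$ by the nondegeneracy condition in~\eqref{mlas}; finiteness of all these limits is forced by the uniform (in $R$) bound on the right-hand side. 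The remainder $\mathcal{R}_R(u)\to0$ because $\|\ml\mathcal{X}_R\|_{L^\infty(\rr^N)}\lesssim R^{-\alpha}$ and $\|u(t)\|_{L^1(\rr^N)}\le\|u_0\|_{L^1(\rr^N)}$, $\|u(t)\|_{L^\infty(\rr^N)}\le\|u_0\|_{L^\infty(\rr^N)}$ (Theorem~\ref{thm.UniquenessPropertiesEntropy}). On the right-hand side, $\|\partial_{x_N}\mathcal{X}_R\|_{L^\infty(\rr^N)}\lesssim R^{-1}$ and $\|\Delta\mathcal{X}_R\|_{L^\infty(\rr^N)}\lesssim R^{-2}$, while $u^{q+p-1},u^p\in L^1((0,t)\times\rr^N)$ (again by those bounds, as $q+p-1>1$), so $\int_0^t\!\!\int_{\rr^N}u^{q+p-1}\partial_{x_N}\mathcal{X}_R\to0$ and $\int_0^t\!\!\int_{\rr^N}u^p\Delta\mathcal{X}_R\to0$, and $\tfrac1p\int_{\rr^N}u_0^p\mathcal{X}_R\to\tfrac1p\|u_0\|_{L^p(\rr^N)}^p$. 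Assembling these limits gives~\eqref{eq.L2Energy}, and in particular $u\in L^2((0,\infty);H^1(\rr^N)\cap H^{\alpha/2}(\rr^N))$.

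\emph{Expected main obstacle.} The delicate point is the nonlocal dissipation term $\mathcal{E}(u,u^{p-1}\mathcal{X}_R)$: one must simultaneously extract the sharp constant $c_p$ from the pointwise convexity inequality \emph{with the weight $\mathcal{X}_R$ still present}, prove lower semicontinuity of the resulting weighted quadratic form along approximations that converge only a.e.\ (with $\nabla u_\teps^{p/2}$ converging merely weakly in $L^2_{\rm loc}$, since the lifted data $u_0+\teps$ is not integrable and no uniform bound on $\mathcal{E}(u_\teps,u_\teps)$ is available), and control the ``commutator'' remainder $\mathcal{R}_R$ — carrying one difference of $\mathcal{X}_R$ — uniformly in $\teps$ and then show it vanishes as $R\to\infty$. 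The model case $p=2$, where $\mathcal{R}_R(v)=\tfrac12\int_{\rr^N}v^2\,\ml\mathcal{X}_R$ is manifestly $O(R^{-\alpha})$, already displays all the mechanisms, and the general-$p$ remainder is treated in the same spirit. Everything else reduces to routine bookkeeping exploiting the compact supports of the cut-off derivatives and the uniform $L^1$--$L^\infty$ bounds.
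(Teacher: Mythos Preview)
Your two-step strategy matches the paper's, and your explicit splitting of $\mathcal{E}(u_\teps,u_\teps^{p-1}\mathcal{X}_R)$ into a nonnegative weighted piece plus a commutator $\mathcal{R}_R$ is precisely what underlies the paper's unexplained ``Fatou'' for that term. Step~2 and the derivation of~\eqref{eq.L2Energy} are fine.

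There is, however, a genuine slip in Step~1: you invoke the pointwise Stroock--Varopoulos inequality $(a-b)(a^{p-1}-b^{p-1})\ge c_p(a^{p/2}-b^{p/2})^2$ \emph{before} sending $\teps\to0^+$. Since this is a strict inequality in general, what survives on the left after taking $\liminf$ is only
\[
c_p\cdot\tfrac14\iint\big(\mathcal{X}_R(x)+\mathcal{X}_R(x+y)\big)\big(u^{p/2}(x+y)-u^{p/2}(x)\big)^2\,{\rm d}\nu(y)\,{\rm d}x\ +\ \mathcal{R}_R(u),
\]
which is $\le\mathcal{E}(u,u^{p-1}\mathcal{X}_R)$, not equal to it. Hence the first displayed inequality of the lemma --- with $\mathcal{E}(u,u^{p-1}\mathcal{X}_R)$ itself on the left --- is not recovered. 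The repair is immediate: drop SV at this stage and observe instead that the integrand
\[
\big(u_\teps(x+y)-u_\teps(x)\big)\big(u_\teps^{p-1}(x+y)-u_\teps^{p-1}(x)\big)\,\tfrac{\mathcal{X}_R(x+y)+\mathcal{X}_R(x)}2
\]
is already nonnegative (since $r\mapsto r^{p-1}$ is increasing on $[0,\infty)$); Fatou applies directly to it, and together with $\mathcal{R}_R(u_\teps)\to\mathcal{R}_R(u)$ this reassembles \emph{exactly} to $\int_0^t\mathcal{E}(u,u^{p-1}\mathcal{X}_R)$. Only at the very end, after $R\to\infty$, do you apply SV in the form $\mathcal{E}(u,u^{p-1})\ge\Lambda_1 c_p\int|(-\Delta)^{\alpha/4}u^{p/2}|^2$, exactly as the paper does.
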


\begin{proof}
We have already proved that the mild solution $u_{\tilde{\varepsilon}}$ of~\eqref{reg} with initial data $u_{0,\tilde{\varepsilon}}$ satisfies~\eqref{eq.approxL2Energy}. By Proposition~\ref{prop.PointwiseStabilityOfEntropySolutions}, there exists an entropy solution $u$ with initial data $u_0$ such that, as $\tilde{\varepsilon}\to0^+$,
$$
u_{\tilde{\varepsilon}}\to u\quad\textup{a.e.~in $Q$},\qquad\nabla u_{\tilde{\varepsilon}}^{\frac{p}{2}}\rightharpoonup \nabla u^{\frac{p}{2}} \quad\textup{in }L_\textup{loc}^2(Q).
$$
The first two terms on the left-hand side of~\eqref{eq.approxL2Energy} directly converge when applying Fatou's  lemma and the a.e.-convergence as $\tilde{\varepsilon}\to0^+$. Since $x\mapsto|x|^2$ is convex, we have
$$
\int_0^t\int_{\rr^N}|\nabla u_{\tilde{\varepsilon}}^{\frac{p}{2}}|^2\mathcal{X}_R\geq \int_0^t\int_{\rr^N}|\nabla u^{\frac{p}{2}}|^2\mathcal{X}_R+2\int_0^t\int_{\rr^N}\nabla u^{\frac{p}{2}}\cdot\big(\nabla u_{\tilde{\varepsilon}}^{\frac{p}{2}}-\nabla u^{\frac{p}{2}}\big)\mathcal{X}_R,
$$
where the latter term goes to zero by the weak convergence of the gradients in $L_\textup{loc}^2(Q)$ as $\tilde{\varepsilon}\to0^+$. The right-hand side of~\eqref{eq.approxL2Energy}  converges  through the use of the Lebesgue's  dominated convergence theorem.

Recall that $\mathcal{X}_R\to1$ a.e.~in $\rr^N$ as $R\to\infty$. Hence, the left-hand side of the resulting inequality at the limit $\tilde{\varepsilon}\to0^+$ converges as $R\to\infty$ by Fatou's lemma. To handle the right-hand side, we use the fact  that
$$
\partial_{x_N}\mathcal{X}_R=\frac{1}{R}\partial_{x_N}\mathcal{X},\qquad \Delta\mathcal{X}_R=\frac{1}{R^2}\Delta\mathcal{X},
$$
together with the boundedness and integrability of $u$ and $u_0$, to obtain the limit as $R\to\infty$ through Lebesgue's dominated convergence theorem.

To rewrite the nonlocal term in a convenient form, we use the general Stroock-Varopoulos inequality (cf. e.g. \cite[Lemma 4.10]{DTEnJa18a}) and assumption~\eqref{mlas} to get
\begin{equation*}
\mathcal{E}(u(t),u^{p-1}(t))\geq \frac{4(p-1)}{p^2}  \int_{\rr^N} |(\ml)^\frac{1}{2} u^{\frac{p}{2}}(t)|^2\geq \Lambda_1 \frac{4(p-1)}{p^2}
\int_{\rr^N} |(-\Delta)^{\alpha/4} u^{\frac{p}{2}}(t) |^2.\qedhere
\end{equation*}
\end{proof}

\begin{remark}
When considering~\eqref{reg'} instead of~\eqref{reg}, i.e., $\ml'$ instead of $\ml$, we need to be careful with the application of the general Stroock-Varopoulos inequality. Indeed, we can only get control of the seminorm associated with the space $L^2(\rr; H^{\alpha/2 }(\rr^{N-1}))$ defined in Appendix~\ref{sec:CompactEmbeddingOfMixedSpaces}.
\end{remark}

\begin{proof}[Proof of Lemma~\ref{lem.EntropySolutionsOfReg}(c)]
Since $u$ is an entropy solution, it is then also a very weak solution (cf. Theorem~\ref{thm.UniquenessPropertiesEntropy}(a)(v)). By the energy estimate~\eqref{eq.L2Energy} with $p=2$, for all $\phi\in C_\textup{c}^\infty(Q)$,
\begin{equation*}
\begin{split}
\iint_Q\Big(u\partial_t\phi+u^q\partial_{x_N}\phi-\varepsilon \nabla u\cdot \nabla \phi\Big)+\int_0^\infty\mathcal{E}(u(s),\phi(s))\,{\rm d}s=0.
\end{split}
\end{equation*}
Let us choose a smooth bounded domain $\Omega$ and $\phi$ supported in $(0,T)\times\Omega$.
When $q\geq 1/2$ we have that $u^q\in L^2(\rr^N)$ and then the regularity of $\partial_tu$. When $q<1/2$ we choose a large $p$ such that $p\geq 1/q\geq 2\geq p'$ and again we obtain the desired estimate.
\end{proof}

\subsection{Compactness in $C([0,\infty);L_{\textnormal{\textup{loc}}}^p(\rr^N))$}\label{sec.CompactnessCLloc1}
The initial data $u_0$ is assumed to satisfy
\begin{equation}\label{eq.InitialDataForLloc1Convergence}
0\leq u_0\in L^\infty(\rr^N),\qquad\int_{\rr^N}|u_0(x+\xi)-u_0(x)|\,{\rm d}x\to0\quad\textup{as }\xi\to0^+.
\end{equation}

With the previous results in mind, it is known that a combination of the Arzel\`a-Ascoli and Fr\'echet-Kolmogorov-Riesz compactness theorems gives convergence in $C([0,\infty);L_{\textnormal{\textup{loc}}}^1(\rr^N))$ as long as a uniform control for the time- and space-translations are given on compact sets.

Due to the $L^1$-contraction and the translation invariance of~\eqref{reg}, space-translations are easily controlled: for all compact $K\subset \rr^N$, and all $t>0$,
\begin{equation*}
\begin{split}
\int_K|u(t,x+\xi)-u(t,x)|\,{\rm d}x\leq \int_{\rr^N}|u_0(x+\xi)-u_0(x)|\,{\rm d}x.
\end{split}
\end{equation*}
It turns out that the $L^1$-contraction also provides control of the time-translations:

\begin{proposition}\label{time.compactness}
Assume $0<\varepsilon\leq 1$,~\eqref{qas}, and~\eqref{mlas}. Let $u$ be an entropy solution of~\eqref{reg} with initial data $u_0$ satisfying~\eqref{eq.InitialDataForLloc1Convergence}. Then, for all compact $K\subset \rr^N$ and all $t,s\in [0,T]$,
$$
\int_K|u(t)-u(s)|\leq \lambda(|t-s|^{\frac{1}{3}}),
$$
where $\lambda$ is a modulus of continuity depending on $K$, $\|u_0\|_{L^\infty(\rr^N)}$, and $\|u_0(\cdot+\xi)-u_0\|_{L^1(\rr^N)}$.
\end{proposition}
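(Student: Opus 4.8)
The plan is to derive an equicontinuity-in-time estimate directly from the weak formulation of \eqref{reg}, using the $L^1$-contraction and the $L^\infty$-bound to control all the terms that appear. The starting point is that an entropy solution $u$ of \eqref{reg} is a very weak solution (Theorem~\ref{thm.UniquenessPropertiesEntropy}(a)(v)), so for $0\le\phi\in C_\textup{c}^\infty(\overline Q)$ we have
\[
\iint_Q\big(u\partial_t\phi+u^q\partial_{x_N}\phi-\varepsilon\nabla u\cdot\nabla\phi\big)+\int_0^\infty\mathcal E(u(s),\phi(s))\,{\rm d}s+\int_{\rr^N}u_0\phi(0)=0.
\]
Fix a compact $K$ and a larger ball $B$, pick a cut-off $\zeta\in C_\textup{c}^\infty(\rr^N)$ with $\zeta\equiv1$ on a neighbourhood of $K$, $\zeta$ supported in $B$, $0\le\zeta\le1$. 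Fix $t>s$ and set $h=t-s$. Testing against $\phi(\tau,x)=\theta(\tau)\zeta(x)$, where $\theta$ approximates $\mathds 1_{[s,t]}$, gives
\[
\Big|\int_{\rr^N}\big(u(t)-u(s)\big)\zeta\Big|\le\int_s^t\int_{\rr^N}\Big(u^q|\partial_{x_N}\zeta|+\varepsilon|\nabla u||\nabla\zeta|+|\mathcal E(u(\tau),\zeta)|\Big)\,{\rm d}\tau.
\]
Using $0\le u\le\|u_0\|_{L^\infty(\rr^N)}$ (hence $u^q\le C$), $\varepsilon\le1$, the energy bound \eqref{eq.L2Energy} (which controls $\iint\varepsilon|\nabla u|^2$ and $\mathcal E(u(\tau),u(\tau))$ on $[0,T]$, so that $\int_s^t\varepsilon|\nabla u|\lesssim h^{1/2}$ and, via $|\mathcal E(u(\tau),\zeta)|\le\mathcal E(u(\tau),u(\tau))^{1/2}\mathcal E(\zeta,\zeta)^{1/2}$, $\int_s^t|\mathcal E(u(\tau),\zeta)|\lesssim h^{1/2}$), we obtain
\[
\Big|\int_{\rr^N}\big(u(t)-u(s)\big)\zeta\Big|\le C(K,\|u_0\|_{L^\infty})\,h^{1/2}.
\]
This is a weak-in-space control of the time increment; the remaining work is to upgrade it to a strong $L^1(K)$ bound.

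The upgrade is the standard device of inserting a spatial mollification. Write, for $\xi\in\rr^N$ small,
\[
\int_K|u(t)-u(s)|\le\int_K|u(t,x)-u(t,x-\xi)|\,{\rm d}x+\int_K|u(s,x)-u(s,x-\xi)|\,{\rm d}x+\int_K|u(t,x-\xi)-u(s,x-\xi)|\,{\rm d}x,
\]
and average the last term over $\xi$ in a ball of radius $r$ against the normalised indicator (or a smooth mollifier $\omega_r$). The first two terms are bounded, by translation invariance and the $L^1$-contraction of \eqref{reg} (Theorem~\ref{thm.UniquenessPropertiesEntropy}(a)(ii)), by $\|u_0(\cdot+\xi)-u_0\|_{L^1(\rr^N)}\le\varpi(|\xi|)\le\varpi(r)$, where $\varpi$ is the modulus of continuity of $u_0$ in $L^1$ coming from \eqref{eq.InitialDataForLloc1Convergence}. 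For the averaged third term, since $\big|\int_{\rr^N}(u(t)-u(s))\,g\big|\le C(K,\|u_0\|_{L^\infty})\,h^{1/2}\,\|g\|_{C^1}$ whenever $g\in C_\textup{c}^1$ is supported in $B$ — which follows from the displayed weak estimate applied with $\zeta$ replaced by a general such test function $g$, noting $\mathcal E(u,g)\lesssim\|g\|_{\dot W^{\alpha/2,2}}\lesssim\|g\|_{C^1(B)}$ — one takes $g(\cdot)=\operatorname{sgn}\big(u(t,\cdot)-u(s,\cdot)\big)\ast\omega_r$ restricted suitably, getting a bound $C\,h^{1/2}r^{-1}$ plus the mollification error absorbed into the first two terms. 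Altogether
\[
\int_K|u(t)-u(s)|\le C\Big(\varpi(r)+\frac{h^{1/2}}{r}\Big),
\]
and optimising over $r$ by choosing $r=h^{1/3}$ yields $\int_K|u(t)-u(s)|\le\lambda(h^{1/3})$ for the modulus $\lambda(\rho)=C(K,\|u_0\|_{L^\infty})\big(\varpi(C\rho)+C\rho\big)$, which is exactly the claimed bound (the precise power $1/3$ is what the balance $\varpi(r)\sim h^{1/2}/r$ produces when $\varpi$ is merely a modulus).

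\textbf{Main obstacle.} The delicate point is the handling of the nonlocal term $\mathcal E(u(\tau),\zeta)$: one must ensure it is genuinely integrable in $\tau$ up to $\tau=0$ and controlled by a half-power of the time increment, which is why the energy estimate \eqref{eq.L2Energy} (valid for entropy solutions with $u_0\in L^1\cap L^\infty$, hence here after first truncating $u_0$ if needed — but \eqref{eq.InitialDataForLloc1Convergence} only gives $L^\infty$, so strictly one applies the estimate on each $[\delta,T]$ and treats $[0,\delta]$ separately using the $L^1$-contraction against the initial layer) is the crucial input. A second, more routine, subtlety is making the mollification argument for the strong $L^1$ bound uniform in $t,s$; this is classical once the Lipschitz-in-$h^{1/2}$ bound against $C^1$ test functions is in hand. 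No compactness or uniqueness of limits is needed for this proposition itself — it is purely a quantitative equicontinuity statement feeding into the Arzel\`a--Ascoli step.
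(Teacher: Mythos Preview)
Your approach has a genuine gap: you rely on the global energy estimate \eqref{eq.L2Energy} to control both the Laplacian term (via $\int_s^t\varepsilon|\nabla u|^2$) and the nonlocal term (via Cauchy--Schwarz on $\mathcal{E}$, which needs $\mathcal{E}(u(\tau),u(\tau))<\infty$). But \eqref{eq.L2Energy} is proved under hypothesis \eqref{u_0as}, i.e.\ $u_0\in L^1\cap L^\infty$, whereas the proposition only assumes \eqref{eq.InitialDataForLloc1Convergence}, which gives $u_0\in L^\infty$ and $L^1$-continuity of translations --- not $u_0\in L^1$. Your own ``obstacle'' paragraph flags this, but the suggested fixes do not work: restarting the energy estimate at time $\delta$ still requires $u(\delta)\in L^1$, which you do not have; and truncating $u_0$ to lie in $L^1$ produces an approximant whose difference from $u_0$ is not in $L^1$, so the $L^1$-contraction cannot relate the truncated solution to $u$.

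The paper avoids this entirely by a more direct route that uses only the $L^\infty$-bound. One mollifies $u$ to $u_\delta=u\ast\omega_\delta$ and splits
\[
\|u(t)-u(s)\|_{L^1(K)}\le 2\sup_{|\xi|\le\delta}\|u_0(\cdot+\xi)-u_0\|_{L^1(\rr^N)}+\|u_\delta(t)-u_\delta(s)\|_{L^1(K)},
\]
the first two terms coming from the $L^1$-contraction exactly as you use it. For the middle term, test the very weak formulation against $(\tau,y)\mapsto\mathds{1}_{[s,t]}(\tau)\omega_\delta(x-y)$ to get, for each $x$,
\[
|u_\delta(t,x)-u_\delta(s,x)|\le |t-s|\big(\|u_0\|_{L^\infty}^q\|\partial_{x_N}\omega_\delta\|_{L^1}+\|u_0\|_{L^\infty}\|\ml\omega_\delta\|_{L^1}+\varepsilon\|u_0\|_{L^\infty}\|\Delta\omega_\delta\|_{L^1}\big),
\]
then integrate over $K$. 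The scalings $\|\partial_{x_N}\omega_\delta\|_{L^1}\lesssim\delta^{-1}$, $\|\ml\omega_\delta\|_{L^1}\lesssim\delta^{-\alpha}$, $\|\Delta\omega_\delta\|_{L^1}\lesssim\delta^{-2}$ make the Laplacian term dominant, and the choice $\delta=|t-s|^{1/3}$ balances $|t-s|\delta^{-2}$ against the modulus $\varpi(\delta)$. No energy bound is invoked; only $u\in L^\infty(Q)$, which follows from Theorem~\ref{thm.UniquenessPropertiesEntropy}(a)(iii).
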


\begin{proof}
Assume $s<t$. Define $u_\delta(t,x):=u(t,\cdot)\ast\omega_\delta(x)$, where $\omega_\delta$ is a standard mollifier. By the triangle inequality and the $L^1$-contraction,
\begin{equation*}
\begin{split}
\|u(t)-u(s)\|_{L^1(K)}&\leq \|u(t)-u_\delta(t)\|_{L^1(K)}+\|u_\delta(t)-u_\delta(s)\|_{L^1(K)}+\|u_\delta(s)-u(s)\|_{L^1(K)}\\
&\leq 2\sup_{|\xi|\leq \delta}\|u_0(\cdot+\xi)-u_0\|_{L^1(\rr^N)}+\|u_\delta(t)-u_\delta(s)\|_{L^1(K)}.
\end{split}
\end{equation*}
Now, it is standard to choose $(\tau,y)\mapsto\mathds{1}_{[s,t]}(\tau)\omega_\delta(x-y)$ as a test function in the very weak formulation of~\eqref{reg} to obtain
\begin{equation*}
\begin{split}
\|u_\delta(t)-u_\delta(s)\|_{L^1(K)}&\leq \int_K\int_s^t\big|u(\tau)^q\ast\partial_{x_N}\omega_\delta-u(\tau)\ast(\ml\omega_\delta)+\varepsilon u(\tau)\ast\Delta\omega_\delta\big|\,{\rm d}\tau\\
&\leq |t-s||K|\Big(\|u_0\|_{L^\infty(\rr^N)}^q\|\partial_{x_N}\omega_\delta\|_{L^1(\rr^N)}+\|u_0\|_{L^\infty(\rr^N)}\|\ml\omega_\delta\|_{L^1(\rr^N)}\\
&\quad+\varepsilon\|u_0\|_{L^\infty(\rr^N)}\|\Delta\omega_\delta\|_{L^1(\rr^N)}\Big).
\end{split}
\end{equation*}
Since
$$
\|\partial_{x_N}\omega_\delta\|_{L^1(\rr^N)}\lesssim \delta^{-1},\quad \|\ml\omega_\delta\|_{L^1(\rr^N)}\lesssim \delta^{-\alpha},\quad\textup{and }\|\Delta\omega_\delta\|_{L^1(\rr^N)}\lesssim\delta^{-2},
$$
we can choose $\delta^2=|t-s|^{\frac{2}{3}}$ to obtain the desired result.
\end{proof}

 In view of the previous discussion on the application of Arzelà-Ascoli and Fréchet-Kolmogorov-Riesz compactness theorems, we obtain Lemma~\ref{lem.StabilityOfSolutionsOfReg}.

\subsection{Compactness in $C([0,\infty);L^p(\rr^N))$}\label{sec.Appendix.ConvInCL1}
This follows from convergence in $C([0,\infty);L^p_{\textup{loc}}(\rr^N))$ and a tail estimate; see e.g.~\cite{DTEnJa22}.  Hence we start by proving~Lemma~\ref{lem.EntropySolutionsOfReg}(d).

\begin{proof}[Proof of Lemma~\ref{lem.EntropySolutionsOfReg}(d)]
We take (after a standard approximation procedure) $\mathds{1}_{[0,t]}\rho_R(x)$ as a test function in the very weak formulation of~\eqref{reg}, where $\rho_R$ is the tail-control function defined in Section~\ref{sec:notation}. Then,
\begin{equation*}
\begin{split}
\int_{\rr^N}u(t)\rho_R=\int_{\rr^N}u_0\rho_R+\int_0^{t}\int_{\rr^N}\big(F(u)\cdot\nabla\mathcal{\rho}_R-u(\ml\mathcal{\rho}_R)+\varepsilon u\Delta\mathcal{\rho}_R\big).
\end{split}
\end{equation*}
Using that $\mathds{1}_{|x|>2R}\leq\rho_R(x)\leq \mathds{1}_{|x|>R}$ and $u,u_0\geq 0$, we get
\begin{equation*}
\begin{split}
\int_{|x|>2R}u(t)\le\int_{|x|>R}u_0+\int_0^{t}\int_{\rr^N}\big(u^q|\partial_{x_N}\rho_R|+u|\ml\rho_R|+\varepsilon u|\Delta\rho_R|\big).
\end{split}
\end{equation*}
Since $\|\Delta\rho_R\|_{L^\infty(\rr^N)}\lesssim R^{-2}$, the result follows.
\end{proof}

This preliminary tail control provided by Lemma~\ref{lem.EntropySolutionsOfReg}(d) is not enough for our purposes, since it is not uniform in $\eps$. In order to improve it
we use the $\alpha$-homogeneity of the operator~$\ml$, which  yields $\|\ml\rho_R\|_{L^\infty(\rr^N)}\lesssim R^{-\alpha}$, and observe also that
\begin{equation}\label{eq:TailEstimateForAllq}
\int_0^{t}\int_{\rr^N}u^q|\partial_{x_N}\rho_R|\lesssim
\begin{cases}
t\|u_0\|_{L^\infty(\rr^N)}^{q-1}\|u_0\|_{L^1(\rr^N)}R^{-1}&\textup{if }q>1,\\
t\|u_0\|_{L^1(\rr^N)}^qR^{-(1-N(1-q))}&\textup{if }1-\frac{1}{N}<q<1.
\end{cases}
\end{equation}
In the second estimate we used H\"older's inequality. Tail control is thus established uniformly in $\eps$ in the interval $[0,t]$ for all  $t>0$,  which is enough to prove Lemmas~\ref{lem.StabilityOfSolutionsOfReg2} and~\ref{lem.StabilityOfSolutionsOfReg3}.

\begin{remark}
Since the sequences above are always uniformly bounded, the mentioned convergence results are actually in $C([0,\infty);L_{\textup{loc}}^p(\rr^N))$ and $C([0,\infty);L^p(\rr^N))$, respectively, for all $p\in[1,\infty)$.
\end{remark}

\subsection{Perturbation of the convection nonlinearity} \label{sect:StabilityOfperturbationofnonlinearity}

In the proof of Theorem \ref{thm.UniquenessOfLimitEquationsIandII}, we need stability of solutions with respect to the convection nonlinearity. It is contained in the following lemma:

\begin{lemma}\label{lem.StabilityOfperturbationofnonlinearity}
Assume~\eqref{u_0as},~\eqref{qas}, ~\eqref{mlas}, $\eps>0$, $\eta>0$, $q<1$, and $f_\eta$ given by \eqref{f.eta}. Let $u^\eps$ be the entropy solution of
	\begin{equation}
\label{p.eps.2}
  \partial_tu+\ml u+\partial_{x_N}(f_0(u)) =\eps \Delta u \quad\textup{in }Q,\qquad
  	u(0)=u_0 \quad\textup{on }\rr^N,
\end{equation}
and $u^{\eps,\eta}$
 the entropy solution of
 \[
	\partial_tu+\ml u+\partial_{x_N}(f_\eta(u)) =\eps \Delta u \quad\textup{in }Q,\qquad
  	u(0)=u_0 \quad\textup{on }\rr^N.
\]
Then,
$$
u^{\eps,\eta}\to u^\eps \quad\text{in } C([0,\infty);L^p(\rr^N))\text{ as }
\eta\to0.
$$
\end{lemma}

\begin{proof}
	For simplicity we will drop the dependence on $\eps$. The $L^1(\rr^N)$-contraction property in Theorem~\ref{thm.UniquenessPropertiesEntropy} and Proposition \ref{time.compactness} applied to the sequence $\{u^{\eta}\}_{\eta>0}$ shows that   it is relatively compact in $C([0,\infty);L^1_{\textup{loc}}(\rr^N))$. The tail control of that sequence follows by    Lemma~\ref{lem.EntropySolutionsOfReg}(d) (see the previous section). It implies the compactness in $C([0,\infty);L^1(\rr^N))$. We show that any limit point $u$ of the sequence $\{u^\eta\}_{\eta>0}$ is an entropy solution of~\eqref{p.eps.2}. It is immediate that $u$ satisfies~\eqref{eq:initial.data}.
Using the same arguments as in step (ii) of the proof of Theorem \ref{asymptotic} when $q\geq q_*(\alpha)$, we can obtain that   $u$  satisfies~\eqref{eq:entropy.inequality}, with the difference that we have to prove that for all $k\in \rr$ and all $0\leq  \phi\in C_{\textup{c}}^\infty(\bar{Q})$,
\[
	\iint_Q \sgn(u^\eta-k)\big(f_\eta(u^\eta )-f_\eta(k)\big)\cdot\nabla\phi \rightarrow
		\iint_Q \sgn(u-k)\big(f_0(u )-f_0(k)\big)\cdot\nabla\phi .
\]
Up to a subsequence we can assume that $u^\eta\rightarrow u$ a.e.~in $Q$. By the triangle inequality and Lebesgue's dominated convergence theorem  it remains to check that $f_\eta(u^\eta)-f(u^\eta)\rightarrow 0$ and $f_0(u^\eta)-f_0(u)\rightarrow 0$ a.e. in $Q$ as $\eta\rightarrow 0$. The second one trivially follows since $f_0$ is continuous and since $\{u^\eta\}_{\eta>0}$ is uniformly bounded. The first one will follow once we prove that $f_\eta\rightarrow f_0$ uniformly as $\eta\rightarrow 0$. This is a consequence of
		\[
		|f_\eta(u)-f_0(u)|=|(u^2+\eta)^{q/2}-\eta^{q/2}-u^q|\lesssim \eta^{q/2}\quad\text{for all $u\geq 0$ and all $\eta>0$.}
		\]
Since the limit equation has a unique entropy solution by Theorem \ref{thm.UniquenessPropertiesEntropy} and Remark \ref{remarkb7-eps}, the whole sequence $\{u^\eta\}_{\eta>0}$ converges to $u$, not only up to a subsequence.
\end{proof}

\subsection{Primitives of entropy solutions solve parabolic problems}
\label{sec:PrimitivesOfEntropySolutions}

We prove now Lemma~\ref{lem.PrimitiveSolvesParabolic}, needed  for the proofs of Theorem~\ref{thm.UniquenessOfLimitEquationsIandII} and Lemma~\ref{estimari.hiperbolice}.

\begin{proof}[Proof of Lemma~\ref{lem.PrimitiveSolvesParabolic}]
Since $u$ is an entropy solution of~\eqref{reg},
\[
\iint_Q\Big(u\partial_t\phi+u^q\partial_{x_N}\phi-u(\ml\phi+\eps \Delta \phi)\Big)+\int_{\rr^N}u_0\phi(0)=0\quad\text{for all }\phi\in C_{\textup{c}}^\infty(\overline{Q}).
\]
We then choose $\phi_R(t,x',x_N)=\psi (t,x')\mathcal{X}_R(x_N)$ with $\psi\in C_{\textup{c}}^{\infty}([0,\infty)\times\rr^{N-1})$ and $\mathcal{X}_R$ the usual cut-off function.  Since $u\in C([0,\infty);L^1(\rr^{N}))\cap L^\infty(Q)$, the term involving $u^q$ can be estimated similarly as in~\eqref{eq:TailEstimateForAllq}, and we may apply the dominated convergence theorem to obtain

\begin{align*}
&\lim_{R\to\infty}\bigg(\iint_Q\big(u\partial_t\phi_R+u^q\partial_{x_N}\phi_R-u(\ml\phi_R+\eps \Delta \phi_R)\big)+\int_{\rr^N}u_0\phi_R(0)\bigg)\\
&\qquad= \int_0^T\int_{\rr^{N-1}}\big(v\partial_t\psi -v(\widetilde \ml\psi+\eps \Delta_{x'} \psi)\big)+\int_{\rr^{N-1}}v_0\psi(0)
\end{align*}
since (see Remark~\ref{symbols.3})
\[
\lim _{R\rightarrow \infty}\ml (\psi \mathcal{X}_R)(x',x_N)=\ml (\psi(\cdot') \mathds{1}_{\mathbb{R}}(\cdot_N))(x',x_N)= (\widetilde \ml\psi)(x')\mathds{1}_{\mathbb{R}}(x_N)\quad\text{for any }\psi\in C_\textup{c}^\infty(\rr^{N-1}). \qedhere
\]
\end{proof}

\begin{remark}\label{extended:lem.PrimitiveSolvesParabolic}
	The results in Section \ref{sec.CompactnessCLloc1}, Section \ref{sec.Appendix.ConvInCL1} and Section \ref{sec:PrimitivesOfEntropySolutions} hold for $f_\eta$ in \eqref{f.eta} since it satisfies inequality \eqref{ineq.f.eta}.
\end{remark}

\subsection{Parabolic regularization}

We include a variant of Lemma~\ref{lem.ClassicalSolutionsOfReg}, which will be useful in the proof of Theorem~\ref{thm.UniquenessOfLimitEquationsIandII}. It applies in the particular cases $f(u)=u^q$, $q>1$ and $f_\eta(u)=(u^2+\eta )^{q/2}-\eta ^{q/2}$, $q>0$. 

\begin{proposition}[Parabolic regularization]\label{prop.propOfRegularizedEquationSmoothness2}
Assume~\eqref{u_0as},  ~\eqref{mlas},  $f\in C^{1+\beta}(\rr)$ for some $\beta\in (0,1]$ and $f(0)=0$.  Let $u$ be the entropy solution of~\eqref{reg}/\eqref{reg'} with initial data $u_0$.
  Then $u\in C^{1+\delta,2+2\delta}(Q)$ for some $\delta>0$, i.e., $u$ is a classical solution of the PDE in~\eqref{reg}/\eqref{reg'}.
\end{proposition}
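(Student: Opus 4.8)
The plan is to obtain interior parabolic regularity for the regularized equation by viewing $\ml u$ (or $\ml' u$) as a lower-order perturbation of the uniformly parabolic operator $-\eps\Delta$ and then bootstrapping via linear Schauder theory. Since $q>1$, Proposition~\ref{prop.propOfRegularizedEquationSmoothness2} asserts classical regularity; the starting point is that the entropy solution $u$ already satisfies, by Lemma~\ref{lem.EntropySolutionsOfReg}, $u\in L^\infty(Q)\cap C([0,\infty);L^p(\rr^N))\cap L^2_\textup{loc}((0,\infty);H^1(\rr^N))$, so $0\le u\le \|u_0\|_{L^\infty(\rr^N)}=:\|u_0\|_\infty$ a.e.~in $Q$.

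First I would rewrite the PDE in nondivergence form on a parabolic cylinder $Q_{t_0,r}:=(t_0-r^2,t_0+r^2)\times B_r(x_0)\Subset Q$. Using $\partial_{x_N}(u^q)=qu^{q-1}\partial_{x_N}u$, the equation reads
\begin{equation*}
\partial_t u-\eps\Delta u+qu^{q-1}\partial_{x_N}u=-\ml u\quad\text{in }Q,
\end{equation*}
and the key observation is that the right-hand side $g:=-\ml u$ (resp.~$-\ml' u$) is controlled in $L^p$ by lower-order norms of $u$: by the Fourier-multiplier comparison $m(\xi)\eqsim|\xi|^\alpha$ and~\eqref{ml.lp}, we have $\|\ml u(t)\|_{L^p(\rr^N)}\lesssim \|u(t)\|_{H^{\alpha,p}(\rr^N)}$ for $1<p<\infty$, while~\eqref{ml'.lp} gives the analogous bound for $\ml'$. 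I would proceed in three bootstrapping stages: (1) parabolic $L^p$ (Calder\'on--Zygmund) theory for $\partial_t-\eps\Delta$ applied to the equation with $g\in L^p_\textup{loc}$ and the drift term, treated as a known $L^p$ function since $u^{q-1}\in L^\infty$ and $\partial_{x_N}u\in L^2_\textup{loc}$, upgrades $u$ to $W^{2,1}_p$ on interior cylinders for large $p$, hence to $C^{\delta,\delta/2}_\textup{loc}$ by Sobolev embedding; (2) with $u\in C^{\delta}_\textup{loc}$, the coefficient $qu^{q-1}$ and — after showing $\ml u$ inherits a Hölder modulus from a uniform bound on $\|u\|_{W^{\alpha+\delta,p}}$ or directly by splitting $\ml=\ml^{\le\rho}+\ml^{>\rho}$ as in~\eqref{eq.SplittingOperatorInTwo} and estimating each piece by $\|D^2u\|_\infty$ and $\|u\|_\infty$ respectively once a first derivative bound is available — the source term become Hölder, so parabolic Schauder theory gives $u\in C^{1+\delta,2+2\delta}_\textup{loc}$; (3) since all estimates are interior and can be centered at any point of $Q$, this yields $u\in C^{1+\delta,2+2\delta}(Q)$, i.e.\ $u$ is a classical solution.

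The main obstacle is step (2): controlling the Hölder norm of the nonlocal term $\ml u$ on interior cylinders. Unlike the divergence-form energy estimates used elsewhere in the paper, here one needs a pointwise/Hölder handle on $\ml u$ even though $u$ is a priori only known to be bounded and in $H^1_\textup{loc}$. The standard device is the splitting $\ml u=\ml^{\le\rho}u+\ml^{>\rho}u$: the outer part satisfies $\|\ml^{>\rho}u\|_{L^\infty}\lesssim\rho^{-\alpha}\|u\|_{L^\infty(\rr^N)}$ and is as smooth in $x$ as $u$ is in an $L^\infty$-averaged sense, while the inner part is bounded by $\|D^2u\|_{L^\infty(B_{2\rho}(x))}\rho^{2-\alpha}$, so after the first bootstrap has produced $u\in C^{1+\delta'}_x$ locally one iterates: a $C^{1,\delta'}$ bound on $u$ in a slightly larger cylinder feeds a $C^{\delta'}$ bound on $\ml u$ (balancing $\rho$), which feeds a $C^{1+\delta}$ bound via Schauder, and one more turn of the crank (using that $\partial_{x_N}^3 u$-type quantities are controlled once $u\in C^{2+\delta'}$) closes the argument at the $C^{1+\delta,2+2\delta}$ level. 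For~\eqref{reg'} the same scheme applies verbatim with $\ml'$, using~\eqref{ml'.lp} and the fact that $\ml'$ acts only on $x'$, so it is dominated by $\ml$ in all the relevant norms; I would simply remark that no change is needed beyond replacing $\ml$ by $\ml'$ throughout and citing~\eqref{eq:L'.Ltilde}.
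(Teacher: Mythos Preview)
Your overall architecture---parabolic $L^p$ bootstrap followed by Schauder---is the same as the paper's, but the execution diverges in two places, and the first of these is a genuine gap.

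\textbf{Initialization of the bootstrap.} You write that Calder\'on--Zygmund theory applies ``with $g\in L^p_{\textup{loc}}$''. At the starting regularity $u\in L^\infty\cap L^2_{\textup{loc}}((0,\infty);H^1)$, the drift term $qu^{q-1}\partial_{x_N}u$ is indeed in $L^2_{\textup{loc}}$, but $\ml u$ is only in $H^{1-\alpha,2}$; when $\alpha>1$ this is a negative-order space and $g\notin L^p_{\textup{loc}}$. The paper handles this by keeping the equation in divergence form, placing the full right-hand side $g=-\ml u-\partial_{x_N}(u^q)$ in $L^p((T',T);H^{-\max\{\alpha,1\},p}(\rr^N))$, and invoking maximal regularity for $\partial_t-\eps\Delta$ in negative-order Bessel spaces to get $u\in L^p(H^{2-\max\{\alpha,1\},p})$. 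The climb then uses the fractional chain rule $\|u^q\|_{H^{\sigma,p}}\lesssim\|qu^{q-1}\|_{L^\infty}\|u\|_{H^{\sigma,p}}$ (valid for $\sigma\in(0,1]$; this is precisely where $q>1$ enters, since it makes $f'(u)=qu^{q-1}$ bounded on the range of $u$) together with $\ml:H^{\sigma,p}\to H^{\sigma-\alpha,p}$ from~\eqref{ml.lp}/\eqref{ml'.lp}, iterating $\sigma_{n+1}=2+\sigma_n-\max\{1,\alpha\}$ until $\sigma>1$, and then separately treating $\alpha\le1$ and $\alpha\in(1,2)$ to reach $u\in L^p_{\textup{loc}}((0,\infty);H^{2,p})$, $\partial_t u\in L^p_{\textup{loc}}((0,\infty);L^p)$ for every $p\in(1,\infty)$. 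Your nondivergence rewriting is reasonable, but it does not bypass the need to start in negative-order spaces when $\alpha>1$.

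\textbf{Passage to H\"older.} Your plan to estimate the spatial H\"older modulus of $\ml u$ by the splitting $\ml^{\le\rho}+\ml^{>\rho}$ and balancing $\rho$ is a legitimate alternative for the $x$-variable, but it says nothing about time-H\"older regularity of $\ml u$, which is needed for parabolic Schauder. The paper instead uses anisotropic interpolation: from $u\in L^p(H^{2,p})\cap W^{1,p}(L^p)$ one gets $v:=\ml u\in W^{s,p}((T',T);H^{2(1-s)-\alpha,p})$ for all $s\in(0,1)$ via~\cite{MR2863860}, and then Morrey's embedding (choosing $s=(2-\alpha)/4$ and $p$ large) yields $\ml u\in C^{\delta,2\delta}$. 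The nonlinear term $u^{q-1}\partial_{x_N}u$ is treated similarly, with a separate argument for $q\in(1,2)$ since $u^{q-1}$ is only H\"older. This interpolation route gives the parabolic H\"older regularity of $g$ in one stroke, after which linear Schauder for the heat equation~\cite{MR1406091} closes the proof. Your splitting idea could be made to produce time-H\"older as well (once one has $\partial_t u\in L^p$ with large $p$), but as written it does not.
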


\begin{proof}In the first step we prove that
\begin{equation}
	\label{reg.1}
 u\in L^p_\textup{loc}((0,\infty); H^{2,p}(\rr^N))\ \text{and} \ \partial_tu\in L^p_\textup{loc}((0,\infty); L^{p}(\rr^N))\ \text{for all}\ p\in(1,\infty).
 \end{equation}
Let us first observe that $u\in L^\infty(Q)\cap C([0,\infty);L^1(\rr^N))$ is a distributional solution  of
$$
\partial_tu-\eps\Delta u= g
$$
with $g=\ml u +\partial_{x_N}(f(u))$ or $g=\ml' u +\partial_{x_N}(f(u))$. Since $f\in C^1(\rr)$ and $f(0)=0$ we have that $f(u)\in L^p((T',T); L^p(\rr^N))$ for all $p> 1$. This implies that $\partial_{x_N}(f(u)) \in H^{-1,p}(\rr^N)$. Also, in view of~\eqref{ml.lp} and~\eqref{ml'.lp} we have $\ml u,\ml' u\in L^p((T',T); H^{-\alpha,p}(\rr^N))$, hence $g\in L^p((T',T); H^{-\max\{\alpha,1\},p}(\rr^N))$. Regularity results for the heat equation, i.e. Theorem 4 and the last lines of Section 1 in~\cite{MR161099},  give us that $u\in L^p((T',T); H^{2-\max\{\alpha,1\},p}(\rr^N))$.

Assume that $u\in L^p((T',T);H^{\sigma,p}(\rr^N))$ for some $\sigma\in (0,1]$. The fractional chain rule  \cite[Proposition~5.1, p. 112]{MR1766415} gives us that
\[
\|f(u)\|_{H^{\sigma,p}(\rr^N)}\lesssim \|f'(u)\|_{L^\infty(\rr^N)} \|u\|_{H^{\sigma,p}(\rr^N)}.
\]
Since $u\in L^\infty((0,\infty);L^\infty(\rr^N))$ and $f\in C^1(\rr)$ with $f(0)=0$, then $f'(u)\in L^\infty((0,\infty);L^\infty(\rr^N))$, so that
$$
f(u)\in L^p((T',T); H^{\sigma,p}(\rr^N)),\qquad\partial_{x_N}(f(u))
\in L^p((T',T); H^{\sigma-1,p}(\rr^N)).
$$
Therefore, the right-hand-side term $g\in L^p((T',T); H^{\sigma-\max\{\alpha,1\},p}(\rr^N))$.
We set  $\sigma_0=2-\max\{\alpha,1\}>0$. As long as $\sigma_n\leq 1$
we define $\sigma_{n+1}=2+\sigma_n-\max \{1,\alpha\}$ and thus $u\in L^p((T',T); H^{\sigma_{n+1},p}(\rr^N))$. Since the sequence $\{\sigma_n\}_{n\geq 0}$ is increasing we have  for some $n\geq0$, $\sigma_n>1$, hence $u\in L^p((T',T); H^{1,p}(\rr^N))$.

If $\alpha \in (0,1]$ we repeat the above argument with $\sigma =1$ and we obtain that $u\in L^p((T',T); H^{2,p}(\rr^N))$ and then $\partial_tu\in L^p((T',T); L^{p}(\rr^N))$.

Let us now consider the case when $\alpha\in (1,2)$. We know that $ u\in L^p((T',T);H^{\sigma,p}(\rr^N))$ for some $\sigma\geq 1$. It implies that $\partial_{x_N}(f(u))\in L^p((T',T); L^{ p}(\rr^N))$, $\ml u\in L^p((T',T); H^{\sigma -\alpha,p}(\rr^N))$ and
$g$ belongs to $L^p((T',T); H^{\min \{\sigma-\alpha,0\},p}(\rr^N))$. Defining $\sigma_{n+1}=2+\min\{\sigma_n-\alpha, 0\}$ we obtain that
$u\in L^p((T',T); H^{\sigma_{n+1},p}(\rr^N))$. For some $n\geq0$, we have $\sigma_n>\alpha$,  which implies $\sigma_{n+1}=2$ and we obtain the desired property \eqref{reg.1}.

In the second step we will show that for any $0<T'<T$, the function $g$ belongs to $C^{\delta,2\delta}((T',T)\times\rr^N)$ for some $\delta>0$, where $C^{\delta,2\delta}$ are the parabolic H\"older spaces \cite[Section~8.5, p.~117]{MR1406091}. Regularity results for the inhomogeneous heat equation
\cite[Theorem 8.7.3, p.~123]{MR1406091}  applied to $\tilde u=\chi (t) u(t)$,  where $\chi(t)$ is a smooth cut-off function,  show that $u\in C^{1+\delta,2+2\delta}((T',T)\times\rr^N)$, so $u$ is a classical solution.

In what follows,  we will show that  $g$ belongs to $C^{\delta,2\delta}((T',T)\times\rr^N)$ for some positive $\delta$. Let us recall that
 $u\in L^p((T',T);H^{2,p}(\rr^N))$ with $\partial_tu\in L^p((T',T);L^{ p}(\rr^N))$ for all $1<p<\infty$, i.e.
 \[
 u\in L^p((T',T); H^{2,p}(\rr^N)) \cap W^{1,p}((T',T); L^{ p}(\rr^N)).
 \]
 Let us show that the nonlocal term has the required regularity. Denoting $v=\ml u$ or $v=\ml' u$,  we have that
 $v\in L^p((T',T); H^{2-\alpha,p}(\rr^N)) \cap W^{1,p}((T',T);H^{-\alpha,p}(\rr^N))$.
 Therefore,
 $$
 (I-\Delta)^{-\alpha/2}v\in L^p((T',T);H^{2,p}(\rr^N)) \cap W^{1,p}((T',T);L^{ p}(\rr^N)).
 $$
 Using the interpolation results for Sobolev spaces in \cite[Proposition~3.2]{MR2863860} to $(I-\Delta)^{-\alpha/2}v$, we get
 $$
 (I-\Delta)^{-\alpha/2}v\in W^{s,p}((T',T);H^{2(1-s) ,p}(\rr^N))\quad\text{for all }s\in (0,1),
 $$
 and then $v\in W^{s,p}((T',T);H^{2(1-s)-\alpha ,p}(\rr^N))$.
 We emphasize that we have to apply the cited results to $(I-\Delta)^{-\alpha/2}v$ and not directly to $v$ since the results in~\cite{MR2863860} hold for nonnegative index Bessel potential spaces.
The classical Morrey estimates for Bessel potential and Sobolev spaces give us that $v\in C^{\delta}((T',T); C^{2\delta}(\rr^N))\subset C^{\delta,2\delta}((T',T)\times\rr^N)$ provided $s-1/p\geq \delta$ and $2(1-s)- \alpha - N/p\geq 2\delta$. Choosing $s=(2-\alpha)/4, $  and $p$ large enough the two inequalities hold for any $\delta<(2-\alpha)/4$.

Let us analyze the nonlinear term $v=\partial_{x_N}(f(u))=f'(u)\partial_{x_N}u$. The regularity of $u$ and the space interpolation give us that  for any $s\in (0,1)$,
$$
u\in W^{s,p}((T',T);H^{2(1-s),p}(\rr^N))\subset C^{s-1/p}((T',T);C^{2(1-s)-N/p}(\rr^N)).
$$
Choosing $s=1/2$,  we obtain that $u\in C^{\delta,2\delta}((T',T)\times \rr^N)$ for all $\delta<1/2$. Using that $f'\in C^\beta(\rr)$ this implies that
 $$
 f'(u)\in C^{\beta\delta ,2\beta\delta}((T',T)\times \rr^N)\quad\text{for all }\delta<1/2.
 $$
 Regularity of $u$ yields
 \begin{align*}
 \partial_{x_N}u&\in L^p ((T',T);H^{1,p}(\rr^N)\cap W^{1,p}((T',T);H^{-1,p}(\rr^N))\\
 &\subset W^{s,p} ((T',T);H^{-s+(1-s),p}(\rr^N)\subset C^{s-1/p}((T',T);C^{1-2s-N/p}(\rr^N)).
\end{align*}
 Choosing $s=1/4$ we obtain that  $\partial_{x_N}u \in
 C^{\delta ,2\delta} ((T',T)\times \rr^N)$ for all $\delta<1/4$. This finally implies that $\partial_{x_N}(f(u))=f'(u)\partial_{x_N}u\in C^{\beta\delta,2\beta\delta}((T',T)\times \rr^N)$ for all $\delta<1/4$.
\end{proof}

\begin{remark}When $f(u)=u^q$ with $q<1$, we can only perform the first step in the above proof. This will give us that $u\in C(Q)$. Indeed, $f(u)\in L^p((T',T);L^p)$ for all $p\geq 1/q$. Hence,
$$
u\in L^p((T',T); H^{2-\max\{\alpha,1\},p}(\rr^N)),\qquad u_t\in   L^p((T',T); H^{-\max\{\alpha,1\},p}(\rr^N)),
$$
i.e., $u \in W^{1,p}((T',T); H^{-\max\{\alpha,1\},p}(\rr^N)) )$. If we repeat now the interpolation argument above, we get $u \in W^{s,p}((T',T); H^{(1-s)(2-\max\{\alpha,1\})-s\max\{\alpha,1\} ,p}(\rr^N)) )$ for all $s\in [0,1]$ and $p>1/q$. Choosing $s- 1/p>0$ and $(1-s)(2-\max\{\alpha,1\})-s\max\{\alpha,1\} -N/p>0$, i.e., $2-\max\{\alpha,1\}-2s-N/p>0$,  we get $u\in C_{\rm b}([T',T]\times \rr^N)$.
\end{remark}

\section{Compact embedding of mixed spaces}
\label{sec:CompactEmbeddingOfMixedSpaces}
\setcounter{equation}{0}

 We gather here two compactness results that are used in the proof of our convergence result.

\begin{theorem}[Aubin-Lions-Simon, {{\cite[Theorem 5]{Simon}}}]\label{Aubin-Lions-Simon}
Consider the three Banach spaces
$$
X\hookrightarrow B\hookrightarrow Y,
$$
where $X\hookrightarrow B$ is compact. Assume $1\leq p\leq \infty$ and:
\begin{enumerate}[{\rm (i)}]
\item $\mathcal{F}\subset L^p((0,T);X)$ is bounded.
\item $\textstyle{\int_0^{T-h}\|f(t+h)-f(t)\|_Y\,{\rm d}t\to0}$ as $h\to0^+$ uniformly for $f\in \mathcal{F}$.
\end{enumerate}
Then $\mathcal{F}$ is relatively compact in $L^p((0,T);B)$ (and in $C([0,T];B)$ if $p=\infty$).
\end{theorem}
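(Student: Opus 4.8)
The statement is the classical Aubin--Lions--Simon compactness theorem, so the plan is to follow Simon's proof \cite{Simon}. Its backbone is an abstract characterization of precompactness in $L^p((0,T);B)$: a family $\mathcal{F}$ that is bounded in $L^p((0,T);B)$ is relatively compact there if and only if
\begin{enumerate}[{\rm (C1)}]
\item $\big\{\int_{t_1}^{t_2}f(t)\,{\rm d}t:f\in\mathcal{F}\big\}$ is relatively compact in $B$ for all $0\le t_1<t_2\le T$;
\item $\sup_{f\in\mathcal{F}}\|\tau_h f-f\|_{L^p((0,T-h);B)}\to 0$ as $h\to 0^+$, where $(\tau_h f)(t):=f(t+h)$.
\end{enumerate}
This is Simon's Theorem~1; it is proved by mollifying in the time variable, deducing from (C1) that the mollified family is, at each fixed time, a continuous image of a relatively compact set of averages (hence relatively compact in $B$), and using (C2) to make the mollification error uniformly small. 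Granting this criterion, the whole task reduces to verifying (C1) and (C2) from the hypotheses on $X$ and $Y$.

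The one analytic ingredient linking $X$, $B$, $Y$ is Ehrling's (Lions') lemma: for every $\eta>0$ there is $C_\eta>0$ with $\|v\|_B\le \eta\|v\|_X+C_\eta\|v\|_Y$ for all $v\in X$. I would prove it by contradiction: were it false for some $\eta_0$, there would be $v_n\in X$ with $\|v_n\|_B=1$ and $\|v_n\|_B>\eta_0\|v_n\|_X+n\|v_n\|_Y$; then $\|v_n\|_X\le 1/\eta_0$ and $\|v_n\|_Y\to 0$, so by compactness of $X\hookrightarrow B$ a subsequence converges in $B$ to some $v$ with $\|v\|_B=1$, while the continuity of $B\hookrightarrow Y$ and $v_n\to0$ in $Y$ force $v=0$, a contradiction.

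With Ehrling's lemma available, (C1) and (C2) become bookkeeping. Set $M:=\sup_{f\in\mathcal{F}}\|f\|_{L^p((0,T);X)}<\infty$. To check (C1) I would use H\"older's inequality in time, $\big\|\int_{t_1}^{t_2}f\big\|_X\le (t_2-t_1)^{1-1/p}M$, so that the set of these integrals is bounded in $X$, hence relatively compact in $B$ by the compact embedding. For (C2), with $\omega(h):=\sup_{f\in\mathcal{F}}\|\tau_h f-f\|_{L^p((0,T-h);Y)}\to 0$ (hypothesis (ii)), applying Ehrling's lemma pointwise in $t$ and then the triangle inequality in $L^p((0,T-h))$ yields
\begin{equation*}
\|\tau_h f-f\|_{L^p((0,T-h);B)}\le \eta\|\tau_h f-f\|_{L^p((0,T-h);X)}+C_\eta\|\tau_h f-f\|_{L^p((0,T-h);Y)}\le 2\eta M+C_\eta\,\omega(h),
\end{equation*}
since $\|\tau_h f-f\|_{L^p((0,T-h);X)}\le 2M$; letting $h\to0^+$ and then $\eta\to0^+$ gives (C2). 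Boundedness of $\mathcal{F}$ in $L^p((0,T);B)$ is immediate from $X\hookrightarrow B$, so Simon's criterion yields relative compactness in $L^p((0,T);B)$.

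For the endpoint case $p=\infty$ I would instead invoke Arzel\`a--Ascoli in $C([0,T];B)$: passing to continuous representatives, Ehrling's lemma together with the $L^\infty$ form of hypothesis (ii) gives equicontinuity, $\sup_{0\le t\le T-h}\|f(t+h)-f(t)\|_B\le 2\eta M+C_\eta\,\omega(h)$, while for each fixed $t$ the set $\{f(t):f\in\mathcal{F}\}$ is bounded in $X$, hence relatively compact in $B$; Arzel\`a--Ascoli then concludes. The main obstacle, and the only point that is not elementary once Ehrling's lemma is in place, is the abstract criterion (C1)--(C2) itself — the Kolmogorov--Riesz--Fr\'echet-type characterization of precompactness for Banach-space-valued $L^p$ functions; in the present paper this is simply quoted from \cite{Simon}.
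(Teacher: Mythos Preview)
The paper does not prove this theorem; it is stated with attribution to \cite[Theorem~5]{Simon} and used as a black box in the compactness arguments of Section~\ref{sect:scalings} and Appendix~\ref{sec:CompactEmbeddingOfMixedSpaces}. Your proposal is a correct sketch of Simon's original proof (the precompactness criterion via time-averages and translations, combined with Ehrling's lemma), so there is nothing in the paper to compare it against beyond noting that your reconstruction matches the cited source.
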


Assume that $\alpha\in(0,2)$ and $x=(x',x_N)\in \rr^{N-1}\times \rr$, and define the Hilbert space
$$
\begin{aligned}
L^2(\rr; W^{\alpha/2,2}(\rr^{N-1}))&:=\{\phi\in L^2(\rr^N) \,:\, |\phi|_{L^2(\rr; W^{\alpha/2,2}(\rr^{N-1}))}<\infty\},
\quad\text{where }\\
|\phi|_{L^2(\rr; W^{\alpha/2,2}(\rr^{N-1}))}&:=\bigg(\int_{\rr^N}\int_{\rr^{N-1}}\frac{|\phi(x'+z',x_N)-\phi(x',x_N)|^2}{|z'|^{(N-1)+\alpha}}\,{\rm d}z'{\rm d}(x',x_N)\bigg)^{\frac{1}{2}},
\end{aligned}
$$
and moreover, $\|\phi\|_{L^2(\rr; W^{\alpha/2,2}(\rr^{N-1}))}=\|\phi\|_{L^2(\rr^N)}+|\phi|_{L^2(\rr; W^{\alpha/2,2}(\rr^{N-1}))}$.
It is classical that the above spaces are the same as the Bessel potential spaces, i.e.  $W^{\alpha/2,2}(\rr^{N-1})=H^{\alpha/2 }(\rr^{N-1})$.

\begin{lemma}\label{lem:CompactEmbeddingInL2}
Assume $\alpha\in(0,2)$,  $\beta>0$, $R', R>0$   and a set $\mathcal{F}$ such that:
\begin{enumerate}[{\rm (i)}]
\item $\|\phi\|_{L^2(\rr; W^{\alpha/2,2}(\rr^{N-1}))}+\|\phi\|_{L^\infty(\rr^N)}\leq C$ for some $C$ independent of $\phi\in \mathcal{F}$.
\item For   all $|h_N|\leq R$ and some $\beta>0$,
$$
\int_{|x'|\leq R', |x_N|\leq R}|\phi(x',x_N+h_N)-\phi(x',x_N)|^2\,{\rm d}x\leq C|h_N|^\beta
$$
for some constant C independent of $\phi\in \mathcal{F}$.
\end{enumerate}
Then  $\mathcal{F}$ is relatively compact in  $L^2(B_{R'}\times(-R,R))$.
\end{lemma}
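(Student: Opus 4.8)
The plan is to regard $x_N$ as a ``time'' variable and to reduce the statement to the Aubin--Lions--Simon theorem (Theorem~\ref{Aubin-Lions-Simon}). Write $I:=(-R,R)$ and let $B_{R'}\subset\rr^{N-1}$ be the open ball of radius $R'$. I would apply Theorem~\ref{Aubin-Lions-Simon} with $p=2$, interval $I$, and the spaces
\[
X:=W^{\alpha/2,2}(B_{R'}),\qquad B=Y:=L^2(B_{R'}),
\]
so that $X\hookrightarrow B\hookrightarrow Y$ with $X\hookrightarrow B$ \emph{compact} by the fractional Rellich--Kondrachov theorem (a ball is a bounded Lipschitz extension domain and $\alpha/2>0$; see e.g.~\cite{DTG-CVa20}), and with the choice $Y=B$ so that hypothesis~(ii) of Theorem~\ref{Aubin-Lions-Simon} asks only for control of $x_N$-translations in $L^2(B_{R'})$ itself. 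Identifying each $\phi\in\mathcal{F}$ with the map $x_N\mapsto\phi(\cdot,x_N)$, the conclusion of Theorem~\ref{Aubin-Lions-Simon} is precisely that $\mathcal{F}$ is relatively compact in $L^2(I;B)=L^2(B_{R'}\times(-R,R))$, which is the assertion. It thus only remains to verify the two hypotheses of Theorem~\ref{Aubin-Lions-Simon}.

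\emph{Boundedness in $L^2(I;X)$.} Here I would use hypothesis~(i) of the lemma. The restriction map $W^{\alpha/2,2}(\rr^{N-1})\to W^{\alpha/2,2}(B_{R'})$ is bounded: clearly $\|\psi\|_{L^2(B_{R'})}\le\|\psi\|_{L^2(\rr^{N-1})}$, while the change of variables $z'=x'-y'$ gives
\[
[\psi]^2_{W^{\alpha/2,2}(B_{R'})}=\int_{B_{R'}}\int_{B_{R'}}\frac{|\psi(x')-\psi(y')|^2}{|x'-y'|^{(N-1)+\alpha}}\,{\rm d}x'{\rm d}y'\le\int_{\rr^{N-1}}\int_{\rr^{N-1}}\frac{|\psi(x'+z')-\psi(x')|^2}{|z'|^{(N-1)+\alpha}}\,{\rm d}z'{\rm d}x'.
\]
Applying this slicewise and integrating over $x_N\in I$, one gets $\int_I\|\phi(\cdot,x_N)\|^2_{W^{\alpha/2,2}(B_{R'})}\,{\rm d}x_N\lesssim\|\phi\|^2_{L^2(\rr^N)}+|\phi|^2_{L^2(\rr;W^{\alpha/2,2}(\rr^{N-1}))}\lesssim C^2$, uniformly over $\phi\in\mathcal{F}$, straight from the definition of the mixed norm in hypothesis~(i).

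\emph{Control of $x_N$-translations.} Here I would use hypothesis~(ii) of the lemma, which by Fubini reads $\int_{-R}^{R}\|\phi(\cdot,x_N+h_N)-\phi(\cdot,x_N)\|^2_{L^2(B_{R'})}\,{\rm d}x_N\le C|h_N|^\beta$ for all $|h_N|\le R$. Cauchy--Schwarz then yields
\[
\int_{-R}^{R-|h_N|}\|\phi(\cdot,x_N+h_N)-\phi(\cdot,x_N)\|_{L^2(B_{R'})}\,{\rm d}x_N\le(2R)^{1/2}\,\big(C|h_N|^\beta\big)^{1/2}\longrightarrow0\quad\text{as }h_N\to0,
\]
uniformly over $\phi\in\mathcal{F}$ and for both signs of $h_N$. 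This is exactly hypothesis~(ii) of Theorem~\ref{Aubin-Lions-Simon} with $Y=B=L^2(B_{R'})$, so the theorem applies and gives the relative compactness of $\mathcal{F}$ in $L^2(B_{R'}\times(-R,R))$.

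The only input that is not completely elementary is the compact embedding $W^{\alpha/2,2}(B_{R'})\hookrightarrow L^2(B_{R'})$; the remaining steps are bookkeeping of the mixed norm together with Fubini and Cauchy--Schwarz, so I do not anticipate a genuine obstacle. As a self-contained alternative not relying on Theorem~\ref{Aubin-Lions-Simon}, one may instead apply the Fr\'echet--Kolmogorov--Riesz criterion on $L^2(B_{R'}\times(-R,R))$ directly: the family is bounded there by the $L^\infty$ bound in hypothesis~(i); translations in $x_N$ are equicontinuous by hypothesis~(ii); and translations in $x'$ are equicontinuous because of the fractional Sobolev estimate
\[
\|\phi(\cdot+h',\cdot)-\phi\|^2_{L^2(\rr^N)}\lesssim|h'|^\alpha\,|\phi|^2_{L^2(\rr;W^{\alpha/2,2}(\rr^{N-1}))},
\]
which follows by taking the Fourier transform in the $x'$ variables only and bounding $|e^{i\xi'\cdot h'}-1|^2\le c_\alpha|h'|^\alpha|\xi'|^\alpha$ (valid since $0<\alpha<2$). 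Combining the two gives a uniform modulus of continuity for every translation $h=(h',h_N)$, and the criterion concludes.
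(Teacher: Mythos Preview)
Your main approach via Aubin--Lions--Simon is correct and takes a genuinely different route from the paper. The paper argues directly with the Fr\'echet--Kolmogorov--Riesz criterion on $L^2(\rr^N)$: it multiplies $\phi$ by the characteristic function $\mathds{1}_{B_{R'}\times(-R,R)}$, splits a generic translation $h=(h',h_N)$ into its $x'$ and $x_N$ components, controls the $x'$-part exactly as in your last display (Plancherel and $|e^{i\xi'\cdot h'}-1|^2\lesssim|h'|^\alpha|\xi'|^\alpha$), controls the $x_N$-part by hypothesis~(ii), and handles the boundary term coming from the characteristic function using the $L^\infty$ bound in hypothesis~(i). Your alternative sketch is precisely this argument; the one detail you omit is that when you pass to $\psi=\phi\,\mathds{1}_{B_{R'}\times(-R,R)}$ the translation of the indicator produces a cross-term bounded by $\|\phi\|_{L^\infty}\|\mathds{1}_{B_{R'}\times(-R,R)}(\cdot+h)-\mathds{1}_{B_{R'}\times(-R,R)}\|_{L^2}$, which is where the $L^\infty$ bound is actually spent.

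Your primary argument, by contrast, uses $x_N$ as a time variable and reduces everything to the compact embedding $W^{\alpha/2,2}(B_{R'})\hookrightarrow L^2(B_{R'})$. This is slightly slicker: it avoids the cutoff-by-indicator step entirely, and---as you may have noticed---it does not use the $L^\infty$ bound at all (the boundedness in $L^2(I;X)$ comes from the mixed Sobolev norm alone, and the translation hypothesis of Simon's theorem is checked in $Y=L^2(B_{R'})$ directly from~(ii)). The paper's route is more self-contained in that it only needs Riesz--Kolmogorov and a one-line Fourier computation, whereas yours imports both fractional Rellich--Kondrachov and Simon's theorem; but both are standard tools already present in the paper's toolbox, and your packaging is arguably cleaner.
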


\begin{remark}\label{X.compact}
For any $R,R'>0$ the space $X_{R,R'}\subset L^\infty(\rr^N) \cap L^2(\rr; W^{\alpha/2,2}(\rr^{N-1}))$ endowed with the norm
\[
\|\psi\|_{X_{R,R'}}=\|\phi\|_{L^2(\rr; W^{\alpha/2,2}(\rr^{N-1}))}+\|\phi\|_{L^\infty(\rr^N)}+\sup_{|h_N|<R}\int_{|x'|\leq R', |x_N|\leq R}
\frac{|\phi(x',x_N+h_N)-\phi(x',x_N)|}{|h_N|}\,{\rm d}x
\]
is compactly embedded in $ L^2(B_{R'}\times(-R,R))$.
\end{remark}

\begin{proof}
We will employ the Fr\'echet-Kolmogorov-Riesz compactness theorem (see,  e.g., ~\cite{H-OHoMa19}); hence, we need uniform estimates on translations in $L^2$.

By the triangle inequality,
\begin{equation*}
\begin{split}
|\phi(x+h)-\phi(x)|^2&=|\phi(x+(h',h_N))-\phi(x)|^2\\
&\leq |\phi(x+(h',h_N))-\phi(x+(0,h_N))|^2+|\phi(x+(0,h_N))-\phi(x)|^2.
\end{split}
\end{equation*}
Integrating the above inequality and using the change of variables $x+(0,h_N)\mapsto x$ gives
\begin{equation}\label{eq:TranslationsInx'Andx_N}
\begin{gathered}
\int_{|x'|\leq R', |x_N|\leq R}|\phi(x+h)-\phi(x)|^2\,{\rm d}x\\
\leq \int_{\rr^N}|\phi(x+(h',0))-\phi(x)|^2\,{\rm d}x+\int_{|x'|\leq R', |x_N|\leq R}|\phi(x+(0,h_N))-\phi(x)|^2\,{\rm d}x.
\end{gathered}
\end{equation}
The second term can already be estimated by assumption (ii), while the first term needs further consideration.
Using Plancherel's identity and the inequality
\[
|\textup{e}^{\textup{i}\xi' \cdot h'}-1|\leq \min\{2, |\xi'\cdot h'|\}\leq C_\alpha |\xi'\cdot h'|^{\alpha/2}\leq C_\alpha|\xi'|^{\alpha/2} |h'|^{\alpha/2},
\]

we get
\begin{align*}
\int_{\rr^N}|\phi(x+(h',0))-\phi(x)|^2\,{\rm d}x&= \int_{\rr^N}|\textup{e}^{\textup{i}\xi' \cdot h' }-1|^2|\widehat\phi(\xi)|^2\,{\rm d}\xi\leq
C_\alpha |h'|^{\alpha }\int_{\rr^N} |\xi'|^{\alpha } |\widehat\phi(\xi)|^2\,{\rm d}\xi\\
&=C_\alpha |h'|^{\alpha} |\phi|_{L^2(\rr; W^{\alpha/2,2}(\rr^{N-1}))}^2.
\end{align*}
Going back to~\eqref{eq:TranslationsInx'Andx_N}, we therefore get
\begin{equation}\label{eq:TranslationsInx'Andx_N2}
\int_{|x'|\leq R', |x_N|\leq R}|\phi(x'+h',x_N+h_N)-\phi(x',x_N)|^2\,{\rm d}(x',x_N)\leq C\big(|h'|^{\alpha}+|h_N|^{\beta}\big),
\end{equation}
where $C$ is again independent of $\phi$ by assumption (ii).

Now, define
$$
\psi(x):=\phi(x)\mathds{1}_{B_{R'}\times(-R,R)}(x). 
$$
In order to apply Fr\'echet-Kolmogorov-Riesz's compactness theorem, we need to check that the family $\{\psi \,:\, \psi\in L^2(\rr^N)\}\subset L^2(\rr^N)$ is equicontinuous and equitight. The latter is automatically satisfied in $\rr^N\setminus (B_{R'}\times(-R,R))$, and we are left with estimating the translations, for $h\in \rr^{N-1}\times(-R,R)$:
\begin{equation*}
\begin{split}
\big\|\psi(\cdot+h)-\psi\big\|_{L^2(\rr^N)}&\leq \big\|\phi(\cdot+h)\big(\mathds{1}_{B_{R'}\times(-R,R)}(\cdot+h)-\mathds{1}_{B_{R'}\times(-R,R)}\big)\big\|_{L^2(\rr^N)}\\
&\quad+\big\|\mathds{1}_{B_{R'}\times(-R,R)}\big(\phi(\cdot+h)-\phi\big)\big\|_{L^2(\rr^N)}\\
&\leq \big\|\phi\big\|_{L^\infty(\rr^N)}\big\|\mathds{1}_{B_{R'}\times(-R,R)}(\cdot+h)-\mathds{1}_{B_{R'}\times(-R,R)}\big\|_{L^2(\rr^N)}\\
&\quad+\big\|\phi(\cdot+h)-\phi\big\|_{L^2(B_{R'}\times (-R,R))}. 
\end{split}
\end{equation*}
By assumption (i), $\|\phi\|_{L^\infty(\rr^N)}$ is uniformly bounded, and hence, the first term goes to zero as $h\to0^+$ uniformly. Moreover, the second term does so too since~\eqref{eq:TranslationsInx'Andx_N2} holds. Hence, there is a convergent subsequence in $L^2(B_{R'}\times (-R,R))$.
\end{proof}

\section{An auxiliary lemma}
\setcounter{equation}{0}

This appendix is devoted to extend to the case of integrable and locally bounded functions a result that was already proved in \cite[Lemma~1.2]{EVZArma} for smooth functions and in \cite[Lemma~A.1]{LaurencotSimondon}, \cite[Lemma~2.7]{LaurencotFast} for continuous and integrable ones. This extension is used in the course of the proof of Lemma~\ref{estimari.hiperbolice} to obtain the hyperbolic estimates.
\begin{lemma}
\label{ineq.dis.appendix}
Consider a nonnegative function $f\in L^1(\rr)\cap L^\infty_{\rm loc}(\rr)$ satisfying
\begin{equation}
\label{ineq.distributions}
(f^k)'\leq C\quad \text{or} \quad (f^k)'\geq -C\quad\text{in } \mathcal{D}'(\rr)
\end{equation}
for some real numbers $k>0$ and $C>0$. Then
\begin{equation*}
\label{ineq.L1.Linfty}
0\leq f(x)\leq \Big(\frac{C(k+1)}k \|f\|_{L^1(\rr)}\Big)^{1/(k+1)}  \ a.e.\ x\in \rr.
\end{equation*}
\end{lemma}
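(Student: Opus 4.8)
The plan is to reduce the claim to a one-line computation on a conveniently chosen representative of $f$, thereby avoiding any regularity beyond $L^1\cap L^\infty_{\rm loc}$. I would treat the case $(f^k)'\le C$; the case $(f^k)'\ge -C$ reduces to it via the reflection $\tilde f(x):=f(-x)$, which satisfies $(\tilde f^k)'\le C$ in $\mathcal D'(\mathbb R)$ and has the same $L^1$-norm. Setting $g:=f^k$, the assumption $f\in L^\infty_{\rm loc}(\mathbb R)$ gives $g\in L^\infty_{\rm loc}(\mathbb R)\subset L^1_{\rm loc}(\mathbb R)$ and $g\ge0$ a.e., while $(g-Cx)'\le0$ in $\mathcal D'(\mathbb R)$. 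By the standard fact that a locally integrable function whose distributional derivative is nonpositive agrees a.e.\ with a nonincreasing function, $g$ has an everywhere-defined representative $\tilde g$ for which $x\mapsto \tilde g(x)-Cx$ is nonincreasing, so that $\tilde g(x)\ge\tilde g(x_0)-C(x_0-x)$ for all $x<x_0$. Replacing $f$ by $\tilde g^{1/k}\ge0$, it then suffices to bound $f(x_0)$ for an arbitrary fixed $x_0$.

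The core estimate: fix $x_0$, put $m:=f(x_0)=\tilde g(x_0)^{1/k}$, and assume $m>0$. For $x_0-m^k/C<x<x_0$ the monotonicity above yields $\tilde g(x)\ge m^k-C(x_0-x)>0$, hence $f(x)\ge\bigl(m^k-C(x_0-x)\bigr)^{1/k}$ for a.e.\ such $x$. Integrating over this interval and substituting $s=x_0-x$, then $t=m^k-Cs$, gives
\[
\|f\|_{L^1(\mathbb R)}\ \ge\ \int_{x_0-m^k/C}^{x_0}\bigl(m^k-C(x_0-x)\bigr)^{1/k}\,{\rm d}x
=\frac1C\int_0^{m^k}t^{1/k}\,{\rm d}t=\frac{k}{C(k+1)}\,m^{k+1},
\]
which is exactly $m^{k+1}\le\frac{C(k+1)}{k}\|f\|_{L^1(\mathbb R)}$. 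Since $x_0$ was arbitrary, the bound holds at every point of this representative, i.e.\ for a.e.\ $x\in\mathbb R$; nonnegativity of $f$ is immediate.

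The step I expect to require the most care is the passage from the distributional inequality to the everywhere-defined monotone representative $\tilde g$ and the legitimacy of the pointwise inequality $\tilde g(x)\ge\tilde g(x_0)-C(x_0-x)$ under the weak hypothesis $f\in L^1\cap L^\infty_{\rm loc}$. This is what makes the mollification approach unattractive: mollifying $f$ does not commute with taking the $k$-th power, and mollifying $g=f^k$ would force one to compare $\|g_\delta^{1/k}\|_{L^1(\mathbb R)}$ with $\|g^{1/k}\|_{L^1(\mathbb R)}$, which is delicate for $k\ge1$. Working with the monotone representative sidesteps both difficulties and reproduces the constants of the smooth case in \cite[Lemma~1.2]{EVZArma} and of the continuous integrable case in \cite[Lemma~A.1]{LaurencotSimondon} and \cite[Lemma~2.7]{LaurencotFast}.
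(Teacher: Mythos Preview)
Your proof is correct and follows essentially the same route as the paper: reduce to $(f^k)'\le C$ by reflection, convert the distributional inequality on $g=f^k$ into the pointwise bound $g(x)\ge g(x_0)-C(x_0-x)$ for $x<x_0$, and then integrate the resulting lower bound $f(x)\ge\bigl(g(x_0)-C(x_0-x)\bigr)^{1/k}$ over the interval of length $g(x_0)/C$ to the left of $x_0$. The only cosmetic difference is that the paper obtains the pointwise inequality via an explicit piecewise-linear test function and Lebesgue differentiation, whereas you invoke the equivalent fact that $g-Cx$ has a nonincreasing representative; the core computation and constants are identical.
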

\begin{proof} It is enough to prove the first case; the second one can be reduced to it by taking $\tilde f(x)=f(-x)$.

Let us first prove that a function $g\in L_{\rm loc}^1(\rr)$ such that $g'\leq C$ in $\mathcal{D}'(\rr)$ satisfies
\begin{equation}
\label{ineg.g}
  	g(y)-g(x)\leq C(y-x) \quad\text{a.e. }x<y\in \rr.
\end{equation}
By density, in the inequality
\[
	-\int_{\rr } g \varphi'\leq C\int_{\rr} \varphi \quad \text{for all } \varphi\in C^1_{\rm c}(\rr),\ \varphi\ge0,
\]
we can use nonnegative and compactly supported test functions $\varphi$ which are only \emph{piecewise} $C^1$. Fix two points $x<y$ and a function $\varphi_\eps$ such that $\varphi_\eps=1$ in $(x+\eps,y-\eps)$, linear on $(x-\eps,x+\eps)\cup (y-\eps,y+\eps)$ and vanishing identically outside the interval $(x-\eps,y+\eps)$. Then
\[
    -\frac1{2\eps}\int_{x-\eps}^{x+\eps}g +\frac 1{2\eps}\int_{y-\eps}^{y+\eps}g\leq C\int _{x-\eps}^{y+\eps} \varphi_\eps=C(y-x).
\]
Since the function $g$ is locally integrable,  Lebesgue's differentiation theorem guarantees that for a.e.~$x$ and $y$ we can let $\eps\rightarrow 0$ in the above inequality to obtain~\eqref{ineg.g}.
	
Let us now go back to function $f$. Since $f\in L^\infty_{\rm loc}(\rr)$, then $f^k\in L^1_{\rm loc}(\rr)$. Hence, we may apply the preliminary result to $g=f^k$ to obtain, using the inequality~\eqref{ineq.distributions},
\[
	f^k(y)-f^k(x)\leq C(y-x)\quad\text{a.e. } x<y\in \rr.
\]
Let us fix a point $y$ for which the above inequality holds. Then
\[
	f^k(x)\geq C(x-\bar y)\quad\text{a.e. } x<y, \quad\text{where }\bar y:=y-\frac{f^k(y)}C.
\]
We use this inequality on the interval $(\bar y,y)$ to get, since $f$ is nonnegative,
\begin{align*}
\label{}
\|f\|_{L^1(\mathbb{R})}&=  \int_{\rr}f\geq \int_{\bar y}^{y}f\geq C^{1/k} \int_{\bar y}^{y}(x-\bar y)^{1/k}\,{\rm d}x
=C^{1/k}\frac{k}{k+1}(y-\bar y)^{1+1/k}=\frac{k}{C(k+1)} f^{k+1}(y),
\end{align*}
from which  the result follows immediately.
\end{proof}

%



\end{document}